\let\oldtocsection=\tocsection
\let\oldtocsubsection=\tocsubsection
\renewcommand{\tocsection}[2]{\hspace{0em}\oldtocsection{#1}{#2}}
\renewcommand{\tocsubsection}[2]{\hspace{1em}\oldtocsubsection{#1}{#2}}
\newtheorem{theorem}[equation]{Theorem}
\newtheorem{lemma}[equation]{Lemma}
\newtheorem{proposition}[equation]{Proposition}
\newtheorem{corollary}[equation]{Corollary}
\newtheorem{conjecture}[equation]{Conjecture}
\newtheorem{definition-lemma}[equation]{Definition-Lemma}
\theoremstyle{definition}
\newtheorem{definition}[equation]{Definition}
\newtheorem{example}[equation]{Example}
\theoremstyle{remark}
\newtheorem{remark}[equation]{Remark}
\numberwithin{equation}{section}
\numberwithin{figure}{section}
\newcommand{\bZ} {\mathbb{Z}}
\newcommand{\bQ} {\mathbb{Q}}
\newcommand{\bR} {\mathbb{R}}
\newcommand{\bC} {\mathbb{C}}
\newcommand{\bG} {\mathbb{G}}
\newcommand{\bN} {\mathbb{N}}
\newcommand{\bP} {\mathbb{P}}
\newcommand{\bF} {\mathbb{F}}
\newcommand {\cA}  {\mathcal{A}}
\newcommand {\cD}  {\mathcal{D}}
\newcommand {\cE}  {\mathcal{E}}
\newcommand {\cF}  {\mathcal{F}}
\newcommand {\cL}  {\mathcal{L}}
\newcommand {\cO}  {\mathcal{O}}
\newcommand {\cP}  {\mathcal{P}}
\newcommand {\cW} {\mathcal{W}}
\newcommand {\cY}  {\mathcal{Y}}
\renewcommand {\ker} {\operatorname{ker}}
\newcommand {\Id}  {\operatorname{Id}}
\newcommand {\Bir}  {\operatorname{Bir}}
\newcommand {\Hom}  {\operatorname{Hom}}
\newcommand {\Pic}  {\operatorname{Pic}}
\newcommand {\loc}  {\mathrm{loc}}
\newcommand {\tw}  {\mathrm{tw}}
\newcommand {\Bl}  {\mathrm{Bl}}
\newcommand {\HK}  {\mathrm{HK}}
\newcommand {\Supp}  {\operatorname{Supp}}
\newcommand {\SL}  {\operatorname{SL}}
\DeclareMathOperator{\Ext}{Ext}
\DeclareMathOperator{\Coh}{Coh}
\DeclareMathOperator{\Perf}{Perf}
\DeclareMathOperator{\PD}{PD}
\DeclareMathOperator{\Auteq}{Auteq}
\DeclareMathOperator{\Aut}{Aut}
\DeclareMathOperator{\End}{End}
\DeclareMathOperator{\Tube}{Tube}
\newcommand{\ev}{\mathrm{ev}}
\newcommand{\gr}{\mathrm{gr}}
\newcommand{\Lag}{\mathrm{Lag}}
\newcommand{\Adm}{\mathrm{Adm}}
\newcommand{\Br}{\mathrm{Br}}
\newcommand{\gen}{\mathrm{gen}}
\newcommand{\symp}{\mathrm{symp}}
\newcommand{\ch}{\mathrm{ch}}
\newcommand{\w}{\varpi}
\newcommand{\W}{\mathcal{W}}
\newcommand{\Symp}{\operatorname{Symp}}
\def\mydate{\ifcase\month \or January\or February\or March\or
April\or May\or June\or July\or August\or September\or October\or 
November\or December\fi \space\number\day,\space\number\year}
\begin{document}

\title{Symplectomorphisms of some Weinstein 4-manifolds}
\author{Paul Hacking}
\address{Department of Mathematics and Statistics, Lederle Graduate Research Tower, University of Massachusetts, Amherst, MA 01003-9305}
\email{hacking@math.umass.edu}
\author{Ailsa Keating}
\address{Department of Pure Mathematics and Mathematical Statistics, Centre for Mathematical Sciences, University of Cambridge, Wilberforce Road, Cambridge, CB3 0WB}
\email{amk50@cam.ac.uk}

\maketitle


\begin{abstract} 

Let $M$ be a Weinstein four-manifold mirror to $Y \backslash D$ for $(Y,D)$ a log Calabi--Yau surface; this is typically the Milnor fibre of a smoothing of a cusp singularity. We introduce two families of symplectomorphisms of $M$: Lagrangian translations, which we prove are mirror to tensors with line bundles; and nodal slide recombinations, which we prove are mirror to automorphisms of $(Y,D)$. The proof uses a detailed compatibility between the homological and SYZ view-points on mirror symmetry. Together with spherical twists, these symplectomorphisms are expected to generate all autoequivalences of the wrapped Fukaya category of $M$ that are compactly supported in a categorical sense. A range of applications is given.

\end{abstract}

\tableofcontents

\section{Introduction}

\subsection{Context and main theorem}

Consider a symplectic manifold $M$, possibly with boundary. Let  $ \Symp_c (M)$ be its group of compactly supported symplectomorphisms (if $\partial M \neq \emptyset$, we ask for elements of $\Symp_c (M)$ to be the identity on  a collar neighbourhood of $\partial M$). We are interested in the symplectic mapping class group $\pi_0 \Symp_c (M)$.

The best current source of  symplectic mapping class group elements is Dehn twists in Lagrangian spheres. They admit modest generalisations: fibred twists, and twists in other Lagrangians with periodic geodesic flow \cite{Seidel_graded, Perutz, WW}. In the case where $M$ is non exact, there are sporadic examples of symplectomorphisms beyond these constructions, such as global symmetries (e.g.~ some Lie group actions on closed manifolds), and  Shevchishin--Smirnov's  four-dimensional `elliptic twist',  defined in the presence of a self-intersection $(-1)$ symplectic two-torus \cite{Smirnov-Shevchishin}. However, in the case where $M$ is Weinstein, no further examples of symplectic mapping classes are known. Moreover, as the monodromy of complex Morse singularities, Dehn twists have long been regarded as particularly natural in the Weinstein setting. A long-standing folklore question is whether they generate, or generically generate, the symplectic mapping class group of a  Weinstein manifold.

In this paper, we introduce new families of symplectic mapping classes for a large collection of four-dimensional Weinstein manifolds. The symplectic mapping classes are demonstrably not generated by Dehn twists. Heuristically, they should be thought of as `natural' rather than `exotic'; in particular, they are mirror to some basic operations on coherent sheaves.

\subsubsection*{Mirror symmetry context} Here is the setting. On the $A$ side, we have a four-dimensional Weinstein domain $M$. It should be thought of as any smoothing of a cusp or simply elliptic surface singularity (see \cite{Looijenga, GHK1, Engel}). With a small number of exceptions, we get all possible total spaces of exact, 2-dimensional Lagrangian torus fibrations with non-degenerate singularities. 
In particular, $M$ can be described explicitly as a Weinstein handlebody, by starting with $D^\ast T^2$ and gluing on Weinstein two handles along the conormal Legendrian lifts of rational slope circles in $T^2$, see \cite{HK}.  
On the $B$ side, we have a log Calabi--Yau surface $(Y,D)$. Whenever $M$ is a smoothing of a cusp singularity, $D$ is the cycle of the dual cusp singularity. Within its complex deformation class, $(Y,D)$ has the complex structure which is distinguished by the fact that it induces the split mixed Hodge structure. Let $U$ denote $Y \backslash D$. We can equip $M$ with a superpotential   $w: M \to \bC$ (see \cite{Keating, HK}).  
Homological mirror symmetry then consists of a triple of compatible explicit equivalences, which we take to be fixed for the rest of the introduction:
$$
\Perf(D) \simeq D^\pi \cF(\Sigma) \qquad D(Y) \simeq D^b \cF (w) \qquad D(U) \simeq D^b \cW(M)
$$
where $D^b \cF (w)$ is the derived directed Fukaya category of $w$, $D^b \cW(M)$ the derived wrapped Fukaya category of $M$, and $D^\pi \cF(\Sigma)$ the split-closed derived Fukaya category of a smooth fibre of $w$ near infinity, see \cite[Theorem 1.1]{HK}. 

 There are rich connections to cluster theory, cf.~\cite{GHKK, STW}.

\subsubsection*{Categorical compact support} Suppose $f \in \pi_0 \Symp_c M$ is a mapping class element mirror to an autoequivalence $\check{f} \in \Auteq D(Y)$. 
 As $f$ fixes the fibres of $w$ outside a compact set, homological mirror symmetry implies that $\check{f}$ respects restriction to $\Perf D$, i.e.~$\iota^\ast_D  \circ  \check{f} = \iota^\ast_D $. By abuse of terminology, we say that an autoequivalence of $D(Y)$ satisfying this condition has `compact support on $Y \backslash D$'. (From the perspective of the geometry of $Y$, `asymptotically compactly supported' would perhaps be more accurate.)

\subsubsection*{Main theorem}

Our main results can be summarised as follows.

\begin{theorem} \label{thm:main}
Let $(Y,D)$ be an arbitrary log Calabi-Yau surface with maximal boundary and distinguished complex structure, and let $M$ be the mirror Weinstein manifold, with superpotential $w: M \to \bC$. 
\begin{enumerate}

\item (Theorem \ref{thm:mirror_autos}.) Suppose $\cL \in \Pic (Y)$ is any line bundle such that $\cL|_D = \cO_D$. Then there exists a compactly supported symplectomorphism $\sigma_{\cL}$ of $M$ such that  under homological mirror symmetry, $\otimes \cL \in \Auteq D(U)$ is taken to $\sigma_{\cL} \in \Auteq D^b \cW(M)$. We call $\sigma_{\cL}$ a Lagrangian translation.

\item (Theorem \ref{thm:nodal_slide_recombination}.) Suppose $\varphi \in \Aut (Y)$ is any biholomorphic automorphism fixing $D$ pointwise. Then there exists a compactly supported symplectomorphism $\check{\varphi}$ of $M$ such that under  homological mirror symmetry, $\varphi_\ast \in \Auteq D(U)$, is taken to $\check{\varphi} \in  \Auteq D^b \cW(M)$. (This also holds at the level of $\Auteq D(Y)$ and $  \Auteq D^b \cF (w)$.) We call $\check{\varphi}$ a nodal slide recombination. 
\end{enumerate}

\end{theorem}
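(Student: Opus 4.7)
\emph{Proof outline.} The proof rests on the explicit almost toric / SYZ fibration $\pi: M \to B$ on the mirror constructed earlier in the paper. The base $B$ is an integral affine manifold with nodal singularities; the nodes correspond to the interior $(-1)$-curves in a toric model of $(Y,D)$, and a distinguished family of Lagrangian sections of $\pi$ generates $\cW(M)$, matching twists of $\cO_Y$ under the HMS equivalence $D(U) \simeq D^b \cW(M)$.

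For (1), my plan is to realise $\sigma_{\cL}$ as a fibrewise translation. First I translate $\cL$ under SYZ into a Lagrangian section $s_{\cL}$ of $\pi$; the hypothesis $\cL|_D \cong \cO_D$ forces $s_{\cL}$ to agree with the reference zero section outside a compact subset of $B$. I then define $\sigma_{\cL}$ by fibrewise addition of $s_{\cL}$, extended across nodal fibres using the standard local model, in which fibrewise addition by a section passing through the node is symplectic and well-defined. Triviality of $\cL|_D$ is exactly what makes the translation compactly supported. The identification $\sigma_{\cL} \leftrightarrow \otimes \cL$ then follows by computing the action on the section-generators: fibrewise addition sends $s_{\cL'}$ to $s_{\cL \otimes \cL'}$, which under the SYZ/HMS dictionary is precisely $\otimes\cL$.

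For (2), an automorphism $\varphi \in \Aut(Y,D)$ fixing $D$ pointwise acts on $B$ by an integral affine map which is trivial near $\partial B$ (because $\varphi|_D = \id$) and permutes the nodes according to its action on the exceptional curves of the toric model. I construct $\check\varphi$ as a \emph{nodal slide recombination}: slide each node along its monodromy-invariant ray, permute according to $\varphi$, and slide back. Each individual step is a Symington-style nodal slide realised as a Hamiltonian isotopy, and compact support follows from $\varphi|_D = \id$. To see $\check\varphi$ is mirror to $\varphi_*$ I track its action on the section/thimble generators of $\cW(M)$ and match with $\varphi_* \cO_Y$ and its twists; this pins down the induced autoequivalence on all of $D(U)$ by generation, and a parallel argument on the directed side gives the statement for $D^b \cF(w)$.

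The main obstacle is verifying the HMS compatibility in part (2): nodal slide recombinations have genuine topological content, and one must control gradings, choices of paths for the slides, and show that different orderings of moves yield symplectically isotopic maps---a rigidity statement amounting to a Torelli-type result for the almost toric fibration. One must also check that the induced autoequivalence on $\cW(M)$ acts correctly on \emph{all} objects, not merely sections; for this the full strength of the detailed compatibility between the SYZ and HMS viewpoints (developed in the bulk of the paper) is essential.
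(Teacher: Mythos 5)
Your construction of $\sigma_{\cL}$ in part (1) matches the paper's (fibrewise addition of a section, extended over the nodes via the local Hamiltonian-flow model, with $\cL|_D \cong \cO_D$ giving compact support). But the step where you conclude that $\sigma_{\cL}$ is mirror to $\otimes\cL$ "by computing the action on the section-generators" has a genuine gap: the statement that Lagrangian sections of $\pi$ correspond to line bundles \emph{under the specific HMS equivalence of Theorem \ref{thm:hms}} is not available. That equivalence is built from Lefschetz thimbles and exceptional collections, not from sections of the SYZ fibration, and the paper repeatedly flags (Remarks \ref{rmk:Hanlon1}, \ref{rmk:Hanlon2}, \ref{rmk:Hanlon3}) that a section/line-bundle dictionary at the categorical level would require the monomially admissible machinery of Hanlon--Ward, which is deliberately avoided. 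What the paper actually establishes is only the $K$-theoretic shadow of this dictionary (Corollary \ref{cor:mirror_autos_K-theory}). To upgrade from $K$-theory to the full autoequivalence, the paper takes a detour you are missing: it constructs Lagrangian spheres mirror to $i_*\cO_C(a)$ for $(-2)$ curves $C$ (Proposition \ref{prop:Lagrangian_spheres}), checks that $\sigma_L$ acts on these as $\otimes\cL$ does, and then invokes a rigidity criterion (Proposition \ref{prop:detecting_id}, resting on Uehara's theorem that Fourier--Mukai kernels of autoequivalences of $D(Y)$ have two-dimensional support, plus the classification of spherical objects on chains/cycles of $(-2)$ curves) to conclude that an autoequivalence acting trivially on $K$-theory, on all $\cO_C(a)$, and on $\Perf(D)$ is the identity. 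Without some such detection mechanism, knowing the action on a family of objects does not pin down the functor.

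In part (2) the construction itself is misdescribed. An automorphism $\varphi$ fixing $D$ pointwise does \emph{not} act on the base $B$ by an integral affine map (only the separate $SL_2(\bZ)$ symmetries do), and "slide each node along its ray, permute, slide back" would be a Moser isotopy of the fibration, hence Hamiltonian isotopic to the identity --- it cannot produce the nontrivial mapping classes the theorem asserts. The actual recipe requires input from birational geometry: by the two-dimensional Sarkisov programme (Proposition \ref{prop:change_of_toric_model}), the two toric models $f$ and $f\circ\varphi$ are related by a sequence of \emph{elementary transformations}, and each of these is mirrored by a nodal slide \emph{through the central fibre} followed by a cut transfer to the opposite ray (equivalently, a Lagrangian mutation of the central torus); $\check\varphi$ is the composite of this whole sequence, which returns to the same combinatorial fibration but is not isotopic to the identity. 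The HMS statement then follows because the corresponding Hurwitz moves carry the distinguished collection of thimbles mirror to the standard exceptional collection to the one mirror to its $\varphi$-pullback. Finally, the "rigidity statement" you defer is not a Torelli-type fact about the fibration but a concrete check of Blanc's relations in $\Bir((\bC^*)^2,\Omega)$: independence of the factorisation reduces to the order-five $A_2$ cluster relation $P^5=\mathrm{Id}$, which the paper verifies by an explicit computation in the base of the Lefschetz fibration (Proposition \ref{prop:symplectic_P5=I_relation}). Both the factorisation input and this relation check are essential and absent from your outline.
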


The names will be self-explanatory from the constructions. The conditions on $\cL$ and $\phi$ are precisely the ones needed  so that $\otimes \cL$ and $\phi_\ast$ have compact support on $Y \backslash D$ in the sense introduced above.  
Both families on the $B$ side are understood:
\begin{itemize}
\item The  line bundles in (1) form the subgroup $Q = \langle D_1 \ldots, D_k \rangle^\perp \subset \Pic (Y)$, where the $D_i$ are the irreducible components of $D$. 
 We have $Q \simeq H_2(U) / \bZ \langle \gamma \rangle$, where $\gamma$ is the class of a linking torus for a node of $D$. 
\item
For (2), the Torelli theorem in \cite{GHK2} implies that the group of automorphisms fixing $D$ pointwise is identified with $\Adm / W$ in Gross--Hacking--Keel's notation, where $\Adm$ is the group of admissible lattice automorphisms of $\Pic (Y)$, and $W$ is the Weyl group. (See \cite[Definition 4.2]{GHK2}.) 
\end{itemize}

Geometrically, our constructions are well-behaved with respect to composition: we get a faithful group homomorphism (Theorem \ref{thm:new_symplectos} part (1)):
$$
Q \rtimes \Adm / W \hookrightarrow \pi_0 \Symp_c (M).
$$
Let $\langle \Phi \rangle \subset Q$ be the sublattice generated by classes of $(-2)$ curves in $Y \backslash D$. For any $\cL \in \Phi $, we'll see that the symplectomorphism $\sigma_\cL$ can be factorised as the product of two Dehn twists in Lagrangian spheres lifting these roots (Lemma \ref{lem:two_twists_Lag_translation}). All other symplectomorphisms we construct are new, in the following sense. Denote by $\Br \subset \pi_0 \Symp_c M$ the subgroup generated by all Dehn twists, and by $Q_\symp$, etc, the images of our groups in $\pi_0 \Symp_c (M)$. Theorem \ref{thm:new_symplectos} part (2) tells us that:
$$
( Q \rtimes \Adm / W )_\symp \cap  \Br= \langle \Phi \rangle_\symp \rtimes \{ 1\}  \subset \pi_0 \Symp_c M.
$$  
 Note that elements of $\Adm / W$ persist under blow-ups  at points of $D$, which are precisely the ones which respect the log CY property; from that perspective, non-triviality of $\Adm / W$ should be thought of as generic. 
More generally, via Weinstein handle attachments, we get symplectic mapping classes of more general Weinstein 4-manifolds. As non-generation by Dehn twists is detected at the level of the second homology lattice, one immediately gets verifiable sufficient criteria for it surviving under handle attachment.

\begin{example} 
\label{ex:favourite_ex}
(Examples \ref{ex:k=8case} and \ref{ex:k=8ctd}.)
The first examples of new symplectomorphisms are in relatively elementary manifolds. One example is the Weinstein manifold $M$ given by $\bF_1 \backslash E$, the complement of a \emph{smooth} anticanonical divisor in $\bF_1$, the blow-up of $\bP^2$ at a point. The mirror is a log Calabi-Yau $(Y,D)$ such that $D$ is a cycle of eight $(-2)$ curves. Then $Q \simeq \bZ^2$, $\Phi = \emptyset$, and $\Adm  / W \simeq \bZ$. (The latter consists of eighth powers of elements of the Mordell--Weil group of $Y$, see Section \ref{sec:Mordell-Weil}.) This means we have a family of symplectomorphisms
$$
\bZ^3 \leq  \pi_0 \Symp_c (M).
$$
This includes a $\bZ$ factor which acts trivially on the wrapped Fukaya category $D^b\cW(M)$ (but not  on the directed Fukaya category $D^b \cF(w)$), which we expect to be generated by a boundary twist. In contrast, notice that following \cite{Gromov}, $\Symp(\bF_1, \omega_{FS})$ is homotopy equivalent to $U(2)$, where $\omega_{FS}$ is the Fubini--Study form; in particular, its symplectic mapping class group is trivial. 
\end{example}

For further examples, see  Sections \ref{sec:nodal_slide_examples} and \ref{sec:new_symplectos}.

\subsection{Construction sketches} Both constructions are most natural from the perspective of an almost-toric fibration $\pi: M \to \bR^2$. This has connections with the Gross--Siebert programme, and should be thought of as an SYZ fibration on $M$. 
To describe it, pick a toric model for $(Y,D)$: possibly after blowing up corners of $D$, we can find a toric pair $(\bar{Y}, \bar{D})$ and a map $(Y,D) \to (\bar{Y}, \bar{D})$ given by iteratively blowing up smooth points on $\bar{D}$ and its strict transforms. As $Y$ has the distinguished complex structure, we (possibly iteratively) blow up a single favourite point on each $\bar{D}_i$, say $m_i$ times. Let $v_i$ be the ray in the fan for $\bar{Y}$ corresponding to $\bar{D}_i$. The associated almost-toric fibration $\pi$ has exactly $m_i$ nodal fibres with invariant direction $v_i$, with all invariant lines concurrent. (See \cite[Theorem 1.2]{HK}.) Changes of toric models give equivalent fibrations, which will be further exploited below. 

\subsubsection*{Lagrangian translations}

Fix a reference Lagrangian section $L_0$ for the almost-toric fibration $\pi: M \to \bR^2$. This should be thought of as mirror to $\cO_Y$. Going back to early days of SYZ mirror symmetry, Lagrangian sections of $\pi$, up to suitable equivalence, are expected to correspond to line bundles on $Y$. Restricting to `compactly supported' subsets on both sides, we will see in Corollary \ref{cor:mirror_autos_K-theory} that there are one-to-one correspondences
\begin{center}
\begin{tabular}{ccc}
$\Bigg\{ $ \makecell{ Lagrangian sections $L$ of $\pi$ equal to $L_0$ near $\partial M$ \\ up to compactly supported fibre-preserving Hamiltonian isotopy } $ \Bigg\}$
& $\longleftrightarrow$ & 
 $ H_2(M) / \bZ \langle \gamma^{\vee} \rangle 
$   
\\
&& $\Big\updownarrow$ \\
Line bundles in $Q $ & $\longleftrightarrow$ &  $H_2(U) / \bZ \langle \gamma \rangle $    
 \end{tabular}

\end{center}

where $\gamma$, as before, is a linking torus for a node of $D$, and $\gamma^\vee$, the ``dual torus'', is the class of a fibre of $\pi$. 
The vertical equivalence is induced by an explicit diffeomorphism between $M$ and $U$, the top one by taking $L$ to $[L] - [L_0]$, and the bottom one by taking the first Chern class and Poincar\'e duality.
To define a Lagrangian translation, fix a Lagrangian section $L$. 
 Arnol'd--Liouville coordinates give a linear structure on each smooth fibre, and adding $L-L_0$ fibrewise can be checked to be locally a symplectomorphism. We show that this can be extended over the singular fibres to get an element of $\pi_0 \Symp_c(M)$ (Proposition \ref{prop:Lag_translations}).

This is related to an early conjecture of Gross for monodromy diffeomorphisms associated to large complex structure limit points for \emph{compact} Calabi--Yau manifolds \cite[Conjecture 3.7]{Gross_specialI}. Related constructions on individual Lagrangian sections can be found in Subotic's thesis \cite[Chap.~5]{Subotic} and Hanlon--Hicks' work \cite[Section 5.2]{Hanlon-Hicks}, for fibrations without singular fibres. Finally, for some distinguished line bundles, the mirror Lagrangian translation is described as a monodromy diffeomorphism in \cite{EF} (see also Remark \ref{rmk:Engel-Friedman}).

\subsubsection*{Nodal slide recombinations}
Suppose $\varphi$ is an automorphism of $Y$ preserving $D$ pointwise. To construct the mirror $\check{\varphi}$, we use input from birational geometry. Fix any toric model for $(Y,D)$, say $f: (Y,D) \to (\bar{Y}, \bar{D})$. (We can without loss of generality ignore toric blow-ups.) It follows from the two-dimensional Sarkisov programme that the toric model $f \circ \varphi: (Y,D) \to (\bar{Y}, \bar{D})$ can be obtained by starting from $f: (Y,D) \to (\bar{Y}, \bar{D})$  and performing a sequence of elementary transformations (see Proposition \ref{prop:change_of_toric_model} and \cite{Corti-Kaloghiros, CKS}). Equivalently, $\varphi$ restricts to a birational transformation of $(\bC^*)^2$, say $\bar{\varphi}$, and the sequence of elementary transformations corresponds to a factorisation of $\bar{\varphi}$  into a composition of maps of the form $\psi \circ E \circ \psi^{-1}$, where $E: (x,y) \mapsto (x, y/(x+1))$ and $\psi \in SL_2(\bZ)$ (cf.~\cite{Blanc}). 
 Readers may recognise $E$ as the standard two-variable cluster transformation. 

On the mirror side, each elementary transformation determines an operation on $M$. In terms of the almost-toric fibration, this is a nodal slide followed by a cut transfer (\cite{Symington}, see Section \ref{sec:change-toric-model-A-side}); in terms of the Weinstein skeleton of $M$, a Lagrangian mutation (for which we will follow \cite{Pascaleff-Tonkonog}). The full sequence of transformations returns the same almost-toric fibration (or presentation of the Weinstein skeleton) as we started with, and determines a symplectomorphism of $M$. 
Independence of the choice of factorisation reduces to checking a symplectic incarnation of the `$A_2$ cluster relation', see Proposition \ref{prop:indep_of_choices}; this uses Blanc's description of the relations in the group of volume-preserving birational transformations of $\bP^2$ \cite{Blanc}.

 From the cluster perspective, the symplectomorphisms are interesting in of their own right: they are associated to loops in the cluster graph for $Y \backslash D$. In terms of actions on Lagrangian submanifolds, the idea that elementary transformations are mirror to Lagrangian mutations has been widely explored in the wake of \cite{Auroux_Gokova} -- see  \cite{Vianna, Vianna_delPezzo, STW, Pascaleff-Tonkonog}. Finally, for integral affine bases of SYZ fibrations on K3 surfaces, 
 \cite[Conjecture 7]{Kontsevich-Soibelman} considers similar operations.

\begin{remark}

Heuristically, we expect a symplectomorphism to preserve the SYZ fibration (up to Hamiltonian isotopy) if and only if the mirror autoequivalence preserves the collection of structure sheaves of points. On the other hand, by \cite[Corollary 5.23]{Huybrechts}, such autoequivalences are precisely given by $\Pic(Y) \rtimes \Aut(Y)$: this would mean that we have found all compactly supported symplectomorphisms respecting the SYZ fibration, up to equivalence in $\Auteq \cW(M)$. 

\end{remark}

\subsubsection*{Notes on non-compactly supported maps}
Our constructions naturally extend to give examples of non-compactly supported symplectomorphisms:

\begin{itemize}

\item for Lagrangian translations, we can use a Lagrangian section $L$ which does not agree with $L_0$ near infinity;
see Lemma \ref{lem:sections_M_noncpt} and Proposition \ref{prop:Lag_translations_group}. 
We expect this to be mirror to $\otimes \cL$ for a line bundle $\cL$ such that $\cL|_D \ncong \cO_D$. One possible approach is to use the monomially admissible framework as developed in  \cite{Hanlon, Hanlon-Hicks, Hanlon-Ward}.

\item for nodal slide recombinations, we can add extra automorphisms of $Y$, which fix $D$ setwise but not pointwise, 
 see Section \ref{sec:SL_2(Z)_transformations}. This corresponds to adding actions of elements of $SL_2(\bZ)$ on the base of the almost-toric fibration.  We expect to get an action by symplectomorphisms of the cluster modular group introduced in \cite{FG2}. See also \cite{KW}.

\end{itemize}

\subsection{HMS action}

The proof that the symplectomorphisms we construct have the expected mirrors requires several sections.
In particular, we establish tractable criteria for an autoequivalence of $D(Y)$ to be the identity: see  Proposition \ref{prop:detecting_id} or the proof of Theorem \ref{thm:mirror_autos}.  
As part of applying this,  we give an explicit construction of Lagrangian spheres mirror to $i_\ast \cO_C(a)$ for \emph{any} $C \subset U$ a $(-2)$ curve, and $a \in \bZ$; this may be of independent interest. 

\subsection{Spherical objects and full auto-equivalence groups}\label{sec:intro_further_applications}

\subsubsection*{Spherical objects} Some corollaries on Lagrangian spheres may be of independent interest. 
In particular, Corollary \ref{cor:infinite_spheres} shows that whenever there are infinitely many $(-2)$ curves in $Y \backslash D$,  we can construct a countably infinite family of Lagrangian spheres, none of which is contained in the subcategory of $D^b \cW(M)$ split-generated by the others.  
This implies that there is a large collection of finite type Weinstein four-manifolds with this property. 
In contrast, for Milnor fibres of hypersurface singularities, all known Lagrangian spheres are split generated by a finite collection of them (by Picard--Lefschetz theory).

In general, spherical objects in $D(Y)$ are not classified, and that problem is largely orthogonal to the focus of this article. The folklore expectation is as follows. 

\begin{conjecture}\label{conj:sphericals} 
Let $\cA$ be the collection of spherical objects 
$$
\cA := \{ i_\ast \cO_C (a) \, | \, C \text{ a }(-2) \text{ curve, } a \in \bZ \}.
$$
Let $T_{\cA}$ be the group generated by spherical twists in objects of $\cA$. Then any spherical object in $D(Y)$ is quasi-isomorphic to an element of $T_{\cA} \cdot \cA$.
\end{conjecture}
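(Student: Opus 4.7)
The plan is to reduce the classification to a lattice-theoretic problem governed by the ADE configurations formed by $(-2)$-curves in $U = Y \setminus D$. For any spherical object $E \in D(Y)$, the condition $E \otimes \omega_Y \simeq E$ (required since $Y$ is not Calabi--Yau) together with $\omega_Y \simeq \cO_Y(-D)$ forces $\Supp(E)$ to lie in the zero locus of a section of $\cO_Y(-D)$ restricted to $\Supp(E)$; since $D \cdot C_i < 0$ for each component $C_i$ of $D$ in the negative-definite or indefinite case, one expects $\Supp(E) \subset U$, and furthermore that $\Supp(E)$ is one-dimensional (zero-dimensional spherical objects do not exist when $\omega_Y \ne \cO_Y$, and two-dimensional support is excluded by an argument in the spirit of Uehara's analysis of kernels). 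This reduces the problem to classifying spherical objects set-theoretically supported on a union of $(-2)$-curves in $U$, which form ADE configurations under intersection.

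First I would handle the case where $\Supp(E)$ is contained in a single connected ADE configuration $\Gamma$ of $(-2)$-curves. The formal neighbourhood of $\Gamma$ in $Y$ is the minimal resolution of the corresponding ADE surface singularity; Ishii--Uehara and extensions in type $A$, with parallel results in types $D,E$, classify spherical objects on such resolutions as precisely the $T_{\cA_\Gamma}$-orbit of $\cA_\Gamma := \{ i_\ast \cO_C(a) : C \subset \Gamma, a \in \bZ\}$. Applying this locally and checking that the classification globalises (using that $H^0(Y, \cO_Y) = \kk$ and that deformations of such objects off $\Gamma$ are obstructed when $(Y,D) = (Y_e,D)$) would settle the connected case. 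Second, for $\Supp(E)$ spread over several disjoint ADE configurations $\Gamma_1, \ldots, \Gamma_r$, one can use that twists in $\cA_{\Gamma_i}$ and $\cA_{\Gamma_j}$ commute (because $\Hom^\ast$ between their generators vanishes when $\Gamma_i \cap \Gamma_j = \emptyset$), so that after applying twists componentwise one reduces to an object which is a direct sum, hence to the connected case by induction.

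The main obstacle is \emph{controlling the support and numerical class of $E$ under iterated twists}. Twisting an object supported on $C_1$ by $i_\ast \cO_{C_2}(a)$ with $C_1 \cdot C_2 > 0$ generally enlarges the support to $C_1 \cup C_2$, so termination of the reduction requires a descent invariant --- for instance the length of the Mukai vector measured against the root lattice $\langle C_i \rangle$, or the number of irreducible components in a minimal semistable decomposition --- that strictly decreases under a judiciously chosen sequence of twists. Establishing such a descent in types $D, E$, and verifying that the endpoint is an actual object of $\cA$ rather than a ``virtual'' lattice vector, is where the argument genuinely goes beyond the braid-group/$A_n$ case and where one needs the distinguished complex structure $(Y,D) = (Y_e,D)$ to rigidify the $(-2)$-curve configurations and prevent exotic spherical objects from arising.

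An alternative, and possibly cleaner, route is to transport the problem through homological mirror symmetry. Since $D(Y) \simeq D^b \cF(w)$, spherical objects correspond to spherical objects in the directed Fukaya category, and one could attempt to classify these via Lagrangian thimbles and use the matching constructions of Section~\ref{sec:new_symplectos} (where spherical objects of the form $i_\ast \cO_C(a)$ correspond to explicit Lagrangian spheres) to identify the orbit. The advantage is that Dehn twists in Lagrangian spheres --- hence generators of $T_\cA$ --- act transitively on such Lagrangians by results on distinguished bases of vanishing cycles, and the obstacle above reappears only as a termination statement for the Hurwitz-type action on bases of Lefschetz thimbles.
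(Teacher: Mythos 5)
This statement is labelled a \emph{conjecture} in the paper, and the paper does not prove it: it only records (Lemma \ref{lem:type_A_sphericals}, quoting Ishii--Uehara and Uehara) that the claim holds for connected components of $Z$ that are type $A$ chains or cycles of $(-2)$ curves, the cycle case requiring a separate argument realising the cycle as a fibre of an elliptic fibration. Your proposal is a plan rather than a proof, and it does not close the gap. The most concrete problem is that your reduction is organised around the assumption that the $(-2)$ curves in $U$ form ADE configurations. That is true when $D$ is indefinite (\cite[Lemma 6.9]{GHK1}, quoted in the paper), but it fails in the negative definite case: the paper points out that for $k=3$ one gets a configuration with Y-shaped dual graph (a $T_{p,q,r}$-type configuration, not ADE), and that in general there may be \emph{infinitely many} $(-2)$ curves, so $Z$ need not decompose into finitely many finite configurations at all. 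Cycles of $(-2)$ curves are likewise not ADE (they are affine $\tilde A_n$), and the formal-neighbourhood-of-an-ADE-singularity framing does not see them. So the case division on which your argument rests does not cover the conjecture.

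Second, even within the ADE setting your plan invokes ``parallel results in types $D,E$'' classifying spherical objects on minimal resolutions; no such results are cited in the paper and, to the extent the paper is a reliable guide, they are not available --- this is precisely the content the conjecture is asking for. Your own text concedes that the descent invariant needed to terminate the sequence of twists ``is where the argument genuinely goes beyond the braid-group/$A_n$ case''; that invariant is never constructed, so the core of the argument is missing rather than merely sketched. (The steps you do carry out --- support on $Z$ via Uehara's kernel analysis, connectedness of the support of a spherical object because $\Hom^0(E,E)=\bC$ rules out nontrivial direct sums --- are fine, but they are the already-known reductions.) The mirror-symmetry detour at the end has the same status: transporting the problem to the Fukaya side does not supply the missing transitivity/termination statement, it only relabels it as a Hurwitz-action question that is equally open.
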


Any spherical object in $D(Y)$ has support in $Z$, the union of all $(-2)$ curves in $Y$, and the conjecture is known for all connected components of $Z$ which are either chains or cycles of $(-2)$ curves (\cite{IU, Uehara1}, see Lemma \ref{lem:type_A_sphericals}; note the extra technical assumption in the case where the cycle of $(-2)$ curves is $D$ itself). 

\subsubsection*{Autoequivalence groups}  A companion question is to understand the full group of autoequivalences of $D(Y)$. For compactly supported ones, the general expectation translates as follows.

\begin{conjecture}\label{conj:auteqs}
Cf.~\cite[Conjecture 1.2]{Uehara2}.
Let $\Auteq_c D(U)$ denote compactly supported autoequivalences of $D(U)$ (see Section \ref{sec:compact_support}). 
Then 
$$
\Auteq_c D(U) = \langle T_{\cA'}, Q \rangle \rtimes \Aut (Y, D; \text{pt})
$$
where $\cA' \subset \cA$ is given by restricting to $(-2)$ curves in $Y \backslash D$,  $Q$ is as before, and $ \Aut (Y, D; \text{pt})$ is the group of automorphisms of $Y$ fixing $D$ pointwise.

\end{conjecture}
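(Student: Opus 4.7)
The proof plan is to take any $\Phi \in \Auteq_c D(U)$, lift it to $\tilde\Phi \in \Auteq D(Y)$ commuting (on compactly-supported-in-$U$ objects) with $\iota_D^\ast : D(Y) \to \Perf(D)$, and then reduce $\tilde\Phi$ modulo the three generating families to the identity. The first key input is the technical fact flagged in Section~\ref{sec:intro_further_applications} via item (3) of Section 1.3: the Fourier--Mukai kernel of $\tilde\Phi$ has two-dimensional support. For $D$ negative definite or indefinite this is Uehara \cite{Uehara1, Uehara2}; for $D$ semi-definite one argues separately using that $Y$ is rational elliptic. This rules out kernels that are ``point-like'' and makes the subsequent numerical/action-on-point-sheaves arguments meaningful.

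Next, granting Conjecture \ref{conj:sphericals}, every spherical object of $D(U)$ lies in the orbit $T_{\cA'} \cdot \cA'$. The core of the argument is to show that after composing $\tilde\Phi$ with some element of $\langle T_{\cA'}\rangle$ one may assume $\tilde\Phi$ sends every skyscraper $\cO_p$ (for $p \in Y$) to a shifted skyscraper. Once this holds, \cite[Corollary 5.23]{Huybrechts} identifies $\tilde\Phi$ with an element of $\Pic(Y) \rtimes \Aut(Y)$, so $\tilde\Phi \cong (\otimes \cL) \circ f_\ast$ for some $\cL \in \Pic(Y)$ and $f \in \Aut(Y)$, up to shift.

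The compact-support condition then pins down $(\cL, f)$ up to the allowed ambiguity. Since $\iota_D^\ast \circ \tilde\Phi = \iota_D^\ast$ on compactly-supported-in-$U$ objects, applying this to skyscrapers $\cO_p$ for $p \in D$ forces $f|_D = \id_D$, i.e.~$f \in \Aut(Y, D; \mathrm{pt})$; applying it to $\cO_D$-modules forces $\cL|_D \cong \cO_D$, placing the class of $\cL$ in $Q$ and hence, modulo the ambiguity $\otimes \cO_Y(D)$, in $\bar Q$. The semidirect product structure is then a formal check: $f_\ast$ conjugates $T_C$ to $T_{f(C)}$ and $\otimes \cL$ to $\otimes (f^{-1})^\ast \cL$, both of which remain in the normal subgroup $\langle T_{\cA'}, \bar Q\rangle$ since $f$ preserves $Y \setminus D$ and $\bar Q$ is $\Aut(Y,D;\mathrm{pt})$-stable.

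The main obstacle is the reduction to $\tilde\Phi$ preserving point sheaves: even granting the classification of spherical objects, the statement that an autoequivalence of $D(Y)$ becomes a point-sheaf permutation after twisting by elements of $\langle T_{\cA'}\rangle$ is exactly the unconditional content of \cite[Conjecture 1.2]{Uehara2}. Removing this step would most plausibly proceed through constructing a Bridgeland stability condition on $D(Y)$ whose heart is preserved, up to shift, by every such $\tilde\Phi$, so that its point-like minimal objects could be tracked through the autoequivalence; absent such a stability condition, the proof is genuinely conditional on both Conjecture \ref{conj:sphericals} and the point-sheaf rigidity statement of \cite[Conjecture 1.2]{Uehara2}.
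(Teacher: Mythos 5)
This statement is a \emph{conjecture} in the paper: no proof is given, and none is claimed. The paper's only supporting discussion is that the conjecture holds whenever every connected component of $Z \cap (Y\setminus D)$ is a chain or cycle of $(-2)$-curves, by combining Uehara's structure theorems for $\Auteq D(Y)$ (Theorems \ref{thm:neg_def_auteq} and \ref{thm:auto_elliptic}) with the compact-support condition of Definition \ref{def:compact_support}, and that the general obstruction is precisely the classification of autoequivalences of $D_Z(Y)$. Your proposal is therefore correctly calibrated: you identify the same obstruction (ruling out exotic autoequivalences supported on $Z$, equivalently your ``point-sheaf rigidity'' step) and you are explicit that the argument is conditional on it. Your reduction — two-dimensional support of the kernel, then \cite[Corollary 5.23]{Huybrechts} to land in $\Pic(Y)\rtimes\Aut(Y)$, then the compact-support condition to force $\cL|_D\cong\cO_D$ and $f\in\Aut(Y,D;\text{pt})$ — is essentially the mechanism the paper itself deploys in the proof of Proposition \ref{prop:detecting_id} and in Theorem \ref{thm:mirror_autos}, just run on an arbitrary autoequivalence rather than one already known to act trivially on $K$-theory. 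The packaging differs slightly from the paper's (which quotes Uehara's global structure theorems for $\Auteq D(Y)$ in the known cases rather than re-deriving the point-sheaf step), but the content is the same.

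One caution on your logical dependencies: you write as if Conjecture \ref{conj:sphericals} plus the spherical-twist reduction would yield the point-sheaf permutation property, but the paper treats Conjectures \ref{conj:sphericals} and \ref{conj:auteqs} as \emph{companion} conjectures, not as one implying the other. Knowing that every spherical object of $D_Z(Y)$ lies in $T_{\cA}\cdot\cA$ does not by itself show that every autoequivalence of $D_Z(Y)$ (in the image of $\Auteq D(Y)\to\Auteq^{\dagger}D_Z(Y)$ from the short exact sequence \eqref{eq:Auteq_SES}) is generated by twists in those objects; that is an additional rigidity statement. Since you flag the conditionality in your final paragraph, this is a matter of emphasis rather than an error, but the two hypotheses should be kept separate.
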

This too is known to hold whenever all connected components of $Z \cap (Y \backslash D)$ are chains or cycles of $(-2)$ curves \cite{IU, Uehara1, Uehara2}. Examples include the cases where $D$ is semi-definite case with $k \geq 5$ (assuming $D$ contains no $(-1)$ curves). 
The difficulty is classifying autoequivalences of $D_Z(Y)$, the derived category of coherent sheaves with support on $Z$. Note that our results imply the following:

\begin{theorem}\label{thm:full_auteq_groups}
(Proposition \ref{prop:Lagrangian_spheres}  and Theorem  \ref{thm:new_symplectos})
Assume that Conjecture \ref{conj:auteqs} holds. Then any  compactly supported autoequivalence of $D^b \cW(M)$ can be represented by a compactly supported symplectomorphism of $M$. Moreover, the 
 categorical symplectic mapping class group of $M$, i.e.~the image of the (graded) symplectic mapping class group in $\Auteq D^b \cW(M)$, is generated by Lagrangians translations, nodal slide recombinations, and Dehn twists in Lagrangian spheres mirror to line bundles on $(-2)$ curves.
\end{theorem}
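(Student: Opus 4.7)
The plan is to work on the $B$-side and transport via the homological mirror symmetry equivalence $D(U) \simeq D^b \cW(M)$ fixed at the start. Under Conjecture \ref{conj:auteqs}, every element of $\Auteq_c D(U)$ factors in terms of three types of generators: spherical twists in objects of $\cA'$, tensor products with line bundles in $\bar{Q}$, and pushforwards by automorphisms in $\Aut(Y, D; \text{pt})$. The strategy is to lift each generator separately to a compactly supported symplectomorphism of $M$ using the results already established in the paper, and then to compose the lifts according to the given factorisation.

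For each family I would invoke an already-proved symplectic realisation. By Theorem \ref{thm:main}(1), tensoring with $\cL \in \bar{Q}$ is mirror to the Lagrangian translation $\sigma_\cL$; by Theorem \ref{thm:main}(2), pushforward by $\varphi \in \Aut(Y, D; \text{pt})$ is mirror to the nodal slide recombination $\check{\varphi}$; and by Proposition \ref{prop:Lagrangian_spheres}, the spherical twist in $E \in \cA'$ is mirror to the Dehn twist in an explicit Lagrangian sphere representing $E$. Given any $F \in \Auteq_c D^b \cW(M)$, one transports via HMS to $\Auteq_c D(U)$, factors it using Conjecture \ref{conj:auteqs}, and composes the corresponding symplectomorphisms in the matching order to obtain $f \in \pi_0 \Symp_c(M)$ whose induced autoequivalence of $D^b \cW(M)$ equals $F$. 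This proves part (1). For part (2), the image of the graded $\pi_0 \Symp_c(M)$ in $\Auteq D^b \cW(M)$ is a priori contained in $\Auteq_c D^b \cW(M)$, and by (1) it agrees with the latter, so the three families already generate the categorical symplectic mapping class group.

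The substantive geometric work has been done in Theorem \ref{thm:main} and Proposition \ref{prop:Lagrangian_spheres}, so the remaining difficulty is bookkeeping. First, one has to confirm that the notion of compactly supported autoequivalence transports cleanly under HMS, via the formulation of categorical compact support from Section \ref{sec:compact_support}; concretely, this uses that a compactly supported symplectomorphism preserves the Lefschetz fibration outside a compact set and hence intertwines restriction to the smooth fibre at infinity. Second, one must check that the semidirect product structure in Conjecture \ref{conj:auteqs} is compatible with composition of symplectomorphisms; this is precisely what is encoded by the group homomorphism $\bar{Q} \rtimes \Adm/W \to \pi_0 \Symp_c(M)$ recorded in Theorem \ref{thm:new_symplectos}(1), so no additional group-theoretic verification is needed, and the braid relations among Dehn twists in $(-2)$-sphere lifts are handled componentwise within each connected component of $Z$. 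Beyond these consistency checks, both parts of the theorem follow formally from the conjecture and the earlier results.
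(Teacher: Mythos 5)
Your argument is correct and is essentially the paper's own proof, which consists precisely of citing Proposition \ref{prop:Lagrangian_spheres}, Theorem \ref{thm:mirror_autos}, Theorem \ref{thm:nodal_slide_recombination} and Theorem \ref{thm:new_symplectos}: factor a compactly supported autoequivalence using Conjecture \ref{conj:auteqs}, lift each generator to its mirror symplectomorphism, and compose. One small remark: since the statement only concerns generation, the consistency checks in your final paragraph (semidirect product compatibility, braid relations) are not actually needed --- functoriality of the action of $\pi_0 \Symp_c^{\gr}(M)$ on $D^b \cW(M)$ already ensures that the composite symplectomorphism induces the composite autoequivalence, so it suffices that each individual generator is realised.
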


The theorem holds unconditionally whenever  Conjecture \ref{conj:auteqs}  is known -- for instance, for Example \ref{ex:favourite_ex}, for which  $Z = \emptyset$.

\subsection{Monodromy and moduli spaces of complex structures}\label{sec:monodromies}

We end with some further motivation. Why would one expect autoequivalences of $D(Y)$ to have mirrors realised by symplectomorphisms? For our families, we conjecture that all of these symplectomorphisms are induced by monodromy in the complex moduli space for $M$.

 For the finite list of  cases where the intersection pairing associated to $D$ is semi-definite, $M$ is the Milnor fibre of a simple elliptic singularity. It compactifies to a del Pezzo surface $X$, with $M = X \backslash E$, for  $E \subset X$ a smooth anticanonical. The moduli space of pairs $(X', E')$, where $X'$ is deformation equivalent to $X$ and $E' \subset X'$ smooth anticanonical, is well understood. Aspects of this will be revisited in Section \ref{sec:monodromy_semidef_symplectic_side}.

For now, focus on the case where $D$ is negative definite. There is now no a priori reason to expect the Stein mainfold $M$ to be a variety. To get a precise `complex moduli space' for $M$, consider the smoothing component of the singularity for which $M$ is a Milnor fibre, with the discriminant locus removed; call this ${\mathbf{S}}$. Note that one expects monodromy in a smoothing component to  always induce compactly supported symplectomorphisms, whereas for the full  complex moduli space  in the semi-definite case this won't always be true.

\begin{conjecture}\label{conj:smoothing_cpt} Assume that  $D$ is negative definite. 
There is a natural isomorphism:
$$
{\mathbf{S}}  \simeq \Big(  ( Q_{\mathbb{R}} + i \mathcal{C} ) \backslash \bigcup_{\alpha \in \Phi, k \in \bZ } \{ x \, | \, \langle x, \alpha \rangle = k \} \Big) / (Q \rtimes \mathrm{Adm})
$$
where $Q = \langle D_1, \ldots, D_k \rangle^\perp \subset \Pic Y$, for $D_1, \ldots, D_k$ the irreducible components of $D$; $\mathcal{C}$ is the interior of $\overline{K(Y_{\gen})} \cap Q \subset \Pic Y \otimes \bR = H^2_{dR}(Y; \bR)$, where  $K(Y_{\gen})$ is the generic Kaehler cone (i.e.~the Kaehler cone for a generic $Y'$ in the complex deformation space of $Y$); $\Phi$ is set of all roots in $Q$; and $\Adm$ is the subgroup of admissible lattice automorphisms of $\Pic Y$, i.e.~the ones preserving  the $[D_i]$ and $\mathcal{C}$. 
\end{conjecture}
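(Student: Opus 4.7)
The plan is to construct a period map $\mathcal{P}$ from $\mathbf{S}$ to the right-hand side and, in the opposite direction, a universal family of smoothings parametrized by the right-hand side, and then to match them up via the Gross--Hacking--Keel theta-function construction of the universal family of log Calabi--Yau surfaces. Heuristically, the right-hand side is the complexified K\"ahler moduli of the mirror pair $(Y,D)$, so this conjecture should be read as a mirror-symmetric identification of K\"ahler moduli on one side with complex moduli of the smoothing on the other.

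First, I would construct the period map on $\mathbf{S}$. Each $s \in \mathbf{S}$ gives a smooth fiber $M_s$ carrying a canonical holomorphic $2$-form $\Omega_s$ with logarithmic behaviour along the cycle at infinity dual to $D$. The Milnor fibration trivializes $H_2(M_s; \bZ)$ as the lattice $Q$ (up to monodromy), and integration of $\Omega_s$ against classes in $Q$ gives a point of $Q_\bC$. Via the mirror map, the imaginary part of this point is the K\"ahler class of a mirror pair $(Y_{\gen}, D)$, so it lies in the cone $\mathcal{C}$; the real part plays the role of a $B$-field and is defined only modulo $Q$ because of the ambiguity in logarithms of coordinates on the base.

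Next I would identify the discriminant of $\mathbf{S}$ with the wall system on the right. At a generic point of the discriminant locus, $M_s$ acquires an $A_1$ singularity and a vanishing $2$-sphere $S_\alpha$ collapses; its homology class $\alpha$ satisfies $\alpha^2 = -2$, so $\alpha \in \Phi$. The vanishing condition $\int_{S_\alpha} \Omega_s = 0$ translates, after passing to logarithms, to $\langle \mathcal{P}(s), \alpha\rangle \in \bZ$, which cuts out the walls $\{x \, | \, \langle x,\alpha\rangle = k\}$. Picard--Lefschetz monodromy around such a wall is the reflection in $\alpha$, producing the Weyl group $W \subset \Adm$. The remaining factor of $Q$ corresponds to integer translations of the logarithms, while the quotient $\Adm/W$ is realised by automorphisms of $(Y,D)$ fixing $D$ pointwise (by the Torelli theorem of \cite{GHK2}); these are identified with monodromies of $\mathbf{S}$ induced by automorphisms of the cusp singularity, along the lines of Theorem \ref{thm:main}(2).

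To establish that $\mathcal{P}$ is an isomorphism, I would invoke the Gross--Hacking--Keel (and Gross--Hacking--Keel--Siebert) construction of the universal family of log Calabi--Yau pairs via scattering diagrams and theta functions, which produces a flat family over (an algebraic model of) the tube domain complement of walls; under the homological mirror symmetry correspondence fixed in the introduction and \cite{HK}, this family matches the smoothing family over $\mathbf{S}$ and thereby provides the inverse of $\mathcal{P}$. Equivariance under $Q \rtimes \Adm$ is built into the construction through the wall-crossing automorphisms and the functoriality of lattice automorphisms. The main obstacle is surjectivity of $\mathcal{P}$: realising every wall-avoiding point as the period of an actual smoothing requires promoting the formal or algebraic GHK construction to a convergent analytic family over the entire tube domain complement of walls, which is a delicate analytic step beyond the formal Gross--Siebert framework. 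A secondary difficulty is proving a Torelli-type statement for smoothings of cusp singularities, strengthening \cite{GHK2}'s Torelli theorem to the smoothing side in order to conclude injectivity of $\mathcal{P}$ modulo $Q \rtimes \Adm$.
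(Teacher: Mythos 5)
This statement is labelled as a conjecture in the paper, and the paper offers no proof of it: the only positive result cited is that for $k \le 5$ it is a theorem of Looijenga \cite{Looijenga}, and the surrounding text consists of motivation and expected consequences (a representation of $\pi_1(\mathbf{S})$ in $\pi_0\Symp_c(M)$, the relation to stability conditions, etc.). So there is no proof in the paper to compare yours against, and what you have written is not a proof either --- it is a strategy outline, and the two obstacles you name at the end are precisely the reasons the statement remains open. Surjectivity of your period map would require upgrading the Gross--Hacking--Keel theta-function family from a formal/algebraic construction near the boundary of the tube domain to a convergent analytic family over the entire wall complement, and injectivity would require a global Torelli theorem for smoothings of cusp singularities; neither is available, and neither is supplied by anything in this paper or in \cite{GHK2}. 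A proposal whose concluding paragraph identifies the two central steps as unresolved cannot be assessed as a proof.

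Beyond that, two points in your sketch need more care even as a programme. First, the holomorphic $2$-form $\Omega_s$ on the Milnor fibre is only canonical up to a nonzero scalar; without normalising it (the paper suggests doing so by compactifying the cusp to an Inoue surface), your period map lands in $\bP(Q_\bC)$ rather than $Q_\bC$, and the distinction between $\mathbf{S}$ and its $\bC^*$-cover $\mathbf{S}'$ in the paper is exactly about this normalisation. Second, the identification of the Milnor lattice $H_2(M_s;\bZ)$ (modulo the classes coming from the boundary) with $Q = \langle D_1,\dots,D_k\rangle^\perp \subset \Pic Y$ of the \emph{mirror} pair is itself the content of Looijenga's cusp duality conjecture as resolved by Gross--Hacking--Keel, not an automatic consequence of the Milnor fibration; you should cite it as an input rather than treat the trivialisation ``as the lattice $Q$'' as given. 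With those caveats your outline is consistent with the heuristics the authors themselves describe (Lagrangian translations $\leftrightarrow$ $Q$, nodal slide recombinations and Dehn twists $\leftrightarrow$ $\Adm$, squares of Dehn twists $\leftrightarrow$ loops around single hyperplanes), but it should be presented as evidence for the conjecture, not as a proof.
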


\begin{remark} For the purpose of computing $\pi_1(\mathbf{S})$, we could replace $\mathcal{C}$ by the larger cone given by one connected component of the positive square cone; heuristically, this is because we know that the strata of the hyperplane arrangements in these open sets are in bijection.  
\end{remark}

In the cases $k \leq 5$, Conjecture \ref{conj:smoothing_cpt} is a theorem of Looijenga \cite{Looijenga}. 

The intersection pairing on $\Pic Y$ has hyperbolic signature, $\mathcal{C}$ is contained in the positive square cone, and $\Adm$ acts by  isometries,  which implies that $Q \rtimes \Adm$ acts properly discontinuously on its domain. This means the quotient is well-behaved. 

We expect a representation of $\pi_1(\mathbf{S})$ in $\pi_0 \Symp_c(M)$, and, on the mirror side, in $\Auteq_cD(U)$. Loosely,  $Q \rtimes \Adm$ gives the action at the level of homology, with $Q$ corresponding 
to  Lagrangian translations ($\leftrightarrow \otimes \cO(\cL)$), and $\Adm$ to the actions of nodal slide recombinations ($\leftrightarrow \Aut(Y,D; \textrm{pt})$) and Dehn twists ($\leftrightarrow$ Weyl group); and loops in the hyperplane complement should give Torelli elements. We expect an elementary loop around a single hyperplane to correspond to the square of  a Dehn twist in a known Lagrangian sphere; on the $B$ side, using the previous notation, these are $T_{\cA} \cdot \cA$.

The action of $Q \rtimes \Adm$ lifts to $\Hom(K(U), \bC)$, where $K(U) = \bZ \oplus \Pic(U)$ is the $K$-theory of $U$. We can consider the space 
$$
\mathbf{S}' := \Big( \Hom (K(U), \bC)^\circ \backslash \bigcup_{\hat{\alpha} \in \hat{\Phi}} \{ {\hat{\alpha}}^\perp = 0 \}  \Big)/ (Q \rtimes \Adm) $$
where $\hat{\Phi}$ is the lift of $\Phi \subset Q$ to $K(U)^\ast$, and $\phantom{.}^\circ$ denotes the restriction to the $\bC^\ast$ cover of ${\mathbf{S}}$, the moduli space of Conjecture \ref{conj:smoothing_cpt}.
Informally, $\mathbf{S}'$ is the moduli space of complex structures on $M$ equipped with a choice of holomorphic volume form $\Omega$. (This could be made precise by compactifying  the cusp to an Inoue surface, which fixes $\Omega$ up to a scalar, cf.~\cite{Looijenga}.) 
Monodromy about $\bC^\ast$ should correspond to the square of the shift.

\begin{conjecture}
The universal cover $\widetilde{\mathbf{S}'}$ of $\mathbf{S}'$ is a connected component of the space of Bridgeland stability conditions on $D^b \Coh(U)$. The natural map to the intermediate covering space which lies inside $Hom (K(U), \bC)$, before quotienting by $Q \rtimes \Adm$, 
 is given by mapping to the central charge. 
\end{conjecture}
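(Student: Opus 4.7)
The strategy is to adapt the blueprint of Bridgeland and Bayer--Macri for surfaces with smooth anticanonical divisor to the non-compact log Calabi--Yau setting: first construct an open family of stability conditions in a neighbourhood of the distinguished locus; then use Bridgeland's deformation theorem together with a wall-crossing analysis to propagate them over the entire conjectured domain; finally identify the deck transformations of the cover with $Q \rtimes \Adm$ via the autoequivalences produced by Theorem \ref{thm:main}.

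For the explicit construction, I would start from the heart of compactly supported coherent sheaves $\Coh_c(U) \subset D^b\Coh(U)$ and a pair $(\beta, \omega) \in Q_\bR \oplus \mathcal{C}$. The expected central charge pairs $\ch(E)$ with $e^{\beta + i\omega} \in K(U) \otimes \bC$, and the slicing is obtained by Bridgeland's tilting construction at the torsion pair cut out by $\omega$-slope. A Bogomolov--Gieseker inequality for sheaves on $Y$ controls destabilizers of the skyscraper sheaves, while the avoidance of the hyperplanes $\langle \beta + i\omega, \alpha\rangle \in \bZ$ for $\alpha \in \Phi$ guarantees that the spherical objects $i_* \cO_C(a)$ of Conjecture \ref{conj:sphericals} never degenerate into the heart. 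Together these should give an open embedding of $(Q_\bR + i\mathcal{C})$ minus the hyperplane arrangement into the stability manifold, tautologically compatible with the expected central charge map to $\Hom(K(U), \bC)$.

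Bridgeland's deformation theorem then provides an open neighbourhood of this locus inside $\operatorname{Stab} D^b\Coh(U)$ fibering over $\Hom(K(U), \bC)$, and to see the image forms a full connected component one must rule out any path of stability conditions escaping the constructed locus without either crossing a root hyperplane into a chamber obtained by tilting at the corresponding $i_* \cO_C(a)$ (an adjacent chamber already constructed), or losing the support property in finite time. This is the content of a Harder--Narasimhan analysis along the path; its completion relies on Conjecture \ref{conj:sphericals}, since root hyperplanes are precisely where spherical objects become semistable of the same phase as their constituents, and on the fact that $\mathcal{C}$ sits properly inside the positive square cone so that the resulting hyperplane arrangement is locally finite.

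Finally, the group $Q \rtimes \Adm$ acts on $D^b\Coh(U)$ by the $B$-side autoequivalences identified in Theorem \ref{thm:main}, namely tensoring with line bundles in $\bar Q$ and push-forward by automorphisms of $(Y,D)$ fixing $D$ pointwise; this action commutes with its linear action on $\Hom(K(U), \bC)$ and so descends to the quotient defining $\mathbf{S}'$. The Weyl subgroup $W \subset \Adm$ is absorbed into the deck transformations of the universal cover by squared spherical twists in the $i_*\cO_C(a)$, accounting for elementary loops around the root hyperplanes. The main obstacle throughout is the second step: the indefinite, hyperbolic signature of the intersection form on $Q$ produces an infinite chamber structure, and controlling limits of stability conditions demands both Conjecture \ref{conj:sphericals} and a delicate mass bound, in the spirit of Bridgeland--Smith but without the aid of a quadratic differential or a global Torelli theorem.
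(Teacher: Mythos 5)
The statement you are addressing is stated in the paper as a \emph{conjecture}: the authors offer no proof, only the heuristic that there should be two connected components of the stability manifold interchanged by the shift functor, mapping to $Q_{\mathbb{R}} \pm i\mathcal{C}$ via the central charge, with monodromy about the $\bC^\ast$ factor realised by the square of the shift. So there is no argument in the paper to compare yours against; the question is whether your proposal actually closes the conjecture, and it does not. What you have written is a plausible research programme in the Bridgeland/Bayer--Macr\`i mould, but each of its three stages contains an essential unproved ingredient that you name without supplying: the construction step needs a support property (Bogomolov--Gieseker-type inequality) for the tilted heart on the non-compact surface $U$; the propagation step needs the classification of spherical objects, which is precisely Conjecture \ref{conj:sphericals} and is open except when $Z$ is a union of chains and cycles of $(-2)$ curves; and the completeness step needs a mass bound to prevent paths of stability conditions from degenerating, which you concede is the main obstacle and for which no mechanism is proposed. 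A conditional sketch with an admitted missing core is not a proof.

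Two further concrete problems. First, your choice of heart: $\Coh_c(U)$ is not the heart of a bounded t-structure on $D^b\Coh(U)$, since objects such as $\cO_U$ (which have nonzero rank in $K(U) = \bZ \oplus \Pic(U)$ and are certainly part of the category the conjecture refers to) admit no filtration by shifts of compactly supported sheaves. You need to tilt $\Coh(U)$ itself, or else work with the compactly supported derived category --- but then the central charge would not naturally live in $\Hom(K(U),\bC)$ with the rank coordinate, which is what the conjecture requires. Second, you do not engage with the covering-space structure at all: the conjecture concerns the \emph{universal cover} of $\mathbf{S}'$, and $\mathbf{S}'$ already carries a $\bC^\ast$ direction whose monodromy the paper expects to realise the square of the shift (note from Section \ref{sec:semi_def_D} that only even shifts, $\bZ[2]$, lie in the relevant kernel). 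Your construction produces at best an open subset of the stability manifold mapping to a slice $Q_{\mathbb{R}} + i\mathcal{C}$ minus hyperplanes; identifying the full universal cover, including the $\widetilde{GL}^+(2,\bR)$ or shift directions and the simple connectivity of the resulting component, is a separate argument you have not given.
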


We expect that there are two connected components of the space of Bridgeland stability conditions, interchanged by the shift functor, mapping to $Q_\mathbb{R} \pm i\mathcal{C}$ via the central charge. 

\begin{remark} \label{rmk:Engel-Friedman}
The reader may be interested in comparing with work of Engel--Friedman in \cite[Section 5.1]{EF}. They construct monodromy diffeomorphisms of the Milnor fibre $M$ of a smoothing of a cusp singularity associated to a negative definite log CY $(Y,D)$ together with a choice of line bundle $\cL \in \mathcal{C}$, describing in particular the action on homology.
We expect that $M$ is mirror to $(Y,D)$ and these monodromy diffeomorphisms are the Lagrangian translations mirror to $(\cdot) \otimes \cL$, associated to loops bounding a disc passing through the origin in the deformation space of the cusp (rather than an arbitrary closed loop). 
\end{remark}

\subsection*{Organisation of the paper}
Section \ref{sec:autoequivalences} gathers background material on autoequivalences of the derived categories on the $B$ side. 
Section \ref{sec:hms_background} contains the input we will use from mirror symmetry, including some refinements of results in \cite{HK}. 
Section \ref{sec:translations_and_tensors} sets up Lagrangian translations, including a proof of Theorem \ref{thm:main} part (1) at the $K$-theoretic level.
Section \ref{sec:spheres} constructs Lagrangian spheres mirror to line bundles on $(-2)$ curves; this also allows us to complete the proof of  Theorem \ref{thm:main} part (1) (Theorem \ref{thm:mirror_autos}). 
Section \ref{sec:nodal_slide_recombinations} constructs nodal slide recombinations and proves Theorem \ref{thm:main} part (2), and the independence of choices mentioned above. Relations between symplectomorphisms from different families  are established in Section \ref{sec:relations}. 
Finally, Section \ref{sec:applications} contains a range of applications: first, the packaging of the new symplectomorphisms as Theorem \ref{sec:new_symplectos} and discussion of examples in Section \ref{sec:new_symplectos}; applications of our constructions to questions about spherical objects in Section \ref{sec:applications_to_sphericals}; and it ends with extended discussion of the semi-definite case -- further constructions from monodromy (Section \ref{sec:monodromy_semidef_symplectic_side}) and connections with work of Collins--Jacob--Lin on hyperkaehler structures (Section \ref{sec:semi-def_hyperkaehler}).

\subsection*{Notation and conventions}
While they will be introduced as we go along, we record standing notational conventions here for the readers' convenience.

\emph{B-side.}
$(Y,D)$ will denote a log Calabi-Yau pair, assumed to have maximal boundary unless otherwise specified; we use $U$ to denote $Y \backslash D$, $k$ for the number of irreducible components $D_1, \ldots, D_k$ of $D$, and $(Y_e,D)$ for the surface in the deformation class of $(Y,D)$ with the distinguished complex structure, i.e.~such that the mixed Hodge structure on $Y_e \backslash D$ is split (see exposition in \cite[Section 2.2]{HK}). We say that $D$ is negative definite, semi-definite or indefinite whenever the intersection form associated to the $D_i$ is. (Abusing terminology, we use `semi-definite' to mean strictly semi-definite.)
When $D$ is semi-definite, $Y_e$ is rational elliptic, and we denote by $\w: Y_e \to \bP^1$ the minimal rational elliptic fibration ($\w^{-1}(\infty) = D$).

\emph{A-side.} The mirror to $(Y_e, D)$ is a Weinstein manifold $M$, which is the total space of a Lefschetz fibration $w: M \to \bC$.  $M$ is the total space of  (the equivalence class of) an almost-toric fibration $\pi: M \to \bR^2$.  In the semi-definite case, $M = X \backslash E$, where $X$ is a del Pezzo surface of degree $k$ and $E \subset X$ a smooth anti-canonical divisor (elliptic curve).

\subsection*{Acknowledgements}
We are grateful to 
Roger Casals for correspondence about Remark \ref{rmk:contact_pentagon}; 
Alessio Corti and Wendelin Lutz for discussions related to \cite{Corti-Kaloghiros, CKS}; Abigail Ward for feedback on earlier versions of this article, including discussions around Section \ref{sec:Lag_sections}; and
Ivan Smith for explanations of \cite{Sheridan-Smith} and for detailed comments on an earlier draft.

PH was partially supported by NSF grants DMS-1901970 and DMS-2200875. AK was partially supported by an award from the Isaac Newton Trust, EPSRC Fellowship  EP/W001780/1, and the ERC starting grant SingSymp.

\textbf{Open Access.} For the purpose of open access, the authors have applied a Creative Commons Attribution (CC:BY) licence to any Author Accepted Manuscript version arising from this submission.
\textbf{UKRI data access statement.} 
There is no dataset associated with this paper.

\section{Autoequivalences of $D^b \Coh (Y)$ and $D^b \Coh (U)$}\label{sec:autoequivalences}

\subsection{Weyl group and classes of spherical objects}

In the wake of \cite{Seidel-Thomas}, a well-studied class of autoequivalences of $D(Y)$ are spherical twists. 
In particular, given a $(-2)$ curve $C \subset Y$, the sheaves $i_\ast \cO_C(a)$, $a \in \bZ$, are spherical objects. Let $Z$ be the union of all $(-2)$ curves in $Y$. (These are either components of $D$ or disjoint from it.) Any spherical object in $D(Y)$ has support on $Z$, using the arguments in the proof of \cite[Claim 6.3]{Uehara2}. 

\begin{lemma}\label{lem:type_A_sphericals}
Suppose that $\tilde{Z} \subset Z$ is either a type $A$ chain of $(-2)$ curves or a cycle of $(-2)$ curves. In the latter case, we further assume that either $\tilde{Z} \neq D$ or $Y=Y_e$. Then the spherical objects with support on $\tilde{Z}$  are the images of
$$\{ i_\ast \cO_C(a) \, | \, a \in \bZ, C \subset \tilde{Z} \text{ a } (-2)\text{ curve}\}$$
 under spherical twists in themselves.
\end{lemma}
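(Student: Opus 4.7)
The plan is to reduce the classification of spherical objects on $\tilde{Z}$ to known results of Ishii--Uehara and Uehara on local models. The starting point is the fact, quoted from \cite[Claim 6.3]{Uehara2} just before the lemma, that every spherical object of $D(Y)$ has set-theoretic support in $Z$. Since $\tilde Z$ is a connected component of $Z$ (each chain or cycle being separated from other $(-2)$-configurations), a spherical object with support meeting $\tilde Z$ is supported entirely on $\tilde Z$. Hence it suffices to classify spherical objects in the triangulated subcategory $D_{\tilde Z}(Y)\subset D(Y)$ of complexes set-theoretically supported on $\tilde Z$. This subcategory depends only on the formal neighbourhood of $\tilde Z$ inside $Y$, so the entire problem localises.

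Next, I would identify that formal neighbourhood with a standard local model. When $\tilde Z$ is a type $A_n$ chain of $(-2)$-curves, the configuration and self-intersections force the formal neighbourhood to be isomorphic to that of the exceptional divisor in the minimal resolution of the $A_n$ Du Val singularity; this is the setting of \cite{IU}, and their theorem states precisely that every spherical object in this local model lies in the orbit of $\{i_*\mathcal{O}_C(a)\}$ under the group generated by spherical twists in these objects. When $\tilde Z$ is a cycle of $(-2)$-curves with $\tilde Z\neq D$, the curves lie in the interior of $U$, and a formal neighbourhood is modelled on the minimal resolution of a cusp singularity; the analogous classification is given by Uehara in \cite{Uehara1}. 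The final case $\tilde Z=D$ with $Y=Y_e$ is the one covered by \cite{Uehara2}: the hypothesis $Y=Y_e$ is imposed exactly so that the analytic neighbourhood of $D$ has the distinguished model for which Uehara's classification applies.

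With the local identifications in place, the final step is a direct invocation of the three classification theorems to conclude that every spherical object on $\tilde Z$ is in the $T_{\mathcal{A}}$-orbit of some $i_*\mathcal{O}_C(a)$ with $C\subset\tilde Z$.

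I expect the one subtle point to be the cycle case $\tilde Z=D$. Unlike the chain case, the formal neighbourhood of a cycle of $(-2)$-curves is not rigid (its analytic type carries a period parameter matching the moduli of cusp singularities), and one cannot classify spherical objects purely combinatorially. The assumption $Y=Y_e$, or equivalently that the period of the cusp is trivial, is precisely what is needed to place us inside the regime handled by \cite{Uehara2}; without it, there is no reason to expect the conclusion to hold, and indeed one would expect additional exotic spherical objects to appear. All the work of the proof is therefore done by the cited results; the contribution of the proof above is the reduction to those local models and verifying that the hypotheses match.
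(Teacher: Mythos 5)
Your treatment of the chain case matches the paper's: both reduce to the Ishii--Uehara classification for type $A$ configurations, and the rigidity of the formal neighbourhood there is unproblematic. The gap is in the cycle case, where your proposed local model is wrong and the actual content of the paper's proof is missing.

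A cycle of $(-2)$-curves has negative \emph{semi}-definite intersection matrix, with kernel spanned by the class of the whole cycle; it is therefore not contractible and in particular is not the exceptional cycle of a minimal resolution of a cusp singularity (those cycles are negative definite, forcing at least one component of self-intersection $\le -3$). So there is no "cusp resolution" local model to invoke, and the relevant result, \cite[Proposition 4.4]{Uehara1}, is not a statement about formal neighbourhoods of cusp cycles at all: it classifies spherical objects supported on a fibre of a relatively minimal rational elliptic fibration. The paper's proof consequently has two pieces of genuine work that your proposal omits. First, one must observe (by inspecting Uehara's argument) that relative minimality is not actually needed, only the existence of \emph{some} elliptic fibration $f \colon Y \to \bP^1$ having $\tilde{Z}$ as a fibre. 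Second, one must construct such a fibration: adjunction gives $\cO_Y(\tilde{Z})|_{\tilde{Z}} \simeq \omega_{\tilde{Z}} \otimes (\omega_Y|_{\tilde{Z}})^{-1}$, which is trivial because $\omega_{\tilde{Z}}$ is trivial and $K_Y|_{\tilde{Z}} = -D|_{\tilde{Z}} = 0$ (automatic when $\tilde{Z}$ is disjoint from $D$, and exactly where the hypothesis $Y = Y_e$ enters when $\tilde{Z} = D$); then $h^0(\cO_Y(\tilde{Z})) = 2$ and a moving/fixed part argument shows $|\tilde{Z}|$ is basepoint free, yielding the morphism to $\bP^1$. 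Your explanation of the role of $Y = Y_e$ ("the period of the cusp is trivial, placing us in the regime of \cite{Uehara2}") misidentifies both the geometry and the reference: the hypothesis is needed precisely to trivialise the normal bundle of $D$ so that $|D|$ defines the elliptic fibration, and the classification being applied in all cycle cases is \cite[Proposition 4.4]{Uehara1}, not a result of \cite{Uehara2}.
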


\begin{proof}
When $\tilde{Z}$ is a type $A$ chain of $(-2)$ curves, this is \cite[Proposition 1.6 and Lemma 4.1]{IU}. When $\tilde{Z}$ is a cycle of $(-2)$ curves, we want to use \cite[Proposition 4.4]{Uehara1}. This is stated for the case where $\tilde{Z}$ ($Z_0$ in the notation in \cite{Uehara1}) is a fibre of a relatively minimal rational elliptic fibration on $Y$. (Note this would imply that $D$ is semi-definite, cf.~start of Section \ref{sec:neg_indef_auteq}.)
However, inspecting the proof of \cite[Proposition 4.4]{Uehara1}, one can check that relative minimality is not required for that statement: it's enough to check that there exists an elliptic fibration $f: Y \to \bP^1$ such that $\tilde{Z}$ is a fibre of $f$. Say $\tilde{Z}$ has components $C_1, \ldots, C_l$. 

First, by the adjunction formula, we have that
$\omega_{\tilde{Z}}=\omega_Y \otimes \cO_Y(\tilde{Z}) |_{\tilde{Z}}$. Also,   the dualizing sheaf $\omega_{\tilde{Z}}$ is trivial, and $\omega_Y|_{\tilde{Z}} = K_Y|_{\tilde{Z}}=-D|_{\tilde{Z}} =0$. (Here we use $Y=Y_e$ in the case where $\tilde{Z} = D$.) Thus the normal bundle $\cO_Y(\tilde{Z})|_{\tilde{Z}}$ of $\tilde{Z}$ in $Y$ is trivial. 
It then follows from the exact sequence
$$
0 \to \cO_Y \to \cO_Y(\tilde{Z}) \to  \cO_Y(\tilde{Z})|_{\tilde{Z}} \to  0
$$
that $\dim H^0(\cO_Y(\tilde{Z}))=2$ (note $H^1(\cO_Y)=0$ because $Y$ is rational). Thus the linear system $|\tilde{Z}|$ gives a rational map to $\bP^1$. 

Next, we claim that $|\tilde{Z}|$ is basepoint free. Write $\tilde{Z}=M+F $ for the moving and fixed parts of the divisor $\tilde{Z}$.
Note that $M \neq 0$ since $\dim H^0(\cO_Y(\tilde{Z}))=2$. 
The lattice $\langle C_1, \ldots, C_k \rangle $ is negative semidefinite with kernel $\langle \tilde{Z} \rangle$. 
Thus if $F \neq 0$,
 then $M^2<0$, which  
contradicts $M$ moving. 
So $F=0$ and $\tilde{Z}=M$ is moving. Now $\tilde{Z}^2=0$ implies that $|\tilde{Z}|$ is basepoint free. Thus the linear system $|\tilde{Z}|$ gives a morphism to $\bP^1$, with one fiber equal to $\tilde{Z}$.
(This is essentially the same as the proof of the analogous result for K3 surfaces.)
\end{proof}

\begin{remark}
In general, there could be infinitely many $(-2)$ curves, and also other finite configurations -- e.g.~whenever $k=3$, a nodal configuration with Y-shaped dual graph  (mirror to the Milnor fibre of a hypersurface cusp).

When $D$ is indefinite, the only possible configurations of $(-2)$ curves are ADE configurations (and finitely many of them): there's a birational morphism which contracts all the internal $(-2)$ curves to ADE singularities and is an isomorphism elsewhere \cite[Lemma 6.9]{GHK1}. 
\end{remark}

We can completely classify classes of sphericals in $K(D(Y))$. First recall that the set of roots $\Phi$ consists of classes in $\Pic(Y)$  obtained by parallel transport from an internal $(-2)$ curve in a deformation equivalent pair $(Y',D')$. (Alternatively, consider the generic ample cone $\cA_\gen \subset \Pic Y \otimes \bR$; the roots are the $(-2)$ classes $\delta$ such that $\langle \delta \rangle^\perp \cap \cA_\gen \neq \emptyset $, see \cite{Friedman}.)
The Weyl group, denoted by $W$, is the group generated by reflections $s_{\alpha}(x)=x+\langle x,\alpha \rangle\alpha$ for $\alpha \in \Phi$, acting on $H^2(Y; \bZ)  = \Pic(Y)$. The set of \emph{simple roots} $\Delta$ consists of classes of $(-2)$ curves $C \subset Y_e \setminus D$. By  \cite{GHK2}, $W$ is generated by the reflections in elements of $\Delta$, and $\Phi = W \cdot \Delta$.
As the spherical twist in $i_\ast \cO_C(a)$ lifts the action of $s_{[C]}$, any root is the first Chern class of a spherical object in $D(Y)$. The converse also holds:

\begin{lemma}\label{lem:roots_and_sphericals}
Suppose $E \in D(Y)$ is a spherical object with support disjoint from $D$. Then $c_1(E) \in H^2(Y; \bZ) \simeq \Pic(Y)$ is a root.
\end{lemma}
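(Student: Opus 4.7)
The plan is to compare two different computations of $\chi(E, E)$ in order to extract the numerical conditions that pin $c_1(E)$ down as an element of the root set $\Phi$ recalled just above the statement of the lemma.

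Because $\mathrm{supp}(E) \subset U$ has dimension at most one, the rank of $E$ on the smooth projective surface $Y$ vanishes, so $\mathrm{ch}(E) = (0, c_1(E), \mathrm{ch}_2(E))$. Applying Hirzebruch--Riemann--Roch to the self-pairing, every term in $\mathrm{ch}(E^\vee)\,\mathrm{ch}(E)\cdot \mathrm{td}(Y)$ involving the rank dies, and the unique surviving degree-four contribution is $-c_1(E)^2$, giving $\chi(E, E) = -c_1(E)^2$. On the other hand, since $K_Y = -D$ restricts trivially to $\mathrm{supp}(E)$, we have $E \otimes K_Y \simeq E$, so $E$ behaves as a genuine Calabi--Yau spherical object and Serre duality is compatible with the spherical hypothesis $\mathrm{Ext}^\bullet(E, E) \simeq \bC \oplus \bC[-2]$. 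This yields $\chi(E, E) = 2$, and combining the two computations gives $c_1(E)^2 = -2$.

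Next, since $c_1(E)$ is represented by a $\bZ$-linear combination of irreducible components of $\mathrm{supp}(E)$, each of which is disjoint from every boundary component $D_j$, we obtain $c_1(E) \cdot [D_j] = 0$ for all $j$, placing $c_1(E)$ inside $Q = \langle [D_1], \dots, [D_k]\rangle^\perp$. It remains to conclude that a class $\delta \in Q$ with $\delta^2 = -2$ lies in $\Phi$; here I would invoke the alternative characterisation given just before the lemma, namely that $\delta \in \Phi$ iff $\delta^\perp \cap \mathcal{A}_\mathrm{gen} \neq \emptyset$, citing \cite{Friedman}. The substantive content of the lemma is therefore concentrated in the short HRR-plus-sphericality computation; the main obstacle, if any, is simply to ensure that our $\delta$ meets the generic ample cone, which follows from the standard analysis of $(-2)$ classes on rational surfaces with a fixed anticanonical cycle and does not rely on the spherical object itself beyond the numerical data already extracted.
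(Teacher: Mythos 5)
Your opening computation is fine and matches a one-line ingredient of the paper's argument: for a rank-zero object Hirzebruch--Riemann--Roch gives $\chi(E,E)=-c_1(E)^2$, sphericality gives $\chi(E,E)=2$, hence $c_1(E)^2=-2$; and since $c_1(E)$ is supported on curves disjoint from $D$, it lies in $Q=\langle [D_1],\dots,[D_k]\rangle^\perp$. The problem is the last step. Being a root is \emph{strictly stronger} than being a $(-2)$ class in $Q$: by the definition recalled just before the lemma, $\delta\in\Phi$ means $\delta$ is realised (up to parallel transport) by an internal $(-2)$ curve on some deformation-equivalent pair, equivalently $\langle\delta\rangle^\perp\cap\cA_{\gen}\neq\emptyset$. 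That extra condition is substantive and is not a consequence of $\delta^2=-2$ and $\delta\in Q$. Indeed $\Phi=W\cdot\Delta$ where $\Delta$ is the set of classes of internal $(-2)$ curves on $Y_e$, and this "root system" is typically of indefinite type, where norm $-2$ vectors of $Q$ (or even of the lattice $\bZ\Delta$) need not lie in the Weyl orbit of the simple roots; the paper itself leans on the discrepancy between $\bar{Q}$ and $\langle\Phi\rangle$ elsewhere, and cites Friedman's examples in which $Q$ has rank $2$ while $\Phi$ is empty. So your closing claim that the remaining obstacle "does not rely on the spherical object itself beyond the numerical data already extracted" is exactly where the argument breaks: no purely numerical criterion identifies the roots.

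The paper's proof supplies the missing realisability by deforming $E$ itself. The trace map $\Ext^2(E,E(-D))\to H^2(\cO_Y(-D))$ is shown to be an isomorphism (Serre dual to $H^0(\cO_Y)\to\Hom(E,E)$, an isomorphism because $E$ is spherical), so $E$ extends over any deformation of $(Y,D)$ along which $\det E$ admits a framed deformation, i.e.\ along which the period point kills $c_1(E)$. Local Torelli then produces a nearby pair $(\cY_s,\cD_s)$ with $\ker\phi_{\cY_s}=\bZ\cdot c_1(E)$; the deformed object is still spherical, still supported on $(-2)$ curves away from the boundary, and the only available class is $\pm c_1(E)$, so $c_1(E)$ is the class of an actual internal $(-2)$ curve on a deformation-equivalent pair, i.e.\ a root. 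To repair your write-up you would need to reproduce some version of this deformation step (or otherwise verify $\langle c_1(E)\rangle^\perp\cap\cA_{\gen}\neq\emptyset$ using the existence of $E$); the numerics alone do not suffice.
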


\begin{proof}
Let $E$ be a spherical object in $D(Y)$ with support disjoint from $D$. Consider framed deformations of $E$ with framing along $D$. 
These are controlled by $\Ext^i(E,E(-D))$, $i=0,1,2$. 
(Note that, since the support of $E$ is disjoint from $D$, the framed deformations are identified with ordinary deformations, and $\Ext^i(E,E(-D))=\Ext^i(E,E)$; 
however, this is not true for deformations of the associated determinant line bundle considered below.) 

Consider the trace map from $\Ext^i(E,E(-D)) $ to $H^i(\cO_Y(-D))$. 
This corresponds to the map from framed deformations of $E$ to framed deformations of the line bundle $\det E$. 
The map $$\Ext^2(E,E(-D)) \rightarrow H^2(\cO_Y(-D))$$ is an isomorphism, 
that is, the obstruction spaces for deformations of $E$ and framed deformations of $\det E$ are identified. 
Indeed, recalling that $-D=K_Y$, the map is Serre dual to the map $H^0(\cO_Y) \rightarrow \Hom(E,E)$, which is an isomorphism since $E$ is spherical. 
(This is the usual argument for smoothness of moduli spaces of stable sheaves on Calabi--Yau surfaces due to Mukai, adapted to the log Calabi--Yau / framed case.) 

This implies that if $(\cY,\cD)/(0 \in S)$ is a deformation of $(Y,D)$ over an analytic germ $(0 \in S)$, 
then $E$ extends to an object $\cE$ in $D^b(\Coh \cY)$ iff there is a framed deformation of $\det E$ over $S$. 
Equivalently, writing $\cL_s$ for the line bundle on $\cY_s$ obtained from $\det E$ by parallel transport (noting that $c_1 \colon \Pic \cY_s  \rightarrow H^2(\cY_s,\bZ)$ is an isomorphism since $\cY_s$ is rational), we have $\cL_s |_{\cD_s} \simeq \cO_{\cD_s}$ for all $s \in S$. 
In the terminology of \cite{GHK2}, after choosing a trivialization of the local system over $S$ with fibers $H^2(\cY_s,\bZ)$, 
we require that the period point $\phi_{\cY_s} \colon H^2(Y,\bZ) \rightarrow \bC^*$ satisfies $\phi_{\cY_s}(c_1(E))=1$ for all $s \in S$. 
(Moreover, in this case the extension $\cE$ is uniquely determined since $\Ext^1(E,E)=0$.) 
Write $\alpha=c_1(E) \in H^2(Y,\bZ)$. Note that $\alpha^2=-2$ by the Riemann-Roch formula; in particular, $\alpha \in H^2(Y,\bZ)$ is primitive.

By the local Torelli theorem for log Calabi--Yau surfaces \cite{Looijenga}, 
there is a small deformation $(\cY_s,\cD_s)$ of $(Y,D)$ such that $\ker \phi_{\cY_s} = \bZ \cdot \alpha$. 
Thus $E$ deforms to a spherical object $\cE_s$ on $\cY_s$. 
(A deformation of a spherical object is necessarily spherical by upper semicontinuity of coherent cohomology and the topological nature of the Euler
characteristic.) 
Recall that any spherical object in $D(Y)$ has support a union of $(-2)$ curves.
Moreover, if $C \subset Y \setminus D$ is an internal curve,  then $\phi_{Y}([C])=1$. 
Finally, the support of $\cE_s$ is disjoint from the boundary $\cD_s$ of $\cY_s$ for all $s$ (since this is an open condition and it holds for $s=0$). 
It follows that $\cY_s \setminus \cD_s$ contains a unique $(-2)$ curve $C$ with class $\pm \alpha= \pm c_1(E)$. Thus $c_1(E) \in H^2(Y,\bZ)$ is a root.\end{proof}

\begin{remark}
\label{rmk:classes_sphericals}
The $K$-theory $K(\Coh Y)$ is generated by the image of $\Coh(Y)$ under the Chern character. 
Explicitly, 
$$K(\Coh Y)=\{(r,c_1,\ch_2) \in \bZ \oplus H^2(Y,\bZ) \oplus \frac{1}{2}\bZ \ | \ ch_2 \equiv \frac{1}{2}c_1^2 \bmod \bZ\}.$$
(As $Y$ is rational, the algebraic and topological $K$-theories for $Y$ agree.)
The classes of the spherical objects in $K(\Coh Y_e)$ are given by $(0,c_1,\ch_2)$ where $c_1 \in \Phi$ is a root and $\ch_2 \in \bZ$.
Above we only considered the first Chern class $c_1$, but the same argument shows this stronger result. (Note that the spherical twist acts by the same formula on $K(\Coh Y)$ and $\ch(\cO_C(n))=\ch(\cO_C)+(0,0,n)$ using the basis above.)
\end{remark}

\subsection{Automorphisms of $Y$} We collect relevant results from \cite{GHK2, Friedman}.

\subsubsection{Generalities and Torelli theorem}
We consider the following subgroups of the group of biholomorphic automorphisms $\Aut(Y)$: let $\Aut(Y,D)$ be the subgroup preserving $D$, $\Aut(Y,D; \text{cpt})$, the subgroup preserving each component of $D$; and $\Aut(Y,D; \text{pt})$, the subgroup fixing $D$ pointwise. ($\Aut (Y,D; \text{cpt})$ clearly has (at worst) finite index in $\Aut(Y,D)$.)

If $D$ is negative definite, $\Aut(Y) = \Aut (Y,D)$; in the negative semi-definite case, $\Aut(Y,D)$ has finite index in $\Aut(Y)$.  In either of these cases, provided $D$ contains no $(-1)$ curves, any automorphism of $U = Y \backslash D$ compactifies to an element of $\Aut(Y)$: this is an straightforward exercise in the semi-definite cases, and follows from uniqueness of the minimal resolution of the corresponding cusp singularities for the negative definite ones. 
(Note that this is emphatically not true in the indefinite case, see e.g.~\cite{Cantat-Loray}.)

Let 
$\Adm$ denote the subgroup of automorphisms of the lattice $\Pic(Y)$ preserving the classes $[D_i]$ and the generic ample cone in $\Pic (Y)$. (See \cite[Definition 1.7]{GHK2};  this is called $\Gamma(Y,D)$ in \cite{Friedman}.) $\Adm$ is the monodromy group of pairs $(Y,D)$ in the sense of \cite[Theorem 5.15]{GHK2}. We have  $W  \unlhd \Adm$.

\begin{theorem}\label{thm:GHK_Torelli} \cite[Theorem 5.1]{GHK2}. Assume $Y$ has the distinguished complex structure within its deformation class. 
There is an exact sequence
$$
1 \to N \to \Aut(Y,D; \text{cpt}) \to \Adm/W \to 1
$$
where $N = \Hom (N', \bG_m)$, and $N'$ the cokernel of the evaluation map $\ev: \Pic(Y) \to \bZ^k$ given by taking intersection numbers with the $D_i$. Equivalently, $N' = \pi_1 (Y \backslash D)$, by considering the long exact sequence of the pair $(Y, U)$.
\end{theorem}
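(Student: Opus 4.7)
The plan is to exhibit the map $\rho\colon \Aut(Y,D;\text{cpt})\to\Adm/W$ and analyse its kernel and image separately, with the heart of the argument being a global Torelli theorem for log Calabi--Yau pairs. For $\varphi\in\Aut(Y,D;\text{cpt})$, pullback on the Picard lattice yields $\varphi^{*}\in\Aut\,\Pic(Y)$. By hypothesis $\varphi^{*}$ fixes each class $[D_i]$, and since $\varphi$ is a biholomorphism it preserves the K\"ahler cone of $Y$. A deformation argument using the local period map then shows $\varphi^{*}$ preserves the generic ample cone as well, so that $\varphi^{*}\in\Adm$. Composing with the quotient $\Adm\twoheadrightarrow\Adm/W$ yields the map $\rho$ of the theorem.

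For surjectivity I would invoke the global Torelli theorem for log Calabi--Yau surfaces: two pairs in the same deformation class are isomorphic (componentwise on the boundary) iff their period points lie in the same $\Adm$-orbit, with the ambiguity in the isomorphism controlled by $W$. Since $Y=Y_e$ has the trivial period point, which is stabilised by every $\psi\in\Adm$, applying Torelli to the trivial identification $(Y,D)=(Y,D)$ produces for each such $\psi$ a self-isomorphism inducing the class of $\psi$ in $\Adm/W$. The role of $W$ is precisely to account for the different K\"ahler chambers within the deformation class obtained by sliding the complex structure across internal $(-2)$-walls.

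To compute the kernel, suppose $\rho(\varphi)=1$, so $\varphi^{*}\in\Adm$ lies in $W$. A key lemma one needs is that $\Aut(Y,D;\text{cpt})\to\Adm$ meets $W$ only in the identity: any nontrivial $w\in W$ acts nontrivially on some root $\alpha=[C']$ with $C'$ a $(-2)$-curve in a nearby deformation $(Y',D')$, and such an action cannot be realised by an automorphism of the fixed $Y$. Hence $\varphi^{*}=\id$. Such $\varphi$ then preserves every line bundle; in particular each $\varphi|_{D_i}$ preserves $\cO_{D_i}(1)$ and the two nodal points, forcing $\varphi|_D=\id$, so $\varphi\in\Aut(Y,D;\text{pt})$. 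Any such $\varphi$ is determined by its normal derivative along each $D_i$, giving a character $\chi\colon\bZ^{k}\to\bG_m$. The condition for $\chi$ to arise from a global, rather than merely formal, automorphism is that $\chi$ pair trivially with the image of $\ev\colon\Pic(Y)\to\bZ^{k}$ (the obstruction coming from requiring every line bundle to stay holomorphic under the one-parameter group generated by $\chi$). Thus $\chi$ descends to $\Hom(N',\bG_m)=\Hom(\coker\ev,\bG_m)$, and conversely every such character integrates to an automorphism fixing $D$ pointwise; the identification $N'\simeq\pi_1(Y\setminus D)$ is then read off the long exact sequence of the pair $(Y,U)$.

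The main obstacle is the surjectivity, which relies on a genuine global Torelli theorem for log Calabi--Yau surfaces; the compatibility with $W$ hinges on a careful analysis of how monodromy in the universal family of pairs acts on $\Pic(Y)$ and on the precise identification of $W$ as the monodromy subgroup not realised by honest automorphisms of the distinguished fibre. By contrast, the kernel computation is largely formal once the exponential description of characters of $\pi_1(U)$ is set up and combined with the evaluation sequence.
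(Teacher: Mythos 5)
First, note that the paper does not prove this statement at all: it is quoted directly from \cite[Theorem 5.1]{GHK2}, so there is no in-paper argument to compare against and your proposal has to stand on its own as a sketch of the Gross--Hacking--Keel argument. The overall architecture is right: define $\rho(\varphi)=\varphi^{*}$, get surjectivity from Torelli once one knows that $W$ acts simply transitively on the chambers of the generic ample cone cut out by the root hyperplanes (so each class in $\Adm/W$ has a unique representative preserving the \emph{actual} ample cone of $Y_e$, and that representative is the one Torelli realises), and get $\Aut(Y,D;\text{cpt})\cap W=\{1\}$ because automorphisms preserve the ample cone while nontrivial elements of $W$ move the chamber. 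Your phrasing of both of these steps is vaguer than it needs to be (e.g.\ ``acts nontrivially on a root represented on a nearby deformation'' is not by itself an obstruction to being induced by an automorphism of $Y$), but the ideas are the standard ones.

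The genuine gap is in the kernel computation. You claim that $\varphi^{*}=\id$ forces $\varphi|_{D}=\id$ because ``$\varphi|_{D_i}$ preserves $\cO_{D_i}(1)$ and the two nodal points.'' This is false: an automorphism of $D_i\cong\bP^1$ fixing two points is an arbitrary element of $\bG_m$, and it automatically preserves $\cO(1)$ up to isomorphism. Indeed the conclusion cannot be right, since combined with Lemma \ref{lem:AutYDpt} (which says $\rho$ restricts to an isomorphism on $\Aut(Y,D;\text{pt})$, so $\Aut(Y,D;\text{pt})\cap\ker\rho=\{1\}$) it would force $N=1$, contradicting for instance the $k=9$ semi-definite case where $N=\bZ/3$, or the indefinite cases where $N$ contains a positive-dimensional torus. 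Elements of $N$ genuinely move points of $D$: $N$ sits inside $\Aut^{0}(D)\cong(\bG_m)^k$, and the character $\chi\in\Hom(\bZ^k,\bG_m)$ you want is the restriction $\varphi|_{D}$ itself, not the normal derivative of a map fixing $D$ pointwise. The condition that $\chi$ kill the image of $\ev$ then comes from the fact that $(\lambda_i)\in\Aut^{0}(D)$ acts on $\Pic^{\underline{d}}(D)$, a torsor over $\Pic^{0}(D)\cong\bG_m$, by $\prod_i\lambda_i^{d_i}$, together with $\varphi^{*}L\cong L$ restricted to $D$ --- not from an ``obstruction to integrating a one-parameter group.'' The converse (every such character is realised by an automorphism of $Y$) also needs an argument, via the toric model: such a $\varphi$ descends to an element of the big torus $T\subset\Aut(\bar{Y},\bar{D})$ fixing the blown-up points, and conversely every such torus element lifts. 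The final identification $N'\cong\pi_1(Y\setminus D)$ via the long exact sequence of the pair is fine.
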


$N$ is the subgroup of $\Aut(Y,D; \text{cpt})$ which acts trivially on $H^2(Y; \bZ)$. It is finite whenever a toric model for $Y$ involves interior blow ups on components corresponding to (at least) two linearly independent toric rays; in particular, it is finite whenever $D$ is negative semi-definite or negative definite. 
The action of $N$ on $(Y,D)$ can be described explicitly as follows: choose a toric model $(Y,D) \to (\bar{Y},\bar{D})$. Let  $T \simeq (\bC^*)^2$ be the big torus acting on  $\bar{Y}$. Then $N$ is the subgroup of $T$ fixing the points we blow up to obtain $(Y,D)$, and the action of $N$ on $(\bar{Y},\bar{D})$ lifts to an action of $N$ on $(Y,D)$. 

\begin{lemma} 
\label{lem:AutYDpt} Assume that $Y = Y_e$. 
The exact sequence of Theorem \ref{thm:GHK_Torelli} splits, and we have
$$\Aut(Y,D; \text{pt}) = \ker \{ \Aut(Y,D; \text{cpt})  \to \Aut D \} \simeq \Adm / W $$
\end{lemma}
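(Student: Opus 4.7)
The plan is to show that the composition
$$\Aut(Y, D; \text{pt}) \hookrightarrow \Aut(Y, D; \text{cpt}) \twoheadrightarrow \Adm/W$$
coming from Theorem \ref{thm:GHK_Torelli} is an isomorphism. This simultaneously splits the exact sequence and identifies $\Aut(Y, D; \text{pt})$ with $\Adm/W$. The first equality displayed in the lemma is immediate from the definitions, so everything reduces to establishing bijectivity of this composite.

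For injectivity, the task is to show $\Aut(Y, D; \text{pt}) \cap N = \{e\}$. Choose a toric model $(Y, D) \to (\bar{Y}, \bar{D})$; the group $N$ is then the stabiliser inside $T \simeq (\bC^*)^2$ of the interior blowup points. An element $t \in T$ acts on $\bar{D}_i \simeq \bP^1$ by scaling (fixing the two torus fixed points) via the character $\chi_i \colon T \to \bG_m$ associated to the ray $v_i$, and since $D_i \to \bar{D}_i$ is an isomorphism of abstract curves, the induced action of $N \subseteq T$ on $D_i$ is by $\chi_i|_N$. Since the rays $v_i$ span $\bZ^2$, the characters $\chi_i$ collectively separate points of $T$, so $\chi_i(t) = 1$ for all $i$ forces $t = e$; hence no non-trivial element of $N$ fixes $D$ pointwise.

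For surjectivity, given $\mu \in \Adm$, Theorem \ref{thm:GHK_Torelli} supplies some lift $\varphi_0 \in \Aut(Y, D; \text{cpt})$ of $\mu$, and I would seek $n \in N$ with $n \varphi_0 \in \Aut(Y, D; \text{pt})$. Let $\rho \colon \Aut(Y, D; \text{cpt}) \to \Aut_0(D) \simeq (\bC^*)^k$ denote restriction to $D$, where $\Aut_0(D)$ is the group of automorphisms of $D$ fixing each component and each node. Then $\rho|_N$ is an isomorphism onto $\ker \ev^* \subseteq (\bC^*)^k$, where $\ev^* \colon (\bC^*)^k \to \Hom(\Pic(Y), \bC^*)$ is the dual of the evaluation map $\ev \colon \Pic(Y) \to \bZ^k$. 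It therefore suffices to show $\rho(\varphi_0) \in \ker \ev^*$. This is where the distinguished complex structure $Y = Y_e$ enters: the period point $\phi_Y \in \Hom(D^\perp, \bC^*)$ is trivial, and the compatibility of $\phi_Y$ with the $\Aut(Y, D; \text{cpt})$-action (as worked out in \cite{GHK2}) translates into $\rho(\varphi_0) \in \ker \ev^*$. Then the unique $n \in N$ with $\rho(n) = \rho(\varphi_0)^{-1}$ produces the desired element $n \varphi_0 \in \Aut(Y, D; \text{pt})$ lifting $\mu$.

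The main obstacle is extracting from the period point formalism of \cite{GHK2} a precise enough transformation law under automorphisms to deduce $\rho(\varphi_0) \in \ker \ev^*$ from the triviality $\phi_{Y_e} = 1$. Once this step is in place, the rest of the argument is a routine diagram chase.
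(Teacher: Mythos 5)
Your overall strategy is the right one and is essentially the paper's: realise $N$ inside $\Aut^0(D)\simeq(\bC^*)^k$ via restriction to $D$, and use the distinguished complex structure to control the image of $\Aut(Y,D;\text{cpt})$ under that restriction. (The paper organises this slightly more efficiently: it shows directly that the restriction map $\rho\colon\Aut(Y,D;\text{cpt})\to\Aut^0(D)$ has image contained in $N$, so that $\rho$ is a retraction onto $N$; the splitting and the identification $\Aut(Y,D;\text{pt})=\ker\rho\simeq\Adm/W$ then fall out at once, and your separate injectivity step -- which is correct -- is absorbed into the observation $N\subset T\subset\Aut^0(D)$.)

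However, the step you flag as ``the main obstacle'' is precisely the mathematical content of the lemma, and leaving it unresolved is a genuine gap: without it you have not shown surjectivity. The paper fills it in two lines, and not by invoking a transformation law for the period point, but by the following direct argument. Since $Y=Y_e$, one may choose points $p_i\in D_i$ so that $L|_D=\cO_D\bigl(\sum_i (L\cdot D_i)\,p_i\bigr)$ for \emph{every} $L\in\Pic(Y)$ (triviality of the period point on $\langle D_1,\dots,D_k\rangle^\perp$ plus a normalisation of the $p_i$, using divisibility of $\bC^*$). Now for $\varphi\in\Aut(Y,D;\text{cpt})$ one has $\varphi^*L\cdot D_i=L\cdot\varphi_*D_i=L\cdot D_i$, hence $\varphi^*(L)|_D\simeq L|_D$; combined with $\varphi|_D^*(L|_D)\simeq\varphi^*(L)|_D$ this says $\varphi|_D$ acts trivially on the image of $\Pic(Y)\to\Pic(D)$, which is exactly your condition $\rho(\varphi)\in\ker\ev^*$, and \cite[Proposition 2.6]{GHK2} then gives $\varphi|_D\in N$. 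Note this holds for \emph{all} of $\Aut(Y,D;\text{cpt})$, not just a chosen lift $\varphi_0$ of $\mu\in\Adm/W$, which is why the paper gets the retraction for free. If you supply this computation, your argument closes up and agrees with the paper's.
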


\begin{proof} 
From the discussion above, we get $N \subset T \subset \Aut^0 (D) ( \simeq (\bC^\ast)^k)$. Now consider the restriction map $\Aut(Y,D; \text{cpt}) \to \Aut^0 (D)$. We claim that its image lies in $N$. This uses the fact that $Y = Y_e$: for any $L \in \Pic (Y)$, $L|_D = \cO_D (\sum (L \cdot D_i) p_i)$, for some distinguished points $p_i \in D_i$. Now consider an arbitrary $\varphi \in  \Aut(Y,D; \text{cpt}) $. It follows from the above that $ \varphi|_D^\ast (L|_D) \simeq \varphi^\ast (L) |_D  \simeq  L|_D  $. On the other hand, as this holds for an arbitrary $L$, we see that $ \varphi|_D \in N$ by \cite[Proposition 2.6]{GHK2}, as required. Thus restriction gives a splitting $ \Aut(Y,D; \text{cpt})  \to N$, and we're done.
\end{proof}

\subsubsection{Negative semi-definite case}\label{sec:Mordell-Weil} 

 Assume that $D$ is negative semi-definite and does not contain a $(-1)$ curve, i.e.~$D$ is a cycle of $(-2)$ curves. Complex deformation classes of such pairs $(Y,D)$ are completely understood: 

\begin{proposition}\cite[Propositions 9.15 and 9.16]{Friedman} Suppose $(Y,D)$ is a log CY surface, and that $D$ is a cycle of $k$ $(-2)$ curves. Then $k \le 9$, there is a unique deformation type for $k \neq 8$, and two deformation types for $k=8$ distinguished by $\pi_1(Y\backslash D)$.
\end{proposition}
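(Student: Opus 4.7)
The plan is to recognise $(Y,D)$ as a rational elliptic surface with $D$ realised as an $I_k$ singular fibre, and then invoke the classical classification of such surfaces. First I would construct the elliptic fibration: adjunction on each $(-2)$-curve $D_i$ gives $K_Y\cdot D_i=0$, hence $K_Y\cdot D=0$, and $D^2 = -K_Y\cdot D = 0$. The basepoint-freeness argument used in the proof of Lemma \ref{lem:type_A_sphericals} (which only used $D$ negative semi-definite of zero square and $Y$ rational) then gives $\dim H^0(Y,\cO_Y(D))=2$ with $|D|$ basepoint-free, producing an elliptic fibration $\varpi\colon Y\to \bP^1$ with $D=\varpi^{-1}(\infty)$. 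The combinatorics of $D$ identify this as a Kodaira fibre of type $I_k$, and Noether's formula combined with $K_Y^2=0$ gives $b_2(Y)=10$.

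For the bound $k\leq 9$: no $D_i$ is a $(-1)$-curve, so inductively contracting $(-1)$-curves that lie in fibres of $\varpi$ produces a relatively minimal rational elliptic surface $\bar\varpi\colon\bar Y\to\bP^1$ in which $D$ is still an $I_k$ fibre. Rationality together with $K_{\bar Y}^2=0$ realises $\bar Y$ as the blow-up of $\bP^2$ at the nine base points of a cubic pencil, and any of the exceptional divisors gives a section $S$ of $\bar\varpi$. The sublattice of $NS(\bar Y)$ spanned by the general fibre $F$, the section $S$, and the $k-1$ non-identity components of $D$ has rank $k+1$; since it injects primitively into $NS(\bar Y)$ of rank $10$, we conclude $k\leq 9$.

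Deformations of $(Y,D)$ that preserve the combinatorial type of $D$ correspond, after contracting fibrewise $(-1)$-curves, to deformations of $\bar Y$ fixing the $I_k$ fibre. The irreducible components of the resulting moduli space are in bijection with isomorphism classes of primitive embeddings of the affine cycle lattice $\widetilde A_{k-1}$ into the $E_8$-frame $F^\perp/\bZ F \subset NS(\bar Y)$. By the classical Mordell--Weil classification of rational elliptic surfaces (Oguiso--Shioda), this embedding is unique up to isomorphism whenever $k\neq 8$, and splits into exactly two isomorphism classes for $k=8$: one with trivial Mordell--Weil torsion and one with Mordell--Weil torsion $\bZ/2$.

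The main obstacle is separating the two $k=8$ families by a topological invariant. I would compute $\pi_1(Y\setminus D)$ via the restricted fibration $\varpi|_{Y\setminus D}\colon Y\setminus D\to\bA^1$: a generic fibre has $\pi_1 = \bZ^2$, quotiented by the Picard--Lefschetz vanishing cycles of the remaining singular fibres. In the torsion-free case these cycles generate all of $\bZ^2$, giving $\pi_1(Y\setminus D)=1$; in the $\bZ/2$-torsion case exactly one $\bZ/2$ quotient survives, giving $\pi_1(Y\setminus D)=\bZ/2$. Equivalently, the order-two section yields a connected unramified double cover of $Y\setminus D$ in the latter case. Producing both families explicitly (e.g.\ via Weierstrass models) and matching the vanishing-cycle combinatorics to the Mordell--Weil torsion is where I expect the bulk of the work to lie.
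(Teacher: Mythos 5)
The paper gives no proof of this statement---it is quoted directly from \cite{Friedman}---so there is nothing internal to compare against, and I assess your sketch on its own terms. Your strategy (realise $D$ as an $I_k$ fibre of a rational elliptic fibration and invoke the Oguiso--Shioda/Mordell--Weil classification) is the standard route and can be made to work, but as written the opening step fails for a general member of the deformation class. The basepoint-freeness argument of Lemma \ref{lem:type_A_sphericals} requires the normal bundle $\cO_Y(D)|_D$ to be trivial; in general it is a degree-zero but non-trivial line bundle on the cycle $D$, in which case $h^0(\cO_Y(D))=1$ and there is no elliptic fibration at all. (This is exactly why that lemma carries the hypothesis $Y=Y_e$ when $\tilde Z=D$, and why Section \ref{sec:Mordell-Weil} describes the elliptic locus as codimension one in moduli, cut out by $\phi([D])=1$.) To repair this you must first deform $(Y,D)$ into that locus---using surjectivity of the period map---and check that doing so neither conflates nor splits deformation types, i.e.\ that the locus $\phi([D])=1$ meets each deformation type in a nonempty connected set. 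Note that the bound $k\le 9$ needs none of this: $\Pic Y\cong I^{1,9}$ has rank $10$ and already contains the classes $[D_1],\dots,[D_k]$ together with that of an exceptional curve, so your rank count goes through without constructing the fibration.

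The second gap is the one you flag yourself: you assert but do not prove that Mordell--Weil torsion $\bZ/2$ corresponds to $\pi_1(Y\setminus D)=\bZ/2$. (A smaller point: the two relevant embeddings $A_7\hookrightarrow E_8$ are distinguished by whether the embedding is \emph{primitive}---the torsion case is the non-primitive one---so ``two isomorphism classes of primitive embeddings'' is not quite the right dichotomy.) Rather than chasing vanishing cycles of the other singular fibres, it is cleaner to use the identification $\pi_1(Y\setminus D)\cong\coker\big(\ev\colon\Pic(Y)\to\bZ^k\big)$ recorded in Theorem \ref{thm:GHK_Torelli}: for an $I_8$ fibre the classes of the $D_i$ and of the zero section leave a residual $\bZ/8$, which is then quotiented by the subgroup recording which components of $D$ the remaining sections meet; that subgroup is governed precisely by the structure of $MW(Y,\bP^1)$, and the two frames give cokernel $0$ and $\bZ/2$ respectively. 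This turns your ``bulk of the work'' into a short lattice computation.
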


\begin{remark}
$Y \backslash D$ is mirror to $X \backslash E$ where $X$ is a del Pezzo surface of degree $K_X^2=k$ and $E$ is a smooth anticanonical divisor \cite{AKO_delPezzo, HK}.
\end{remark}

The various flavours of automorphism groups are also known explicitly. First, $N$ is known, see \cite[Proposition 9.16 (iii)]{Friedman}:
$$
\pi_1 (Y \backslash D) =
\begin{cases}
 \bZ/2 \mbox{ or } 0& 
\text{ when }k=8\\
\bZ/3 & 
\text{ when } k=9 \\
0 &  \text{otherwise} \\
\end{cases}
$$
Second, by \cite[Corollary 9.20]{Friedman},
$\Adm = W$ except when $k=7$, or $k=8$ and $\pi_1(Y \backslash D)=0$. In both of these cases, $\Adm / W \simeq \bZ$, see  \cite[Example 5.6]{GHK2}, \cite[Example 5.3]{GHK2} and \cite[Lemma 9.18 (iii)]{Friedman}.

Thus if $Y=Y_e$, in both of these cases,  we get that $\Aut(Y,D; \text{pt}) \simeq \bZ$. 
We can describe these automorphisms explicitly using the Mordell-Weil group of $Y$. As $Y= Y_e$, it is the total space of a minimal rational elliptic fibration $\varpi: Y \to \bP^1$, with $D = \varpi^{-1} (\infty)$ cycle of $(-2)$ curves. (The existence of such an elliptic fibration is codimension one in the complex moduli space: it exists whenever the period map $\phi$ satisfies $\phi ([D])=1$, which is certainly true for $Y_e$. For background on $\phi$, see e.g.~\cite[Section 2.2]{HK}.)  The fibration $\varpi$ is unique, and admits a holomorphic section. 
 Recall that the Mordell-Weil group  $MW(Y, \bP^1)$ is the group of holomorphic sections of $\varpi$, with identity element the reference section $s$. This acts by translation on each smooth fibre. As the fibration is relatively minimal, this extends to an automorphism of $Y$, cf.~\cite[Proposition III.8.4]{BHPV}.
From \cite[Theorem 1.3]{Shioda}, we have 
$$MW(Y, \bP^1) \simeq \langle F \rangle^\perp / K$$
where $F$ is the class of a smooth fibre of $\varpi$, $\langle F \rangle^\perp$ is taken inside $ \Pic(Y) = H^2(Y; \bZ) $ with respect to the standard pairing, and $K$ is the subgroup of $H^2(Y; \bZ)$ generated by all the irreducible components of fibres. 
One can calculate that $MW(Y, \bP^1) \cong \bZ$ when  $k=7$, or $k=8$ and $\pi_1(Y\backslash D) = 0$, in which cases $\Aut(Y,D; \text{pt}) =  k\cdot MW(Y, \bP^1)$. In other cases it's a finite group. 

\subsection{Autoequivalences of $D^b \Coh(Y)$} 

\subsubsection{$D$ negative definite or indefinite} \label{sec:neg_indef_auteq}

\begin{lemma}\cite{Uehara2}
Assume that $D$ is negative definite or indefinite. Then any autoequivalence of $D(Y)$ has Fourier--Mukai kernel with  two-dimensional support. 
\end{lemma}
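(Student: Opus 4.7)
The plan is to adapt Uehara's argument from \cite{Uehara2}. Writing $\Phi$ as a Fourier--Mukai transform with kernel $\cK \in D(Y \times Y)$, set $Z := \Supp \cK$. The lower bound $\dim Z \geq 2$ is the easy direction: both projections $Z \to Y$ must be dominant, since otherwise $\Phi$ would either annihilate a generic skyscraper $\cO_p$ or fail to be essentially surjective. The work lies in the upper bound $\dim Z \leq 2$.

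For the upper bound, the strategy is to analyse the image $\Phi(\cO_p)$ of a generic skyscraper. A Bondal--Orlov / Bridgeland--Maciocia style argument gives the desired dimension bound once one shows that $\Phi(\cO_p)$ is a shift of an honest skyscraper $\cO_{\sigma(p)}$ for some birational automorphism $\sigma$ of $Y$, since then the kernel is forced to be supported near the graph of $\sigma$. The crux is therefore to prove that, under the hypothesis on $D$, the only objects $E \in D(Y)$ satisfying $E \otimes \omega_Y \simeq E$, $\Hom(E,E) = \kk$, $\Ext^i(E,E) = 0$ for $i < 0$, and of numerical class $[\cO_p]$ are shifts of true skyscrapers.

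Exotic objects of this shape typically arise from a fibration of $Y$ over a curve --- most importantly from a relatively minimal elliptic fibration, which is precisely what occurs in the excluded semi-definite case. The essential geometric input is therefore that $Y$ admits no such fibration when $D$ is negative definite or indefinite. A morphism $Y \to C$ to a smooth curve yields an effective isotropic class $F = [\text{general fibre}]$ with $F \cdot K_Y = 0$, hence $F \cdot D_i = 0$ for each component of $D$. Lattice considerations on $\Pic(Y) \simeq H^2(Y; \bZ)$, of signature $(1, \rho - 1)$, together with the fact that $D$ is either negative definite or contains components with strictly positive self-intersection, rule out such an $F$: in the negative definite case, $\langle F, D_1, \ldots, D_k \rangle$ is negative semi-definite with $F$ in its radical, so $F$ pairs to zero with any ample class and is numerically trivial; in the indefinite case, Hodge index applied to $F$ together with a positive component $D_i$ forces the same conclusion.

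I expect the main obstacle to be the classification of point-like objects rather than the lattice-theoretic step: one must exclude subtle ``relative'' Fourier--Mukai partners of $Y$ along lower-dimensional subvarieties, and this is where \cite{Uehara2} does the bulk of the work, via stability arguments on coherent sheaves. Adapting that analysis is particularly delicate in the indefinite case, where $(-2)$-curves may organise into ADE configurations carrying non-trivial derived symmetries of their own (already accounted for by the spherical twist subgroup, which of course has kernel of two-dimensional support).
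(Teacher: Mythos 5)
Your overall reduction is the same as the paper's: the hard categorical content is deferred to Uehara (the paper simply invokes case (iii) of \cite[Theorem 5.3]{Uehara2}, whose hypotheses are $K_Y \neq 0$ and that $Y$ admit no relatively minimal elliptic fibration), so the only thing to actually prove is the non-existence of such a fibration. It is precisely there that your argument has a genuine gap. In the negative definite case you claim that, since $\langle F, D_1, \ldots, D_k\rangle$ is negative semi-definite with the fibre class $F$ in its radical, $F$ pairs to zero with every ample class and is numerically trivial. That inference is false: in a lattice of signature $(1,\rho-1)$ a nonzero isotropic class can perfectly well be orthogonal to a negative definite sublattice (the orthogonal complement of $\langle D_1,\ldots,D_k\rangle$ still has signature $(1,\rho-1-k)$), and indeed $F$, being an effective fibre class, satisfies $F\cdot H>0$ for every ample $H$. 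The relation $F\cdot K_Y=0$, which is all you use, is too weak. The missing input is the Kodaira canonical bundle formula for a relatively minimal elliptic fibration on a rational surface: since $-K_Y=D$ is effective there is at most one multiple fibre, say of multiplicity $m$, and then $-K_Y\equiv \frac{1}{m}F$. This gives $D^2=0$ at once (killing the negative definite case, where $D^2<0$), and together with $D\cdot F=0$ it shows $D$ is contained in, hence equal to, a fibre, so $\langle D_1,\ldots,D_k\rangle$ is degenerate negative semi-definite by Zariski's lemma --- ruling out the indefinite case as well. This is exactly the paper's argument.

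Two smaller points. In the indefinite case you apply Hodge index to $F$ and ``a positive component $D_i$'', but an indefinite configuration need not contain a component of positive self-intersection (a cycle of $(-1)$-curves is indefinite with all $D_i^2=-1$); the step is salvageable by taking instead any class $A$ in the span of the $D_i$ with $A^2>0$ and using $F\cdot A=0$, but as written it does not go through. And your first paragraph, re-deriving the support bound from the classification of point-like objects, is a fair description of what happens inside \cite{Uehara2}, but since you cite that work for the classification anyway, it buys nothing over the paper's direct appeal to Uehara's theorem.
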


\begin{proof}
We claim that these cases both fall under (iii) of \cite[Theorem 5.3]{Uehara2}: namely that $K_Y \neq 0$, and $Y$ does not admit a minimal elliptic fibration. The second part needs checking.  Suppose that $Y$ admits a minimal elliptic fibration. Since $Y$ is rational, there is at most one multiple fiber, say of multiplicity $m \ge 1$, by the Kodaira canonical bundle formula.
Then $D=-K_Y = \frac{1}{m}F$, where $F$ is the class of a fibre. Now $D \cdot F =0$ so $D$ is contained in a fiber, and $D^2=0$ so $D$ is equal to a fiber, and thus $D$ is semi-definite, a contradiction.

\end{proof}

As before, let $Z$ be the union of all $(-2)$ curves in $Y$, and let $D_Z(Y)$ be the full triangulated subcategory of $D(Y)$ consisting of objects supported on $Z$. By \cite[Proposition 6.1]{Uehara2}, there is a well-defined group homomorphism
$$
\Auteq D(Y) \to \Auteq D_Z(Y)
$$
induced by restriction. Let $\Auteq^{\dagger} D_Z(Y)$ denote the image of this map. (This is $\Auteq^{\dagger}_{\text{K-equiv}} D_Z(Y)$ in \cite{Uehara2}, though in the case at hand all autoequivalences are of so-called $K$-equivalent-type.)

\begin{theorem}\cite[Theorems 6.6 and 6.8]{Uehara2} \label{thm:neg_def_auteq}
There is a short exact sequence
\begin{equation}\label{eq:Auteq_SES}
1 \to \Pic_Z(Y) \rtimes \Aut_Z(Y) \to \Auteq D(Y) \to \Auteq^{\dagger} D_Z(Y) \to 1
\end{equation}
where $\Pic_Z(Y) = \{ \cL \in \Pic(Y) \, | \, \cL|_Z \simeq \cO_Z \} $, acting by tensor product, and $\Aut_Z(Y) = \{ \varphi \in \Aut (Y) \, | \, \varphi|_Z = \text{id}_Z \}$, acting by pushforward.  

If $Z$ is a disjoint union of cycles and chains of $(-2)$ curves, and $Y=Y_e$ if $D$ is a cycle of $(-2)$ curves, then
$$
\Auteq D(Y) = \langle \Br_Z(Y) , \Pic(Y) \rangle \rtimes \Aut(Y) \times \bZ[1]
$$ 
by Lemma \ref{lem:type_A_sphericals},
where 
$
\Br_Z = \langle T_{\cO_C(a)}\, | \, C \subset Z \text{ a } (-2) \text{ curve}, a \in \bZ \rangle 
$,
and $\bZ[1]$  denotes shifts.
\end{theorem}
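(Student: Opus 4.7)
The result is attributed to Uehara, so the plan is to outline how one would prove it using the structural input already recorded above. The central ingredient is the preceding lemma: every autoequivalence of $D(Y)$ has Fourier--Mukai kernel with two-dimensional support. I would exploit this as a Bondal--Orlov-type platform to constrain all autoequivalences.

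First I would construct the restriction map $\Auteq D(Y) \to \Auteq D_Z(Y)$ and verify it is well-defined. The subcategory $D_Z(Y)$ admits an intrinsic characterization (using that every spherical object in $D(Y)$ has support on $Z$, as noted above before Lemma \ref{lem:type_A_sphericals}), so any autoequivalence preserves it. The image is $\Auteq^{\dagger} D_Z(Y)$ by definition. Next I would identify the kernel. Using the two-dimensional support of the FM kernel, I would argue that any autoequivalence of $D(Y)$ is of the form $\varphi_* \circ (\otimes \cL) \circ [n]$; if moreover it restricts to the identity on $D_Z(Y)$, then $n=0$ (since $[1]$ acts nontrivially on each $\cO_C$), $\varphi|_Z = \id_Z$ (evaluate on each $\cO_C$ with $C \subset Z$ a $(-2)$-curve), and $\cL|_Z \simeq \cO_Z$ (evaluate on $\cO_C(a)$ for varying $a$). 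Conversely, any pair $(\cL,\varphi) \in \Pic_Z(Y) \rtimes \Aut_Z(Y)$ manifestly gives an autoequivalence in the kernel. The semidirect product structure is immediate from the natural pullback action of $\Aut_Z(Y)$ on $\Pic_Z(Y)$.

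For the explicit description under the additional hypothesis on $Z$, I would combine the exact sequence with Lemma \ref{lem:type_A_sphericals}: when $Z$ is a disjoint union of chains and cycles of $(-2)$-curves (with the extra condition $Y = Y_e$ in the cycle case involving $D$), every spherical object in $D_Z(Y)$ lies in the orbit of some $\cO_C(a)$ under spherical twists in $\cA$. Appealing to Ishii--Uehara for chains and Uehara for cycles to describe $\Auteq D_Z(Y)$ on each component, I would conclude that $\Auteq^{\dagger} D_Z(Y)$ is generated by the twists $T_{\cO_C(a)}$, pullbacks of line bundles, pushforwards by automorphisms, and shifts; pulling these back through the exact sequence and absorbing the kernel into $\Pic(Y) \rtimes \Aut(Y)$ produces the announced generators. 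A short commutator bookkeeping then yields the stated semidirect and direct product structure, with $\bZ[1]$ splitting off because shifts commute with tensor products, pushforwards, and spherical twists up to grading conventions.

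The main obstacle is the Bondal--Orlov-type classification of autoequivalences with two-dimensional FM kernel support: since $Y$ is neither Fano nor has ample canonical bundle, the original Bondal--Orlov theorem does not apply directly, and one has to use the more delicate analysis of Uehara tailored to rational surfaces with effective anticanonical divisor. Careful control is needed to ensure that the restriction-trivial autoequivalences really are exactly $\varphi_*(\otimes \cL)$ with no additional "hidden" degrees of freedom supported on $Z$, since a priori one could worry about exotic twists concentrated along the $(-2)$-curves.
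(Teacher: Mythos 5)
First, a point of comparison: the paper does not prove this statement. It is quoted from Uehara (Theorems~6.6 and 6.8 of \cite{Uehara2}), and the only content added by the authors is the observation that the second displayed description follows from the exact sequence together with Lemma~\ref{lem:type_A_sphericals} (which supplies the classification of spherical objects on the chains and cycles making up $Z$, and hence, via Ishii--Uehara and Uehara, the generators of $\Auteq^{\dagger} D_Z(Y)$). Your proposal is therefore a reconstruction of Uehara's argument; its overall architecture --- a restriction map to $D_Z(Y)$, identification of its kernel, then feeding in the classification of sphericals --- is indeed the intended route, and your treatment of the second half matches what the paper does.

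There is, however, a genuine logical error at the central step. You assert that the two-dimensional support of the Fourier--Mukai kernel lets you ``argue that any autoequivalence of $D(Y)$ is of the form $\varphi_* \circ (\otimes \cL) \circ [n]$'', and only \emph{afterwards} impose triviality on $D_Z(Y)$ to pin down $n$, $\varphi$ and $\cL$. This cannot be right: the spherical twists $T_{\cO_C(a)}$ are autoequivalences whose kernels also have two-dimensional support (namely $\Delta_Y \cup (C\times C)$), and they are not of standard form --- if every autoequivalence were standard, $\Br_Z$ would be absorbed into $\Pic(Y)\rtimes\Aut(Y)$ and the theorem would collapse. What the support theorem actually yields is that the kernel is supported on $\Delta_Y \cup Q$ with $Q \subset Z\times Z$, i.e.\ the autoequivalence acts standardly only \emph{away from} $Z$. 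The hypothesis of triviality on $D_Z(Y)$ must therefore be used \emph{first}, to conclude that the autoequivalence sends all point sheaves to (shifts of) point sheaves, after which \cite[Corollary 5.23]{Huybrechts} gives the form $\varphi_*\circ(\otimes\cL)\circ[n]$ and your subsequent evaluations on $\cO_C$ and $\cO_C(a)$ pin down the parameters. Your closing caveat about ``hidden degrees of freedom supported on $Z$'' is precisely this issue, but as structured the argument assumes away the very thing that has to be proved; this is the delicate content of Uehara's Theorem~6.6, not a cosmetic gap. (A secondary point: to see that the restriction map is well defined via your ``intrinsic characterization'', you would also need that $D_Z(Y)$ is \emph{generated} by the spherical objects $\cO_C(a)$ --- true here since any sheaf supported on a union of $(-2)$-curves is an iterated extension of sheaves on the reduced components --- but this should be said; Uehara instead deduces well-definedness directly from the support decomposition of the kernel.)
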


\subsubsection{$D$ negative semi-definite}  
\label{sec:semi_def_D}
Assume that there is a  rational elliptic fibration  $\varpi: Y \to \bP^1$ as above (e.g.~if $Y=Y_e$), and let  $F$ denote any fibre of $\varpi$. Every element of $\Auteq D(Y)$ induces an automorphism of $(K(Y), \chi)$, where $\chi$ is the Mukai pairing, and, via restriction, an automorphism of $(K(F), \chi)$: see the proof of \cite[Theorem 3.11]{Uehara1} and lemmas leading up to it. 
(For $F$ we are referring to topological $K$-theory.)
 $K(F)$ is identified with $H^0(F, \bZ) \oplus H^2(F, \bZ)$ by taking rank and degree, and, as an element of $GL_2(\bZ)$, an automorphism preserves $\chi$ precisely when it has determinant one. Thus we get a map $\Theta: \Auteq D(Y) \to SL_2(\bZ)$. 

\begin{theorem}\cite[Theorem 4.1]{Uehara1}\label{thm:auto_elliptic} 
Assume that all the reducible fibres of $\varpi$ are of type $I_m$ (i.e.~cycles of $(-2)$ curves). Then there is a short exact sequence
\begin{equation}
1 \to \langle \Br_Z, \otimes \cO_Y(E) \, | \, E \cdot F = 0 \rangle \rtimes \Aut Y \times \bZ[2]
\to \Auteq D(Y)
\xrightarrow{\Theta} 
SL_2(\bZ) \to 1 \label{eq:auteq_elliptic}
\end{equation}
where $\Br_Z = \langle T_{\cO_C (a)} \, | \, C \subset Y \text{ a }(-2) \text{ curve}, a \in \bZ \rangle$, and $\bZ[2]$ is even degree shifts. 
\end{theorem}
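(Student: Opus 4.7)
\textbf{Proof plan for Theorem \ref{thm:auto_elliptic}.} The plan is to first show $\Theta$ is well-defined and surjective, then identify the kernel by reducing to fiber-preserving autoequivalences and classifying those.

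\emph{Well-definedness of $\Theta$ and surjectivity.} Every autoequivalence $\Phi \in \Auteq D(Y)$ induces an automorphism of $(K(Y), \chi)$ respecting the Mukai pairing, so I first check that the sublattice $K(F) \hookrightarrow K(Y)$ spanned by $\cO_F$ and $\cO_p$ (for $p$ a point on a smooth fiber) is preserved up to an element of $SL_2(\bZ)$; this comes down to the fact that $[\cO_F]$ is characterized numerically as a primitive isotropic class, and that $\Phi$ must permute such classes in a controlled way. To get surjectivity it suffices to hit a generating set of $SL_2(\bZ)$. The shift $[1]$ gives $-I$. Tensoring with $\cO_Y(\Sigma)$, for $\Sigma$ a section of $\varpi$, acts on $K(F)$ by $(r,d) \mapsto (r,d+r)$, a unipotent generator. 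The final generator comes from the relative Fourier--Mukai transform along $\varpi$; since $\varpi$ admits a section, this is a well-defined autoequivalence of $D(Y)$ in the sense of Bridgeland, and it interchanges $[\cO_F]$ and $[\cO_p]$ up to sign.

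\emph{Kernel reduction.} Suppose $\Phi \in \ker \Theta$, so $\Phi$ preserves the numerical class of the skyscraper of a smooth point on $F$. I would invoke Bridgeland's theorem on autoequivalences of elliptic surfaces to conclude that, after composing with an element of the subgroup we are trying to describe, $\Phi$ is \emph{fiber-preserving}: there is an automorphism $\alpha$ of $\bP^1$ such that $\varpi \circ \Phi = \alpha \circ \varpi$ at the level of supports of point-like objects. Pulling back by the associated automorphism of $Y$ (which exists and lies in $\Aut Y$ since $\varpi$ is the unique minimal elliptic fibration), I reduce to the case where $\Phi$ preserves each fiber.

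\emph{Fiberwise classification.} Such a $\Phi$ restricts on each smooth fiber to an autoequivalence of a smooth elliptic curve acting trivially on $K$-theory; by the classification of autoequivalences of elliptic curves, this restriction is a translation composed with a shift. The translations on the smooth fibers patch together to an element of the Mordell--Weil group acting by fiberwise translation, hence lie in $\Aut Y$. After dividing out by this, $\Phi$ acts trivially on each smooth fiber, and on each reducible fiber of type $I_m$ it acts as an autoequivalence of the nodal cycle preserving the numerical class of the point. By Lemma \ref{lem:type_A_sphericals} and Theorem \ref{thm:neg_def_auteq} applied fiberwise to the cycles of $(-2)$ curves, this residual autoequivalence is generated by spherical twists $T_{\cO_C(a)}$ in components $C$ and twists by line bundles supported fiberwise, i.e.\ line bundles $\cO_Y(E)$ with $E \cdot F = 0$. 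Shifts by even degrees show up because an odd shift flips the sign in $K(F)$, hence does not lie in $\ker\Theta$.

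\emph{Main obstacle.} The technical heart is the kernel reduction step: controlling how a general $\Phi \in \ker\Theta$ interacts with the fibration structure, and in particular ruling out autoequivalences that mix fibers in exotic ways. This relies crucially on Bridgeland's spectral-sequence-type analysis for elliptic fibrations and on having \emph{all} reducible fibers of $I_m$ type, so that every vertical component is a smooth $(-2)$ curve and the fiberwise classification reduces to Lemma \ref{lem:type_A_sphericals}. The splitting of the sequence, and the semidirect product structure, then follow formally from the fact that $\Aut Y$ acts naturally on $\Pic Y$ and on the collection of vertical $(-2)$ curves, compatibly with the established actions on $\Auteq D(Y)$.
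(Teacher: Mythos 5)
First, a point of comparison: the paper does not prove this statement at all — it is imported verbatim as \cite[Theorem 4.1]{Uehara1}, so the paper's "proof" is the citation. Your outline does follow the broad shape of Uehara's argument: surjectivity of $\Theta$ via the shift, tensoring by a section, and the relative Fourier--Mukai transform with kernel $\cP$ (exactly the lifts the paper records immediately after the theorem), and a kernel analysis that reduces to the action on skyscraper sheaves and \cite[Corollary 5.23]{Huybrechts}. A compressed version of that kernel step also appears in the paper's own proof of Proposition \ref{prop:detecting_id}.

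That said, there are two genuine problems. First, your closing claim that ``the splitting of the sequence \dots follow[s] formally'' is false if it refers to \eqref{eq:auteq_elliptic}: the paper states explicitly, right after the theorem, that this sequence does \emph{not} split; only the internal semidirect/direct product structure of the kernel is being asserted, and you should separate those claims. Second, and more seriously, the kernel reduction is where essentially all the work lies, and ``Bridgeland's theorem on autoequivalences of elliptic surfaces'' is not something you can invoke — Bridgeland constructs the relative Fourier--Mukai equivalences but does not classify autoequivalences. What must actually be shown is that any $\Phi \in \ker\Theta$ sends every skyscraper $\cO_x$ to a shifted skyscraper, \emph{including} for $x$ on a reducible fibre: over smooth fibres this follows from the classification of autoequivalences of an elliptic curve as you say, but over the $I_m$ fibres one must rule out $\Phi(\cO_x)$ being an exotic complex supported on the cycle of $(-2)$ curves. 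This is precisely where the hypothesis on the fibre types and the classification of spherical (and point-like) objects on such cycles — Lemma \ref{lem:type_A_sphericals}, i.e.\ \cite[Proposition 4.4]{Uehara1} — enters, and your appeal to Theorem \ref{thm:neg_def_auteq} ``applied fiberwise'' is not legitimate: that theorem assumes $D$ negative definite or indefinite, which is excluded in the semi-definite situation at hand. Without carrying out this step the argument has a hole exactly at the point you yourself identify as the technical heart.
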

Note that all $(-2)$ curves are contained in reducible fibres, and so Lemma \ref{lem:type_A_sphericals} applies. When $\varpi$ has singular fibres of other types, a similar short exact sequence is conjectured in \cite{Uehara1}, and known up to the same ambiguity as in the negative definite case: `extra' autoequivalences with Fourier--Mukai kernel with support on $Z$ have not been ruled out.

The degree one shift acts as $-1$ on $K$ theory, and so is not in the kernel of the map to $SL_2 (\bZ)$.
The sequence \ref{eq:auteq_elliptic} doesn't split; this will be clear from the perspective of moduli space monodromies, see Section \ref{sec:monodromies}. Explicit lifts of $SL_2(\bZ)$ elements can be given as follows.  Use the basis $([\cO_F], [\cO_{pt}])$ for $K(F) \simeq \bZ^2$, and let $t= \begin{pmatrix} 1 & 0  \\ 1 & 1 \end{pmatrix}$ and $s = \begin{pmatrix} 0 & 1  \\ -1 & 0 \end{pmatrix}$ be the usual generators for $SL_2(\bZ)$. Let $E_0$ be a holomorphic section of $\varpi$. Then $\otimes \cO_Y  (E_0) \in \Auteq D(Y)$ satisfies $\Theta (\otimes \cO_Y  (E_0)) = t$. Let $\Delta \subset Y \times_{\bP^1} Y$ be the relative diagonal, $\pi_i: Y \times_{\bP^1} Y \to Y$ be projection to the $i$th factor, and let $\cP$ be the relative Poincar\'e bundle associated to $E_0$:
$$
\cP = \cO_{Y \times_{\bP^1} Y} (\Delta - \pi^\ast_1 E_0 - \pi^\ast_2 E_0).
$$
Let  $\Phi^\cP$ be the fibrewise Fourier-Mukai transform with kernel $\cP$; then $\Theta (\Phi^\cP) = s$, see e.g.~\cite{Bridgeland-elliptic}.

\subsubsection{Detecting the identity} 

\begin{proposition}\label{prop:detecting_id}
Assume that $(Y,D)$ is arbitrary negative definite or indefinite; or semi-definite admitting a rational elliptic fibration (e.g.~$Y=Y_e$). 
 Assume that $\psi \in \Auteq D(Y)$ induces the identity on $K(Y)$, and that for each $(-2)$ curve $C \subset Y$, $\psi$ fixes $i_\ast \cO_C(-1)$ and $i_\ast \cO_C$ as objects of $D(Y)$. Then $\psi = \varphi_\ast$, where $\varphi \in N \leq \Aut(Y,D; \text{cpt})$, up to an even shift if $Z$ is empty.
In particular, $\psi$ must be the identity autoequivalence if in addition  $\iota^\ast  \circ \psi = \iota^\ast$, where $\iota$ is the inclusion $D \to Y$, and $\iota^\ast$ the derived pullback $D(Y) \to \Perf(D)$. 
\end{proposition}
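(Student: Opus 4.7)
\emph{Proof plan.} The plan is to decompose $\psi$ using the structure theorems for $\Auteq D(Y)$ (Theorems~\ref{thm:neg_def_auteq} and \ref{thm:auto_elliptic}) and then use each hypothesis to kill one standard component at a time.

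In the negative definite or indefinite case, Theorem~\ref{thm:neg_def_auteq} provides a short exact sequence with kernel $\Pic_Z(Y) \rtimes \Aut_Z(Y)$ and restriction map $\psi \mapsto \bar\psi \in \Auteq^{\dagger} D_Z(Y)$. The hypotheses force $\bar\psi$ to fix each $i_\ast \cO_C(-1)$ and $i_\ast \cO_C$ and to act as the identity on $K(D_Z(Y))$. For each connected component of $Z$ (in the indefinite case, an ADE configuration by \cite[Lemma~6.9]{GHK1}; more generally, a chain or cycle of $(-2)$ curves when Lemma~\ref{lem:type_A_sphericals} applies), these conditions pin $\bar\psi$ down to the identity, so $\psi = \varphi_\ast \circ (\otimes \cL)$ for some $\varphi \in \Aut_Z(Y)$ and $\cL \in \Pic_Z(Y)$. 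Next, apply the identity on $K(Y)$ to the rank-zero classes $(0,\alpha,0)$ for $\alpha \in H^2(Y;\bZ)$: using the description in Remark~\ref{rmk:classes_sphericals} and $\Pic Y = H^2(Y;\bZ)$ (as $Y$ is rational), one deduces first $\cL \cong \cO_Y$, and then that $\varphi^\ast$ acts trivially on $H^2(Y;\bZ)$. The effective rigid classes $[D_i]$ are therefore preserved, so $\varphi$ preserves $D$; combined with its trivial cohomological action, $\varphi \in N \leq \Aut(Y,D;\text{cpt})$.

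The semi-definite case follows the same scheme using Theorem~\ref{thm:auto_elliptic}: since $K(F) \hookrightarrow K(Y)$, $\Theta(\psi) = 1 \in SL_2(\bZ)$, so $\psi$ lies in the kernel subgroup. Writing $\psi$ as a composition of spherical twists, a tensor with $\cO_Y(E)$ for $E \cdot F = 0$, an automorphism pushforward $\varphi_\ast$, and a shift $[2k]$, the arguments above kill the spherical part and force $\cL \cong \cO_Y$ and $\varphi \in N$. The shift $[2k]$ acts trivially on $K(Y)$ but sends each $i_\ast \cO_C$ to its $2k$-shift, so the hypothesis pins $k = 0$ unless $Z = \emptyset$, which is precisely the exceptional clause in the statement.

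For the final assertion, assume also $\iota^\ast \circ \psi = \iota^\ast$. With $\psi = \varphi_\ast$ and $\varphi \in N$, applying the hypothesis to skyscrapers $\cO_p$ for $p \in D$ forces $\varphi|_D = \id_D$, i.e.~$\varphi \in \Aut(Y,D;\text{pt})$. By Lemma~\ref{lem:AutYDpt} and the splitting in Theorem~\ref{thm:GHK_Torelli}, $N \cap \Aut(Y,D;\text{pt}) = \{\id\}$, so $\varphi = \id$ and $\psi$ is the identity. The main technical obstacle is the first reduction step, controlling the image of $\psi$ in $\Auteq^{\dagger} D_Z(Y)$: this is well understood when the components of $Z$ are the chains/cycles/ADE configurations covered by existing classifications, but in the full generality allowed by the statement may require a direct argument leveraging only the given hypotheses on the standard spherical objects and on $K$-theory.
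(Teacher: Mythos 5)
Your overall scheme --- reduce to the standard subgroup $\Pic_Z(Y)\rtimes\Aut_Z(Y)$ via the structure theorems and then use the $K$-theory hypothesis to kill the line bundle and place $\varphi$ in $N$ --- ends in the same place as the paper, and your handling of the $K$-theoretic step, of the even shift when $Z=\emptyset$, and of the final assertion via Lemma \ref{lem:AutYDpt} are all fine. But the gap is exactly where you flag it, and it is fatal in the stated generality: your first reduction needs to know that an element of $\Auteq^{\dagger} D_Z(Y)$ which fixes all the objects $i_\ast\cO_C(-1)$, $i_\ast\cO_C$ and acts trivially on $K$-theory is the identity. That is precisely the open classification problem for autoequivalences of $D_Z(Y)$ that the paper is careful to avoid (see the discussion around Conjecture \ref{conj:auteqs}): Lemma \ref{lem:type_A_sphericals} and the Ishii--Uehara results only cover type $A$ chains and cycles of $(-2)$ curves, whereas the proposition is asserted for arbitrary negative definite or indefinite $(Y,D)$, where $Z$ may contain $D$- and $E$-type configurations, Y-shaped nodal configurations, or infinitely many $(-2)$ curves. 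So as written the argument only establishes the proposition under the same restrictive hypotheses as Conjecture \ref{conj:auteqs}.

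The paper's proof sidesteps $\Auteq^{\dagger}D_Z(Y)$ entirely by working with point objects. For $p\in C\subset Z$, the skyscraper $i_\ast\cO_p$ is a cone on a morphism $i_\ast\cO_C(-1)\to i_\ast\cO_C$, so the hypothesis that $\psi$ fixes these two objects already forces $\psi$ to permute the skyscrapers of points of $C$ --- no classification of spherical objects or of autoequivalences supported on $Z$ is needed. For $q\in Y\setminus Z$, the two-dimensionality of the support of the Fourier--Mukai kernel (the proof of Uehara's Theorem 6.6), or in the semi-definite case the restriction of $\psi$ to smooth elliptic fibres over $\bP^1\setminus\varpi(Z)$, shows that $\psi(\cO_q)\simeq\cO_{q'}[2m]$ with $m=0$ unless $Z=\emptyset$. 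Huybrechts' Corollary 5.23 then places $\psi$ in $\Pic(Y)\rtimes\Aut(Y)$ up to the allowed shift, and the $K$-theory and Torelli arguments conclude as in your write-up. To repair your proof you would need to replace the appeal to the structure of $\Auteq^\dagger D_Z(Y)$ with this point-object argument.
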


\begin{proof}

Let's first assume that $D$ is either negative definite or indefinite.
Let $C$ be an arbitrary $(-2)$ curve.
 The structure sheaves  $i_\ast \cO_p$ of points $p \in C$ are in one-to-one correspondence with cones on morphisms from  $i_\ast \cO_C(-1)$ to $i_\ast \cO_C$, so the second assumption implies that $\psi$ permutes these structure sheaves. 
On the other hand,  the proof of \cite[Theorem 6.6]{Uehara2} implies for any $q \in Y \backslash Z$, $i_\ast \cO_q$ must be mapped to $i_\ast \cO_{q'}$, some $q' \in Y \backslash Z$. 
From \cite[Corollary 5.23]{Huybrechts}, it follows that $\psi$ is in $\Pic(Y) \rtimes \Aut(Y)$. Now use the assumption on the $K$-theoretic action: first, considering the image of $[\cO_Y] \in K(Y)$, we see that $\psi = \varphi_\ast$, some $\varphi \in \Aut(Y)$; second, the Torelli theorem (Theorem \ref{thm:GHK_Torelli}) then implies that $\varphi \in N$.

When $D$ is negative semi-definite, as before, $\psi$ must permute structure sheaves of points of $Z$.
By the assumption on $K$-theory, $\psi \in \ker \Theta$.
We want to show that $\psi$ is in $\Pic Y \rtimes \Aut Y$.
After composing with an automorphism of $\bP^1$, $\psi$ restricts to an autoequivalence of $Y \backslash Z$ over $V=\bP^1 \backslash \varpi(Z)$ (note here $Z$ is the union of the reducible fibers). Now since $\psi \in \ker(\Theta)$ for any smooth fibre $F \subset Y \backslash Z$ the induced autoequivalence of $D(F)$ is a composition of an automorphism, tensor by a line bundle, and even shift by the description of autoequivalences of $D(F)$ for $F$ an elliptic curve; in particular for any $x \in Y \backslash Z$, $\psi(\cO_x)=\cO_y[2m]$ for some $y \in Y \backslash Z$ and $m \in \bZ$. Finally $m=0$ if $Z \neq \emptyset$ (since $\psi(\cO_x)$ is a sheaf for $x \in Z$ and openness of this property).
 The remainder of the argument is then similar to the  first case. 

The criterion for $\psi = \Id$ is immediate from the characterisation of $N$ in Lemma \ref{lem:AutYDpt}. 
\end{proof}

We record relations between different categories of known autoequivalences, for later reference:

\begin{lemma}\label{lem:autoequivalence_relations} 
Assume that $\varphi \in \Aut(Y)$, $\cL \in \Pic (Y)$, and  $\mathcal{S}$ is a spherical object in $D(Y)$. Let $T_\mathcal{S}$ denote the spherical twist in $\mathcal{S}$. Then
\begin{enumerate}
\item $\varphi^\ast \circ T_{ \mathcal{S}} = T_{\varphi^\ast \mathcal{S}} \circ \varphi^\ast  $
\item $ (\underline{\phantom{...}} \otimes \cL) \circ T_\mathcal{S} =   T_{\mathcal{S} \otimes \cL}  \circ (\underline{\phantom{...}} \otimes \cL) $
\item $\varphi^\ast \circ (\underline{\phantom{...}} \otimes   \cL) = (\underline{\phantom{...}} \otimes \varphi^\ast  \cL )  \circ \varphi^\ast $
\item $T_{\cO_C (a-1)} \circ T_{\cO_C (a)} = \underline{\phantom{...}}\otimes \cO_Y (C)$, where $C$ is a $(-2)$ curve and $a \in \bZ$. 
\end{enumerate}
\end{lemma}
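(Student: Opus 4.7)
The plan is as follows. Items (1)--(3) are purely formal consequences of the defining cone
$$\mathrm{RHom}(\mathcal{S}, E) \otimes \mathcal{S} \longrightarrow E \longrightarrow T_{\mathcal{S}}(E)$$
of the spherical twist together with the fact that both $\varphi^{\ast}$ and $\underline{\phantom{x}} \otimes \cL$ are monoidal autoequivalences of $D(Y)$. For (1), I would apply $\varphi^{\ast}$ to the defining triangle: since $\varphi^{\ast}$ is an equivalence it commutes with tensor product, with cones, and via the natural isomorphism $\mathrm{RHom}(\mathcal{S},E) \cong \mathrm{RHom}(\varphi^{\ast}\mathcal{S}, \varphi^{\ast}E)$ also with $\mathrm{RHom}$, so the transported triangle is the one defining $T_{\varphi^{\ast}\mathcal{S}}(\varphi^{\ast}E)$. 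For (2), apply $\underline{\phantom{x}} \otimes \cL$ and use the canonical isomorphism $\mathrm{RHom}(\mathcal{S},E) \cong \mathrm{RHom}(\mathcal{S}\otimes\cL, E\otimes\cL)$ obtained by tensoring Hom-complexes with the invertible sheaf $\cL$. Item (3) is the tautological monoidality $\varphi^{\ast}(E \otimes \cL) \cong \varphi^{\ast}E \otimes \varphi^{\ast}\cL$.

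For (4), the plan is to reduce to a classical computation on $C \cong \bP^{1}$. The first observation is that both sides act as the identity on objects of $D(Y)$ whose support is disjoint from $C$: the left-hand side, because $\mathrm{RHom}_{Y}(\cO_{C}(m), E) = 0$ for such $E$, so each factor $T_{\cO_{C}(m)}$ is the identity on it; the right-hand side, because $\cO_{Y}(C)$ restricts to the trivial bundle on any open neighbourhood of such a support. Since both functors also preserve the full subcategory $D_{C}(Y) \subset D(Y)$ of objects with set-theoretic support on $C$, it suffices to verify the isomorphism of functors on $D_{C}(Y)$, and in fact on the spanning family $\{\cO_{C}(b)\}_{b \in \bZ}$.

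The remaining content is the well-known $A_{1}$ spherical-twist relation of Seidel--Thomas. The key geometric input is that by adjunction $\omega_{Y}|_{C} \cong \cO_{C}$ (using $C^{2} = -2$ and $g(C)=0$); this simultaneously verifies the spherical condition for each $\cO_{C}(a)$ in $D(Y)$ and reduces the relevant groups $\mathrm{Ext}^{\ast}_{Y}(\cO_{C}(a), \cO_{C}(b))$ to a computation on $\bP^{1}$ involving the normal bundle $\cO_{C}(-2)$. Evaluating $T_{\cO_{C}(a-1)} \circ T_{\cO_{C}(a)}$ on $\cO_{C}(b)$ and comparing with $\cO_{C}(b) \otimes \cO_{Y}(C) \cong \cO_{C}(b-2)$ then produces the identity; see \cite{Seidel-Thomas} or \cite{Huybrechts} for the detailed $\mathrm{Ext}$ bookkeeping, which is the main (though entirely routine) technical step.
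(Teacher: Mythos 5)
Your treatment of (1)--(3) matches the paper, which simply declares these immediate; your formal argument via the defining triangle is exactly the implicit justification. For (4) the paper likewise does no computation and cites \cite[Lemma 4.15 (i)(2)]{IU}, so in spirit your proof is the same; however, two points in your reduction deserve care. First, the claim that it ``suffices to verify the isomorphism of functors on the spanning family $\{\cO_C(b)\}_{b\in\bZ}$'' is not literally valid: two autoequivalences can agree on every object of a spanning class without being isomorphic as functors, so one must either exhibit a natural transformation that is an isomorphism on the spanning class or, as Ishii--Uehara do, compute the convolution of the Fourier--Mukai kernels and show it is $\cO_\Delta$ twisted by $\cO_Y(C)$; your object-wise bookkeeping on $\cO_C(b)$ plus the vanishing off $C$ does not by itself glue to an isomorphism of functors on all of $D(Y)$. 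Second, the relation $T_{\cO_C(a-1)}\circ T_{\cO_C(a)} = \underline{\phantom{.}}\otimes\cO_Y(C)$ is not actually proved in \cite{Seidel-Thomas} or \cite{Huybrechts}; the correct reference for the kernel-level computation is \cite{IU}, which is what the paper cites. Neither issue reflects a wrong idea --- the strategy is the standard one --- but as written the proof of (4) delegates its only nontrivial step to sources that do not contain it, and the reduction preceding that delegation is stated too strongly.
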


\begin{proof}
These are all immediate apart from the final one, for which the calculation is in \cite[Lemma 4.15 (i) (2)]{IU}. 
\end{proof}
Note also that $\Br_Z \cap \Aut (Y) = \Pic (Y) \cap \Aut (Y) = \{ \Id \} \in \Auteq (Y)$, see \cite[Lemma 4.14]{IU}.

\subsection{`Compactly supported' autoequivalences of $D^b \Coh(U)$} \label{sec:compact_support}

Recall that $D(U)$ is equivalent to the derived wrapped Fukaya category of the mirror, $D^b \cW(M)$. 
 On that side, any compactly supported symplectomorphism of $M$ induces an automorphism of $D^b \cF^{\to} (w)$ (the mirror to $D(Y)$),  and the identity on the fibre $\Sigma$ of $w$, whose Fukaya category $\cF(\Sigma)$ is mirror to $\Perf (D)$. This motivates the following definition:

\begin{definition}\label{def:compact_support}
We say that  $\psi \in \Auteq D(U)$ has compact support if there exists $\widetilde{\psi} \in \Auteq D(Y)$ such that
  $\widetilde{\psi}$ restricts to $\psi$ on $D(U)$, i.e.~we have a commutative diagram:
$$
\xymatrix{
D(Y) \ar[r]^{\widetilde{\psi}} \ar[d]^{i^\ast} & D(Y) \ar[d]_{i^\ast} \\
D(U) \ar[r]^{\psi} & D(U) \\
}
$$
and moreover the following diagram commutes: 
$$
\xymatrix{
D(Y) \ar[rr]^{\widetilde{\psi}} \ar[rd]^{i^\ast} && D(Y) \ar[ld]_{i^\ast} \\
& \Perf(D) & 
}
$$

Let $\Auteq_c D(U)$ denote the subgroup of compactly supported autoequivalences. 
\end{definition}

\begin{definition}
Let $\gamma \subset H_2 (U, \bZ)$ be the class of of a ``crossing torus'' at a node $p$ of $D$: identifying the germ $(p \in D \subset Y)$  with $(0 \in (z_1z_2=0) \subset \bC^2_{z_1,z_2})$, $\gamma$ is the class of the $2$-torus  $(|z_1|=|z_2|=\epsilon)$ for $0< \epsilon \ll 1$.  

\end{definition}

A choice of sign for $\gamma$ is the same as a choice of generator of $H_1(D,\bZ)$. 
Note that $\gamma$ generates the kernel of the map $i_* \colon H_2(U,\bZ) \rightarrow H_2(Y,\bZ)$, where $i \colon U \rightarrow Y$ denotes the inclusion of $U$ in $Y$.

Let $Q =  \langle D_1, \ldots, D_k \rangle^\perp \subset \Pic(Y)$. 
 Assuming $Y = Y_e$, we have that 
$$
Q = \{ \mathcal{L} \in \Pic(Y) \, | \, \mathcal{L}|_D \simeq \cO_D \}  \simeq  \text{Im} (H_2(U) \to H_2(Y)) \simeq H_2(U) / \bZ \cdot \gamma. 
$$
Let $\bar{Q}$ be the image of $Q$ in $\Pic(U)$. This is isomorphic to $Q$ unless $D$ is semi-definite, in which case we have a short exact sequence
$$
0 \to \langle D \rangle \to Q \to \bar{Q} \to 0.
$$

Choosing (for instance) a section $s$ of $\w: Y_e \to \bP^1$ determines a splitting of this sequence: 
\begin{equation}\label{eq:Qbar_split}
Q \simeq \langle D \rangle \oplus (  \langle D_1, \ldots, D_k \rangle^\perp \subset  \langle D, s \rangle^\perp ).
\end{equation}

\begin{conjecture}\label{conj:compact_generators} $\Auteq_c D(U)$ is generated by $\bar{Q}$, $\Aut(Y,D; \text{pt})$ and $\Br_{Z'}$, where $Z' = Z \cap U$. 
\end{conjecture}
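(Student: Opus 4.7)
The strategy is to lift a compactly supported autoequivalence of $D(U)$ to one of $D(Y)$, exploit the structure theorem for $\Auteq D(Y)$, and show that the compact-support condition forces the factors into the three stated families. Given $\psi \in \Auteq_c D(U)$, Definition~\ref{def:compact_support} provides a lift $\widetilde\psi \in \Auteq D(Y)$ with $i^* \widetilde\psi \cong i^*$. Assuming Conjecture~\ref{conj:sphericals}, Theorem~\ref{thm:neg_def_auteq} (in the negative definite or indefinite case) or Theorem~\ref{thm:auto_elliptic} (in the semi-definite case) describes $\Auteq D(Y)$, and one decomposes
$$\widetilde\psi = \varphi_* \circ (\otimes \cL) \circ \beta \circ [n],$$
with $\varphi \in \Aut Y$, $\cL \in \Pic Y$, $\beta$ a product of spherical twists in $\cA$, and $[n]$ a shift; in the semi-definite case one also has a relative Fourier--Mukai factor producing an element of $SL_2(\bZ)$.

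The next step is to translate $i^* \widetilde\psi \cong i^*$ into constraints on each factor. Applied to $\cO_Y$, it forces the shift $n=0$ and, in the semi-definite case, the $SL_2(\bZ)$ factor to be trivial, by comparing rank and degree on the class of $\cO_D$ in $K$-theory and using that $D$ is numerically a fibre of $\w$. Applied to generic line bundles, it yields $\cL|_D \cong \cO_D$, so $\cL \in Q$. Tracking the action on skyscraper sheaves $\cO_p$ for $p \in D$ --- which $\widetilde\psi$ must send to skyscrapers of the same points of $D$, up to the controllable effect of $\beta$ on $(-2)$ curves --- forces $\varphi \in \Aut(Y,D;\text{pt})$.

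The technical heart is the spherical twist factor $\beta$. The generators $T_{\cO_C(a)}$ split into those with $C \subset Z' = Z \cap U$, which restrict to elements of $\Br_{Z'}$ on $D(U)$, and those with $C \subset D$, whose defining spherical objects have support disjoint from $U$ and therefore restrict to the identity on $D(U)$, but whose cumulative action on $D$ via $i^*$ must be trivial for compact support to hold. Using Lemma~\ref{lem:autoequivalence_relations}(4), consecutive pairs of $C \subset D$ twists combine into tensors $\otimes\, \cO_Y(C)$ which can be absorbed into the $\cL$ factor modulo $\langle D_1,\ldots,D_k\rangle$; the remaining $C \subset D$ twists should then cancel modulo the kernel of restriction $\Auteq D(Y) \to \Auteq D(U)$. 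Restricting the simplified $\widetilde\psi$ to $D(U)$ exhibits $\psi$ as an element of $\langle \bar Q, \Aut(Y,D;\text{pt}), \Br_{Z'}\rangle$.

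The main obstacle --- and the reason the statement remains a conjecture --- is rigorously isolating the $C \subset D$ contribution in $\beta$: the commutation rules of Lemma~\ref{lem:autoequivalence_relations} are not diagonal in the decomposition $Z = (Z \cap D) \sqcup Z'$, so rearranging $\beta$ into a canonical form amounts to controlling the kernel of $\Auteq D(Y) \to \Auteq D(U)$, equivalently the relevant part of $\Auteq^{\dagger} D_Z(Y)$. This is exactly what the Ishii--Uehara and Uehara results cited above provide when every connected component of $Z$ is a chain or cycle of $(-2)$ curves, which is why Conjecture~\ref{conj:compact_generators} holds unconditionally in that range; the expected route to the general case passes through the classification of spherical objects in Conjecture~\ref{conj:sphericals}.
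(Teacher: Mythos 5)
Your proposal follows essentially the same route the paper indicates: lift via Definition~\ref{def:compact_support}, invoke the structure theorems (Theorems~\ref{thm:neg_def_auteq} and~\ref{thm:auto_elliptic}) when $Z$ is a disjoint union of chains and cycles of $(-2)$ curves, constrain each factor by the compact-support condition, and isolate the unresolved part as the classification of autoequivalences (equivalently, spherical objects) supported on $Z$. You correctly reproduce both the argument for the known cases and the reason the general statement remains a conjecture, so there is nothing to add.
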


Whenever $Z$ consists of disjoint chains and cycles of $(-2)$ curves, this follows from Theorems \ref{thm:neg_def_auteq} and \ref{thm:auto_elliptic}. In general, the conjecture is known up to rulling out `extra' autoequivalences with support on $Z$ (i.e.~restricting trivially to $Y \backslash Z$).

\section{Mirror symmetry for log Calabi--Yau surfaces: background}\label{sec:hms_background}

\subsection{Homological mirror symmetry}\label{sec:hms_background1}

Recall the following. 

\begin{theorem}\label{thm:hms}\cite[Theorem 1.1]{HK}
Suppose $(Y,D)$ is a log Calabi--Yau surface with maximal boundary, and distinguished complex structure. Then there exists a four-dimensional Weinstein domain $M$ and a Lefschetz fibration $w: M \to \bC$, with fibre $\Sigma$, such that:
\begin{enumerate}
\item $\Sigma$ is a $k$--punctured elliptic curve, where $k$ is the number of irreducible components of $D$; there is a quasi-equivalence $D^\pi \cF(\Sigma) \simeq \Perf(D)$, due to Lekili--Polishchuk \cite{Lekili-Polishchuk}, where $\cF(\Sigma)$ is the Fukaya category of $\Sigma$, with objects compact Lagrangian branes. (These are graded categories; in particular, there is a natural choice of line field on $\Sigma$.)

\item $D^b \cF^{\to} (w) \simeq D(Y)$, where $\cF^{\to} (w) $ is the directed Fukaya category of $w$;

\item $D^b \W(M) \simeq D(Y \backslash D)$, where $\W(M)$ is the wrapped Fukaya category of $M$.

\end{enumerate}
\end{theorem}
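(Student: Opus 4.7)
The plan is to construct $M$ and $w$ explicitly, then establish the three quasi-equivalences in turn, using a compatible set of Lagrangians indexed by the toric model data. First I would build $M$ as a Weinstein handlebody: start from $D^{*}T^{2}$ and attach Weinstein $2$-handles along conormal Legendrian lifts of rational-slope curves in $T^{2}$, one handle per interior blow-up in a chosen toric model $(Y,D) \to (\bar{Y},\bar{D})$, with slopes dictated by the rays of the fan of $\bar{Y}$. By construction this gives $M$ a compatible almost-toric fibration $\pi\colon M \to \bR^{2}$ whose base diagram mirrors the fan/blow-up data. The Lefschetz fibration $w\colon M \to \bC$ is then extracted by projecting the almost-toric base along a generic line, with critical values realised by the nodal fibers of $\pi$; the generic fibre $\Sigma$ is a $k$-punctured elliptic curve, where $k$ is the number of rays.

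For part (1), the equivalence $D^{\pi}\cF(\Sigma) \simeq \Perf(D)$ I would deduce directly from Lekili--Polishchuk's homological mirror symmetry for punctured elliptic curves paired with cycles of $\bP^{1}$'s: a concrete generator of $\cF(\Sigma)$ is given by a collection of embedded Lagrangian circles/arcs whose endomorphism $A_{\infty}$-algebra realises the endomorphism algebra of $\bigoplus_{i}\cO_{D_{i}}$ on $D$. The grading on $\Sigma$ is fixed by the restriction of the holomorphic volume form on $M$ to the fibre, and matches the natural grading on $\Perf(D)$. For part (2), choose a distinguished basis of vanishing paths for $w$, producing Lagrangian thimbles $L_{1},\dots,L_{N}$ whose boundaries are vanishing cycles on $\Sigma$. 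The $L_{i}$ are to be chosen mirror to a full exceptional collection on $Y$, built from a toric tilting sheaf on $\bar{Y}$ twisted by the exceptional divisors of the interior blow-ups. The central task is to identify the Floer $A_{\infty}$-algebra of $\bigoplus L_{i}$ with the $\Ext$-algebra of the corresponding tilting object. Cohomology-level agreement reduces to a finite intersection calculation in $\Sigma$ whose output is pinned down by part (1), after which $A_{\infty}$ formality/rigidity (controlled by the toric combinatorics and a Hochschild obstruction argument) promotes the agreement to a quasi-equivalence. Generation on the $A$-side follows from the fact that thimbles of a Lefschetz fibration generate $\cF^{\to}(w)$, and on the $B$-side from the tilting property of the chosen exceptional collection.

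For part (3), I would appeal to the stop-removal formalism of Ganatra--Pardon--Shende: the Lefschetz fibration $w$ equips $M$ with a stop at infinity whose partially-wrapped category recovers $\cF^{\to}(w)$, and removing this stop yields the fully wrapped category $\W(M)$. The mirror of stop removal is localisation along the subcategory of objects supported on $D$, which on the $B$-side produces $D(Y\setminus D)$; combined with part (2) this gives $D^{b}\W(M) \simeq D(U)$. The hard step throughout will be part (2): controlling the higher $A_{\infty}$-products in $\cF^{\to}(w)$ well enough to match them against the explicit quadratic/toric relations of the tilting algebra on $Y$, rather than only at the level of cohomology. The leverage available is the fibrewise input from (1) together with an interpolation through the almost-toric model, which together should constrain disc counts enough to make the comparison tractable.
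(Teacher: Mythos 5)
This statement is not proved in the present paper at all: it is quoted verbatim from \cite{HK} (Theorem 1.1 there) as background, so there is no in-text proof to compare against. Measured against what \cite{HK} actually does (and against the background recalled in Section \ref{sec:hms_background1} here), your outline tracks the right overall strategy: $M$ as a Weinstein handlebody on $D^\ast T^2$, Lekili--Polishchuk for the fibre, matching a distinguished collection of vanishing cycles to the full exceptional collection built from a toric model, and passing from the directed to the wrapped category by localisation/stop removal mirror to $D(Y) \to D(U)$.

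There are, however, two concrete inaccuracies. First, the Lefschetz fibration $w$ is \emph{not} obtained by projecting the almost-toric base along a generic line with critical values at the nodes of $\pi$: as recalled in Section \ref{sec:hms_background1}, the distinguished collection of vanishing cycles is $\{W_{ij}\}_{i=1,\dots,k,\ j=1,\dots,m_i}$ together with $V_0,\dots,V_{k-1}$, so $w$ has $k+\sum_i m_i$ critical points, whereas $\pi$ has only $\sum_i m_i$ nodal fibres (Theorem \ref{thm:ATstructure}); the $V_l$, mirror to $\cO, f^\ast\cO(\bar D_1),\dots$, have no counterpart among the nodes of $\pi$. In \cite{HK} the Lefschetz fibration is constructed directly from this vanishing-cycle data on $\Sigma$, and its compatibility with the almost-toric fibration is a separate theorem proved afterwards, not the definition of $w$. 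Second, the central $A_\infty$ comparison in part (2) is not settled by a generic formality/Hochschild-rigidity argument; the mechanism in \cite{HK} is to compute morphisms and products between the exceptional objects via their images in the fibre category $D^\pi\cF(\Sigma)\simeq\Perf(D)$ (respectively via restriction of the exceptional collection to $D$), exploiting directedness to reduce the comparison to data already pinned down by part (1). Your sketch gestures at this (``pinned down by part (1)''), but the obstruction-theoretic step as stated is a placeholder rather than an argument, and it is precisely where the work lies.
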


\begin{remark}
When $D$ is semi-definite, i.e.~a cycle of $k$ $(-2)$ curves, $M$ is the Milnor fibre of a rational elliptic singularity, and we have $M = X \backslash E$, where $X$ is a degree $k$ del Pezzo surface, and $E \subset X$ an anti-canonical divisor given by a smooth elliptic curve. \cite{AKO_delPezzo} proves HMS for these spaces with the $A$ and $B$ sides reversed. 
\end{remark}

Any log CY pair $(Y,D)$ can be described by the data of a toric model \cite[Proposition 1.3]{GHK1}. We follow \cite[Definition 2.2]{HK}. A
 choice of toric model is given by
$$
(\bar{Y}, \bar{D}) \leftarrow (\widetilde{Y}, \widetilde{D}) \to (Y,D)
$$
where $(\bar{Y}, \bar{D})$ is a toric pair, the map $(\bar{Y}, \bar{D}) \leftarrow (\widetilde{Y}, \widetilde{D})$ is given by interior blow ups, and $(\widetilde{Y}, \widetilde{D}) \to (Y,D)$ is given by blowing down a sequence of components of $ \widetilde{D}$ (in other words, one goes from $(Y,D)$ to $(\widetilde{Y}, \widetilde{D})$ by performing a sequence of `corner'  blow-ups).
Varying the blow-up locus for interior blow-ups deforms the complex structure. For the distinguished one, we blow up a single favourite point on each component of $\bar{D}$, which is determined by the torus action. (See \cite[Section 2.2]{HK}. Fixing a torus action, one can take the point $-1 \in \bar{D}_i \backslash \cup_{j \neq i} \bar{D}_j \simeq \bC^\ast$.) When $(\widetilde{Y},D) = (\widetilde{Y}_e, D)$, $(\widetilde{Y}, \widetilde{D}) \to (\bar{Y}, \bar{D})$ is entirely determined by numerical data:
\begin{itemize}
\item the self-intersection numbers of the $\bar{D}_i$, say $n_1, \ldots, n_k$;
\item the number of interior blow-ups of each component, say $m_1, \ldots, m_k$.
\end{itemize}
Note also that there's a natural isomorphism $\widetilde{Y}_e \backslash \widetilde{D} \simeq Y_e \backslash D$; they have the same mirror $M$, with different choices of superpotential for the two different compactifications (these both give Lefschetz fibrations on $M$, which are related by a sequence of stabilisations mirror to the sequence of corner blow-ups, see  \cite[Proposition 3.11]{HK}).

Assume $\widetilde{Y}= \widetilde{Y}_e$. Given the data of a pair $f: (\widetilde{Y}, \widetilde{D}) \to (\bar{Y}, \bar{D})$, we have the following:
\begin{itemize}
\item[(1)] a full exceptional collection for $D(\widetilde{Y})$, namely
\begin{multline*}
\cO_{\Gamma_{km_k}}(\Gamma_{km_k}), \cdots, \cO_{\Gamma_{k1}}(\Gamma_{k1}), \cdots, \cO_{\Gamma_{1m_1}}(\Gamma_{1m_1}),\cdots, \cO_{\Gamma_{11}}(\Gamma_{11}), \cO,
f^\ast \cO(\bar{D}_1), \\ \cdots, f^\ast\cO(\bar{D}_1+ \cdots + \bar{D}_{k-1})
\end{multline*}
where $\Gamma_{ij}$ is the pullback of the $j$th exceptional curve over $\bar{D}_i$, for $i=1, \ldots, k$, $j=1, \ldots, m_i$.

\item[(2)] a Lefschetz fibration $w: M \to \bC$ with central fibre a $k$ punctured elliptic curve, and distinguished collection of vanishing cycles
$$
\{ W_{ij} \}_{i=1, \ldots, k, j = 1, \ldots, m_i}, V_0, \ldots, V_{k-1}
$$
which correspond under Theorem \ref{thm:hms} to the full exceptional collection above. 
We use the notation $\vartheta_{ij}$ for the Lefschetz thimble corresponding to $W_{ij}$. 

\item[(3)] a Weinstein deformation equivalence between $M$ and the total space of an explicit almost-toric fibration, below. 
\end{itemize}

\begin{theorem}\label{thm:ATstructure}\cite[Theorem 1.2]{HK}
Let $(Y,D)$ be a log Calabi--Yau surface with maximal boundary and  distinguished complex structure. 
Fix a toric model for $(Y,D)$, with notation as before. 
Let $v_i$ be the primitive vector for the ray associated to $\bar{D}_i$ in the fan of $\bar{Y}$. 
 Then $M$, the mirror space in Theorem \ref{thm:hms}, 
is Weinstein deformation equivalent to the total space of an almost-toric fibration with base a two-dimensional integral affine space; smooth fibres Lagrangian two-tori; and  a nodal fibre for each of the interior blow-ups on $\bar{D}_i$, with invariant line in direction $v_i$. All invariant lines are concurrent. (Note this implies they are cyclically ordered by the indices of the $v_i$.)
\end{theorem}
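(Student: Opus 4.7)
The plan is to construct the almost-toric fibration on $M$ directly from its Weinstein handlebody presentation and then match the outcome with the Lefschetz fibration data of Theorem \ref{thm:hms}. As recalled in the introduction, $M$ can be presented by starting with $D^*T^2$---equipped with the standard Lagrangian torus fibration $D^*T^2 \to D^2$ over the cotangent fibre disk---and attaching $m_i$ Weinstein $2$-handles for each toric boundary component $\bar{D}_i$ of $\bar{Y}$, along conormal Legendrian lifts of rational-slope circles in the zero section $T^2$ whose slopes are determined by the primitive vector $v_i$.

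The crux is a local-to-global statement: performing a Weinstein $2$-handle attachment along a conormal Legendrian lift of a rational-slope circle in $T^2$ is symplectically equivalent, up to Weinstein deformation, to executing a Symington nodal trade in the almost-toric fibration, which modifies one smooth torus fibre into a nodal fibre whose invariant ray points in the direction $v_i$. To see this, one identifies a neighbourhood of the attaching Legendrian inside $\partial D^*T^2$ with a local model for the boundary of a nodal chart, and checks that the $2$-handle fills it in the expected way. Iterating over all $\sum m_i$ handle attachments then produces an almost-toric fibration on $M$ whose nodal fibres are in bijection with the interior blow-ups of the toric model and carry the claimed invariant directions.

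Concurrence of the invariant lines follows from the distinguished complex structure hypothesis $Y = Y_e$: all $m_i$ interior blow-ups on $\bar{D}_i$ are taken at the single favourite point on $\bar{D}_i$ determined by the torus action, which on the mirror side corresponds to attaching all $m_i$ handles along small perturbations of a single Legendrian conormal circle. As a result, the associated nodal singularities all sit over a common centre in the base $\bR^2$, and the cyclic order of their invariant rays tracks the cyclic order of the toric rays $v_i$ in the fan of $\bar{Y}$.

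The main obstacle is the careful verification of the local-to-global matching between Weinstein handle attachment and nodal trades, together with the check that the Weinstein structure assembled this way is deformation equivalent to the one supplied by the Lefschetz fibration $w: M \to \bC$. The latter compatibility can be established by realising the distinguished basis $\{W_{ij}\}, V_0, \ldots, V_{k-1}$ of vanishing cycles inside the almost-toric total space as matching spheres between nodal fibres and as Lagrangian disks fibred over straight arcs emanating from the common concurrency point in the base, and comparing this with the description dictated by the exceptional collection in item (1) preceding the theorem.
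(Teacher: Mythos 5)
This theorem is imported verbatim from \cite[Theorem 1.2]{HK}: the paper gives no proof of its own, only the pointer immediately following the statement to \cite[Section 5.2]{HK} (for the exact torus mirror to the $(\bC^\ast)^2$ chart) and \cite[Sections 6.1--6.2]{HK} (for the nodal fibres). Your route is the intended one — present $M$ as $D^\ast T^2$ with Weinstein two-handles attached along the conormal lifts $S^1_{v_i^\perp}$, and match each handle attachment with the insertion of a nodal fibre with invariant direction $v_i$ — so at the level of strategy you are reproducing the argument the paper is relying on. A few imprecisions are worth flagging. The Symington move you invoke is not a nodal trade (that operation exchanges a toric corner for an interior node); the relevant local statement is that a neighbourhood of a nodal fibre together with an adjacent smooth fibre is, up to Weinstein deformation, a two-handle attached to $D^\ast T^2$ along the conormal of the vanishing-cycle circle. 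The nodal singularities do not ``sit over a common centre'': they lie at distinct points along rays emanating from the common centre, and it is the invariant \emph{lines} that are concurrent; this concurrence is built into the construction (every handle is attached to the boundary of one and the same $D^\ast T^2$), while the hypothesis $Y=Y_e$ is what ensures the mirror is this exact Weinstein manifold rather than a non-exact or bulk-deformed version of it. Finally, in your proposed verification the $V_l$ are not matching spheres between nodal fibres: only the thimbles $\vartheta_{ij}$ for the $W_{ij}$ appear as Lagrangian discs over arcs from the centre to the nodal critical values, whereas it is the \emph{dual} cycles $V_{k-1}^\ast,\ldots,V_0^\ast$ whose iterated Polterovich surgery assembles the central torus fibre $T$ — this is \cite[Theorem 5.5]{HK}, and it is exactly the identification the present paper re-uses later (see Figure \ref{fig:torus_for_mutation_1}).
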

Denote by $\gamma^\vee$ the class of a fibre of the almost-toric fibration.

An explicit identification is constructed in \cite[Section 5.2]{HK} for the exact torus mirror to the $(\bC^*)^2$ chart `inherited' from $\bar{Y} \backslash \bar{D}$, and then \cite[Section 6.1--2]{HK} for the nodal fibres. 
We often refer to these as `Symington almost-toric fibrations', owing to \cite{Symington}. 
One should think of them as SYZ fibrations. The integral affine bases  already appear explicitly in \cite{GHK1}. 
As a Weinstein handle-body, $M$ is given by attaching two-handles to $D^\ast T^2$; there is one two-handle for each interior blow-up of a component $\bar{D_i}$ of $\bar{D}$. The attaching Legendrian in $S(D^\ast T^2)$ is given as follows: letting $N$ denote the toric lattice, take the line $S^1 v_i \hookrightarrow T^2 = N \otimes S^1$ (given by quotienting $\bR v_i \hookrightarrow T^2 = N \otimes \bR$), and take its conormal Legendrian lift associated to the orientation induced by $v_i$. We denote this conormal lift by $S^1_{v_i^\perp}$.

\begin{remark}

 Given a toric model for $(Y,D)$, we can also describe an almost-toric fibration with total space $U$ by starting with the toric fibration $\bar{Y} \to \bR^2$, and adding nodal edge cuts for each of the interior blow ups on $\bar{D}$, following \cite[Section 5.4]{Symington}.
Topologically, the fibrations on $U$ and $M$ as readily dual to each other: in particular, there's an obvious correspondence between nodal fibres, and with respect to the standard bases, the monodromy about one is the transpose of the monodromy about the other, with dual invariant lines. 
(The reader may recall that as well as being mirror, $U$ and $M$ are diffeomorphic -- see e.g.~\cite[Remark 1.4]{HK}.)

\end{remark}

\subsection{Elementary transformations as almost-toric moves}

In \cite[Section 3]{HK}, we prove that as a Lefschetz fibration, $w: M \to \bC$ is independent of choices. 
In the present paper, we will later need a strengthening of \cite[Section 6.1.3]{HK}: a suitably fine compatibility between changes of toric models for $(Y,D)$, mutations of the associated mirror Lefschetz fibration $w: M \to \bC$, and changes to the almost-toric fibration on the mirror $M$ (itself also associated to a given choice of toric model for $(Y,D)$). We establish this in this subsection.

\subsubsection{Change of toric model: B side}
\begin{proposition}\label{prop:change_of_toric_model}(\cite[Proposition 3.27]{HK})
Given a pair $(Y,D)$, any two choices of toric models can be related  by a sequence of moves of the following two types:
\begin{itemize}
\item toric blow ups \cite[Definition 3.23]{HK}. These are given by performing a toric blow up on $(\bar{Y},\bar{D})$, and the corresponding blow-up on $(\widetilde{Y}, \widetilde{D})$. 
\item elementary transformations \cite[Definition 3.24]{HK}. Suppose that there are two opposite toric rays in the fan for $(\bar{Y},\bar{D})$, say $v_i = -v_j$, and that $m_i > 0$. The elementary transformation replaces $n_i$ with $n_i - 1 $ and $n_j$ with $n_j + 1 $, and similarly for $m_i$ and $m_j$. The other $n_l$ and $m_l$ are unchanged, and $(Y,D)$ itself is unchanged. The toric pair changes, say to $(\bar{Y}^\natural, \bar{D}^\natural)$. 
\end{itemize}

\end{proposition}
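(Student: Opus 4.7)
The plan is to reduce the statement to the two-dimensional volume-preserving Sarkisov programme, following the strategy of Corti--Kaloghiros and CKS. Fix two toric models $f^a : (\widetilde{Y}^a, \widetilde{D}^a) \to (\bar{Y}^a, \bar{D}^a)$, $a=1,2$, of the same pair $(Y,D)$. Each determines a distinguished open torus $T^a \simeq (\bC^*)^2 \subset \bar{Y}^a \setminus \bar{D}^a$, and, since interior blow-ups and corner blow-downs do not affect the torus, the inclusion $T^a \hookrightarrow U = Y \setminus D$ is an open immersion preserving the canonical holomorphic volume form $\Omega$ on $U$. Composing, we obtain a volume-preserving birational map $\varphi : T^1 \dashrightarrow T^2$, which after choosing an identification $T^1 \simeq T^2 \simeq (\bC^*)^2$ becomes an element of $\Bir((\bC^*)^2,\Omega)$.

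The next step is to factorize $\varphi$. The two-dimensional Sarkisov programme for log Calabi--Yau pairs (\cite{Corti-Kaloghiros, CKS}) guarantees that any such volume-preserving birational map between toric models factors as a finite composition of links of three types: (i) a change of torus coordinates by $\psi \in \GL_2(\bZ)$; (ii) a ``divisorial'' link, namely a toric blow-up of a corner of $\bar{D}^a$ or its inverse; and (iii) a ``Sarkisov link of fiber type'' between Hirzebruch surfaces, which at the level of the toric pair is exactly the elementary transformation described in Proposition~\ref{prop:change_of_toric_model}. One then follows $\varphi$ through this factorization, reading off at each step the associated modification of the toric model.

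The translation of each link into a move on toric models is the key bookkeeping task. A change of lattice basis $\psi \in \GL_2(\bZ)$ is absorbed by the choice of identification $T^a \simeq (\bC^*)^2$ and does not affect the underlying toric model as an abstract datum; it may, however, require some preliminary refinement of the fan to make the relevant rays lie in the same half-plane, and this is accomplished via toric blow-ups. A toric blow-up or blow-down of a corner corresponds directly to the first type of move in the proposition. A Sarkisov link of fiber type exchanges two opposite rays $v_i = -v_j$ and redistributes the interior blow-ups on $\bar{D}_i$ and $\bar{D}_j$; checking that it exactly matches our definition amounts to unwinding the effect on the self-intersection numbers $n_i$ and blow-up multiplicities $m_i$, which is the standard computation for elementary transformations of ruled surfaces pulled back along the interior blow-ups.

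The main obstacle is the first reduction: phrasing the comparison of two toric models as a single volume-preserving birational map to which the Sarkisov machinery applies, and then ensuring that each Sarkisov link is realized by a combination of the two moves allowed (in particular, that the ``auxiliary'' toric blow-ups needed to realise $\GL_2(\bZ)$ changes of basis can always be inserted without leaving the class of toric models under consideration). Once this bookkeeping is set up, the statement follows directly from the Sarkisov factorization theorem. An alternative, more direct argument---which we expect also to go through---is to refine the fans of $\bar{Y}^1$ and $\bar{Y}^2$ to a common fan by toric blow-ups, and then to use induction on the total number of interior blow-ups, with the inductive step executed by a single elementary transformation; this is essentially the proof we gave in \cite{HK}.
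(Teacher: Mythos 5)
Your plan follows the same route the paper itself indicates: the paper offers no independent proof, simply citing \cite[Proposition 3.27]{HK} and remarking that the statement follows from the two-dimensional volume-preserving Sarkisov programme via the exposition in \cite{CKS}, which is precisely the reduction-to-Sarkisov-links strategy you outline. Your translation of the three link types into toric blow-ups, absorbed $\GL_2(\bZ)$ changes of basis, and elementary transformations (modelled on $\bF_a \leftrightarrow \bF_{a-1}$) matches how the paper and \cite{HK} treat these moves, so the proposal is correct and essentially identical in approach.
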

A toy model for an elementary transformation is between $\bP^1 \times \bP^1$ (blown up at a point) and the Hirzebruch surface $\bF_1$ (blown up at a point).

This follows from the two-dimensional Sarkisov programme, using the exposition in \cite{CKS}. It is also closely related to J.~Blanc's work on birational automorphisms of $(\bC^*)^2$ \cite{Blanc}.

\subsubsection{Change of toric model: A side}\label{sec:change-toric-model-A-side}

Toric blow-ups correspond to stabilisations of the Lefschetz fibration, with a natural compatibility of the respective full exceptional collections, worked out in \cite[Section 3.4.5]{HK}. They don't change the almost-toric fibration.

Suppose that $\widetilde{Y} = \widetilde{Y}_e$, and that  we have two different toric models for $(\widetilde{Y}, \widetilde{D})$ related by an elementary transformation. The two toric models give two different full exceptional collections for $D(\widetilde{Y})$. We showed that these are related by a sequence of mutations (i.e.~Hurwitz moves), and that the two mirror distinguished collections of vanishing cycles are related by the same sequence of mutations \cite[Section 3.4.8]{HK}. 

Assume for convenience that we have $m_j > 1$ and $v_j = -v_k$. (The mutations needed to allow the cyclic reordering of labels are described in \cite[Section 3.4.1]{HK}.) On the symplectic side, the sequence of Hurwitz moves is encoded in Figure \ref{fig:elementary_transformation}, and goes as follows. 
First, mutate $W_{j m_j}$ to the end of the list of vanishing cycles. (Note that by \cite[Proposition 3.15]{HK} on the total monodromy of the fibration, this leaves the vanishing cycle itself unchanged.) 
Then mutate $V_{k-1}, \ldots, V_j$ over it to get $V_{k-1}^\natural, \ldots, V_j^\natural$. Then mutate $W_{j m_j}$ over $V_{j-1}, \ldots, V_0$, which gives the vanishing cycle $W_k$. (We have that $V_l^\natural = V_l$ for $l < j$. This description also ignores trivial Hurwitz moves of meridians over each other.) 

\begin{figure}[htb]
\begin{center}
\includegraphics[scale=0.38]{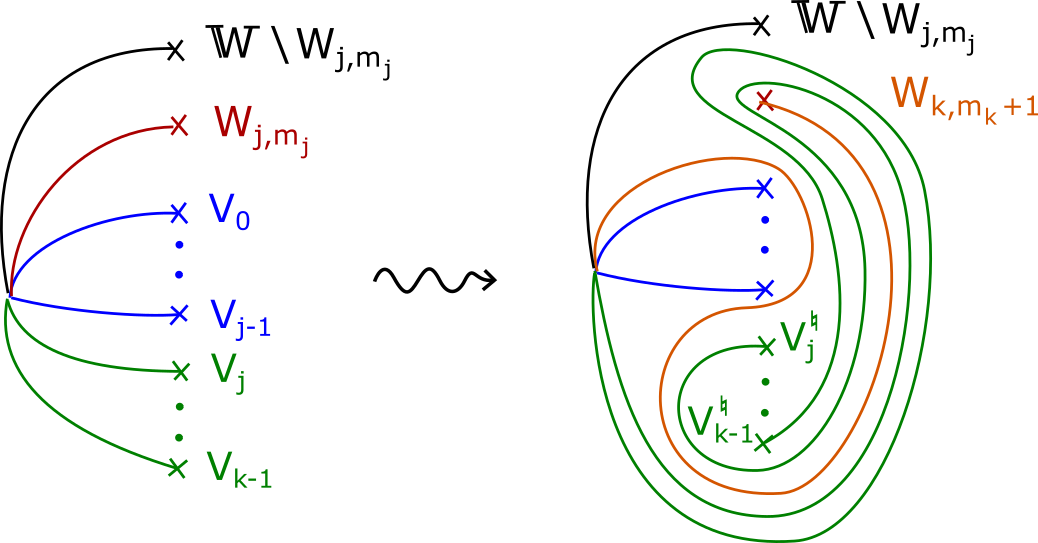}
\caption{Base of the Lefschetz fibration $w: M \to \bC$, with vanishing paths for the initial distinguished basis (left), and for the one after the elementary transformation (right). $\mathbb{W}$ denotes the collection of all the $W_{il}$.}
\label{fig:elementary_transformation}
\end{center}
\end{figure}

We will use the following well-known moves on almost-toric fibrations: 

\begin{itemize}

\item nodal slide: this deforms the almost-toric fibration by sliding a node along its invariant line (without passing any other nodes that might be on that line), see \cite[Section 6.1]{Symington}. Formally, we have a one-parameter family of exact symplectic manifolds $\{ (M_t, \omega_t) \}_{t \in [0,1]}$, which agree outside a compact set, and are symplectically isotopic by Moser.

\item cut transfers: for a given node, instead of describing the almost-toric fibration using one invariant half-line, switch to using the other invariant half-line \cite[Definition 2.1]{Vianna}. This should be thought of as changing the description of an almost-toric fibration rather than the fibration itself; it corresponds to a piecewise linear transformation of the base. 

\end{itemize}

A nodal slide implicitly uses a choice of reference Lagrangian section in a neighbourhood of the nodal fibre. Up to fibrewise Hamiltonian isotopy, there is  is a unique one locally, and so this choice is usually suppressed. We will sometimes later need a global choice of Lagrangian section.
Note that if we definine an almost-toric fibration by starting with $T^\ast \bR^2 / \bZ^2 \to \bR^2$ and introducing Symington cuts, we get a prefered zero-section, inherited from the zero section in $T^\ast \bR^2$. 
 Unless otherwise specified, we will work with these; they are taken to each other under nodal slides and cut transfers.

Following \cite[Section 6.1.3]{HK}, elementary transformations are `mirror' to nodal slides combined with cut transfers. We want to  make this precise. First, notice that the combinatorics matches up.

\begin{lemma}\label{lem:slide_combinatorics} Assume $\widetilde{Y} = \widetilde{Y}_e$.
Suppose $(\widetilde{Y}, \widetilde{D}) \to (\bar{Y}, \bar{D})$ and  $(\widetilde{Y}, \widetilde{D}) \to (\bar{Y}^\natural, \bar{D}^\natural)$ are related by an elementary transformation as above: assume that  $v_j = -v_k$ in the fan for $\bar{Y}$, and that $m_j > 0$, so that we're replacing $n_j$ with $n_j^\natural = n_j - 1 $ and $n_k$ with $n_k^\natural = n_k + 1 $, and similarly for the $m_l$. Start with the mirror almost-toric fibration associated to the first toric model. The one for the second toric model can be obtained by sliding the `final' node (i.e.~the one closest to the central fibre) with invariant direction $v_j$ past the central fibre, and then transfering the cut to the $v_k = -v_j$ side. 
\end{lemma}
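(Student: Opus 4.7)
The plan is purely combinatorial, as the lemma's name and the preceding remark suggest. By Theorem~\ref{thm:ATstructure}, each mirror almost-toric fibration is specified, up to Weinstein deformation, by the directions $\{v_i\}$ of the toric rays together with the multiplicities $\{m_i\}$ of interior blow-ups: the fibration has, for each $i$, precisely $m_i$ nodal fibres whose invariant lines pass through a common concurrence point $p_0$ in direction $v_i$, cyclically ordered. The lemma reduces to checking that the prescribed slide-and-transfer operation modifies this data exactly as an elementary transformation does on the $B$-side, namely $m_j \mapsto m_j-1$ and $m_k \mapsto m_k+1$ with all other $m_l$ unchanged.

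First I would observe that since $v_j = -v_k$, the $m_j$ nodes of direction $v_j$ and the $m_k$ nodes of direction $v_k$ all lie on a single straight line $\ell$ through $p_0$ spanned by $v_j$, the $v_j$-nodes on one side of $p_0$ and the $v_k$-nodes on the other. Let $N$ denote the $v_j$-node closest to $p_0$; since $m_j>0$, $N$ exists. A nodal slide along an invariant line is a permissible almost-toric move provided no other node is crossed \cite[Section 6.1]{Symington}. As $N$ is innermost on the $v_j$ side, I can slide $N$ along $\ell$ monotonically toward $p_0$, through $p_0$ (which is a smooth torus fibre in the interior of $\ell$), and stop at any point strictly between $p_0$ and the innermost $v_k$-node.

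After the slide, $N$ is geometrically located in the $v_k$ half-line, but is still being described via the outgoing $v_j$-directed invariant half-line. Applying a cut transfer at $N$ replaces this description by the opposite half-line, with outgoing direction $v_k$, without altering the underlying fibration. Counting nodes by invariant direction after both moves yields $m_j-1$ in direction $v_j$, $m_k+1$ in direction $v_k$, and $m_l$ unchanged for $l \neq j,k$. Since $v_j$ and $v_k$ are opposite, the cyclic ordering of the remaining rays is preserved, and this is precisely the combinatorial data Theorem~\ref{thm:ATstructure} attaches to the second toric model $(\widetilde{Y},\widetilde{D}) \to (\bar{Y}^\natural,\bar{D}^\natural)$.

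The only point requiring care is the admissibility of the slide traversing $p_0$: this is not an obstruction, because $p_0$ is a smooth torus fibre (only invariant half-lines, not nodes, meet there), and by construction no other node lies on the interior of the segment of $\ell$ swept by $N$. Hence the operation is a legitimate sequence of almost-toric moves, and the resulting integral affine base-with-nodes is identified with the one coming from the second toric model.
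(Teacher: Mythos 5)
There is a genuine gap, and it sits exactly where your proof is most confident: the claim that after the slide and cut transfer the data is ``$m_j-1$ nodes in direction $v_j$, $m_k+1$ in direction $v_k$, and $m_l$ nodes in direction $v_l$ for $l\neq j,k$'', and that this is the data Theorem \ref{thm:ATstructure} attaches to the second toric model. Both halves of that sentence are false as stated. First, a cut transfer is not a passive relabelling: by \cite[Definition 2.1]{Vianna} it changes the base diagram by a piecewise integral affine map which is the identity on one closed half-plane bounded by the invariant line $\ell = p_0 + \bR v_j$ and is the nodal monodromy (a unit shear fixing $v_j$) on the other half-plane. Since in general there are rays $v_l$ on both sides of $\ell$, the apparent directions of the invariant lines in the sheared half-plane genuinely change. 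Second, the target data is not what you wrote either: the fan of $\bar{Y}^\natural$ is \emph{not} the fan of $\bar{Y}$, because the elementary transformation changes the self-intersection numbers $n_j, n_k$, and via the relations $v_{l-1}+v_{l+1}+n_l v_l=0$ this changes the rays $v_l^\natural$ for $l$ on one side of the line spanned by $v_j$. (Concretely, for $\bF_a \to \bF_{a+1}$ the ray $(-1,a)$ becomes $(-1,a+1)$ while $(1,0)$ is unchanged: a unit shear of one half-plane.) The actual content of the lemma is that these two discrepancies cancel --- the shear produced by the cut transfer equals the shear relating the two fans, on the same half-plane and with the same sign --- and your argument never verifies this. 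As written it would ``prove'' that the new fibration has the same rays as the old one with only the multiplicities changed, which is not the fibration associated to $(\bar{Y}^\natural,\bar{D}^\natural)$.

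For comparison, the paper sidesteps the general bookkeeping by invoking \cite[Proposition 3.37]{HK}: every elementary transformation is obtained by toric blow-ups from one of the model ones between $\bF_{a-1}$ and $\bF_a$ (each blown up once at an interior point), so it suffices to verify the matching of shears by hand in those model cases. Your direct approach can be repaired, but only by adding the explicit computation that the monodromy shear applied by the cut transfer to the relevant half-plane coincides with the change of fan $\{v_l\} \to \{v_l^\natural\}$ dictated by $n_j \mapsto n_j-1$, $n_k \mapsto n_k+1$; the parts of your argument about admissibility of the slide through the smooth fibre $p_0$ are fine and not where the work lies.
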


\begin{proof}
Recall that all elementary transformations are obtained by toric blow ups on one of the `model' ones: the Hirzebruch surfaces $\bF_a$ and $\bF_{a-1}$ each blown up once for $a \geq 1$ (by convention $\bF_0 = \bP^1 \times \bP^1$)  \cite[Proposition 3.37]{HK}. This means that it's enough to check these cases, which can be readily done by hand. 
\end{proof}

The two almost-toric fibrations determine two different Weinstein handbody descriptions for $M$:

\begin{itemize}

\item
For the original fibration, start with $D^\ast T$, where $T$ is the original (exact) central fibre, i.e.~the intersection point of the invariant lines; and, for each nodal fibre with invariant direction $v_i$, glue a Weinstein two-handle $D^\ast D^2$ with attaching Legendrian the half-conormal to $S^1_{v_i^\perp} \subset T = \bR^2 / \bZ^2$ with coorientation given by $v_i$. Call these $D^\ast D_{i l}$, $i=1, \ldots, k$, $l=1, \ldots, m_i$. (For $m_i > 1$, we get an $A_{m_i-1}$ chain of Lagrangian spheres.)

\item  For the new fibration, start with $D^\ast T'$, where $T'$ is the new central fibre, and glue Weinstein two-handles similarly to above, say $D^\ast D^\natural_{i l}$, $i=1, \ldots, k$, $l=1, \ldots, m_i^\natural$. 

\end{itemize}

These are related by a Lagrangian mutation of $T$ over $D_{j m_j}$. Recall Lagrangian mutations are surgery-type operations modelled on passing from a Clifford to a Chekanov torus; they change the Lagrangian skeleton of the Weinstein domain but not the space itself. These constructions go back to \cite{Auroux_Gokova}, see \cite[Section 4.4]{Pascaleff-Tonkonog} for a careful general description. (They were first systematically used in \cite{Vianna}; we are in the same setting as \cite{STW}.) 
In particular, in our case, we immediately have that $T'$ is the mutation of $T$ over $D_{j m_j}$, and under this mutation $D_{j m_j}$ becomes $D^\natural_{k m_k^\natural}$ (this reflects the fact that $v_j  =- v_k$), and $D_{i l}$ just becomes $D^\natural_{i l}$ otherwise.

We're now ready to complete our discussion: 
\begin{lemma}
Suppose $(\widetilde{Y}, \widetilde{D}) \to (\bar{Y}, \bar{D})$ and  $(\widetilde{Y}, \widetilde{D}) \to (\bar{Y}^\natural, \bar{D}^\natural)$ are related by an elementary transformation as above. Then the two mirror operations are compatible: under the explicit identification in Theorem \ref{thm:ATstructure} between $M$ as the total space of the Lefschetz fibration associated to the first toric model, resp.~second one, and as the total space of the almost-toric fibration associated to the first toric model, resp.~second one, the sequence of Hurwitz moves of Figure \ref{fig:elementary_transformation} induces the Lagrangian mutation described above.
\end{lemma}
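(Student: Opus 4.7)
The plan is to reduce the statement to a model case and verify compatibility there directly. By Proposition \ref{prop:change_of_toric_model} and \cite[Proposition 3.37]{HK}, any elementary transformation between toric models of $(\widetilde{Y}, \widetilde{D})$ is obtained, after a sequence of toric (corner) blow-ups, from the model case of elementary transformations between Hirzebruch surfaces $\bF_a$ and $\bF_{a-1}$ (each with one interior blow-up). Toric blow-ups do not alter the almost-toric fibration on $M$; on the Lefschetz side, they induce stabilisations of $w: M \to \bC$ whose compatibility with the identification of Theorem \ref{thm:ATstructure} was established in \cite[Section 3.4.5]{HK}. The Hurwitz moves of Figure \ref{fig:elementary_transformation} only involve the vanishing cycles $W_{il}$ and the $V_l$, so we may arrange the toric blow-ups to commute with them, reducing to the model case.

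In the model case, the almost-toric fibration has precisely two nodes, one with invariant direction $v_j$ and one with $v_k = -v_j$, and a central torus $T$ at the intersection of the invariant lines. By Lemma \ref{lem:slide_combinatorics}, the elementary transformation corresponds to sliding the $v_j$-node past the central fibre and transferring its cut to the $v_k$-side; this is precisely the data of a Lagrangian mutation of $T$ across the disc $D_{j m_j}$, yielding a new skeleton $T' \cup D^\natural_{k m_k^\natural} \cup (\text{unchanged } D^\natural_{il})$, with $D^\natural_{k m_k^\natural}$ the mutated image of $D_{j m_j}$, in the sense of \cite[Section 4.4]{Pascaleff-Tonkonog}. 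On the Lefschetz side, we track directly the successive Hurwitz mutations: first, sliding $W_{j m_j}$ past the $V_l$ with $l \geq j$ produces the mutated $V_l^\natural$ (with $V_l$ unchanged for $l<j$), and then mutating $W_{j m_j}$ successively across $V_{j-1}, \ldots, V_0$ produces a new vanishing cycle $W_k^{\mathrm{new}}$ on the fibre $\Sigma$. The task is to identify $W_k^{\mathrm{new}}$ with the attaching Legendrian of $D^\natural_{k m_k^\natural}$ on $\partial D^\ast T'$, and the curves $V_l^\natural \subset \Sigma$ with the cycles on $T'$ inherited from $V_l \subset T$ under the mutation.

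The main obstacle is the bookkeeping required to relate the two identifications of Theorem \ref{thm:ATstructure} associated to the two toric models. Concretely, one must match up (i) the attaching data of the Weinstein $2$-handles in the two handle decompositions $D^\ast T \cup (\text{handles})$ and $D^\ast T' \cup (\text{handles})$, which differ by the explicit mutation formula describing how a rational-slope circle on $T$ transforms under mutation over $D_{j m_j}$; (ii) the action of the Dehn twist in $W_{j m_j}$ on the fibre $\Sigma$, which realises the combinatorial effect of the Hurwitz move on each $V_l$; and (iii) the resulting gradings and orientations, so that the distinguished collections match the exceptional collections dictated by the $B$-side toric model. Once these are aligned, both procedures produce the same Weinstein handlebody presentation of $M$ with skeleton $T' \cup \bigcup_{i,l} D^\natural_{il}$, and Theorem \ref{thm:ATstructure} applied to $(\bar{Y}^\natural, \bar{D}^\natural)$ shows that this presentation is exactly the one attached to the new toric model, completing the proof.
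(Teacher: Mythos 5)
Your proposal correctly frames the statement and identifies the right objects, but it stops precisely where the proof has to begin. The last paragraph lists what ``must'' be matched up --- the attaching data of the two handle decompositions, the effect of the Dehn twist in $W_{j m_j}$ on the $V_l$, the gradings --- and then concludes with ``Once these are aligned, both procedures produce the same Weinstein handlebody presentation.'' That alignment \emph{is} the content of the lemma; asserting that it can be done is not an argument for it. In particular you never actually show that the torus $T'$ produced by the post-Hurwitz-move distinguished collection is the Lagrangian mutation of $T$ over the disc $D_{j m_j} = \vartheta_{j m_j}$, which is the one genuinely geometric claim here. The paper supplies this via three concrete steps: (a) the factored iterated Polterovich surgery description of $T$ (and of $T'$) coming from \cite[Theorem 5.5]{HK}, which exhibits $T$ as two cylinders glued along $(W_j \sqcup W_k) \times [0,1]$ and makes the handle $D^\ast D_{j m_j}$ visible; (b) a Hamiltonian isotopy of the thimble $\vartheta_{j m_j}$ relative to $T$ identifying it with the thimble over the new vanishing path for $W_{k, m_k^\natural}$; and (c) a comparison with the Clifford/Chekanov local model of \cite[Section 4]{Pascaleff-Tonkonog}, which certifies that $T'$ is the mutation of $T$ over $\vartheta_{j m_j}$. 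None of (a)--(c) appears in your write-up, so the proof is incomplete.

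A secondary concern: your opening reduction to the model case between $\bF_a$ and $\bF_{a-1}$ is looser than you present it. The Hurwitz moves of Figure \ref{fig:elementary_transformation} mutate $W_{j m_j}$ over \emph{all} of $V_{k-1}, \ldots, V_0$, and the number of these grows under toric blow-ups, so the claim that the blow-ups ``commute'' with the moves is not automatic. The paper does not reduce the whole lemma to the model case; it uses the fact that elementary transformations are pulled back from model ones only to guarantee that the second base case of \cite[Theorem 5.5]{HK} applies, and then runs the argument for the general configuration.
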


\begin{proof}

$T$ is given by iterated Polterovich surgery on the dual collection  $V^\ast_{k-1}, \ldots, V^\ast_1, V^\ast_0 (= V_0)$ \cite[Theorem 5.5]{HK}. Moreover, as all elementary transformations are pulled back from model ones between $\bF_{a-1}$ and $\bF_a$, inspecting the proof of \cite[Theorem 5.5]{HK}, we see that the second of the two base cases considering therin always applies for us. Further, we see that the iterated Polterovich surgeries can be factored into gluing $V^\ast_{k-1}, \ldots, V^\ast_j$ and $V^\ast_{j-1}, \ldots, V^\ast_0$, with each giving a cylinder, glued by the product of $W_j \sqcup W_k$ with a small interval. See Figure \ref{fig:torus_for_mutation_1}. In particular, from this perspective we readily see the handle $D^\ast D_{j m_j}$, which corresponds to $W_{j m_j}$ under \cite[Proposition 6.3]{HK}.

\begin{figure}[htb]
\begin{center}
\includegraphics[scale=0.37]{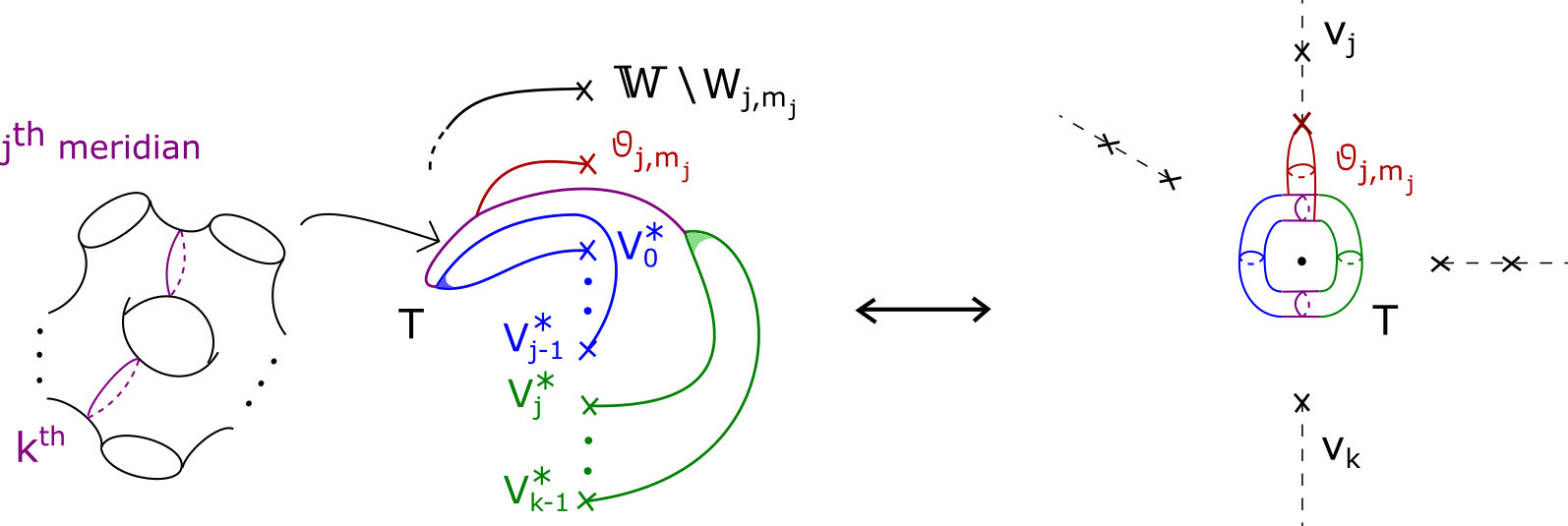}
\caption{Visualising the Weinstein equivalence between $M$ and the total space of the almost-toric fibration: before the mutation.}
\label{fig:torus_for_mutation_1}
\end{center}
\end{figure}

Now notice that in the Lefschetz fibration picture, the thimble $\vartheta_{j m_j}$ (corresponding to $W_{j m_j}$) can be Hamiltonian isotoped relative to $T$ to be viewed as the vanishing thimble above the vanishing path for $W_{k, m_k+1}= W_{k, m_k^\natural}$, which appears after the Hurwitz moves for the elementary transformation. This isotopy respects the Lefschetz fibration, but not the almost-toric fibration. This is illustrated in Figure \ref{fig:torus_for_mutation_2}. 

\begin{figure}[htb]
\begin{center}
\includegraphics[scale=0.37]{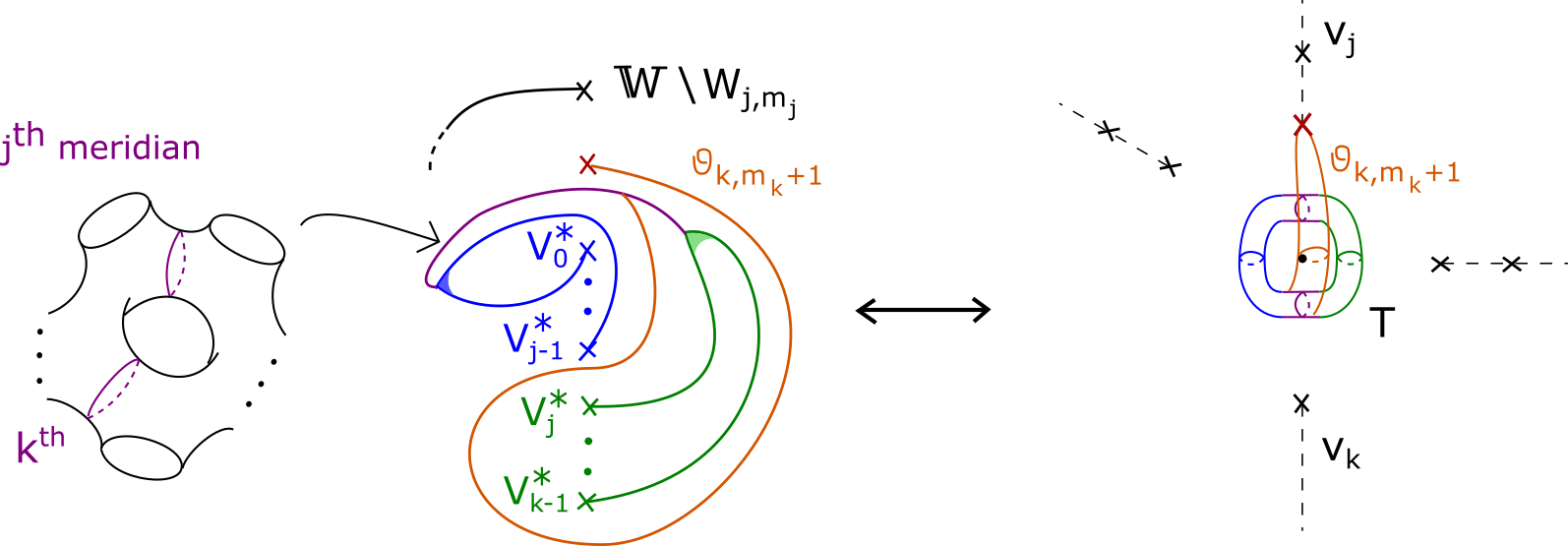}
\caption{Visualising the Weinstein equivalence between $M$ and the total space of the almost-toric fibration: preparing for the mutation.}
\label{fig:torus_for_mutation_2}
\end{center}
\end{figure}

The torus $T'$ is given, on the Lefschetz side, by iterated Polterovich surgery on the dual collection  $(V_{k-1}^\natural)^\ast, \ldots, (V_1^\natural)^\ast, (V_0^\natural)^\ast$, which can be factored as with the description for $T$. On the other hand, there is a standard model for a Lagrangian mutation in a Lefschetz fibration: the local model for the mutation between a Clifford and Checkanov torus. This is carefully described in \cite[Section 4]{Pascaleff-Tonkonog}. By considering that model together with the diagrams for $T$, $T'$ and $\vartheta_{j m_j} = D_{j m_j}$, we now recognise that $T'$ is given by mutating $T$ over $\vartheta_{j m_j}$. See Figure \ref{fig:torus_for_mutation_3}. (We've implicitly used the fact that $W_j \cap W_k = \emptyset$ to mutate both of the purple annuli on $T$ in the figure, given by thickenings of the $j$th and $k$th meridians, over $\vartheta_{j m_j}$ -- the second one doesn't change.)
\begin{figure}[htb]
\begin{center}
\includegraphics[scale=0.35]{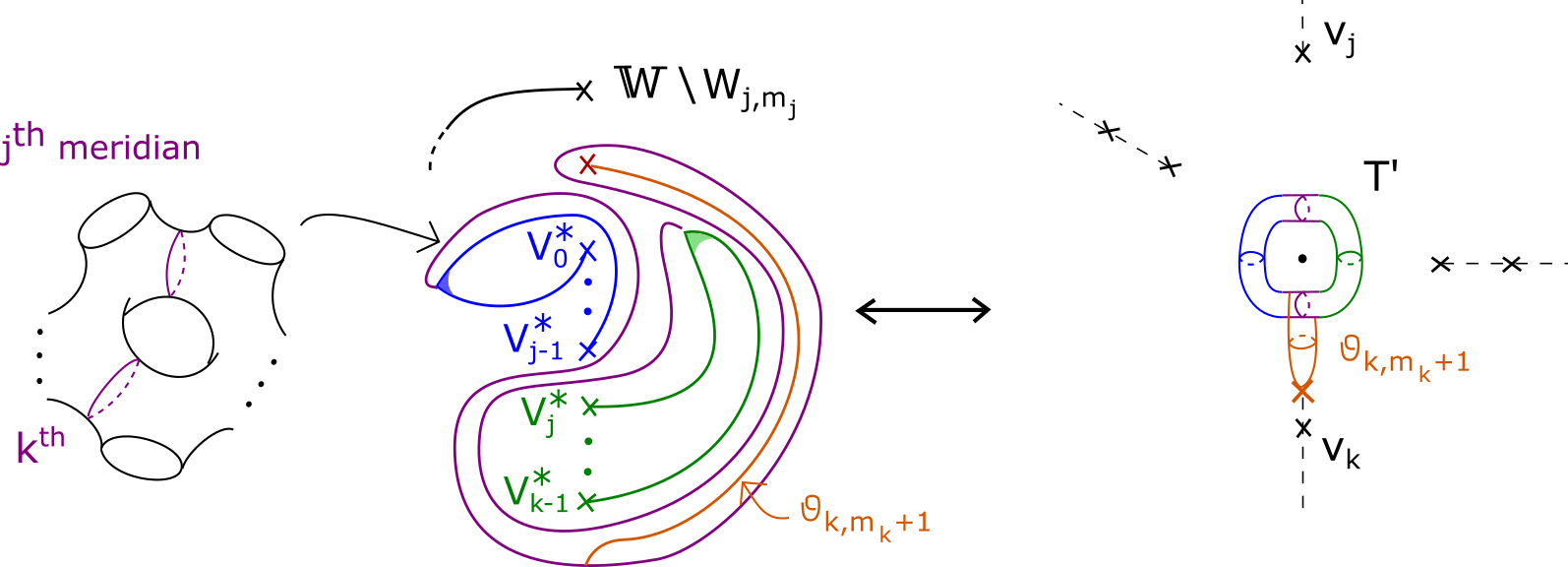}
\caption{Visualising the Weinstein equivalence between $M$ and the total space of the almost-toric fibration: after mutation. We've isotoped the thimble $\vartheta_{k,m_k +1}$ relative to $T'$ a little for legibility.}
\label{fig:torus_for_mutation_3}
\end{center}
\end{figure}
\end{proof}

\section{Lagrangian translations and tensors by line bundles}\label{sec:translations_and_tensors}

Suppose $M^4$ is an exact symplectic manifold, and the total space of an almost-toric fibration $\pi: M \to B$, as defined in \cite{Symington}, where $B$ is an intergral affine manifold homeomorphic to a disc. We will assume throughout that $\pi$ only has nodal, i.e.~focus-focus, singularities.  We also assume that there is a unique nodal singularity in each critical fibre (by sliding along the invariant direction for the singularities, this is true w.l.o.g.). Finally, assume throughout that $\pi$ is equipped with a reference Lagrangian section, say $L_0$. (Recall that this is the case when $M$ is the mirror to a log CY surface with one of the explicit almost-toric fibrations.)

\subsection{Spaces of Lagrangian sections}\label{sec:Lag_sections}
 
Assume $M$ and $\pi$ are as above.
By the Arnol'd--Liouville theorem on action-angle coordinates \cite{Arnold}, any regular fibre $F$ of $\pi$ has an open neighbourhood that is fibre-preserving symplectomorphic to $(V \times T^2, \omega_0) \to V$, where $V \subset \bR^2$. (Here $\omega_0$ denotes the standard symplectic structure $\sum dp_i \wedge dq_i$ on $\bR^2 \times T^2$, where the $p_i$ are the standard coordinates on $\bR^2$ and the $q_i$ the ones on $T^2$, normalised to have period $2\pi$.) We'll sometimes refer to patches $V \times T^2$ as above `Arnol'd--Liouville charts'. By Duistermaat \cite{Duistermaat}, these can be upgraded to global action-angle coordinates. We briefly recall the features we will use. Consider the map $\mathfrak{p}_{L_0}$ given by:
\begin{align*}
\mathfrak{p}_{L_0}: T^\ast B & \longrightarrow M \\
(q,p) & \longmapsto H_{h_{p} \cdot \pi} (L_0 (q))
\end{align*}
where $L_0$ is viewed as a map $B \to M$, $h_p$ is the germ of a smooth function $B(q) \subset B \to \bR$ representing $p \in T^\ast_q B$, and $H_f$ denotes the time-one Hamiltonian flow of a smooth function $f$. By construction, $\mathfrak{p}_{L_0}$ intertwines the projection to $B$. If $q \in B$ is a smooth value of $\pi$, then $\mathfrak{p}_{L_0}: T^\ast_q B \to \pi^{-1} (q)$ is surjective; if $q \in B$ is a singular value, $\mathfrak{p}_{L_0}: T^\ast_q B \to \pi^{-1} (q)$ only misses the critical point. (This uses our assumption that $\pi$ only has nodal singularities with a single node in each critical fibre.) The period lattice $\Lambda^\ast_q \subset T^\ast_q B$ is defined to be $\mathfrak{p}_{L_0}^{-1}(L_0 (q))$. This has full rank of $q$ is a smooth value of $\pi$, and rank one if $q$ is a critical value. Let $B^0$ be the complement of the critical values of $\pi$ in $B$. Then $\mathfrak{p}_{L_0}$ induces a symplectomorphism $T^\ast B_0 / \Lambda^\ast \simeq \pi^{-1}(B_0)$. For nodal fibres, we get local symplectomorphisms away from critical points of $\pi$.

We want to describe certain spaces of Lagrangians sections of $\pi$. Let's start by working away from the critical fibres. As before, let $B^0$ be the complement of the critical values in $B$. Let $M^0 = \pi^{-1} (B^0)$ be the complement of the critical fibres, and let $\pi_0 = \pi|_{M^0}$.  Let $\gamma^\vee$ denote a fibre of $\pi$. 
By the Leray spectral sequence, there is a short exact sequence 
\begin{equation}\label{eq:leray}
0 \to H^1(R^1 (\pi_0)_\ast \bZ) \to H^2(M^0) \xrightarrow{\int_{\gamma^\vee}} \bZ \to 0
\end{equation}
which automatically splits.
On the other hand, by \cite[Proposition 6.69]{Clay2}, $H^1(R^1 (\pi_0)_\ast \bZ)$ classifies Lagrangian sections of $\pi_0$ up to a fibre-preserving symplectic isotopy, i.e.~the addition  in $T^\ast B^0$ of the graph of a closed one-form on $B^0$. 
In particular, this implies that we have a one-to-one correspondence:
\begin{center}
\begin{tabular}{ccc}
$\Bigg\{ $ \makecell{ Lagrangian sections $L$ of $\pi_0$ up to \\  fibre-preserving symplectic isotopy } $ \Bigg\}$
& $\longleftrightarrow$ & \Big\{ $\text{ker}(\smallfrown \gamma^\vee: H_2(M^0, \partial M_0) \to \bZ ) \Big\} $ \\
$L$ & $\mapsto$ & $[L] - [L_0]$
\end{tabular}
\end{center}

Let $N_i$ be a small open neighbourhood of the $i$th singular fibre, and $\partial_i = \partial N_i$ be its boundary; by construction, this is the mapping torus of the transformation $\begin{pmatrix} 1 & 1 \\ 0 & 1 \end{pmatrix}$ acting on $\gamma$, for some choice of basis for $H_1(\gamma; \bZ)$. In particular, $H_1 (\partial_i; \bZ) \simeq \bZ \oplus \bZ$, where the first factor is generated by a loop about the critical point in the base, and the second factor is the class of a generator of $H_1(\gamma; \bZ)$ which intersects the vanishing cycle for this singularity at a single point. (Note that while the class in $H_1(\gamma; \bZ)$ isn't well-defined, the one in $H_1 (\partial_i; \bZ)$ is.) Projecting to the second factor gives a map $\epsilon_i: H_1(\partial_i; \bZ) \to \bZ$. 

The Mayer-Vietoris exact sequence and Poincar\'e--Lefschetz duality imply that
\begin{equation}\label{eq:MV}
H_2 (M, \partial M) = \text{ker} \Big(\sum \epsilon_i: H_2 (M^0, \partial M^0) \to \bigoplus_i \bZ \Big). 
\end{equation}

We're now ready to fill the critical fibres back in.
\begin{lemma}\label{lem:sections_M_noncpt}
There is a one-to-one correspondence:
\begin{center}
\begin{tabular}{ccc}
$\Bigg\{ $ \makecell{ Lagrangian sections $L$ of $\pi$ up to \\  fibre-preserving Hamiltonian isotopy } $ \Bigg\}$
& $\longleftrightarrow$ & \Big\{ $\text{ker}(\smallfrown \gamma^\vee: H_2(M, \partial M) \to \bZ \Big\} $ \\
$L$ & $\mapsto$ & $[L] - [L_0]$
\end{tabular}
\end{center}

\end{lemma}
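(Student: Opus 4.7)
The plan is to deduce the correspondence from the analogous one for $\pi_0$ just established, combined with the Mayer--Vietoris identification (\ref{eq:MV}) realising $H_2(M,\partial M)$ as a subgroup of $H_2(M^0,\partial M^0)$. The map $\Phi: L \mapsto [L]-[L_0]$ is well-defined modulo fibre-preserving Hamiltonian isotopy because such isotopies preserve relative homology classes, and its image lies in $\ker(\smallfrown \gamma)$ since $[L]\cdot\gamma = [L_0]\cdot\gamma = 1$. Restriction to $\pi_0$ sends a fibre-preserving Hamiltonian isotopy of $\pi$ (fibrewise addition of $dH$ for $H \in C^\infty(B,\bR)$) to a fibre-preserving symplectic isotopy of $\pi_0$ (addition of a closed one-form on $B^0$), producing a comparison with the $\pi_0$-classification; after identifying the latter with $\ker(\smallfrown \gamma: H_2(M^0,\partial M^0)\to\bZ)$ and using (\ref{eq:MV}), bijectivity of $\Phi$ reduces to extension/vanishing statements at the critical fibres.

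For surjectivity, given $\beta \in \ker(\smallfrown \gamma: H_2(M,\partial M)\to\bZ)$, each $\epsilon_i(\beta)=0$ by (\ref{eq:MV}), and the $\pi_0$-correspondence produces a Lagrangian section $L'$ of $\pi_0$ with $[L']-[L_0]=\beta$; I would identify the vanishing of each $\epsilon_i$ with the topological obstruction to extending $L'$ across $q_i$. For injectivity, suppose $[L]-[L_0]=0 \in H_2(M,\partial M)$; the $\pi_0$-correspondence then gives a closed one-form $\alpha$ on $B^0$ with $L|_{M^0} = L_0|_{M^0} + \alpha$ in the $T^*B^0/\Lambda^*$ model, and the extendability of both $L$ and $L_0$ across each $q_i$ forces the transverse periods of $\alpha$ to vanish, so $\alpha$ extends modulo $\Lambda^*$ to a section over all of $B$. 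Since $B$ is contractible, a Cech-cohomology argument on an open cover adapted to $\{q_i\}$ should allow this extended section to be represented by $dH$ for some $H \in C^\infty(B,\bR)$, after which the time-one Hamiltonian flow of $H \circ \pi$ provides the required fibre-preserving Hamiltonian isotopy from $L_0$ to $L$.

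The main technical obstacle is the local analysis in Eliasson normal coordinates at each focus-focus fibre, identifying the integer invariant $\epsilon_i$ with the transverse period obstruction for Lagrangian sections. This should rest on the fact that the period lattice $\Lambda^*$ has rank one at $q_i$ (generated by the monodromy-invariant direction), together with the description of $\partial_i$ as the mapping torus of $\begin{pmatrix}1&1\\0&1\end{pmatrix}$ given in the excerpt; once in hand, both the surjectivity and injectivity reductions above follow.
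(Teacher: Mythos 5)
Your overall skeleton — reduce to the classification over $\pi_0$, use the Mayer--Vietoris identification (\ref{eq:MV}) of $H_2(M,\partial M)$ inside $H_2(M^0,\partial M^0)$, then argue locally at the nodes for both surjectivity and injectivity — is exactly the paper's. The surjectivity half is essentially right, modulo one standard step you elide: since $\epsilon_i$ is an isomorphism on the local classification over an annulus $B_i^0$ around $q_i$, the vanishing of $\epsilon_i(\beta)$ only tells you that $L'$ and $L_0$ differ over $B_i^0$ by the graph of some closed one-form $\zeta_i$; one must then choose a global closed form $\zeta$ on $B^0$ extending the $\zeta_i$ and replace $L'$ by $L'-\Gamma(\zeta)$ so that it literally agrees with $L_0$ near each node, after which it extends by $L_0$.

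The injectivity half has a genuine gap. Once you know $L$ and $L_0$ differ over $B^0$ by a single-valued closed one-form $\alpha$, the crux is showing the real-valued periods $\int_{S^1_i}\alpha$ vanish, and your proposed mechanism — ``identifying the integer invariant $\epsilon_i$ with the transverse period obstruction'' in Eliasson coordinates — cannot do this: $\epsilon_i$ is an integer classifying sections over $B_i^0$ \emph{up to adding arbitrary closed one-forms}, so it is blind to the period, which is a real number (the flux). Two sections over $B_i^0$ with the same $\epsilon_i$ can a priori differ by a closed form of any period, so ``extendability forces the transverse periods to vanish'' does not follow from the classification facts you cite; and without killing these periods, no \v{C}ech argument on the contractible $B$ can produce a global primitive $H$, since $\alpha$ only lives on $B^0$ and $H^1_{dR}(B^0)\neq 0$. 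The paper's argument at this point is symplectic rather than topological: the period $\int_{S^1_i}\alpha$ equals the symplectic area of the annulus $\bar V_i$ swept out fibrewise between $L_0|_{S^1_i}$ and $L|_{S^1_i}$; capping this annulus with the two Lagrangian discs $L_0|_{B_i}$ and $L|_{B_i}$ (which exist precisely because both sections extend over $q_i$) produces a closed $2$-cycle on which $\omega$ integrates to zero, whence $\int_{S^1_i}\alpha=\int_{\bar V_i}\omega=0$. You would need to supply this Stokes/area computation (or an equivalent asymptotic analysis of the period lattice at the node, which is substantially more delicate) to close the argument.
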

As all closed forms on $B$ are exact, fibre-preserving symplectic isotopies are necessarily Hamiltonian.

\begin{remark}
The Leray spectral  sequence still implies that there is a short exact sequence
\begin{equation}\label{eq:leray}
0 \to H^1(R^1 (\pi)_\ast \bZ) \to H^2(M) \xrightarrow{\int_{\gamma^\vee}} \bZ \to 0.  
\end{equation}
In particular, this characterisation of Lagrangian sections naturally generalises \cite[Proposition 6.69]{Clay2}. 
\end{remark}

\begin{proof} of Lemma \ref{lem:sections_M_noncpt}. 
If $L$ is a section of $\pi$, then certainly $[L]-[L_0] \in \text{ker}(\smallfrown \gamma^\vee: H_2(M, \partial M) \to \bZ )$, and two isotopic sections give the same class. Thus the map is well-defined.

Surjectivity: by equation \ref{eq:MV}, $$  \text{ker}(\smallfrown \gamma^\vee: H_2(M, \partial M) \to \bZ )  \simeq  \text{ker}\big( (\smallfrown \gamma^\vee, \sum \epsilon_i): H_2(M^0, \partial M^0) \to \bZ \oplus_i \bZ \big). $$
Fix an arbitrary class on the right-hand side. There exists a Lagrangian section $L$ of $\pi_0$ such that $[L] - [L_0]$ represents that class. It's enough to show that, possibly after a fibre-preserving symplectic isotopy, we can extend $L$ to a section of $\pi$. 

Let $B^0_i \subset B^0$ be an open annulus about the $i$th critical value. From the discussion above, Lagrangian sections of the fibration restricted to $B^0_i$, up to fibre-preserving symplectic isotopy, are classified by $\bZ = \text{ker}(\smallfrown \gamma^\vee: H_2(M^0_i, \partial M^0_i) \to \bZ )$; moreover, $\epsilon_i$ restricts to an isomorphism $\bZ \to \bZ$. It follows that there exist closed forms $\zeta_i \in \Omega^1(B^0_i)$ such that $L$ and $L_0$ differ by the graph of $\zeta_i$ over $B^0_i$. Now pick a closed form $\zeta \in \Omega^1(B^0)$ extending the $\zeta_i$. Let $L' = L - \Gamma(\zeta)$, where $\Gamma(\zeta)$ is the graph of $\zeta$. By construction, $L'$ is symplectic isotopic to $L$ over $B^0$ via a fibre-preserving map, and it agrees with $L_0$ on each of the $B^0_i$. Now extend $L'$ by $L_0$ over each of the punctures.

Injectivity: suppose that $L$ and $L'$ are sections of $\pi$ such that $[L]-[L_0]$ and $[L']-[L_0]$ agree in $\text{ker}(\smallfrown \gamma^\vee: H_2(M, \partial M) \to \bZ ) $. Then their restrictions to $M^0$ differ by the graph of a \emph{closed} one-form on $B^0$, say $\zeta$. It's enough to show that $\zeta$ must also be exact. Let $S^1_i \subset B^0_i$ be a waist curve of the annulus. Let $V_i: S^1 \times [0,1] \to T^\ast B^0$ be the annulus given by $(\theta, t) \to t \zeta (\theta)$, for $\theta  \in S^1_i$, and let $\bar{V_i}$ be its image in $T^\ast B^0 / \Lambda^\ast = M$.
 Now notice that
$$
\int_{S^1_i} \zeta = \int_{V_i} \omega = \int_{\bar{V_i}} \omega = \int_{L_0|B_i} \omega  - \int_{L|B_i} \omega = 0
$$
where $B_i \subset B$ is a disc with boundary $S^1_i$, and we are using Stokes' theorem together with the fact that $\bar{V}_i$, $L_0|B_i$ and $L|B_i$ glue to give a null-homologous two-cycle in $M$.
\end{proof}

\begin{remark}\label{rmk:Hanlon1}
To what extent can we control the behaviour of a section near $\infty$? One can check that for a choice of Liouville form on $M$ suitably adapted to the almost-toric fibration, any Lagrangian section of $\pi$ can be arranged to be invariant under the Liouville flow outside a compact set, after a fibre-preserving Hamiltonian isotopy. We do not pursue this further here: we expect that a comprehensive treatment of the behaviour of Lagrangian sections near infinity will follow using a notion of monomial admissibility, introduced by Hanlon \cite{Hanlon}.
\end{remark}

We haven't yet used exactness of $M$. However, this is required to prove the next classification. 

\begin{proposition}\label{prop:sections_M_cpt}
Suppose $M$ is an exact symplectic manifold as above, and $L_0$ a reference Lagrangian section of $\pi$. 
Then we have a one-to-one correspondence:
\begin{center}
\begin{tabular}{ccc}
$\Bigg\{ $ \makecell{ Lag.~sections $L$ of $\pi$ equal to $L_0$ near $\partial M$  up to \\ compactly supported fibre-preserving Ham.~isotopy } $ \Bigg\}$
& $\longleftrightarrow$ & 
 \Big\{ 
$H_2(M) / ( \bZ \cdot \gamma^\vee ) $
\Big\}
\\
$L$ & $\mapsto$ & $\big[[L] - [L_0] \big]$
\end{tabular}
\end{center}
where the homology groups are taken with integer coefficients.
\end{proposition}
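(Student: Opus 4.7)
\emph{Strategy.} I plan to deduce this from Lemma \ref{lem:sections_M_noncpt} by accounting for the boundary condition on both sides of the correspondence. From the long exact sequence of the pair $(M,\partial M)$, the subgroup $\ker(H_2(M,\partial M) \to H_1(\partial M))$ is identified with the image of $H_2(M) \to H_2(M,\partial M)$, which as an abelian group is $H_2(M)/\mathrm{im}(H_2(\partial M) \to H_2(M))$; this gives the isomorphism stated on the right-hand side. Since any Lagrangian section has intersection number one with a generic fibre, this kernel automatically sits inside $\ker(\smallfrown\gamma)$, so that it embeds into the target of Lemma \ref{lem:sections_M_noncpt}. For well-definedness of the map, if $L = L_0$ on a neighbourhood $N$ of $\partial M$, a relative cycle representing $[L]-[L_0] \in H_2(M,\partial M)$ can be chosen with boundary $0$ by cancelling inside $N$; this lifts the class to $H_2(M)$, well-defined up to the ambiguity in $N$, which contributes classes in the image of $H_2(\partial M)$. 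Compactly supported fibre-preserving Hamiltonian isotopies fix the collar (after shrinking $N$) and so leave the class invariant.

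\emph{Surjectivity.} Given $\alpha$ in the target, Lemma \ref{lem:sections_M_noncpt} produces a Lagrangian section $L$ of $\pi$ with $[L]-[L_0] = \alpha$. Over a collar $U \simeq \partial B \times [0,1)$ of $\partial B$, the action--angle coordinates of Section \ref{sec:Lag_sections} identify $L - L_0$ with the graph of a closed $1$-form $\zeta$ on $U$. I claim the hypothesis $\partial\alpha = 0 \in H_1(\partial M)$ forces $\zeta$ to be exact on $U$: since $\partial B \hookrightarrow U$ is a deformation retract, it suffices to check that the single period of $\zeta$ around the core circle of $U$ vanishes, and this period is precisely what measures the image of $\partial\alpha$ under the natural projection to $H_1(\partial B)$ (equivalently, to a summand of $H_1(\partial M)$ coming from the base direction of the $T^2$-bundle $\partial M \to \partial B$). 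Granting this, write $\zeta = df$ on $U$, choose a bump function $\rho$ on $B$ which is $1$ on a smaller collar and compactly supported in $U$, and observe that the fibre-preserving Hamiltonian isotopy generated by $\rho f$ is compactly supported and carries $L$ to a section equal to $L_0$ near $\partial M$.

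\emph{Injectivity.} Suppose $L$ and $L'$ both agree with $L_0$ near $\partial M$ and define the same class in $H_2(M)/\mathrm{im}\,H_2(\partial M) \subset H_2(M,\partial M)$. By Lemma \ref{lem:sections_M_noncpt} there is a (possibly non-compactly supported) fibre-preserving Hamiltonian isotopy $\{L_t\}$ from $L$ to $L'$, realised by the graph of $d\eta_t$ for some time-dependent $\eta_t : B \to \bR$. Since $L_t = L_0$ near $\partial M$ for all $t$ (after possibly refining the isotopy), $d\eta_t \equiv 0$ on a collar of $\partial B$, so $\eta_t$ is locally constant there; as $\partial B$ is connected, subtracting a time-dependent constant produces a Hamiltonian compactly supported in $B$, generating the required compactly supported isotopy.

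\emph{Main obstacle.} The main technical point is the period identification in the surjectivity step, namely matching the algebraic boundary map $\partial \colon H_2(M,\partial M) \to H_1(\partial M)$, applied to $[L]-[L_0]$, with the cohomology class $[\zeta|_{\partial B}] \in H^1(\partial B)$. This is a refinement of the Stokes-type computation at the end of the proof of Lemma \ref{lem:sections_M_noncpt}, but carried out over the outer boundary $\partial B$ rather than around the interior critical values, and it relies on the explicit period-lattice description of a neighbourhood of $\partial M$ provided by Duistermaat's global action--angle construction.
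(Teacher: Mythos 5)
Your overall architecture (reduce to Lemma \ref{lem:sections_M_noncpt}, identify the target group via the long exact sequence of $(M,\partial M)$, handle injectivity by noting the connecting exact one-form vanishes near the connected $\partial B$) matches the paper, and the well-definedness and injectivity steps are fine. But the surjectivity step has a genuine gap, concentrated in your ``main obstacle''. The period $\int_{S^1}\zeta$ of the closed one-form $\zeta$ around the core circle of the collar is a \emph{real number} -- it is the symplectic area of the annulus swept between $L_0$ and $L$ over that circle -- and it is \emph{not} measured by the boundary class $\partial\alpha \in H_1(\partial M)$. Indeed, with the paper's splitting $H_1(\partial M)\simeq \bZ\oplus\bZ^2/(A-I)$, the projection of $\partial$ to the base factor $H_1(\partial B)=\bZ$ is exactly $\smallfrown\gamma$, which already vanishes for any difference of sections (this is built into the target of Lemma \ref{lem:sections_M_noncpt}); if your identification were correct, the period would vanish for \emph{every} section, which is false for non-exact almost-toric fibrations. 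The hypothesis $\partial\alpha=0$ is actually needed for a different purpose that you pass over: it kills the component in $\bZ^2/(A-I)$, which by the classification of sections over the boundary torus bundle (the paper again invokes \cite[Proposition 6.69]{Clay2} on the collar) is precisely the obstruction to $L-L_0$ being the graph of a single-valued closed one-form $\zeta$ near $\partial B$ at all. Without that step your $\zeta$ need not exist globally on the collar.

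Once $\zeta$ exists, the vanishing of its period is a separate, non-homological fact: cap the swept annulus off with the two Lagrangian discs bounded by $L_0$ and $L$ over the inner disc to obtain a \emph{closed} $2$-cycle $\Sigma$ with $\int_\Sigma\omega=\int_{S^1}\zeta$. Unlike in Lemma \ref{lem:sections_M_noncpt}, where the analogous cycle around an interior critical value was null-homologous, here $[\Sigma]$ is essentially the lift of $\alpha$ to $H_2(M)$ and is generally nonzero, so one must invoke \emph{exactness of $\omega$} to conclude $\int_\Sigma\omega=0$. Your proof never uses exactness anywhere, whereas the paper explicitly flags, immediately before the proposition, that exactness is what distinguishes this statement from Lemma \ref{lem:sections_M_noncpt}. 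To repair the argument: first use $\partial\alpha=0$ in the fibre-direction summand to produce $\zeta$ on the collar, then use exactness of $\omega$ (via the capped cycle) to show $\zeta$ is exact, and only then run your cutoff construction.
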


\begin{proof}
The map is clearly well-defined: as $L_0$ and $L_1$ agree near $\partial M$, $[L_0]-[L_1]$ gives a class in $H_2(M)$, and a fortiori in $H_2(M) / (\bZ \cdot \gamma^\vee)$. 
We will use the following algebro-topological features of the set-up. Let $A$ be the total monodromy of the fibration. Then $\partial M$ is a 2-torus bundle over $S^1$ with monodromy $A$. We have, first, that $H_1(\partial M) \simeq \bZ \oplus \bZ^2 / (A-I)$; the first factor is given by the base $S^1$, and the second by the image of $H_1(\gamma^\vee, \bZ)$; composing the boundary map with projection to the first factor  precisely gives $\smallfrown \gamma^\vee: H_2(M, \partial M) \to \bZ$. 
Secondly, we have that $H_2(\partial M) \simeq H^1(\partial M)$ consists of a $\bZ$ factor generated by $\gamma^\vee$, and, when $\det (A-I)=0$, either one or two extra $\bZ$ factors generated by tori fibred over the circle $\partial B$. 


Injectivity: suppose $L$ and $L_0$ are Lagrangian sections which agree outside a compact set, and such that $\big[ [L]-[L_0] \big] = 0 \in H_2(M) / (\bZ \cdot \gamma^\vee) $.
By the long exact sequence of the pair for $(M, \partial M)$, we have that 
$[L]-[L_0] = 0 \in H_2(M, \partial M)$. 
By Lemma \ref{lem:sections_M_noncpt}, there certainly exists a fibrewise Hamiltonian isotopy taking one to the other: in other words, $L$ and $L_0$ differ by the graph of an exact one-form on $B$, say $df$. By assumption, we have $df \in \Lambda^\ast$ outside a compact set. If $1$ is not an eigenvalue of the monodromy matrix $A$, we immediately get that $f$ must be constant outside a compact set, and the Hamiltonian isotopy can be taken to have compact support. In general, the possibilities for $f$ outside a compact set are (up to constants) indexed by $H^1(\partial M) / \PD[\gamma^\vee]$. As we have assumed that $\big[ [L]-[L_0] \big] = 0 \in H_2(M, \partial M) / (\bZ \cdot \gamma^\vee) $, we again get that $f$ must be constant outside of a compact set. (Note that this would not have been the case if we had merely assumed that $[L] - [L_0]$ vanished in $ H_2(M) / H_2(\partial M) \simeq \ker ( H_2(M, \partial M) \to H_1(\partial M))$.)

Surjectivity: suppose we are given a class $l$ in $H_2 (M) / (\bZ \cdot \gamma^\vee)$. Let $l'$ be its image in $H_2(M, \partial M)$.  Whenever $A$ does not have eigenvalue $1$, the map $H_2 (M) / (\bZ \cdot \gamma^\vee) \to H_2(M, \partial M)$ is injective. Whenever $A$ does have eigenvalue $1$,  by similar considerations to those at the end of the injectivity argument, we see for that a given class $l' \in H_2(M, \partial M)$, if we can find a section $L$ agreeing with $L_0$ near $\partial M$ and such that $[L] - [L_0] \in H_2(M) / (\bZ \cdot \gamma^\vee)$ is a lift of $l'$, then we can find Lagrangian sections giving representatives for all other lifts of $l'$. This is given by adding to $L$ an exact one-form $df$, for some function $f$ which near $\partial M$ is  non-constant  and such that $df \in \Lambda^\ast$. 

Given $l'$, by Lemma  \ref{lem:sections_M_noncpt}, there is a Lagrangian section $L$ of $\pi$ such that $[L]-[L_0] = l'$. Mapping to $H_2(\partial M)$ and projecting, we get $[L]-[L_0] \mapsto 0 \in \bZ^2 / (A-I)$. Now proceed as before: first, notice that again applying the result for fibrations without singularities \cite[Proposition 6.69]{Clay2}, the map to $\bZ^2 / (A-I)$ classifies Lagrangian sections of the restriction of the fibration to a neighbourhood of $\partial B$, up to addition of the graphs of closed one-forms. Thus $L$ and $L_0$ differ by the graph of such a one-form, say $\zeta$, near $\partial B$. Then proceed similarly to the end of the proof of Lemma \ref{lem:sections_M_noncpt}: the obstruction to $\zeta$ being exact is the symplectic area of an annulus which can be capped off in $M$ by two Lagrangian discs to give a closed cycle. In general, this cycle will no longer be null-homologous. However, by exactness of $\omega$, the obstruction still vanishes. Thus after a fibre-preserving Hamiltonian isotopy (indeed, supported near $\partial M$), we can assume that $L$ agrees with $L_0$ near $\partial M$. 
\end{proof}

We note the following consequence of the arguments in our proof.
\begin{corollary}\label{cor:isotoping_Lag_sections}
Suppose $L_0, L_1$ are Lagrangian sections on $\pi$ which are smoothly isotopic as sections near $\partial B$. Then there is a fibre-preserving Hamiltonian isotopy of $M$, supported near $\partial M$, such that the image of $L_1$ agrees with $L_0$ on a tubular neighbourhood of $\partial M$. 
\end{corollary}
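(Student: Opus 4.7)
The plan is to adapt the proof of Proposition \ref{prop:sections_M_cpt} to the local setting near $\partial M$. First, shrink and choose a collar neighborhood $U \cong \partial B \times [0,\epsilon) \subset B$ small enough to be disjoint from the critical values of $\pi$ (so $\pi|_U$ is a regular Lagrangian $T^2$-fibration) and to carry the given smooth isotopy between $L_0|_U$ and $L_1|_U$. By the Duistermaat/Arnol'd--Liouville theorem applied with $L_0$ as reference section, the map $\mathfrak{p}_{L_0}$ identifies $\pi^{-1}(U)$ with $T^*U/\Lambda^*$ fibre-preservingly and symplectically, and under this identification $L_1|_U$ is a Lagrangian section, hence given by the graph modulo periods of a closed 1-form $\zeta \in \Omega^1_{\text{closed}}(U)$, well-defined up to addition of a section of $\Lambda^* = R^1\pi_*\bZ$.

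Next, I claim $\zeta$ can be taken to be exact. By \cite[Proposition 6.69]{Clay2}, Lagrangian sections of $\pi|_U$ modulo fibre-preserving symplectic isotopy are classified by $H^1(U; R^1\pi_*\bZ)$. Since $U$ retracts to $\partial B \cong S^1$, this group equals $\mathrm{coker}(A-I)$, where $A \in SL(2,\bZ)$ is the total monodromy; the same cokernel classifies smooth sections of the $T^2$-bundle over $U$ up to vertical smooth isotopy, by elementary obstruction theory applied to the classifying map into the fibre. Hence the smooth isotopy hypothesis forces the Lagrangian isotopy class represented by $\zeta$ to vanish, i.e. $[\zeta] = 0 \in H^1(U; \Lambda^*)$. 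After subtracting an integral representative (which does not affect the induced Lagrangian in $T^*U/\Lambda^*$), we may write $\zeta = df$ for some $f \in C^\infty(U)$.

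Finally, cut off and flow. Choose a smooth cutoff $\chi \colon B \to [0,1]$ that equals $1$ on a smaller collar $U'' \Subset U$ of $\partial B$ and has compact support in a collar $U' \Subset U$. The time-$1$ Hamiltonian flow of $(\chi f) \circ \pi$, extended by the identity to all of $M$, is a fibre-preserving Hamiltonian diffeomorphism supported in $\pi^{-1}(U')$, and on $\pi^{-1}(U'')$ it acts by fibre translation by $d(\chi f) = df = \zeta$. Consequently it carries $L_1$ onto $L_0$ over the tubular neighborhood $\pi^{-1}(U'')$ of $\partial M$, as required.

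The main technical obstacle is the comparison in the second step between the smooth and Lagrangian isotopy classifications of sections over the annular collar $U$: both are controlled by $\mathrm{coker}(A-I)$, but one needs to match the invariants explicitly. An alternative route avoiding obstruction theory is to promote the given smooth isotopy directly to a Lagrangian one via a fibre-preserving Moser argument in $T^*U/\Lambda^*$, exploiting the angular freedom in the fibres; this also reduces the argument to the injectivity/exactness-of-$\omega$ step used in Proposition \ref{prop:sections_M_cpt}.
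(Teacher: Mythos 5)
Your outline matches the paper's: restrict to a collar of $\partial B$ where $\pi$ is a smooth $T^2$-bundle, use the hypothesis to show that $L_1-L_0$ is the graph of a globally defined closed one-form $\zeta$, show that $\zeta$ is exact, and cut off a primitive to produce the fibre-preserving Hamiltonian isotopy. The gap is in the exactness step. Knowing $[\zeta]=0$ in $H^1(U;\Lambda^*)\cong\mathrm{coker}(A-I)$ only tells you that $\zeta$ exists as a global closed one-form on the annular collar $U$; it does not make $\zeta$ exact, since $H^1_{dR}(U)\cong\bR$ is nonzero. Your proposed fix --- ``subtracting an integral representative'' --- does not work: the flat sections of the period lattice over $U$ are the monodromy invariants, a subgroup of $\bZ^2$ (which is trivial whenever $A-I$ is invertible), and their de Rham classes form at most a countable subgroup of $\bR=H^1_{dR}(U)$, so you cannot kill an arbitrary real period $\int_{S^1}\zeta$ this way. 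Tellingly, your main argument never invokes exactness of $\omega$, whereas the paper flags just before Proposition \ref{prop:sections_M_cpt} that exactness is precisely what this step requires.

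The paper closes the gap as follows: $\int_{S^1}\zeta$ equals the symplectic area of the annulus swept out between $L_0$ and $L_1$ over the waist circle $S^1\subset U$; capping this annulus with the two Lagrangian discs $L_0|_{B'}$ and $L_1|_{B'}$, where $B'\subset B$ is the disc bounded by $S^1$, produces a closed $2$-cycle, whose symplectic area vanishes by Stokes because $\omega$ is exact (even though the cycle need not be null-homologous). Hence $\int_{S^1}\zeta=0$ and $\zeta=df$. Your closing remark about reducing to the ``exactness-of-$\omega$ step used in Proposition \ref{prop:sections_M_cpt}'' is exactly the right instinct, but it appears only as an aside to an alternative sketch; it must replace the ``subtract an integral representative'' step in the main argument. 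With that substitution, the rest of your proof (the obstruction-theoretic identification of the smooth and Lagrangian difference classes in $\mathrm{coker}(A-I)$, and the time-one flow of the cut-off function $(\chi f)\circ\pi$) is correct and agrees with what the paper does.
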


For completeness, we record the following.

\begin{lemma}
Suppose $M$ is mirror to a log CY2 surface $(Y,D)$ as in Theorem \ref{thm:hms}. Then the monodromy matrix $A$ has one as an eigenvalue  if and only if $H^1(M,\bZ) \neq 0$ or $(D_i \cdot D_j)$ is strictly negative semi-definite.
\end{lemma}
\begin{proof}

Let $(Y,D)$ be a Looijenga pair. Let $N$ be a tubular neighborhood  of $D$ in $Y$ and $U'=Y \setminus N$, an oriented smooth 4-manifold with boundary $\partial U'$. This is diffeomorphic to $M$, and $\partial U' \simeq \partial M$ is a $2$-torus bundle over $S^1$ with monodromy matrix $A \in \SL(2,\bZ)$.

By the Leray spectral sequence for the fibration $f \colon \partial U' \rightarrow S^1$ we have an exact sequence
$$0 \rightarrow H^1(S^1,\bZ) \rightarrow H^1(\partial U',\bZ) \rightarrow H^0(R^1f_*\bZ) \rightarrow 0$$
and
$$H^0(R^1f_*\bZ) \simeq \bZ^2/(A-I)\bZ^2$$

It follows that the monodromy matrix $A$ has an eigenvector with eigenvalue $1$ if and only if $H^1(\partial U')$ has rank $>1$.
We have the exact sequence of cohomology for the pair $(U',\partial U')$:
$$\cdots \rightarrow H^1(U') \rightarrow H^1(\partial U') \rightarrow H^2(U', \partial U') \rightarrow H^2(U') \rightarrow \cdots$$
The last arrow is $H_2(U') \rightarrow H^2(U')$ by Poincar\'e duality, with kernel the radical of the intersection form on $H_2(U')$. Recall that we have the exact sequence
$$0 \rightarrow \bZ \rightarrow H_2(U') \rightarrow Q \rightarrow 0,$$
where the $\bZ$ factor is generated by the fiber of $\partial U' \rightarrow S^1$, and 
$$Q=\langle D_1,\ldots,D_n\rangle^{\perp} \subset H_2(Y,\bZ).$$
The lattice $Q$ is non-degenerate unless $(D_i \cdot D_j)$ is strictly negative semidefinite by the Hodge index theorem.  
So $H^1(\partial U')$ has rank $>1$ if and only if either $H^1(U') \neq 0$ or $(D_i \cdot D_j)$ is negative semi definite.
\end{proof}

\begin{remark}
Note that $H^1(U') \neq 0$ if and only if there is a toric model for $Y$ where all the non-toric blowups occur on the same boundary divisor. This is because $\pi_1(U')=N/\langle v_1,\ldots,v_k\rangle$ where $N=\pi_1(T)$ is the fundamental group of the algebraic torus $T=\bar{Y} \setminus \bar{D}$ and the $v_i$ are the primitive integral generators of the rays corresponding to boundary divisors of $\bar{Y}$ containing the centers of the non-toric blowups. (Note also that blowups on the boundary divisors corresponding to $v$ and $-v$ can be transferred to the boundary divisor corresponding to $v$ by elementary transformations.)
\end{remark}

\subsection{Lagrangian translations}\label{sec:Lag_translations}

\begin{proposition}\label{prop:Lag_translations}
Suppose $M$ and $\pi$ are as above, and $L_0, L$ are Lagrangian sections of $\pi$. Let $M^s$ be the smooth locus of $\pi$, and let $\mathfrak{p} = \mathfrak{p}_{L_0}: T^\ast B \to M^s \subset M$ give global action-angle coordinates, chosen so that $\mathfrak{p}$ takes the zero-section to $L_0$.  Consider the map
\begin{eqnarray*}
\sigma_L: M^s & \to & M^s  \\
x & \mapsto & \mathfrak{p}( \widetilde{x} + \widetilde{L|_{\pi(x)}} )
\end{eqnarray*}
where $\widetilde{y}$ denotes any preimage of $y \in M$ under $p$. (In words, we are using the linear structure on each fibre to add $L - L_0$.) Then 
\begin{enumerate}
\item $\sigma_L$ is well defined;

\item $\sigma_L$ extends to a smooth map $M \to M$, which we also denote by $\sigma_L$;

\item $\sigma_L$ is a symplectomorphism of $M$;

\item whenever $L_0$ and $L$ agree near $\partial M$, $\sigma_L$ has compact support. 
\end{enumerate}

\end{proposition}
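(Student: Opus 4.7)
Part (1) follows immediately from the definition of action--angle coordinates: $\mathfrak{p}$ descends to a symplectomorphism $T^\ast B^0/\Lambda^\ast \simeq M^s$, so two lifts of $x \in M^s$ under $\mathfrak{p}$ (resp.\ two lifts of $L|_{\pi(x)}$) differ by an element of $\Lambda^\ast_{\pi(x)}$. The sum $\widetilde{x} + \widetilde{L|_{\pi(x)}}$ is therefore well defined in $T^\ast_{\pi(x)}B/\Lambda^\ast_{\pi(x)}$, and its image under $\mathfrak{p}$ is unambiguous.

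For (2) and (3) over $M^s$, work in an Arnol'd--Liouville chart above some open $U \subset B^0$. Via $\mathfrak{p}_{L_0}^{-1}$, the restriction $L|_U$ corresponds to the graph of a closed one-form $\alpha \in \Omega^1(U)$, and $\sigma_L|_{\pi^{-1}(U)}$ is the descent to $T^\ast U/\Lambda^\ast$ of the translation $(q,p) \mapsto (q, p + \alpha(q))$ on $T^\ast U$. This map is manifestly smooth; it is symplectic because on any simply-connected patch of $U$ one has $\alpha = df$ and the translation agrees with the time-one Hamiltonian flow of $f \circ \pi$, and different primitives differ by an additive constant and so produce the same translation map. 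These local Hamiltonian descriptions glue, so $\sigma_L$ is a smooth symplectomorphism of $M^s$.

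The main obstacle is extending $\sigma_L$ smoothly and symplectically across each nodal fibre. Fix a critical value $c_0 \in B$ with nodal point $x_0 \in \pi^{-1}(c_0)$; by our standing hypothesis, $\pi$ has a focus--focus singularity there. The key standard input is that any Lagrangian section of a focus--focus fibration passes through the critical point, so $L(c_0) = L_0(c_0) = x_0$ (this is a consequence of the Eliasson normal form together with the Lagrangian condition; see e.g.~Subotic's thesis for an explicit discussion). In the Eliasson normal form, there are local symplectic coordinates near $x_0$ in which $\pi$ takes the standard focus--focus shape on $\bC^2$, and both $L_0$ and $L$ become smooth Lagrangian sections through the origin. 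A direct computation in these coordinates shows that the fibrewise translation by $L - L_0$ extends smoothly across $x_0$, fixing $x_0$ and preserving $\omega$; equivalently, the closed one-form $\alpha$ representing $L - L_0$ on the punctured base has monodromy about $c_0$ matching that of $\Lambda^\ast$, and the associated descent is manifestly smooth at $x_0$ in the normal form. Gluing with the construction on $M^s$ yields a global symplectomorphism of $M$, establishing (2) and (3).

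Finally, (4) is immediate: if $L = L_0$ on $\pi^{-1}(V)$ for $V$ a neighborhood of $\partial B$, then on any Arnol'd--Liouville chart over $V$ we may take $\alpha \equiv 0$, so $\sigma_L = \mathrm{id}$ over $V$ and $\sigma_L$ is compactly supported.
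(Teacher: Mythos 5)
Your treatment of parts (1) and (4), and of (2)--(3) away from the critical fibres, is fine and agrees with the paper's. The problem is the step you yourself flag as the main obstacle: extending $\sigma_L$ across the nodes. Your ``key standard input'' --- that any Lagrangian section of a focus--focus fibration passes through the critical point, so that $L(c_0)=L_0(c_0)=x_0$ --- is false; in fact the opposite holds. At a focus--focus point $x_0$ both components of $\pi$ have a critical point, so $d\pi_{x_0}=0$, and differentiating $\pi\circ L=\mathrm{id}_B$ at $c_0$ gives $d\pi_{L(c_0)}\circ dL_{c_0}=\mathrm{id}$, which forces $L(c_0)$ to be a \emph{regular} point of $\pi$. (This is also built into the paper's setup: $\mathfrak{p}_{L_0}$ maps $T^\ast_{c_0}B$ onto $\pi^{-1}(c_0)\setminus\{x_0\}$, and $L_0(c_0)$ lies in that image.) The normal-form ``direct computation'' you then invoke is not actually carried out, and since its starting point is wrong it cannot be repaired as stated: $L(c_0)$ and $L_0(c_0)$ are in general two \emph{distinct} smooth points of the pinched torus, so on the critical fibre $\sigma_L$ is a genuine translation of the cylinder $\pi^{-1}(c_0)\setminus\{x_0\}$, not a map fixing a common basepoint of the two sections. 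The only point that needs extending over is the node itself, which lies on neither section.

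The paper's argument for this step is shorter and avoids the normal form entirely: over a small disc in $B$ around the critical value, one writes $L=\mathfrak{p}(\Gamma(df))$ for a single smooth real-valued function $f$ on the disc (your observation that $\alpha$ is locally exact, pushed across the critical value), and then $\sigma_L$ coincides on $\pi^{-1}(\mathrm{disc})$ with the time-one Hamiltonian flow of $f\circ\pi$. Since $f\circ\pi$ is a smooth function on all of $\pi^{-1}(\mathrm{disc})$, including at $x_0$, its flow is a symplectomorphism defined across the node; it fixes $x_0$ because $d(f\circ\pi)_{x_0}=df\circ d\pi_{x_0}=0$. This yields (2) and (3) in one stroke. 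I would rewrite your extension step along these lines, replacing the false claim and the unperformed computation.
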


We will refer to $\sigma_L$ as a \emph{Lagrangian translation}.

\begin{proof}
The fact that $\sigma_L$ is well-defined on $M^s$ is immediate. Locally, $L$ is $\mathfrak{p}(\Gamma(df))$, where $f$ is a smooth real-valued function on a small disc in $B$; now notice that $\sigma_L$ is locally given by the time-one Hamiltonian flow of $f \circ \pi$. This description makes it clear that $\sigma_L$ extends over the critical points of $\pi$, and that it is a symplectomorphism. The final point is immediate.
\end{proof}

\begin{remark}\label{rmk:Hanlon2}
Assume that $L_0$ and $L$ don't agree near $\partial M$. Following on Remark \ref{rmk:Hanlon1}, one could choose the Liouville form on $M$, and representatives for $L_0$ and $L$ , so as to get a well-defined action of $\sigma_L$ on the wrapped Fukaya category $\cW(M)$. However, as before, we expect the monomially admissible framework currently being developed in \cite{Hanlon-Ward} to be best suited for this. 
\end{remark}

 Let $\pi_0 \Symp_c(M)$ denote the symplectic mapping class group of $M$. Fix a universal cover $\widetilde{\Lag}$ of the Lagrangian Grassmanian over $M$. Let $\Symp_c^\gr(M)$ be the group of compactly supported, graded symplectomorphisms \cite[Section 2.b]{Seidel_graded}. Here we assume both that the underlying diffeomorphism has compact support, and that the bundle map is the identity on $\widetilde{\Lag}$ outside a compact set. (This is a distinguished subgroup of the group of graded symplectomorphisms for which we only require the support of the underlying diffeomorphism to be compact.) 
By \cite[Remark 2.5]{Seidel_graded}, there is an exact sequence
\begin{equation}\label{eq:graded_symplecto_lift}
1 \to \pi_0 \Symp_c^\gr(M) \to \pi_0 \Symp_c(M) \to H^1(M, \partial M; \bZ)
\end{equation}
where the final map is not a group homomorphism, but measures the difference between the original universal covering $\widetilde{\Lag}$ and its pullback under an element of $\pi_0 \Symp_c(M)$. (Note that in the cases we will consider, $M$ is homotopy equivalent to a CW 2-complex, and in particular $H_3(M; \bZ) = 0$, and so $H^1(M, \partial M; \bZ) = 0$ by Lefschetz duality.)

We record the following properties of Lagrangian translations, the proofs of which are immediate.

\begin{proposition} \label{prop:Lag_translations_group}

We get the following:

\begin{itemize}
\item
Lagrangian translations by arbitrary Lagrangian sections give a subgroup of $\pi_0 \Symp (M)$ isomorphic to $\text{ker}(\smallfrown \gamma^\vee: H_2(M, \partial M) \to \bZ) $.

\item
Lagrangian translations by Lagrangian sections which agree with $L_0$ near infinity give a subgroup of $\pi_0 \Symp_c (M)$ isomorphic to $H_2(M) /(\bZ \cdot \gamma^\vee)$. Moreover, this lifts to a subgroup of $\pi_0 \Symp_c^\gr(M)$.

\end{itemize}

The action of $\sigma_L$ on $H_2(M, \partial M)$  is in either case determined by the fact that $\sigma_L$ acts on Lagrangian sections by adding $[L]-[L_0]$, and fixes $\gamma^\vee$ and hence the $\bZ = \text{Im} (\smallfrown \gamma^\vee)$. (Dually, we could use the action on $H^2(M)$, see equation \ref{eq:leray}.) In particular, with respect to the decomposition $(\bZ, \text{ker}(\smallfrown \gamma^\vee: H_2(M, \partial M) \to \bZ)) $, $\sigma_L$ is given by $\begin{pmatrix}  1& 0 \\ [L]-[L_0] & \text{Id} \end{pmatrix} $.
\end{proposition}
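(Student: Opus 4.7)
The plan is to upgrade the bijection of Lemma~\ref{lem:sections_M_noncpt} (respectively Proposition~\ref{prop:sections_M_cpt}) to a group isomorphism by showing that $L \mapsto \sigma_L$ is a group homomorphism with trivial kernel. The abelian group structure on Lagrangian sections comes from fiberwise addition using the canonical affine structure on the fibers of $\pi$: given sections $L, L'$, define $L \boxplus L'$ by $(L \boxplus L')|_q = \mathfrak{p}_{L_0}(\widetilde{L|_q} + \widetilde{L'|_q})$, with $L_0$ as identity. Well-definedness over smooth fibers is immediate since the period lattice is additively closed; across a nodal fiber, the Lagrangian-section condition is linear in local action--angle coordinates and is therefore preserved under scaling and fiberwise sums of the underlying closed $1$-forms. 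The same local description shows $\sigma_L \circ \sigma_{L'} = \sigma_{L \boxplus L'}$, so $\sigma$ is a homomorphism, and well-definedness on fiber-preserving Hamiltonian isotopy classes is automatic: a path $\{L_t\}$ realizing such an isotopy yields a smooth family $\{\sigma_{L_t}\}$ of symplectomorphisms.

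Injectivity hinges on the identity $\sigma_L(L_0) = L$: for $x \in L_0$ the preferred lift $\tilde{x}$ is the zero covector, so $\sigma_L(x) = \mathfrak{p}_{L_0}(\widetilde{L|_{\pi(x)}}) \in L$. Hence if $\sigma_L$ is symplectically isotopic to the identity, then $L$ is Lagrangian isotopic to $L_0$, so $[L] - [L_0] = 0$ in $H_2(M, \partial M)$, and by Lemma~\ref{lem:sections_M_noncpt} the class of $L$ is trivial. The same reasoning applied to Proposition~\ref{prop:sections_M_cpt} gives injectivity in the compactly supported setting, yielding the two isomorphism statements.

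For the graded lift in part (2), when $L$ agrees with $L_0$ outside a compact set, over $B^0$ we have $L - L_0 = \Gamma(\alpha)$ for a closed $1$-form $\alpha$ vanishing near $\partial B$; the extension-across-nodes condition is linear, so $L_t := L_0 \boxplus t(L - L_0)$ is a path of Lagrangian sections from $L_0$ to $L$, each equal to $L_0$ near $\partial M$. This induces a compactly supported symplectic isotopy $\{\sigma_{L_t}\}$ from $\Id$ to $\sigma_L$, which canonically lifts to a path in $\Symp_c^{\gr}(M)$; as the space of such admissible paths $\{L_t\}$ is affine (hence contractible), the resulting class in $\pi_0 \Symp_c^{\gr}(M)$ is independent of choices and provides the homomorphic lift.

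The explicit action on $H_2(M, \partial M)$ then follows from $\sigma_L(L_0) = L$, which shows the section class is translated by $[L] - [L_0]$, together with the fact that $\sigma_L$ preserves all fibers so that $[\gamma]$ is fixed. The principal technical point --- and the place where I expect to have to be most careful --- is verifying that the fiberwise sum $L \boxplus L'$ genuinely extends as a Lagrangian section across each nodal fiber; I would handle this in the standard symplectic model for a focus--focus singularity, where the Lagrangian-section condition reduces to linear constraints on the associated closed $1$-forms that are manifestly preserved under fiberwise addition.
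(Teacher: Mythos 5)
The paper itself offers no written proof (it declares these properties ``immediate''), so the benchmark is the natural argument, and most of your proposal is exactly that: the fibrewise sum $L \boxplus L'$, the identity $\sigma_L \circ \sigma_{L'} = \sigma_{L \boxplus L'}$ checked in local action--angle charts where $\sigma_L$ is the time-one flow of $f \circ \pi$, and injectivity via $\sigma_L(L_0) = L$ combined with Lemma \ref{lem:sections_M_noncpt} / Proposition \ref{prop:sections_M_cpt}. Those parts are correct.

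There is, however, a genuine error in your treatment of the graded lift. You assert that ``over $B^0$ we have $L - L_0 = \Gamma(\alpha)$ for a closed $1$-form $\alpha$.'' This is false in general: the difference of two Lagrangian sections is a section of $T^\ast B^0 / \Lambda^\ast$, i.e.\ only \emph{locally} the graph of a closed one-form, and the obstruction to a global lift to a closed one-form on $B^0$ is precisely the class $[L]-[L_0] \in H^1(R^1(\pi_0)_\ast \bZ)$. One cannot form $t(L-L_0)$ because $t\Lambda^\ast \not\subset \Lambda^\ast$. Worse, if your claim were true your path $L_t$ would exhibit a compactly supported symplectic isotopy from $\Id$ to $\sigma_L$, which contradicts the injectivity you establish two paragraphs earlier -- so the argument is internally inconsistent, and the error sits exactly at the point that makes the proposition non-trivial. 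The graded lift should instead be obtained directly: in any Arnol'd--Liouville chart $\sigma_L$ has differential $\begin{pmatrix} I & 0 \\ Ds_L & I \end{pmatrix}$, a fibrewise unipotent family whose induced map on the Lagrangian Grassmannian admits a canonical homotopy to the identity (equivalently, $\sigma_L$ is locally the time-one Hamiltonian flow of $f\circ\pi$, hence canonically graded); since $Ds_L = 0$ near $\partial M$ this canonical grading agrees with the trivial one there, and it is compatible with composition, giving the homomorphic lift to $\pi_0 \Symp_c^{\gr}(M)$ without any isotopy to the identity.
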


We will also want compatibility with nodal slides, which follows from standard Moser-type arguments:

\begin{lemma}
\label{lem:nodal_slide_Lag_section}
Suppose that we perform a nodal slide and cut transfer using $L_0$ as the reference section. Formally, we get 
almost-toric fibrations $\pi^t: M^t \to \bR^2$, with an isotopy $h^t: M^0 \to M^t$ preserving the reference Lagrangian sections $L_0^t$. 
Suppose $\sigma^t_L \in \pi_0 \Symp_c M^t$ is the Lagrangian translation associated to $[L -L^t_0] \in H_2(M^t) / (\bZ \cdot \gamma^\vee)$; then $$\sigma^t_{h^t_\ast [L]} = h^t \circ \sigma^0_{ L} \circ (h^t)^{-1} \in \pi_0 \Symp_c M^t.$$
Similarly for the non-compactly supported case.
\end{lemma}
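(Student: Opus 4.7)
The plan is to exploit the naturality of the Lagrangian translation construction under fibre-preserving symplectomorphisms that preserve the reference section. Since $h^t: M^0 \to M^t$ is, by assumption, a symplectomorphism which intertwines the fibrations $\pi^0$ and $\pi^t$ (up to the induced integral affine isomorphism of the bases) and sends $L_0^0$ to $L_0^t$, the content of the lemma reduces to observing that the explicit formula defining $\sigma_L$ depends only on this data.

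First I would check that $h^t$ intertwines the global action-angle coordinate maps $\mathfrak{p}_{L_0^0}$ and $\mathfrak{p}_{L_0^t}$ from Section \ref{sec:Lag_sections}. This follows from the uniqueness statement in Duistermaat's construction: given an almost-toric fibration with a Lagrangian section, the map $\mathfrak{p}_{L_0}: T^*B \to M$ is uniquely determined by the requirement that it be fibre-preserving, symplectic, and take the zero-section to $L_0$ (and nothing else need be said over critical values, since $\mathfrak{p}_{L_0}$ is surjective onto the complement of the critical points there). Pulling this through $h^t$ and its induced integral affine isomorphism of the bases gives the desired intertwining, both on the smooth locus and in the complement of the critical points over nodal fibres.

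Next, apply this to the formula from Proposition \ref{prop:Lag_translations}: on the smooth locus $(M^t)^s$, writing $x = h^t(y)$, the composition $h^t \circ \sigma_L^0 \circ (h^t)^{-1}(x)$ equals $h^t \bigl( \mathfrak{p}_{L_0^0}(\widetilde y + \widetilde{L|_{\pi^0(y)}}) \bigr)$, which by the intertwining and the fact that $h^t$ takes $L$ to $h^t_\ast L$ rewrites as $\mathfrak{p}_{L_0^t}(\widetilde x + \widetilde{(h^t_\ast L)|_{\pi^t(x)}}) = \sigma^t_{h^t_\ast L}(x)$. Both sides extend uniquely and smoothly over the critical points (by the extension statement in Proposition \ref{prop:Lag_translations}), yielding equality as compactly supported symplectomorphisms of $M^t$, and a fortiori in $\pi_0 \Symp_c M^t$.

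The main technical subtlety is the existence of the isotopy $h^t$ preserving the reference section: this is essentially granted in the statement, but one should verify it via a parametric Moser argument, using that in a nodal slide the reference section is locally unique up to fibrewise Hamiltonian isotopy (by Proposition \ref{prop:sections_M_cpt} applied to each $\pi^t$), and that the isotopy class of $L_0^t$ varies continuously in $t$. The non-compactly supported analogue is identical, replacing Proposition \ref{prop:sections_M_cpt} by Lemma \ref{lem:sections_M_noncpt}.
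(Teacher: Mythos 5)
There is a genuine gap, and it sits at the very first step. Your argument rests on the claim that $h^t$ intertwines $\pi^0$ and $\pi^t$ up to an integral affine isomorphism of the bases. For a nodal slide this is false: the nodes of $\pi^0$ and $\pi^t$ sit over different points of the base, a fibre-preserving symplectomorphism of Lagrangian torus fibrations must cover a map that is integral affine on the smooth locus, and since $h^t$ is the identity outside a compact set that base map would have to be the identity --- which cannot move the critical value. (If $h^t$ did intertwine the fibrations, nodal slides would be trivial operations and the nodal slide recombinations of Section \ref{sec:nodal_slide_recombinations} would be vacuous.) As a consequence, the pointwise identity you derive cannot hold: $h^t \circ \sigma^0_L \circ (h^t)^{-1}$ is not fibre-preserving for $\pi^t$, whereas $\sigma^t_{h^t_\ast L}$ is, so the lemma can only be an identity in $\pi_0 \Symp_c M^t$, not of maps. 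Relatedly, $h^t(L)$ is not a Lagrangian section of $\pi^t$, so the expression $\widetilde{(h^t_\ast L)|_{\pi^t(x)}}$ in your computation is not defined; the class $h^t_\ast[L]$ must first be re-represented by an honest section of $\pi^t$ via Proposition \ref{prop:sections_M_cpt}. Your intertwining observation is a correct and useful naturality statement for genuinely fibre-preserving symplectomorphisms that preserve the reference section, but the Moser maps $h^t$ of a nodal slide are not of that kind. You also place the Moser argument in the wrong spot: the existence of $h^t$ is granted by the statement; the Moser-type content of the lemma is the comparison of the two symplectomorphisms.

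The paper gives no written proof beyond the remark that the lemma ``follows from standard Moser-type arguments,'' and the intended argument is a continuity-in-$t$ argument rather than a pointwise one: choose a continuous family of Lagrangian sections $L^t$ of $\pi^t$, equal to $L^t_0$ near $\partial M^t$ and representing $h^t_\ast([L]-[L^0_0])$ (constant away from the slide region, given by the local model near the sliding node). Then $t \mapsto (h^t)^{-1} \circ \sigma^t_{L^t} \circ h^t$ is a continuous path in $\Symp_c(M^0)$ which equals $\sigma^0_{L}$ at $t=0$ (up to the fibrewise Hamiltonian isotopy of Proposition \ref{prop:sections_M_cpt}), hence stays in the class $[\sigma^0_L] \in \pi_0 \Symp_c M^0$ for all $t$. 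Conjugating back by $h^t$ gives the statement; the cut transfer contributes nothing since it changes only the description of the fibration. The non-compact case is identical using Lemma \ref{lem:sections_M_noncpt}.
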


\subsection{Mirror symmetry for Lagrangian translations: $K$-theory} \label{sec:MS_translations_Ktheory}

We want to show that Lagrangian translations are mirror to tensors with line bundles. We start by establishing this at the $K$-theoretic level.

\begin{lemma}
Line bundles on $U$ are in one-to-one correspondence, via the first Chern class, with $\text{ker} \Big( \int_\gamma: H^2(U) \to \bZ \Big) \simeq \text{ker} \Big(\smallfrown \gamma: H_2(U, \partial U) \to \bZ \Big)$. 
\end{lemma}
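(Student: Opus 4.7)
The plan is to identify both groups explicitly and check that the first Chern class realises the isomorphism. Throughout, the equivalence $\ker(\int_\gamma \colon H^2(U) \to \bZ) \simeq \ker(\smallfrown \gamma \colon H_2(U,\partial U) \to \bZ)$ is immediate from Poincaré--Lefschetz duality on $U$ (viewed as a manifold with boundary after removing a tubular neighbourhood of $D$), so I will focus on the left-hand description.

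First I would compute $\Pic(U)$. Since $Y$ is rational, $H^1(\cO_Y)=H^2(\cO_Y)=0$, and the exponential sequence gives $c_1 \colon \Pic(Y) \xrightarrow{\sim} H^2(Y;\bZ)$. Restriction $\Pic(Y)\to \Pic(U)$ is surjective because $Y\setminus U = D$ has pure codimension one, and its kernel is generated by $[D_1],\ldots,[D_k]$. Hence
\[
\Pic(U) \cong H^2(Y;\bZ)/\langle [D_1],\ldots,[D_k]\rangle.
\]

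Next I would compute $H^2(U;\bZ)$ and locate the kernel of $\int_\gamma$ inside it. By Poincaré--Lefschetz duality, $H^2(U;\bZ)\cong H_2(Y,D;\bZ)$, and the long exact sequence of the pair $(Y,D)$ reads
\[
H_2(D) \xrightarrow{\iota_*} H_2(Y) \to H_2(Y,D) \xrightarrow{\partial} H_1(D) \to H_1(Y)=0,
\]
where the last vanishing uses rationality of $Y$. The image of $\iota_*$ is $\langle [D_1],\ldots,[D_k]\rangle$, and $H_1(D)\cong \bZ$ since $D$ is a cycle of rational curves. The key geometric identification is that under Poincaré--Lefschetz duality the connecting map $\partial$ corresponds to $\int_\gamma$: indeed, a generator of $H_1(D)$ is represented by a small loop on $D$ threading a node, and $\gamma$ is precisely the boundary in $U$ of a $3$-chain lifting such a loop (the bidisc boundary at the node), so pairing a class in $H_2(Y,D)$ with $\gamma$ computes its image under $\partial$. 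Therefore
\[
\ker\bigl(\tfrac{}{}\int_\gamma \colon H^2(U;\bZ) \to \bZ\bigr) = \operatorname{image}\bigl(H^2(Y;\bZ)\to H_2(Y,D;\bZ)\bigr) = H^2(Y;\bZ)/\langle [D_1],\ldots,[D_k]\rangle.
\]

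Finally I would verify that the two isomorphisms are compatible with $c_1$. For $\cL\in\Pic(U)$, any lift $\widetilde{\cL}\in\Pic(Y)$ has $c_1(\widetilde{\cL})\in H^2(Y;\bZ)$ restricting to $c_1(\cL)\in H^2(U;\bZ)$, and this restriction factors through the map $H^2(Y;\bZ)\to H_2(Y,D;\bZ)$ above. Ambiguity in the lift is exactly $\langle [D_i]\rangle$, which maps to zero, so the composite agrees with the chain of isomorphisms constructed in the previous two steps. The only step requiring any care is the geometric identification of the connecting homomorphism with $\int_\gamma$; once that is done the rest is bookkeeping.
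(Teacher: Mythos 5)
Your argument is correct and follows essentially the same route as the paper, which extracts the four-term exact sequence $0 \to \bZ^k \to H^2(Y) \to H^2(U) \xrightarrow{\int_\gamma} \bZ \to 0$ from the long exact sequence of the pair $(Y,D)$ together with excision, and then quotients by $\langle [D_1],\dots,[D_k]\rangle$ to identify $\Pic(U)$ with the kernel via $c_1$. The only imprecision is cosmetic: the generator of $H_1(D)\cong\bZ$ is the loop traversing the whole cycle of rational curves rather than a small loop at a single node, but your key identification of the connecting homomorphism with $\pm\int_\gamma$ (equivalently, that both surjections onto $\bZ$ kill exactly the image of $H^2(Y)$) goes through as stated.
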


\begin{proof}
The long exact sequence for the pair $(Y,D)$ and excision give
$$
0 \to \bZ^k \to H^2(Y) \to H^2(U) \xrightarrow{\int_\gamma} \bZ \to 0
$$
which in turn induces
$$
0 \to\Pic(U) \xrightarrow{c_1} H^2(U) \xrightarrow{\int_\gamma} \bZ \to 0. 
$$
\end{proof}

In the semi-definite case, let $F \subset \partial U$ be the class of a torus given by smoothing all the nodes of $D$. 
Note that 
$$
H_2(\partial U) =
\begin{cases}
\bZ \langle \gamma \rangle & \text{in the negative definite case} \\
\bZ  \langle \gamma \rangle \oplus \bZ  \langle F \rangle & \text{in the semi-definite case}
\end{cases}
$$

Now assume $Y=Y_e$. 
As before, let $Q = \langle D_1, \ldots, D_k \rangle^\perp \subset \Pic(Y)$, and let $\bar{Q}$ be the image of $Q$ in $\Pic(U)$.

\begin{lemma}\label{lem:Qbar}
The first Chern class gives an isomorphism 
 $$\bar{Q} \simeq \text{ker} \big( H^2(U) \to H^2(\partial U) \big) \simeq H_2(U) / H_2 (\partial U). $$
\end{lemma}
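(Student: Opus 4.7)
The plan is to prove the two isomorphisms separately, using standard topological tools. For the second isomorphism $\ker(H^2(U) \to H^2(\partial U)) \simeq H_2(U)/H_2(\partial U)$, I would invoke Poincar\'e--Lefschetz duality for the oriented $4$-manifold with boundary $U$ and for the closed $3$-manifold $\partial U$. This yields $H^2(U) \cong H_2(U,\partial U)$ and $H^2(\partial U) \cong H_1(\partial U)$, and under these identifications the restriction $H^2(U) \to H^2(\partial U)$ becomes the connecting homomorphism $\partial_\ast \colon H_2(U,\partial U) \to H_1(\partial U)$. Its kernel is, by exactness of the long exact sequence of the pair $(U,\partial U)$, the image of $H_2(U) \to H_2(U,\partial U)$, which is $H_2(U)$ modulo the image of $H_2(\partial U) \to H_2(U)$. (The notation $H_2(U)/H_2(\partial U)$ in the statement should be read as this quotient.)

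For the first isomorphism $c_1 \colon \bar{Q} \xrightarrow{\sim} \ker(H^2(U) \to H^2(\partial U))$, the preceding lemma already gives that $c_1 \colon \Pic(U) \hookrightarrow H^2(U)$ is injective, so it suffices to identify the image of $\bar{Q}$ with this kernel. I would apply Mayer--Vietoris to the decomposition $Y = U \cup T$, where $T$ is a closed tubular neighborhood of $D$ in $Y$, so that $U \cap T$ deformation retracts onto $\partial U$. Since $Y$ is a simply connected rational surface with $H^3(Y;\bZ) = 0$, the sequence reduces to
\begin{equation*}
H^2(Y) \xrightarrow{(r_U,\,-r_T)} H^2(U) \oplus H^2(T) \longrightarrow H^2(\partial U) \longrightarrow 0.
\end{equation*}
A class $x \in H^2(U)$ restricts to $0$ in $H^2(\partial U)$ precisely when $(x,0)$ lies in the image, i.e.\ when there exists $\tilde{x} \in H^2(Y)$ with $\tilde{x}|_U = x$ and $\tilde{x}|_T = 0$. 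Since $T$ deformation retracts onto $D$, and $D$ (a chain or a cycle of $\bP^1$'s) has $H^2(D;\bZ) \cong \bZ^k$ with basis dual to the fundamental classes $[D_i]$, the condition $\tilde{x}|_D = 0$ is equivalent to $\tilde{x} \cdot D_i = 0$ for all $i$. Using $H^2(Y;\bZ) \cong \Pic(Y)$, this says $\tilde{x} = c_1(\tilde{\cL})$ for some $\tilde{\cL} \in Q$, whose image in $\Pic(U)$ lies in $\bar{Q}$ with Chern class $x$; conversely, every element of $\bar{Q}$ obviously yields such a lift.

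The main obstacle is really only careful bookkeeping: verifying that $H^2(D;\bZ) \cong \bZ^k$ with the fundamental classes of the $D_i$ as the Poincar\'e dual basis in both the chain and the cycle case, that the Mayer--Vietoris component $H^2(T) \to H^2(\partial U)$ is the pullback along the inclusion, and that the topological restriction pairing recovers the algebraic intersection numbers $\tilde{\cL} \cdot D_i$. The argument is uniform across the negative-definite, semi-definite, and indefinite cases; the trichotomy only affects how $\bar{Q}$ relates to $Q$ (namely $\bar{Q} = Q$ outside the semi-definite case, and $\bar{Q} = Q/\langle D\rangle$ in it), not the shape of the proof.
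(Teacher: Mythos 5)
Your proof is correct, and it takes a mildly but genuinely different route from the paper's. The paper proves the first identification by assembling a commutative ladder: the top row is $0 \to Q \cap \langle D_1,\dots,D_k\rangle \to Q \to \bar{Q} \to 0$, the middle row is the exact sequence $0 \to \langle D_1,\dots,D_k\rangle \to H^2(Y) \to H^2(U) \xrightarrow{\int_\gamma} \bZ \to 0$ from the preceding lemma, and the bottom row is the long exact sequence of the pair $(\nu D, \partial U)$ with $H^2(D) \simeq H_2(\nu D,\partial U) \simeq \bZ^k$; exactness of the column $0 \to \bar{Q} \to H^2(U) \to H_1(\partial U)$ then falls out of a diagram chase, giving injectivity and the image statement simultaneously. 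You instead run Mayer--Vietoris for $Y = U \cup T$ to characterise $\ker(H^2(U)\to H^2(\partial U))$ as the restrictions of classes $\tilde{x} \in H^2(Y)$ with $\tilde{x}|_D = 0$, i.e.\ the image of $Q$, and you import injectivity of $c_1$ on $\Pic(U)$ from the preceding lemma. The two arguments are essentially excision-equivalent, but yours isolates the surjectivity-onto-the-kernel step more explicitly, while the paper's ladder makes visible exactly which previously established exact sequences are being glued together. Your treatment of the second isomorphism via Poincar\'e--Lefschetz duality and the long exact sequence of $(U,\partial U)$ is the standard argument the paper leaves implicit. The bookkeeping points you flag (that $H^2(D)\cong\bZ^k$ with restriction computing $\tilde{x}\cdot D_i$, and that the Mayer--Vietoris component into $H^2(\partial U)$ is the difference of pullbacks) are exactly the right ones and all check out.
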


\begin{proof}  Consider the following commutative diagram

$$
\xymatrix{
&& 0 \ar[d] & 0 \ar[d]  &  \\
0 \ar[r]  &Q \cap \langle D_1, \ldots, D_k \rangle \ar[r] \ar[d] & Q \ar[r] \ar[d] & \bar{Q} \ar[d] \ar[r] & 0 \ar[d]\\
0 \ar[r] & \langle D_1, \ldots, D_k\rangle\ar[r] \ar[d]_{\text{Id}} & H^2(Y) \ar[r] \ar[d]_{(\cdot D_i)_{i=1, \ldots, k}} & H^2(U) \ar[r]^{\int_\gamma} \ar[d] & \bZ \ar[d]_{\text{Id}} \\
0 \ar[r] & H_2(D) \ar[r] & H^2(D) \simeq H_2(\nu D, \partial U) \simeq \bZ^k \ar[r] & H_1 ( \partial U) \ar[r] & H_1(D) \simeq \bZ
}
$$
where $\nu D$ denotes a small neighbourhood of $D \subset Y$ (this deformation-retracts onto $D$). 
All rows are exact;  the bottom row is part of the long exact sequence for the pair $(\nu D, \partial \nu D) = (\nu D, \partial U)$. Also, all the columns apart from the one involving $\bar{Q}$ are automatically exact; exactness of that final column, and our claim, then follow. 
\end{proof}

\begin{lemma}
$K(U) = K_0 (D^b \Coh (U))$ is isomorphic to $\bZ \oplus \Pic(U)$ via $(\text{rk}, c_1)$. 
\end{lemma}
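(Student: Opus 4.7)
Since $U$ is smooth, we may identify $K_0(D^b\Coh(U))$ with the K-theory $K^0(U)$ of vector bundles. The plan is to combine the standard description of the K-theory of a smooth quasi-projective surface with the rationality of $Y$ to conclude.

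First I would invoke the topological filtration $F^\bullet K_0(U)$ by codimension of support. The rank map gives a split short exact sequence
\[
0 \to F^1 K_0(U) \to K_0(U) \xrightarrow{\mathrm{rk}} \bZ \to 0,
\]
split by $n \mapsto n[\cO_U]$. The determinant (or equivalently $c_1$) then gives a surjection $F^1 K_0(U) \twoheadrightarrow \Pic(U)$, which is split by $\cL \mapsto [\cL]-[\cO_U]$; its kernel is $F^2 K_0(U)$. For a smooth surface, sending $[\cO_Z] \mapsto [Z]$ identifies $F^2 K_0(U)$ with $CH^2(U)=CH_0(U)$ integrally (as $F^2$ is generated by structure sheaves of zero-dimensional subschemes and the only obstruction in Grothendieck--Riemann--Roch appears in codimension $\geq 2$). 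Altogether,
\[
K_0(U) \;\cong\; \bZ \,\oplus\, \Pic(U) \,\oplus\, CH^2(U)
\]
via $(\mathrm{rk}, c_1, \mathrm{ch}_2)$ up to sign conventions.

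It then remains to show $CH^2(U)=0$. I would use the localization exact sequence for Chow groups of smooth varieties applied to $D \subset Y$ with complement $U$:
\[
CH_0(D) \longrightarrow CH_0(Y) \longrightarrow CH_0(U) \longrightarrow 0.
\]
Since $Y$ is a smooth rational surface, $CH_0(Y) = \bZ$, generated by the class of any closed point. Any closed point of $D$ maps to this generator, so the first arrow is surjective, hence $CH_0(U) = CH^2(U) = 0$.

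Combining these two steps yields $K(U) \cong \bZ \oplus \Pic(U)$ via $(\mathrm{rk}, c_1)$, as claimed. The only mildly technical point is the integrality of the identification $F^2 K_0 \cong CH^2$ on a smooth surface, but this is standard; the rest is bookkeeping with the Chow localization sequence and the rationality of $Y$.
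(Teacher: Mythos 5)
Your proof is correct, but it takes a genuinely different route from the paper's. The paper runs the localization sequence in $K$-theory, $K(D) \to K(Y) \to K(U) \to 0$, identifies $K(Y)$ with its image under $(\mathrm{rk}, c_1, \ch_2)$ using the full exceptional collection already constructed for $D(Y)$, and then computes the image of $K(D)$ explicitly: the classes $\ch(\cO_{D_i}) = (0,[D_i],-\tfrac12 [D_i]^2)$ kill the span of the $[D_i]$ in $H^2$, and $\ch(\cO_{p_i}) = (0,0,1)$ kills the $\ch_2$-component, leaving $\bZ \oplus \Pic(U)$. You instead work intrinsically on $U$ with the coniveau filtration, reducing everything to the vanishing of $\mathrm{CH}^2(U)$, which you get from the Chow-group localization sequence plus $\mathrm{CH}_0(Y) = \bZ$ for a rational surface. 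Both arguments are sound; each leans on a different piece of input. The paper's version is essentially free given the exceptional collection already in hand, and it simultaneously produces the explicit presentation of $K(\Coh Y)$ used elsewhere (Remark \ref{rmk:classes_sphericals}). Yours is more portable --- it works for any smooth surface whose boundary carries a point hitting the generator of $\mathrm{CH}_0$ --- at the cost of invoking the integral identification $F^2 K_0 \cong \mathrm{CH}^2$ for surfaces (Riemann--Roch without denominators). One small point of hygiene you already implicitly handle: the section $\cL \mapsto [\cL]-[\cO_U]$ of $c_1$ is only a homomorphism modulo $F^2$, so the clean splitting $K_0(U) \cong \bZ \oplus \Pic(U) \oplus \mathrm{CH}^2(U)$ should be read as holding after you have shown $\mathrm{CH}^2(U)=0$; since that is how you use it, the argument goes through.
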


\begin{proof}
There is an exact sequence
$$
K(D) \to K(Y) \to K(U) \to 0
$$
using \cite[Exercise 2.10.6 (b)]{Hartshorne}. We also have an isomorphism $K(Y) \to \text{Im}(\text{ch}(Y)) \subset H^\text{even}(Y; \bQ)$, where $\text{ch}$ denotes the Chern character; as $Y$ is a surface, this map is given by $(rk, c_1, \text{ch}_2)$.  (This follows e.g.~from the existence of our exceptional collection for $D(Y)$.)

For each $D_i$, $K(D_i) \simeq \bZ^2$, generated by, for instance, $\cO_{D_i}, \cO_{p_i}$, where $p_i \in D_i$ can be taken to be any interior point of $D_i \subset D$. Moreover, there is a surjection $\bigoplus_i K(D_i) \twoheadrightarrow K(D)$. Applying the Chern character, $\cO_{D_i} \mapsto (0, [D_i], -1/2[D_i]^2)$; and all of the $\cO_{p_i}$ have the same image, of the form $(0,0, 1)$. The claim is then clear.
\end{proof}

We note the following immediate consequence:
\begin{lemma}
Let  $\cL \in \Pic(U)$. The automorphism $ \otimes \cL: D^b \Coh(U) \to D^b \Coh(U)$ induces an automorphism $K(U) \to K(U)$, given by $\begin{pmatrix}  1 & 0 \\ c_1(\cL) & \text{Id}     \end{pmatrix}$ on $\bZ \oplus \Pic(U)$. 
\end{lemma}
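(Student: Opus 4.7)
The plan is essentially a direct verification, since the preceding lemma already supplies the identification $K(U) \cong \bZ \oplus \Pic(U)$ via $([F] \mapsto (\mathrm{rk}(F), c_1(F)))$, and nothing beyond the standard multiplicativity of the Chern character is needed.

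First, I would justify that $\otimes \cL$ descends to a well-defined automorphism of $K(U)$. Since $U$ is a smooth quasi-projective surface, every object of $D^b\Coh(U)$ is perfect, so $\cdot \otimes \cL$ is exact on resolutions by locally free sheaves and therefore induces a group homomorphism on $K_0$. The inverse is provided by $\otimes \cL^{-1}$, so this homomorphism is an automorphism.

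Next, I would compute the action on the two factors of $\bZ \oplus \Pic(U)$. Fix $[F] \in K(U)$ and write $(r,c) = (\mathrm{rk}(F), c_1(F))$. Rank is multiplicative, so $\mathrm{rk}(F \otimes \cL) = r \cdot \mathrm{rk}(\cL) = r$; and the formula $c_1(F \otimes \cL) = c_1(F) + \mathrm{rk}(F) \cdot c_1(\cL)$ gives $c_1(F \otimes \cL) = c + r \cdot c_1(\cL)$. Both identities hold on the class of any coherent sheaf, and because they are additive in $[F]$ in $K(U)$, they hold on all of $K(U)$.

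Finally, writing elements of $\bZ \oplus \Pic(U)$ as column vectors $\begin{pmatrix} r \\ c \end{pmatrix}$, the preceding computation reads
\[
\begin{pmatrix} r \\ c \end{pmatrix} \longmapsto \begin{pmatrix} r \\ c + r\, c_1(\cL) \end{pmatrix} = \begin{pmatrix} 1 & 0 \\ c_1(\cL) & \mathrm{Id} \end{pmatrix}\begin{pmatrix} r \\ c \end{pmatrix},
\]
which is the claimed matrix. There is no real obstacle here: the content of the lemma is the bookkeeping identification with the matrix appearing in Proposition~\ref{prop:Lag_translations_group}, which is what makes the $K$-theoretic shadow of mirror symmetry for Lagrangian translations visible.
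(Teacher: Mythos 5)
Your proof is correct and matches the paper's intent exactly: the paper states this lemma as an "immediate consequence" of the identification $K(U)\simeq \bZ\oplus\Pic(U)$ via $(\mathrm{rk},c_1)$ and gives no further argument, and your verification (multiplicativity of rank, $c_1(F\otimes\cL)=c_1(F)+\mathrm{rk}(F)\,c_1(\cL)$, extended to all of $K(U)$ by additivity) is precisely the omitted bookkeeping.
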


Let's still assume that $Y=Y_e$, and let $M$ be the mirror to $Y \backslash D$, with $\pi: M \to \bR^2$ an almost-toric fibration from Theorem \ref{thm:ATstructure}; note that it only has  focus-focus singularities.
Let  $\gamma^\vee$ denote a smooth fibre of $\pi$, and on the mirror side, let $\gamma$ denote the class of a crossing torus at a node of $D$.

\begin{lemma}\label{lem:iso_Ktheories}
Let $\cW(M)$ denote the wrapped Fukaya category of $M$. Then there is an isomorphism $K_0 (D^b \cW(M)) \simeq H_2(M, \partial M)$, which takes a Lagrangian brane to $\pm$ its class in relative homology, where the sign depends on the grading of the brane.

Moreover, we have a commutative diagram of isomorphisms:
$$
\xymatrix{
K(U) \ar[r]^{\text{HMS}} \ar[d]^{(c_1, \text{rk})} & K_0(D^b \cW(M)) \ar[d] \\
\text{ker} \Big( \int_{\gamma}: H^2(U) \to \bZ \Big) \oplus \bZ \ar[r]_-{\iota_\ast} & \text{ker} \big( \smallfrown \gamma^\vee: H_2(M, \partial M) \to \bZ \big) \oplus \bZ \simeq H_2(M, \partial M) 
}
$$
where the first line is induced by the explicit homological mirror symmetry equivalence between $D^b \Coh U$ and $D^b \cW(M)$  in Theorem \ref{thm:hms}, and  $\iota_\ast$ is induced by a prefered diffeomorphism $\iota: U \to M$, which takes $\gamma$ to $\gamma^\vee$, together with Poincar\'e--Lefschetz duality. 

\end{lemma}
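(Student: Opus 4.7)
The lemma has two parts: (i) the isomorphism $K_0(D^b\cW(M)) \simeq H_2(M,\partial M)$, and (ii) the commutativity of the diagram.

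For (i), I would define the cycle map $\phi \colon K_0(D^b\cW(M)) \to H_2(M,\partial M)$ on generators by $[L] \mapsto [L]$, where a shift $[1]$ reverses the grading and hence the sign of the class. To check this descends to $K_0$, the key input is that exact triangles $L_1 \to L_2 \to L_3 \to L_1[1]$ in $D^b\cW(M)$ are represented by Lagrangian cobordisms in $M \times \bC$ (in the sense of Biran--Cornea), which project to the relation $[L_2]=[L_1]+[L_3]$ in $H_2(M,\partial M)$. Bijectivity then follows by identifying both groups with $\bZ \oplus \Pic(U)$: HMS gives $K_0(D^b\cW(M)) \simeq K(U) \simeq \bZ \oplus \Pic(U)$ via $(\mathrm{rk},c_1)$, while on the $A$ side a choice of reference Lagrangian section $L_0$ splits $H_2(M,\partial M) \simeq \bZ\langle[L_0]\rangle \oplus \ker(\smallfrown\gamma_M)$, with the orthogonal summand identified with $\Pic(U)$ via the Lemma \ref{lem:Qbar} type analysis and the correspondence with $\bar{Q}$.

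For (ii), the preferred diffeomorphism $\iota\colon U\to M$ of \cite[Remark 1.4]{HK} arises from the mutually dual almost-toric fibrations on $U$ and $M$ and therefore identifies the crossing torus $\gamma_U$ with a smooth fibre $\gamma_M$. Consequently $\iota_*$ respects the splittings of both source and target into a $\bZ$-summand (given by pairing with the fibre) and its kernel, and the $\bZ$-summands match automatically. What remains is the content of the diagram on the orthogonal summand: for a line bundle $\cL$ on $U$, the Poincar\'e--Lefschetz dual $\mathrm{PD}(c_1(\cL)) \in H_2(U,\partial U)$ should map under $\iota_*$ to the relative homology class of the mirror Lagrangian. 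The plan is to check this on the explicit generating set of Theorem \ref{thm:hms}: the full exceptional collection $\cO_{\Gamma_{ij}}(\Gamma_{ij}),\, \cO,\, f^\ast\cO(\bar{D}_1),\,\ldots$ on the $B$ side corresponds, under HMS, to the distinguished basis of vanishing cycles $W_{ij},V_0,\ldots,V_{k-1}$ on the $A$ side; for each generator, I would compute its image in $K(U)$ under restriction and $(c_1,\mathrm{rk})$, and independently compute the class in $H_2(M,\partial M)$ of its mirror Lagrangian.

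The main obstacle is the last computation, in particular for the dual generators $V_0,\ldots,V_{k-1}$ mirror to $\cO,f^\ast\cO(\bar D_1),\ldots$. I would carry this out using the almost-toric description of Theorem \ref{thm:ATstructure}: the thimbles $\vartheta_{ij}$ mirror to $\cO_{\Gamma_{ij}}(\Gamma_{ij})$ correspond to invariant half-discs attached at nodal fibres, so their relative classes can be read off directly from the fan data; the Lagrangian sections mirror to line bundles of the form $f^\ast\cO(\bar D_1+\cdots+\bar D_i)$ can be identified via Proposition \ref{prop:sections_M_cpt} with classes in $H_2(M,\partial M)$ that, under the duality of almost-toric fibrations, pair correctly with the $[D_i]$. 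The matching with $\iota_*(\mathrm{PD}(c_1))$ then reduces to a combinatorial check on the toric fan of $\bar Y$ and the positions of the interior blow-ups. Once commutativity is established on this generating set it extends to all of $K(U)$, since both $(c_1,\mathrm{rk})\circ\mathrm{HMS}$ and the cycle map are homomorphisms.
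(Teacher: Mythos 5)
Your overall strategy matches the paper's: both reduce the diagram to an explicit check on the generators supplied by a toric model (thimbles/cocores on the $A$ side versus the exceptional collection and its restriction to $U$ on the $B$ side), the paper phrasing this as an induction over handle attachments starting from $D^\ast T^2$ rather than as a computation with the $W_{ij}$ and $V_l$. Two points where your write-up is weaker than the paper's argument, though neither is fatal. First, for bijectivity of the cycle map you argue that both groups are abstractly isomorphic to $\bZ \oplus \Pic(U)$; an abstract isomorphism of the source and target does not make a given homomorphism between them bijective. The paper instead observes that Lagrangian cocores generate both $D^b\cW(M)$ (by Chantraine--Dimitroglou Rizell--Ghiggini--Golovko) and $H_2(M,\partial M)$, so the cycle map is surjective, and then deduces injectivity from the commutative square once the other three maps are known to be isomorphisms -- your generator-by-generator verification of commutativity would supply the same conclusion, so you should present bijectivity as a consequence of commutativity rather than as a separate step. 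Second, your appeal to Biran--Cornea for well-definedness runs in the wrong direction: Lagrangian cobordisms give rise to cone decompositions, but not every exact triangle in $D^b\cW(M)$ (whose objects include twisted complexes and summands) is realised by a cobordism, so this does not show the cycle map descends to $K_0$. The paper treats the map as the natural one and does not belabour this; if you want to justify it, the cleaner route is again via generation by cocores, defining the map on the free group they span and checking the relations there.
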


\begin{proof}
This follows from assembling previous results.  From the description of $M$ in Section \ref{sec:hms_background1}, we see that Lagrangian co-cores generate $H_2(M, \partial M)$; as they also generate the wrapped Fukaya category \cite{CDRGG}, the natural map $K_0 (D^b \cW(M)) \to H_2 (M, \partial M)$ is clearly surjective. 

On the other hand, by using the proof of Theorem \ref{thm:hms} (3) together with descriptions of $M$ given by attaching handles to $D^\ast T^2$ along Legendrian (also recalled in Section \ref{sec:hms_background1}), we get a diffeomorphism $\iota: U \to M$ such that the diagram in our statement above commutes. (This is checked inductively, starting with a toric pair and performing interior blow-ups / handle attachments. See also \cite[Remark 1.4]{HK}.)  As the two horizontal maps and the left-hand vertical one are isomorphisms, the right-hand vertical one must be too.
\end{proof}

The following corollary is then immediate.

\begin{corollary}\label{cor:mirror_autos_K-theory}
Suppose $(Y,D) = (Y_e, D)$ is a log Calabi--Yau surface with distinguished complex structure, with a choice of toric model, and $w: M \to \bC$ its mirror. Set $U = Y \backslash D$, and let $\pi: M \to B$ be the almost-toric fibration on $M$ associated to that model, with reference Lagrangian section $L_0$. 

\begin{enumerate}
\item
 The prefered diffeomorphism $\iota: U \to M$ above induces identifications:

\begin{center}
\begin{tabular}{ccc}
$\Bigg\{ $ \makecell{ Lagrangian sections $L$ of $\pi$ up to \\  fibre-preserving Hamiltonian isotopy } $ \Bigg\}$
& $\longleftrightarrow$ & \Big\{ $\text{ker}(\smallfrown \gamma^\vee: H_2(M, \partial M) \to \bZ \Big\} $ \\
&& $\Big\updownarrow$ \\
$\Pic(U)$ & $\longleftrightarrow$ &  \Big\{ $\text{ker}\Big(\int_{\gamma}: H^2(U) \to \bZ  \big)\Big\} $
 \end{tabular}
\vspace{10pt}

\begin{tabular}{ccc}
$\Bigg\{ $ \makecell{ Lagrangian sections $L$ of $\pi$ equal to $L_0$ near $\partial M$ \\ up to fibre-preserving Hamiltonian isotopy } $ \Bigg\}$
& $\longleftrightarrow$ & 
 $ H_2(M) / (\bZ \cdot \gamma^\vee ) $   
\\
&& $\Big\updownarrow$ \\
Line bundles in $Q $ & $\longleftrightarrow$ &  $H_2(U) / (\bZ \cdot \gamma )$    
 \end{tabular}

\end{center}

\item Suppose that a section $L$ corresponds to a line bundle $\cL$ in $Q$. As an element of $\pi_0 \Symp^\gr_c(M)$, $\sigma_L$ acts on $\cW(M)$, and the induced action on $K_0 D^b \cW(M)$ naturally agrees with the homological action $[\sigma_L]_\ast$ on $H_2(M, \partial M; \bZ)$. Moreover, under the identification given by homological mirror symmetry (as in Lemma \ref{lem:iso_Ktheories}), the action of $\otimes \cL$ on $K(U)$ agrees with the action of $\sigma_L$ on $K_0 D^b \cW(M)$. 

\end{enumerate}

\end{corollary}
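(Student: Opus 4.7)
The plan is to assemble the corollary directly from the identifications already established in the preceding discussion, adding only the compatibility between the homological action of $\sigma_L$ and the $K$-theoretic action of $\otimes \cL$.

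For part (1), I will first handle the non-compact correspondence. Lemma \ref{lem:sections_M_noncpt} identifies Lagrangian sections of $\pi$ up to fibre-preserving Hamiltonian isotopy with $\ker(\smallfrown \gamma_M \colon H_2(M,\partial M) \to \bZ)$, via $L \mapsto [L]-[L_0]$. On the $B$-side, the long exact sequence of $(Y,D)$ shows that $c_1 \colon \Pic(U) \to H^2(U)$ identifies $\Pic(U)$ with $\ker(\int_{\gamma_U} \colon H^2(U) \to \bZ)$. Since the preferred diffeomorphism $\iota \colon U \to M$ of Lemma \ref{lem:iso_Ktheories} sends $\gamma_U$ to $\gamma_M$, composing with Poincar\'e--Lefschetz duality $H^2(U) \simeq H_2(U,\partial U)$ gives an isomorphism between the two kernels, yielding the top square. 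For the bottom square, I substitute Proposition \ref{prop:sections_M_cpt} for Lemma \ref{lem:sections_M_noncpt} (classifying sections agreeing with $L_0$ near $\partial M$) and Lemma \ref{lem:Qbar} (identifying $\bar Q$ with $H_2(U)/H_2(\partial U)$), together with the same diffeomorphism $\iota$.

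For part (2), compact support of $\sigma_L$ (Proposition \ref{prop:Lag_translations}(4)) and the graded lift from Proposition \ref{prop:Lag_translations_group} give a well-defined class $[\sigma_L] \in \pi_0\Symp_c^{\gr}(M)$, which acts on the wrapped Fukaya category and hence on $K_0(D^b\cW(M))$. Under the isomorphism $K_0(D^b\cW(M)) \simeq H_2(M,\partial M)$ of Lemma \ref{lem:iso_Ktheories}, which sends a Lagrangian brane to (a sign times) its relative homology class, the induced $K$-theoretic automorphism is the homological action $[\sigma_L]_*$, since both are natural in the symplectomorphism. By Proposition \ref{prop:Lag_translations_group}, in the decomposition $H_2(M,\partial M) = \bZ \oplus \ker(\smallfrown \gamma_M)$, the homological action is
\[
[\sigma_L]_* = \begin{pmatrix} 1 & 0 \\ [L]-[L_0] & \Id \end{pmatrix}.
\]

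To match this with the action of $\otimes \cL$ on $K(U) = \bZ \oplus \Pic(U)$, I first compute that $\otimes\cL$ acts by $\begin{pmatrix} 1 & 0 \\ c_1(\cL) & \Id \end{pmatrix}$ (the off-diagonal term comes from $\cO_Y \mapsto \cL$ picking up first Chern class $c_1(\cL)$, while on sheaves supported in codimension $\ge 1$ the tensor product is trivial modulo rank, which vanishes). The commutative diagram of Lemma \ref{lem:iso_Ktheories} now identifies the $B$-side decomposition with the $A$-side one, and by part (1) the correspondence $L \leftrightarrow \cL$ is precisely arranged so that $\iota_* c_1(\cL) = [L]-[L_0]$ in $\ker(\smallfrown \gamma_M)$. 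Therefore both matrices agree under HMS, which gives the claim.

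The only step with any real content is recognising that the homological action of $\sigma_L$ agrees with the image under HMS of $\otimes \cL$ on $K_0$; everything else is formal assembly. There is no obstruction beyond bookkeeping, since the definition of $\sigma_L$ via the linear structure on fibres forces exactly the translation behaviour captured in Proposition \ref{prop:Lag_translations_group}, and the identifications of part (1) are set up so that $[L]-[L_0]$ and $c_1(\cL)$ correspond to the same class in $H_2(M,\partial M) \simeq H_2(U,\partial U)$.
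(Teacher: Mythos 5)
Your proposal is correct and takes essentially the same route as the paper, which simply declares the corollary "immediate" from the preceding results: you assemble exactly the intended ingredients (Lemma \ref{lem:sections_M_noncpt}, Proposition \ref{prop:sections_M_cpt}, Lemma \ref{lem:Qbar}, Proposition \ref{prop:Lag_translations_group}, and the diagram of Lemma \ref{lem:iso_Ktheories}) and correctly match the two translation matrices on $\bZ \oplus \ker(\smallfrown\gamma_M)$ and $\bZ \oplus \Pic(U)$. The only detail worth a passing remark is the sign/shift ambiguity in $K_0(D^b\cW(M)) \simeq H_2(M,\partial M)$ coming from the choice of grading, which the paper addresses in the sentence following the corollary and which is pinned down here by requiring the graded lift to be the identity near $\partial M$.
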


\begin{remark}
In (2), the action on $\cW(M)$ only depends on  the image of $\cL$ in $\bar{Q}$, or equivalently the image of $L$ in $H_2(M) / H_2(\partial M) \simeq \ker (H_2(M, \partial M) \to H_1(\partial M))$. 
\end{remark}

Note that even in the non-compact case we can always make sense of the $K$-theoretic action up to a shift (depending on the choice of lift of $\sigma_L$ to a graded Lagrangian), and that the shift can be chosen to get it to agree with the action of $\otimes \cL$.

\section{Lagrangian spheres mirror to $(-2)$ curves}\label{sec:spheres}
\subsection{Mirrors to line bundles on $(-2)$ curves: construction}

\begin{proposition}\label{prop:Corti-Filip-Petracci}
Let $(Y,D)$ be a log CY surface, and let $C \subset Y \backslash D$ be a $(-2)$ curve. Then there exists a toric model for $(Y,D)$ that contracts $C$: in other words, $C$ is the strict transform of the exceptional divisor for an interior blow-up.  
\end{proposition}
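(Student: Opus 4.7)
The plan is to use induction on the Picard rank of $Y$, or equivalently on the number of interior blow-ups in any toric model (the base case is $Y = \bar Y$ toric, where the statement is vacuous since $Y \setminus D$ contains no complete curves). The key technical input for the inductive step is the existence of an auxiliary $(-1)$-curve $E' \subset Y$ with $E' \cdot C = 1$, $E' \cdot D_i = 1$ for some boundary component $D_i$, and $E' \cdot D_j = 0$ for $j \ne i$.

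Assuming the existence of $E'$, the induction proceeds as follows. Contract $E'$ to obtain a log Calabi--Yau $(Y_1, D_1)$: the image $C_1$ of $C$ is now a $(-1)$-curve with $C_1 \cdot D_{1,i} = 1$ and $C_1 \cdot D_{1,j} = 0$ for $j \ne i$. Contract $C_1$ in turn to obtain a log Calabi--Yau $(Y_2, D_2)$ with Picard rank $\rho(Y) - 2$; the composition $Y \to Y_1 \to Y_2$ exhibits $C$ as the strict transform of the exceptional divisor of the first of the two interior blow-ups $Y_2 \leftarrow Y_1 \leftarrow Y$ (note the blown-up point on $C_1$ in the second step lies on $C_1 \setminus D_{1,i}$, so that the strict transform of $C_1$ has self-intersection $-2$ and is disjoint from the boundary, as required). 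By induction (or directly by \cite{GHK1}, Proposition 1.3), $(Y_2, D_2)$ admits a toric model; pre-composing the interior blow-up part with the two extra blow-ups above yields a toric model for $(Y, D)$ of the desired type.

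The main obstacle is establishing the existence of the auxiliary $(-1)$-curve $E'$. The divisor class $-K_Y - C = D - C$ satisfies $(D - C) \cdot C = 2$ and $(D-C) \cdot D_i = D \cdot D_i$, and one expects on Mori-theoretic grounds that the effective cone of $Y$ contains a $(-1)$-class with the desired intersection profile with $C$ and $D$. Concretely, one can argue that the linear system $|D-C|$ (or a twist thereof) decomposes into a sum including a suitable $(-1)$-curve component; if a suitable $E'$ cannot be located on $Y$ itself, one uses the flexibility of factorising any chosen toric model via elementary transformations and toric blow-ups (Proposition \ref{prop:change_of_toric_model}) to replace the initial toric model by one in which an $E'$ becomes manifest as a component of the interior blow-up chain.

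To make the induction tight, one should also verify that the choice of interior blow-up data we append is consistent with the distinguished complex structure condition if $(Y,D) = (Y_e, D)$; this is automatic since the two additional blow-up points lie on an exceptional curve of the existing toric model and on its proper transform, so their positions are determined up to the torus action and hence agree (up to equivalence of toric models) with the distinguished choice.
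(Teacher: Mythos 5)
The paper's own proof of this proposition is a one-line citation to \cite[Lemma 3.7]{CFP} (``follows from the minimal model program for surfaces''), so a self-contained argument would be welcome --- but yours has a genuine gap at exactly the step you identify as the main obstacle. The irreducible $(-1)$-curve $E'$ with $E'\cdot C=1$, $E'\cdot D_i=1$ and $E'\cdot D_j=0$ for $j\neq i$ need not exist. Take $\bar{Y}=\bP^2$ with its toric triangle and blow up a single interior point of the line $\bar{D}_1$ three times (infinitely near), so that the interior $(-2)$-curves form a chain $C_1, C_2$ with a $(-1)$-curve $E_3$ attached at the far end. For $C=C_1$, writing $F_j$ for the total transforms of the exceptional divisors, the only numerical class satisfying your constraints ($E'^2=E'\cdot K_Y=-1$, $E'\cdot C_1=1$, and $E'\cdot D_j\geq 0$ with $\sum_j E'\cdot D_j=1$, which forces degree zero against $H$) is $F_2=[C_2]+[E_3]$; since $F_2\cdot C_2=-1$, every effective divisor in that class contains $C_2$ as a component, so there is no irreducible representative. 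Your fallback of modifying the toric model by elementary transformations cannot rescue this: those moves change the model, not the surface $Y$ or the curves on it. What the MMP argument actually delivers is a possibly long chain of contractions of $(-1)$-curves, each meeting the boundary of the current pair once, which reaches $C$ only after everything ``outside'' it in the chain has been contracted; your two-step reduction is too rigid, and the ``Mori-theoretic grounds'' for the existence of $E'$ are precisely what would need to be proved.

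There is also a local error in the inductive step, even granting $E'$. After contracting $E'$, the curves $C_1$ and $D_{1,i}$ meet transversally at exactly the point $p_1$ to which $E'$ was collapsed (since $C_1\cdot D_{1,i}=(E'\cdot C)(E'\cdot D_i)=1$), so the second blow-up $Y_1\leftarrow Y$ is centred at $p_1=C_1\cap D_{1,i}$, a point \emph{on} the boundary --- not at a point of $C_1\setminus D_{1,i}$ as you assert. This is not cosmetic: a blow-up centred off $D_1$ would fail to be an interior blow-up and would destroy the log Calabi--Yau condition ($K_Y+D$ would acquire the exceptional curve as a summand), and the strict transform of $C_1$ becomes disjoint from the boundary precisely because the blow-up separates $C_1$ from $D_{1,i}$ at their unique intersection point, not because the centre avoids $D_{1,i}$. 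The two-contraction skeleton is sound in the cases where a suitable irreducible $E'$ happens to exist, but as written the proof is incomplete.
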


\begin{proof}
This follows from the minimal model program for surfaces, and was noted by  Corti--Filip--Petracci as \cite[Lemma 3.7]{CFP}. 
\end{proof} 

This can be combined with results from Section \ref{sec:hms_background1} to give the following.

\begin{proposition}\label{prop:Lagrangian_spheres}
Assume that $(Y,D)$ is a log CY surface with $Y=Y_e$, and $M$ the mirror to $U = Y \backslash D$. Consider the explicit homological mirror symmetry equivalence between $D^b \Coh U$ and $D^b \cW (M)$ given by Theorem \ref{thm:hms} (3). 
Suppose $C \subset U$ is a $(-2)$ curve. Then

\begin{enumerate}
\item 
 There exists an embedded Lagrangian sphere $s_C \subset M$ such that $s_C$ is mirror to $i_\ast \cO_C(-1)$. This can be explicitly described as a matching cycle for the Lefschetz fibration $w: M \to \bC$, and, for a suitable choice of almost-toric fibration on $M$, $s_C$ lies above a segment joining two nodal critical points with the same invariant direction. 

\item For any $E \subset Y$ such that $E \cdot C = 1$, let  $\sigma_E$ be the Lagrangian translation associated to $\otimes \cO(E)$. Then for any $a \in \bZ$, $\sigma_E^{a} s_C$ is mirror to $i_\ast \cO_C(a-1)$.

\end{enumerate}
\end{proposition}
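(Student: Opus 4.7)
The plan for part (1) is to construct $s_C$ explicitly from a toric model of $(Y,D)$ that contracts $C$, and to identify its mirror using a combination of K-theoretic bookkeeping and the classification of spherical objects supported on chains of $(-2)$-curves. Part (2) will then follow essentially formally from part (1) by transporting under $\sigma_E^a$.

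\textbf{Part (1): construction of $s_C$.} By Proposition \ref{prop:Corti-Filip-Petracci} we can fix a toric model $(\widetilde Y, \widetilde D) \to (\bar Y, \bar D)$ in which $C$ is the strict transform of an interior exceptional divisor; since $C^2=-2$, we may write $C=\Gamma_{ij}$ for some $i$ and some $j < m_i$ in the notation of Section \ref{sec:hms_background1}. In the associated mirror Lefschetz fibration $w : M \to \bC$, the vanishing cycles $W_{ij}$ and $W_{i,j+1}$ are Hamiltonian isotopic in the fibre $\Sigma$, both being built from the toric direction $v_i$, and I would define $s_C$ as the matching cycle over a short arc joining the corresponding two critical values. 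Under the Weinstein equivalence of Theorem \ref{thm:ATstructure} this matching sphere identifies with the $A_1$ Lagrangian sphere above the segment joining the $j$-th and $(j+1)$-th nodes on the invariant line for $v_i$. To identify the mirror with $i_\ast\cO_C(-1)$ I would: (a) use Lemma \ref{lem:iso_Ktheories} to compute the K-theory class of $s_C$ and verify that its mirror has first Chern class $[C]\in\Pic(U)$; (b) invoke sphericality of $s_C$ together with Lemma \ref{lem:type_A_sphericals} applied to the $A_{m_i-1}$ chain containing $C$, which forces the mirror to be of the form $i_\ast\cO_C(b)$ for some $b\in\bZ$; and (c) fix $b=-1$ either by lifting the K-theory computation to $K(Y)$ and matching $\ch_2$, or by recognising the matching-cycle exact triangle between $s_C$ and the thimbles $\vartheta_{ij},\vartheta_{i,j+1}$ as the mirror of a triangle of sheaves derived from the tensor identity $\cO_{\Gamma_{ij}}(\Gamma_{ij})\otimes\cO_Y(\Gamma_{i,j+1}) \cong i_\ast\cO_C(-1)$ (which uses $\Gamma_{ij}\cdot\Gamma_{i,j+1}=1$ via projection formula).

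\textbf{Part (2) and main obstacle.} Given part (1), Corollary \ref{cor:mirror_autos_K-theory} shows that the mirror of $\sigma_E^a s_C$ has the same K-theory class as $i_\ast\cO_C(-1)\otimes\cO_Y(aE) \cong i_\ast\cO_C(a-1)$, where the last isomorphism is by projection formula and $E\cdot C=1$. Since $\sigma_E^a s_C$ is again a Lagrangian sphere, its mirror is spherical with first Chern class $[C]$, and Lemma \ref{lem:type_A_sphericals} forces it to equal $i_\ast\cO_C(b)$ for some $b$; matching $\ch_2$ then yields $b=a-1$. The main obstacle I anticipate in both parts is pinning $b$ down unambiguously: since $K(U)\cong\bZ\oplus\Pic(U)$ captures only rank and $c_1$, matching classes in $K(U)$ alone cannot detect the twist along $C$, and one must either lift the argument through the compact Fukaya subcategory to the directed category $\cF^{\to}(w)\simeq D(Y)$ to access $\ch_2$, or perform a direct triangle and grading comparison on the Fukaya side. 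Careful handling of gradings, both for $s_C$ itself and for the lift of $\sigma_E$ to $\pi_0\Symp_c^{\gr}(M)$, will be required to ensure that the K-theoretic shift matches the actual object.
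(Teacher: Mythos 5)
Your construction of $s_C$ as the matching cycle over an arc joining the two critical values whose vanishing cycle is the $i$th meridian is exactly the paper's, and the second option in your step (c) --- identifying the matching-cycle exact triangle on $\vartheta_{i,j+1},\vartheta_{i,j}$ with the mirror of the triangle exhibiting $i_\ast\cO_C(-1)$ as the cone on the degree-zero morphism $\cO_{\Gamma_{i(j+1)}}(\Gamma_{i(j+1)}) \to \cO_{\Gamma_{ij}}(\Gamma_{ij})$ --- is precisely how the paper proves part (1), via \cite[Proposition 18.21]{Seidel_book}. (Note that $i_\ast\cO_C(-1)$ is this cone, not the tensor product you wrote in the parenthesis.) So part (1) is fine provided you commit to that route rather than the K-theoretic one.

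The K-theoretic route you lean on elsewhere has a genuine gap, and it is fatal for your part (2). Lemma \ref{lem:type_A_sphericals} does not say that a spherical object supported on the chain with $c_1=[C]$ is of the form $i_\ast\cO_C(b)$; it says it lies in the orbit of such objects under spherical twists in the collection. Even after lifting to $K(Y)$ to access $\ch_2$, the class $(0,[C],\ch_2)$ does not determine the object: for instance $T_{\cO_{C'}(a)}^2(\cO_C(b))$, for $C'$ an adjacent $(-2)$-curve in the chain, has the same K-theory class as $\cO_C(b)$ but is a different object, since squares of spherical twists act trivially on K-theory. There is also the prior issue that the support of the mirror spherical object is a priori only some union of $(-2)$-curves, not necessarily contained in the type-$A$ chain you chose, so the lemma need not even apply. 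The paper avoids all of this in part (2) by an explicit geometric computation: it writes $i_\ast\cO_C(a-1)$ as the cone on a morphism into $\cO_{\Gamma_{ij}}(\Gamma_{ij})\otimes f_2^\ast(\cO(D^\dagger))^{\otimes a}$, identifies the latter as an iterated mutation over the line bundles in the exceptional collection, translates those mutations into iterated Polterovich surgeries of $\vartheta_{i,j}$ with the thimbles $\varsigma_l$ that assemble into the central fibre torus $T$, and recognises the resulting sphere as the $S^1$-surgery of $s_C$ with $T$ --- which is exactly $\sigma_E^{\pm 1} s_C$, since $s_C$ cleanly intersects a fibre torus along a circle. Some such direct argument is unavoidable: the full categorical statement that $\sigma_E$ is mirror to $\otimes\cO(E)$ (Theorem \ref{thm:mirror_autos}) is itself proved \emph{using} this proposition, so only the K-theoretic Corollary \ref{cor:mirror_autos_K-theory} is available to you here, and K-theory alone cannot close the argument.
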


\begin{proof}
First, notice that the independence of choice of toric model of the mirror, as established in \cite[Section 3.4]{HK}, means that we are allowed to work with a different toric model for each $(-2)$ curve.

Assume now that  our $(-2)$ curve $C \subset U$ arises as part of a toric model $f: (\tilde{Y}, \tilde{D}) \to (\bar{Y}, \bar{D})$, where $(\bar{Y}, \bar{D})$ is toric, and $(\tilde{Y}, \tilde{D})$ is obtained from $(Y,D)$ by a sequence of corner blow-ups (so $U = Y \backslash D \simeq \tilde{Y} \backslash \tilde{D}$). Consider the usual exceptional collection on $\tilde{Y}$, namely
\begin{multline}
\cO_{\Gamma_{km_k}}(\Gamma_{km_k}), \cdots, \cO_{\Gamma_{k1}}(\Gamma_{k1}), \cdots, \cO_{\Gamma_{1m_1}}(\Gamma_{1m_1}),\cdots, \cO_{\Gamma_{11}}(\Gamma_{11}), \cO,
f^\ast \cO(\bar{D}_1), \\ \cdots, f^\ast \cO (\bar{D}_1+ \cdots + \bar{D}_{k-1})
\end{multline}
where $\Gamma_{ij}$ is the pullback of the $j$th exceptional curve over $\bar{D}_i$, for $i=1, \ldots, k$, $j=1, \ldots, m$. 

Let's first prove the claim about $i_\ast \cO_C(-1)$ in (1).  
Assume that $C$ is the strict transform of the $j$th exceptional curve over $\bar{D}_i$, for some fixed $i,j$. Then $$i_\ast \cO_C (-1) \simeq \{ e: \, \cO_{\Gamma_{i(j+1)}}(\Gamma_{i(j+1)}) \to \cO_{\Gamma_{ij}}(\Gamma_{ij}) \}$$ where $e$ is a non-trivial constant in $\Hom (\cO_{\Gamma_{i(j+1)}}(\Gamma_{i(j+1)}), \cO_{\Gamma_{ij}}(\Gamma_{ij})) \simeq \bC$. 

On the mirror side, $ \cO_{\Gamma_{i(j+1)}}(\Gamma_{i(j+1)})$  and $\cO_{\Gamma_{ij}}(\Gamma_{ij}) $ correspond to consecutive thimbles $\vartheta_{i,j+1}$ and $\vartheta_{i,j}$ with the same vanishing cycle, namely the $i$th meridian (see e.g.~\cite[Section 3.3.2]{HK}). The morphism $e$ corresponds to a non-trivial degree zero morphism in $H^1(S^1) \simeq HF(\vartheta_{i,j+1}, \vartheta_{i,j})$, where we are working in the directed Fukaya category of the superpotential. The cone over this element is the matching cycle given by gluing $\vartheta_{i,j+1}$ and $\vartheta_{i,j}$ together, by \cite[Proposition 18.21]{Seidel_book}. Let this be the Lagrangian sphere $s_C$. 

The assertion about matching cycles for the Lefschetz fibration is satisfied by construction. For the one about almost-toric fibrations, consider the almost-toric fibration associated to the same choice of toric model. The fibration is described in Theorem \ref{thm:ATstructure}; inspecting the construction of the symplectomorphism between it and $M$ in \cite[Section 6]{HK}, we see that $s_C$ is a Lagrangian sphere lying above the $j$th segment between the critical values with invariant direction the ray for $\bar{D}_i$.  (See \cite[Section 7.1]{Symington} for a more general discussion of such Lagrangians.)

We now want to prove the claim about $i_\ast \cO_C(a-1)$ in (2). Set $f =f_2 \circ f_1$, where $f_1$ is given by starting $(\bar{Y}, \bar{D})$ and blowing up the $i$th component of the divisor $j$ times, to get the log CY pair $(Y^\dagger, D^\dagger)$, say, and $f_2$ is given by making all the remaining blow-ups. 
Now notice there is an isomorphism 
  $$ i_\ast \cO_C(a-1) \simeq \{ e: \, \cO_{\Gamma_{i(j+1)}}(\Gamma_{i(j+1)}) \to \cO_{\Gamma_{ij}}(\Gamma_{ij}) \otimes f_2^\ast (\cO(D^\dagger))^{\otimes a}) \}.$$ 

On the other hand, $\cO_{\Gamma_{ij}}(\Gamma_{ij}) \otimes f_2^\ast (\cO(D^\dagger))$
is given by mutating $\cO_{\Gamma_{ij}}(\Gamma_{ij})$
 over 
 $f^\ast \cO(\bar{D}_1)$,
$\ldots$, 
$f^\ast \cO(\bar{D}_1+ \ldots, \bar{D}_{k-1})$: this follows from applying \cite[Corollary 2.10]{Bridgeland-Stern} to the exceptional collection for $(Y^\dagger, D^\dagger)$, and pulling everything back under $f_2$. 
 (We can ignore the $\Gamma_{i'j'}$ for $i' \neq i$.) 
This is the same as mutating it over 
the dual exceptional collection $f^\ast (\cO(\bar{D}_1+ \ldots, \bar{D}_{k-1})^\ast), \ldots,f^\ast (\cO(\bar{D}_1)^\ast), \cO$. Let $\varsigma_{k-1}, \ldots, \varsigma_0$ be the mirror thimbles. These can be glued together by iterative Polterovich surgery to get  
 the exact Lagrangian torus in the copy of $(\bC^\ast)^2$ mirror to $\bar{Y} \backslash \bar{D}$ (see \cite[Theorem 5.5]{HK}). Let  $\vartheta_{i,j}$ be as above.

Now inspect the proof of \cite[Proposition 6.3]{HK}: notice that the vanishing cycle $W_i = \partial (\vartheta_{i,j})$ intersects the vanishing cycle $\partial \varsigma_{k-1}$ transversally at a single point; moreover, the Polterovich surgery $\partial (\vartheta_{i,j}) \# \partial \varsigma_{k-1}$ intersects $\partial \varsigma_{k-2}$ transversally at a single point, and the same remains true as one iterates. 
On the other hand, in the case where two thimbles intersect transversally at a single point, mutating one over the other is the same as performing Polterovich surgery (c.f.~\cite{Seidel_knotted} and \cite[Section 18]{Seidel_book}); this also implies that the result of this iterative surgery has boundary a vanishing cycle which is still a copy of the $i$th meridian, by applying the total monodromy of the part of the fibration that is mirror to $(\bar{Y}, \bar{D})$, see \cite[Proposition 3.15]{HK}. 
It follows that $\cO_{\Gamma_{ij}}(\Gamma_{ij}) \otimes f^\ast (\cO(D^\dagger))$ is mirror to a sphere given by performing an iterated Polterovich surgery on $\vartheta_{i,j}, \varsigma_{k-1}, \ldots, \varsigma_0$, and then capping off the boundary of the resulting Lagrangian with $\vartheta_{i,j+1}$; topologically, this is clearly a Lagrangian sphere; call it $s_C'$. 
To relate $s_C'$ to $\sigma_E (s_C)$, recall that we also know that iterated Polterovich surgery on the $\varsigma_l$ gives an exact Lagrangian torus $T$ which can be taken to be a fibre of the Symington almost-toric fibration; and $\vartheta_{i,j}$ and $\vartheta_{i,j+1}$ are locally given by half-conormals to $S^1_{{v_i}^\perp}$ in $D^\ast_\epsilon T^2$, where $v_i \in \bR^2$ is the ray associated to $\bar{D}_i$ in the fan for $(\bar{Y}, \bar{D})$, and $S^1_{{v_i}^\perp}$ the image of $\bR{v_i}^\perp$ in $T^2 = \bR^2 / \bZ^2$ (see Section \ref{sec:hms_background1}).
In particular, we see that $s_C$ can be arranged to cleanly intersect $T^2$ along $S^1_{{v_i}^\perp}$, and that $s'_C$ is the result of an $S^1$ Lagrangian surgery between them (with suitable sign). Now notice that $\sigma_E (s_C)$ can be described in precisely the same way.

For $\sigma_E^{-1} (s_C)$, one can proceed similarly, taking the Polterovich surgery (with the other ordering choice) of $\vartheta_{i,j}$ with $\varsigma_0$, then $\varsigma_1$, etc, which will correspond to twisting $\cO_{\Gamma_{ij}}(\Gamma_{ij})$ with $\cO(-D)$, and taking the $S^1$ Lagrangian surgery between $s_C$ and $T^2$ with the opposite sign to before. More generally, notice that one can iterate this process by using disjoint parallel copies of $T^2$ in $D^\ast_\epsilon (T^2)$. (At most one of them can be exact, but this doesn't matter after surgery to get a Lagrangian sphere.)
\end{proof}

\begin{remark} \label{rmk:mirror_sphere_uniqueness}
While we won't need this in this paper, we expect that up to Hamiltonian isotopy, the Lagrangian sphere $s_C$ constructed above is independent of the choice of toric model contracting $C$. This would follow e.g.~from a strengthening of Proposition \ref{prop:change_of_toric_model}: we expect that the sequence of moves between any two toric models contracting $C$ can be chosen so that $C$ is contracted at each step. (For these purposes we consider any power of a fixed elementary transformation to be a single step.) Using the discussion of mirror moves in Section \ref{sec:change-toric-model-A-side}, we would then get that under the corresponding sequence of nodal slides and cuts, $s_C$ would remain a matching cycle between two nodal critical points at all time. 
\end{remark}

\subsection{Full mirror symmetry for Lagrangian translations}

\begin{theorem}\label{thm:mirror_autos}
Suppose $(Y,D) = (Y_e, D)$ is a log Calabi-Yau surface with distinguished complex structure, and $w: M \to \bC$ its mirror. Set $U = Y \backslash D$, let $\pi: M \to B$ be an almost-toric fibration on $M$, and $L_0$ a reference Lagrangian section of $\pi$.
 Assume that $L$ is a Lagrangian section of $\pi$ which agrees with $L_0$ near $\partial M$, and $\cL \in Q \subset \Pic Y$ the corresponding line bundle, as in Corollary \ref{cor:mirror_autos_K-theory}. Then under the homological mirror symmetry equivalence 
$D(U) \simeq D^b \cW(M)$ of Theorem \ref{thm:hms} (3), $\otimes \cL$ corresponds to $\sigma_L$. 

\end{theorem}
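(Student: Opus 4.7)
The plan is to apply the identity-detection criterion of Proposition \ref{prop:detecting_id} to
$$\psi := \tilde\sigma_L \circ (\otimes \cL)^{-1} \in \Auteq D(Y),$$
where $\tilde\sigma_L$ is the autoequivalence of $D(Y) \simeq D^b \cF^\to(w)$ obtained by transporting the action of the compactly supported symplectomorphism $\sigma_L$ on the directed Fukaya category through HMS, and $\cL$ is lifted from $\bar Q$ to $Q \subset \Pic(Y)$ (in the semi-definite case via the splitting \eqref{eq:Qbar_split}, together with a compatible graded lift $\sigma_L \in \pi_0 \Symp_c^{\gr}(M)$ as provided by Proposition \ref{prop:Lag_translations_group}). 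Once $\psi = \mathrm{Id}$ is established, restricting to the open part $U$ yields the claim for the mirror equivalence $D^b \cW(M) \simeq D(U)$.

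The compact-support hypothesis $\iota^\ast \psi = \iota^\ast$ is immediate: $\iota^\ast \tilde\sigma_L = \iota^\ast$ since $\sigma_L$ is the identity near the smooth fibre $\Sigma$, and $\iota^\ast(\otimes \cL) = \iota^\ast$ since $\cL \in Q$ means $\cL|_D \simeq \cO_D$. For the hypothesis that $\psi$ fix the sphericals $i_\ast \cO_C(-1)$ and $i_\ast \cO_C$ for each $(-2)$ curve $C \subset Y$: components $C \subset D$ are handled by compact support. For an internal $(-2)$ curve $C \subset U$, Proposition \ref{prop:Lagrangian_spheres} realises $i_\ast \cO_C(-1)$ as a matching sphere $s_C$ above a segment joining two nodal critical values with parallel invariant direction (in the almost-toric fibration coming from a toric model that contracts $C$), and identifies $i_\ast \cO_C(a-1)$ with $\sigma_E^a(s_C)$ for any $E$ with $E \cdot C = 1$. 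It therefore suffices to verify
$$\sigma_L(s_C) \simeq_{\mathrm{Ham}} \sigma_E^{\cL \cdot C}(s_C),$$
whose right-hand side is mirror to $(\otimes \cL)(i_\ast \cO_C(-1)) = i_\ast \cO_C(\cL \cdot C - 1)$. Both sides are obtained from $s_C$ by Lagrangian translation, and their Hamiltonian isotopy class depends only on the net fibre-displacement along the matching segment transverse to the invariant direction; this displacement evaluates to $\cL \cdot C$ in either case, by the first-Chern-class identification from Section \ref{sec:MS_translations_Ktheory} and Corollary \ref{cor:mirror_autos_K-theory}. The same argument applied to $\sigma_E(s_C)$ in place of $s_C$ handles $i_\ast \cO_C$.

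For the triviality of $\psi$ on $K(Y)$ we combine three ingredients: Corollary \ref{cor:mirror_autos_K-theory}(2) gives triviality on the quotient $K(U) = K(Y)/K^D(Y)$; the compact-support condition together with the description of $K^D(Y) \simeq K(D)$ via $\iota_\ast$ applied to generators of $K(D)$ gives triviality on the subgroup $K^D(Y)$; and the action on sphericals established above, together with the explicit action of $\tilde\sigma_L$ on the Lefschetz thimbles dual to the exceptional collection of a toric model (which can be read off directly from the local form of $\sigma_L$ in Arnol'd--Liouville charts), pins down the remaining off-diagonal contribution and matches that of $\otimes \cL$. Proposition \ref{prop:detecting_id} then yields $\psi = \mathrm{Id}$.

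The main obstacle is the A-side identification $\sigma_L(s_C) \simeq_{\mathrm{Ham}} \sigma_E^{\cL \cdot C}(s_C)$: it requires a careful local model for Lagrangian translations in a neighbourhood of the invariant matching segment of $s_C$, and matching the resulting winding invariant transverse to the invariant axis with the intersection number $\cL \cdot C$ on the $B$-side.
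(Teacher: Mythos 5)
Your overall strategy coincides with the paper's: form the composition $\psi$ of $\check{\sigma}_L^{\pm 1}$ with $(\otimes \cL)^{\mp 1}$, feed in the $K$-theoretic input from Corollary \ref{cor:mirror_autos_K-theory} and the action on the mirror sphericals from Proposition \ref{prop:Lagrangian_spheres}, and invoke Proposition \ref{prop:detecting_id}. Your treatment of internal $(-2)$ curves via $\sigma_L(s_C)\simeq \sigma_E^{\cL\cdot C}(s_C)$ is also the intended mechanism; the paper routes this through Proposition \ref{prop:Lagrangian_spheres}(2) together with Lemma \ref{lem:nodal_slide_Lag_section}, the latter being needed to transport $\sigma_L$ between the almost-toric fibration in which it was defined and the one adapted to a toric model contracting $C$ (the only fibration in which $s_C$ is constructed) --- a point worth making explicit in your sketch.

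The gap is in your third paragraph. You aim for $\psi=\mathrm{Id}$ on all of $D(Y)$, which forces you to prove triviality on all of $K(Y)$; but Corollary \ref{cor:mirror_autos_K-theory}(2) only controls the action on $K(U)=K_0(D^b\cW(M))\simeq H_2(M,\partial M)$, and the remaining ``off-diagonal contribution'' you allude to is precisely the part not pinned down by the wrapped category --- reading it off from the action of $\sigma_L$ on thimbles is not something the cited results provide. Worse, in the semi-definite case the stronger statement is not available as posed: the lift of $\cL$ from $\bar{Q}$ to $Q$ is only canonical up to $\otimes\,\cO_Y(D)$, which acts nontrivially on $K(Y)$ and on $D(Y)$ but trivially on $D(U)$; the paper's remark following the theorem records exactly this ambiguity. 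The paper sidesteps all of this by proving less: triviality on $K(U)$ plus fixing the internal sphericals, fed into the \emph{proof} (not the statement) of Proposition \ref{prop:detecting_id} together with Lemma \ref{lem:type_A_sphericals}, shows the residual autoequivalence can only be a product of twists in objects $\cO_{C'}(a)$ with $C'\subset D$, hence restricts to the identity on $D(U)$ --- which is all the theorem asserts. Replacing your $K(Y)$ step with this weaker conclusion repairs the argument.
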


\begin{proof}
As it is compactly supported, $\sigma_L \in \pi_0 \Symp_c^\gr(M)$  gives an autoequivalence of $D^b \cF^\to (w)$ (using e.g.~the characterisation of $D^b \cF^\to (w)$ in  \cite{GPS_covariant}: we can define the category using exact Lagrangians with prescribed behaviour in a neighbourhood of the boundary but no conditions inside of this). 
Moreover, this restricts to the identity on $D^\pi \cF(\Sigma)$, where $\Sigma$ is a fibre of $w$ near infinity. 
Under mirror symmetry, we get an autoequivalence of $D(Y)$ which restricts to the identity on $\Perf D$, say $\check{\sigma}_L$. Consider $\check{\sigma}_L^{-1} \circ (\underline{\phantom{...}} \otimes \cL) \in \Auteq D(Y)$. 
 First, Corollary \ref{cor:mirror_autos_K-theory} implies that it acts as the identity on $K$-theory of $D(U)$; also, Proposition \ref{prop:Lagrangian_spheres}, together with Lemma \ref{lem:nodal_slide_Lag_section}, tells us that  it acts as the identity on $\cO_{C}(a)$ for any $(-2)$ curve $C$ in $U$ and any $a \in \bZ$. The proof of Proposition \ref{prop:detecting_id}, together with the classification result for chains of $(-2)$ curves (Lemma \ref{lem:type_A_sphericals}), then constrains  $\check{\sigma}_L^{-1} \circ (\underline{\phantom{...}} \otimes \cL)$ as an element of $\Auteq D(Y)$: it could only be a product of spherical twists in objects $\cO_{C'}(a)$ for $C' \subset D$ a $(-2)$ curve. In particular, it restricts to the identity  in $\Auteq D(U)$. 
\end{proof}

\begin{remark} If $D$ is indefinite or negative definite, we have $Q = \bar{Q}$, and  it follows from the proof above that $\otimes \cL$ corresponds to $\sigma_L$ under the equivalence $D(Y) \simeq D^b \cF^{\to}(w)$. In the negative semi-definite case, there is an overall ambiguity of $\otimes \cO(D)$, i.e.~the action of the Serre functor. The mirror to this is well-understood, going back to \cite{Kontsevich_ENS}: it's action of the total monodromy of the Lefschetz fibration on the directed Fukaya category, cf.~\cite{Seidel_A_infty_II} and follow-up papers. In the toric case, this was also carefully studied studied by Hanlon \cite{Hanlon}.
\end{remark}

\begin{remark}\label{rmk:Hanlon3}
In the non-compactly supported case, \emph{if} we knew that $\sigma_L$ acted both on $\cW(M)$ \emph{and} $D^b \cF^\to (w)$, then we would be able to apply Proposition \ref{prop:detecting_id} to show that $\sigma_L$ is mirror to tensoring with $\cL$. (This would make use of case (2) of the proposition, by considering the action of $\sigma_L$ on the central torus.) This crucially relies on having an action on $D^b \cF^\to (w)$, as the Prop.~\ref{prop:detecting_id} applies to autoequivalences of $D(Y)$. In order to do this, we need a condition on the behaviour of $L$ near infinity that is more rigid that simply being Liouville adapted (reflecting the difference between $\Pic Y$ and $\Pic U$); we expect monomial admissibility to be well suited for this.
\end{remark}

\section{Nodal slide recombinations}\label{sec:nodal_slide_recombinations}

Assume throughout this section that $(Y,D)=(Y_e,D)$. 

\subsection{Construction and mirror theorem}

Fix a toric model $(\bar{Y}, \bar{D}) \gets (\widetilde{Y}, \widetilde{D}) \to  (Y,D)$.  
Suppose $\varphi  \in \Aut(Y,D; \text{pt})$ is an automorphism of $Y$ fixing $D$ pointwise; this induces an automorphism of $\widetilde{Y}$ fixing $\widetilde{D}$ pointwise, which we also denote $\varphi $. Let $f: (\widetilde{Y}, \widetilde{D}) \to (\bar{Y}, \bar{D})$ denote the blow-down map. Now consider the map $f \circ \varphi  : (\widetilde{Y}, \widetilde{D}) \to (\bar{Y}, \bar{D})$. This also gives a toric model for $(\widetilde{Y}, \widetilde{D})$ (and indeed, $(Y,D)$). 
By Proposition \ref{prop:change_of_toric_model}, one can go between these via blow-ups and elementary transformations, determined by a factorisation of $\varphi $ into elementary \emph{birational} transformations of $(\widetilde{Y}, \widetilde{D})$. After performing toric blow ups to $(\widetilde{Y}, \widetilde{D})$, we can assume wlog that we're only making elementary transformations, say $\cE_1, \ldots, \cE_r$. Notice that combinatorially, on the mirror side, we `get back' to the Lefschetz fibration, and almost toric fibration, that we started with: the $n_i$ and $m_i$ are the same at the start and at the end.

\begin{theorem}\label{thm:nodal_slide_recombination}
Fix $\varphi \in \Aut(Y,D; \text{pt})$, and a factorisation of it into elementary transformations, say $\cE_1, \ldots, \cE_r$, as above. Then there exists a compactly supported symplectomorphism $\check{\varphi} \in \pi_0 \Symp^\gr_c (M)$ such that:

\begin{enumerate}
\item With respect to the Lefschetz fibration $w: M \to \bC$, $\check{\varphi }$  is induced by a compactly supported symplectomorphism of the base, relative to the singular values,  determined by the full sequence of Hurwitz moves for $\cE_1, \ldots, \cE_r$;

\item With respect to the almost-toric fibration on $M$ associated to our choice of toric model, $\check{\varphi }$ is induced by the sequence of nodal slides and cut transfers determined by $\cE_1, \ldots, \cE_r$ (using the standard reference Lagrangian section $L_0$, itself fixed). 

\item Under the homological mirror symmetry identification (Theorem \ref{thm:hms} (2) and (3)), the action of $\varphi_\ast$ on the category $D (Y)$, respectively $D(U)$, is mirror to the action of $\check{\varphi }$ on the category $D^b \cF^\to (w)$, respectively $D^b \cW(M)$. 
\end{enumerate}
\end{theorem}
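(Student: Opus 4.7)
\textbf{The plan} is to construct $\check{\varphi}$ in the almost-toric picture (making (2) tautological), deduce (1) from the Lefschetz/almost-toric compatibility established in Section~\ref{sec:change-toric-model-A-side}, and prove (3) by tracking the action of $\check{\varphi}$ on the distinguished basis of vanishing cycles.

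\textbf{Construction.} I realise each $\cE_i$ on the almost-toric side as a nodal slide followed by a cut transfer, using the reference section $L_0$. A nodal slide gives a compactly supported Moser isotopy between the symplectic structures of the sliding family, hence a compactly supported symplectomorphism of $M$; a cut transfer is a mere change of description. After all $r$ steps the combinatorial data $(v_i, m_i)$ of the almost-toric fibration returns to its starting value, precisely because $\varphi \in \Aut(Y,D;\mathrm{pt})$ is a genuine automorphism and not a general birational map, so the composite is a self-symplectomorphism $\check{\varphi}$ of $M$. The Moser description equips it with a canonical graded lift in $\pi_0 \Symp^\gr_c(M)$. Property (2) holds by definition. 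For (1), the compatibility worked out in Section~\ref{sec:change-toric-model-A-side} says that each $\cE_i$ realises the \emph{same} symplectomorphism of $M$ whether described by a nodal slide plus cut transfer or by the corresponding block of Hurwitz moves lifted via symplectic parallel transport in $w \colon M \to \bC$; composing over $i$, the symplectomorphism $\check{\varphi}$ admits both descriptions simultaneously.

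\textbf{Mirror symmetry.} Let $V^{(0)}$ be the distinguished basis of vanishing thimbles for $w$ associated to the initial toric model $f$, and $V^{(r)}$ the analogous basis for the final toric model $f \circ \varphi$; by (1), $\check{\varphi}$ carries $V^{(0)}$ to $V^{(r)}$. Under the HMS equivalence of Theorem~\ref{thm:hms}(2), these correspond to the full exceptional collections $\cE^{(0)}$ and $\cE^{(r)}$ on $D(Y)$ associated to the two toric models. Since $\cE^{(r)}$ is built from the geometry of $f\circ \varphi$, whose exceptional curves $\Gamma^\natural_{ij}$ are the $\varphi$-translates of those for $f$, the collection $\cE^{(r)}$ is the image of $\cE^{(0)}$ under $\varphi_\ast$. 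Hence $\check{\varphi}_\ast$ and $\varphi_\ast$ agree on a full exceptional collection. To promote this object-level agreement to an isomorphism of autoequivalences, I apply Proposition~\ref{prop:detecting_id} to $\psi := \varphi_\ast^{-1} \circ \check{\varphi}_\ast \in \Auteq D(Y)$: it acts trivially on $K$-theory (both autoequivalences send $\cE^{(0)}$ to the same $K$-classes), fixes $\cO_C$ and $\cO_C(-1)$ for each internal $(-2)$-curve $C \subset Y \setminus D$ (by tracking the matching spheres of Proposition~\ref{prop:Lagrangian_spheres} through the Hurwitz composite, using that the starting and ending toric models agree combinatorially), and satisfies $\iota^\ast \circ \psi = \iota^\ast$ because $\check{\varphi}$ is compactly supported (so trivial on the fibre $\Sigma$ near infinity) while $\varphi$ fixes $D$ pointwise (so $\varphi_\ast$ restricts trivially to $\Perf(D)$). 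In the semi-definite subcase the variant of Proposition~\ref{prop:detecting_id} using the rational elliptic fibration on $Y_e$ applies. Finally, the statement for $D(U) \simeq D^b \cW(M)$ follows by restriction, since $\varphi|_U \in \Aut(U)$ and $\check{\varphi}$ is compactly supported.

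\textbf{Main obstacle.} The delicate step is the upgrade from object-wise agreement on a generating exceptional collection to a genuine equivalence of functors. In principle this is automatic because Hurwitz moves on the Lefschetz side are mirror to mutations of exceptional collections, and the HMS equivalence intertwines morphisms with the Floer cohomology produced by parallel transport; but verifying compatibility morphism-by-morphism along the full composite chain is combinatorially heavy. The detection principle of Proposition~\ref{prop:detecting_id} sidesteps this by testing on a small class of objects ($K$-theory classes, structure sheaves of internal $(-2)$-curves, and restriction to $\Perf(D)$), each of which is transparent on the symplectic side. I expect this reduction to be the conceptual heart of the proof.
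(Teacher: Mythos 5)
Your construction of $\check{\varphi}$ and your proofs of (1) and (2) follow essentially the same skeleton as the paper, just in the opposite order: the paper builds $\check{\varphi}$ first on the Lefschetz side, as a fibred symplectomorphism lifting the unique (up to isotopy rel the critical values) diffeomorphism of the base taking the old vanishing paths to the new ones, proves (3) there, and only then constructs the almost-toric version $\check{\varphi}_r$ as a composite of Moser isotopies and identifies it with $\check{\varphi}$ via the compatibility of Section \ref{sec:change-toric-model-A-side}. Your reversal is harmless, though note that the compatibility lemma is stated per elementary transformation and at the level of skeleta/mutations of the central torus, so the assertion that each $\cE_i$ ``realises the same symplectomorphism'' in both pictures is exactly the identification the paper spells out at the end of its proof of (2) rather than something already on the shelf.

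Where you genuinely diverge is (3), and this is where there is a gap. The paper's proof is \emph{direct}: $\check{\varphi}$ carries the thimbles over $\{\gamma_{ij}\},\varrho_0,\ldots,\varrho_{k-1}$ to those over $\{\gamma'_{ij}\},\varrho'_0,\ldots,\varrho'_{k-1}$, and since the Hurwitz moves are mirror to the mutations of exceptional collections (the compatibility already established in \cite[Section 3.4.8]{HK} and recalled in Section \ref{sec:change-toric-model-A-side}), the two full exceptional collections are identified, morphisms and all, with $\cE^{(0)}$ and $\varphi_\ast\cE^{(0)}$; agreement of the two autoequivalences follows with no detection criterion needed. Your detour through Proposition \ref{prop:detecting_id} has two problems. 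First, its hypotheses require $\psi$ to fix $i_\ast\cO_C(-1)$ and $i_\ast\cO_C$ for \emph{every} $(-2)$ curve $C\subset Y$, including components of $D$ (e.g.\ all of them in the semi-definite case); checking only internal curves leaves a residual ambiguity by twists in objects supported on $D$, which kills the identity claim in $\Auteq D(Y)$ and $\Auteq D^b\cF^\to(w)$ (it would only survive restriction to $D(U)$, as in the proof of Theorem \ref{thm:mirror_autos}). Second, the input that $\check{\varphi}$ sends $s_C$ to $s_{\varphi(C)}$ is not a consequence of Proposition \ref{prop:Lagrangian_spheres} (the spheres there are built from a toric model adapted to each individual curve, generally not the one fixed for $\varphi$), so ``tracking the matching spheres through the Hurwitz composite'' is itself an unproved step of comparable difficulty to the one you are trying to avoid. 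The direct argument via the full exceptional collection is both what the paper does and what closes these holes.
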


\begin{proof}
Let's start with (1). Consider the explicit Lefschetz fibration associated to our choice of toric model. Using the same notation as before, it has a distinguished collection of vanishing cycles $\{ W_{i j} \}_{ i=1, \ldots, k, j=1, \ldots, m_i}$, $V_0$, \ldots, $V_{k-1}$, with vanishing paths, say, $\{ \gamma_{i j} \}, \varrho_0, \ldots, \varrho_{k-1}$. Now perform the Hurwitz moves for $\cE_1, \ldots, \cE_r$. (This follows Figure \ref{fig:elementary_transformation}.)  At then end, by our observation that the $n_i$ and $m_i$ are the same as at the start, the distinguished collection that we get has, again, ordered vanishing cycles $\{ W_{i j} \}_{ i=1, \ldots, k, j=1, \ldots, m_i}$, $V_0$, \ldots, $V_k$. The vanishing \emph{paths}, however, will have changed (indeed, the ordering of the critical points that they induce should in general also have changed). Call the new paths $\{ \gamma'_{i j} \}, \varrho'_0, \ldots, \varrho'_{k-1}$.

Up to compactly supported isotopy relative to the critical points, there exists a unique compactly supported orientation-preserving diffeomorphism of the base taking the $ \gamma_{i j}$ to $ \gamma'_{i j}$, the $\varrho_l$ to the $\varrho_l'$, and preserving a straight half-line going to $-\infty$ out of the central fibre. (Before requiring the latter, the map is only defined by to the ambiguity of rotating by $2k\pi$ in a large radius annulus.) Call this map $f_{\check{\varphi}} $. 
Similarly to the construction in \cite[Proposition 2.5]{Keating_monotone}, this induces a compactly supported symplectomorphism $\check{\varphi}$ of the total space $M$, uniquely determined up to Hamiltonian isotopy, such that 
\begin{itemize}
\item on a large compact set $K$ with $f(K) \supset \Supp f_{\check{\varphi}}$, it intertwines $w$, and lifts $f_{\check{\varphi}}$;
\item in a neighbourhood of the vertical boundary of the Lefschetz fibration above $f(K_1)$, it interpolates back between $f_{\check{\varphi}}$ and the identity (recall that after forgetting the marked points, $f_{\check{\varphi}}$ is isotopic to the identity)
\item it's the identity on fibres above points outside of $f(K)$. 
\end{itemize}
This completes (1). (The reader may also be interested in the set-up at the end of \cite[Section 3]{Torricelli} in a related setting.)

From the definition of $\check{\varphi}$, it maps the Lagrangian branes supported on the thimbles above $\{ \gamma_{i j} \}, \varrho_0, \ldots, \varrho_{k-1}$ to the ones above $\{ \gamma'_{i j} \}, \varrho'_0, \ldots, \varrho'_{k-1}$. 
On the other hand, under the HMS theorem, the former are mirror to 
\begin{multline*}
\cO_{\Gamma_{km_k}}(\Gamma_{km_k}), \cdots, \cO_{\Gamma_{k1}}(\Gamma_{k1}), \cdots, \cO_{\Gamma_{1m_1}}(\Gamma_{1m_1}),\cdots, \cO_{\Gamma_{11}}(\Gamma_{11}), \cO,
f^\ast \cO(\bar{D}_1), \\ \cdots, f^\ast \cO (\bar{D}_1+ \cdots + \bar{D}_{k-1})
\end{multline*}
where $f: (\widetilde{Y}, \widetilde{D}) \to  (Y,D)$ is part of the toric model datum. On the other hand, using the fact that the elementary transformations $\cE_1, \ldots \cE_r$ are factorising $\varphi$, the branes supported on the thimbles above $\{ \gamma'_{i j} \}, \varrho'_0, \ldots, \varrho'_{k-1}$ are mirror to 
\begin{multline*}
\cO_{\varphi(\Gamma_{km_k})}(\varphi(\Gamma_{km_k})), \cdots, \cO_{\varphi(\Gamma_{k1})}(\varphi(\Gamma_{k1})), \cdots, \cO_{\varphi(\Gamma_{1m_1})}(\varphi(\Gamma_{1m_1})),\cdots, \cO_{\varphi(\Gamma_{11})}(\varphi(\Gamma_{11})), \\ \cO,
f^\ast \cO(\bar{D}_1),  \cdots, f^\ast \cO(\bar{D}_1+ \cdots + \bar{D}_{k-1})
\end{multline*}
where we are using the fact that the $\bar{D}_i$ are fixed by $\varphi$. 
This completes (3).

For (2), consider the sequence of nodal slides and cut transfers induced by $\cE_1, \ldots, \cE_r$. We claim that as the $n_i$ and $m_i$ at the end are the same as the ones we started with, the sequence of almost-toric moves naturally induces a compactly supported symplectomorphism of $M$. Formally, this can be defined as follows. Set $M = M_0$. The first nodal slide gives a one-parameter family of symplectic manifolds $M_t$, $t \in [0,1]$, naturally identified outside a compact set, with a smoothly varying family of symplectomorphisms $\check{\varphi}_t: M_0 \to M_t$, all compactly supported (this makes sense because of the natural identification), and such that $\check{\varphi}_0 = \Id$. The first cut transfer simply takes $M_1$ to itself (no deformations). The second nodal slide extends our families to $M_t$ and $\check{\varphi}_t$ with $t \in [1,2]$. Iterating, we get smoothly varying families $M_t$ and $\check{\varphi}_t$ for $t \in [0,r]$ (wlog we reparametrise near integer points of $[0,r]$ to get smoothness everywhere). Now, after all $r$ steps, the $n_i$ and $m_i$ are the same as we started with; thus $M_r$ is naturally identified, as a symplectic manifold, with $M = M_0$; and $\check{\varphi}_r$ is a compactly supported symplectomorphism of $M$. 

We claim that under our identification of $M$ as the total space of the Lefschetz fibration $w$ and as the total space of the (initial) almost-toric fibration, $\check{\varphi}$ and $\check{\varphi}_r$ agree up to compactly supported Hamiltonian isotopy. This readily follows from the two descriptions together with the discussion in Section \ref{sec:change-toric-model-A-side}
of the compatibility of the Hurwitz moves on the Lefschetz side and the nodal slides on the almost-toric side.
 \end{proof}

\begin{remark} 
\label{rmk:N_change_local_systems}
 Recall the short exact sequence from Theorem \ref{thm:GHK_Torelli}:
$$
1 \to N = \Hom (\pi_1(Y_e \backslash D), \bC^\ast) \to \Aut(Y_e, D; \text{cpt}) \to \Adm / W \to 1.
$$
We've just seen that for any element of $\Adm / W$, we can construct a mirror (graded) symplectomorphism $\check{\varphi}$. The autoequivalence of the Fukaya category mirror to an element of $N$ is clear: it is given by the change of  local systems encoded by the corresponding element of $\Hom (\pi_1(M), \bC^\ast)$. 
\end{remark}

\begin{remark}\label{rmk:Lefschetz_isotopy}
On the Lefschetz side, we could construct the $M_t$ and $\check{\varphi}_t$, $t \in [0,r]$, by deforming the base of the Lefschetz fibration by dragging around critical values in the base in the way prescribed by the Hurwitz moves. 
See Figure \ref{fig:nodal_isotopy_Lefschetz} for a single nodal slide.
\begin{figure}[htb]
\begin{center}
\includegraphics[scale=0.38]{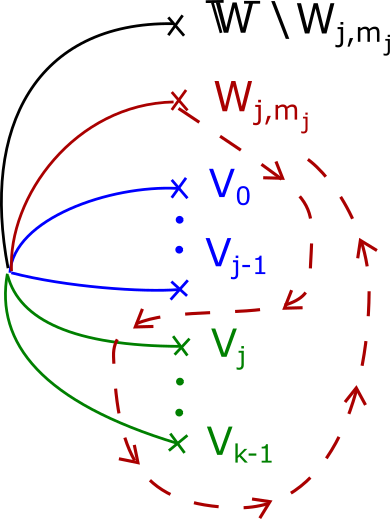}
\caption{Deforming the base of the Lefschetz fibration to get the isotopy from $M$ to $M_1$.}
\label{fig:nodal_isotopy_Lefschetz}
\end{center}
\end{figure}
\end{remark}

\subsection{Independence of choices} 

We want to prove that the map $\check{\varphi}$ constructed in Theorem \ref{thm:nodal_slide_recombination} is independent of choices. In order to do this, we start by understanding the mirrors to $SL_2(\bZ) \subset \Aut(\bC^\ast)^2 \subset \Bir ((\bC^\ast)^2)$. 

\subsubsection{Mirrors to $SL_2(\bZ)$ transformations} \label{sec:SL_2(Z)_transformations}

The $SL_2(\bZ)$ action on $(\bC^\ast)^2$ can be described as follows. The $B$ side is standard: for a given arbitrary toric pair  $(\bar{Y}, \bar{D})$, any element $g$ of $SL_2(\bZ)$  gives a birational transformation of $\bar{Y}$. After sufficiently many toric blow ups, say to $(\bar{Y}', \bar{D}')$, $g$ lifts to a biholomorphism from $\bar{Y}'$  to, say, $\bar{Y}''$. 
If $(\widetilde{Y}', \widetilde{D}')$ is given by interior blow-ups on $(\bar{Y}', \bar{D}')$ in $m_1, \ldots, m_k$ points, then $g$ induces an automorphism from $(\widetilde{Y}', \widetilde{D}')$ to $(\widetilde{Y}'', \widetilde{D}'')$, given by blowing up $(\bar{Y}'', \bar{D}'')$ according to the cyclic permutation of the $m_i$ determined by $g$. (If $g$ preserves the fan for $(\bar{Y}', \bar{D}')$ and the $m_i$ are cyclically symmetric, we get an automorphism of  $(\widetilde{Y}', \widetilde{D}')$ to itself.)

On the A side, let $M$ be the mirror symplectic manifold to  $(\widetilde{Y}', \widetilde{D}')$, and let  $M'$ be the mirror to  $(\widetilde{Y}'', \widetilde{D}'')$. 
By assumption, we can set up the almost-toric fibrations on $M$ and $M'$ so that one base is mapped to the other by $g$, with critical points and invariant lines mapped to each other. This naturally lifts to a symplectomorphism from $M$ to $M'$, say $\check{g}$.

By using suitable Liouville forms, one could arrange for $\check{g}$ to induce a map from $\cW(M)$ to $\cW(M')$. If we knew that there was a compatible map of directed Fukaya categories, Proposition \ref{prop:detecting_id} would apply to show that $\check{g}$ corresponds to $g$ under homological mirror symmetry. (Note that this would not require a fine-level description of the action on those categories!) This should follow from \cite{Hanlon-Ward}, who consider a version of the directed Fukaya category defined in terms of sections of the almost-toric fibration.
 In the current paper, we 
only use this for conjugates of compactly supported maps by $\check{g}$, which as usual follows from Proposition \ref{prop:detecting_id} together with Propositions \ref{prop:Corti-Filip-Petracci}  and \ref{prop:Lagrangian_spheres}. 

\begin{lemma} Let $g$ and $\check{g}$ be as above. Assume that $\check{h} \in \pi_0 \Symp^\gr_c M$ is a symplectomorphism of $M$ mirror to an autoequivalence $h \in \Auteq D(Y)$. Then $\check{g}^{-1} \circ \check{h}\circ \check{g} \in \pi_0 \Symp_c M$ is mirror to $g^{-1} \circ h \circ g \in  \Auteq D(Y)$. 
\end{lemma}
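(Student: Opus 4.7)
The plan is to apply the detection criterion of Proposition~\ref{prop:detecting_id} to the composite $\tau \circ \Psi^{-1} \in \Auteq D(Y)$, where $\Psi := g^{-1} \circ h \circ g$ and $\tau$ is the autoequivalence of $D(Y)$ mirror to $\psi := \check{g}^{-1} \circ \check{h} \circ \check{g}$, and then to conclude $\tau = \Psi$.

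First I will verify that $\psi$ is compactly supported, so that it has a well-defined mirror autoequivalence. If $\Supp(\check{h}) \subseteq K$ for some compact $K \subset M$, then $\Supp(\psi) \subseteq \check{g}^{-1}(K)$ is compact since $\check{g}$ is a diffeomorphism. Equipping $\psi$ with the grading inherited from $\check{h}$ gives an element of $\pi_0\Symp_c^\gr(M)$, hence under HMS an autoequivalence $\tau \in \Auteq D(Y)$ that restricts to the identity on $\Perf(D)$. On the $B$-side, $\Psi$ also restricts to the identity on $\Perf(D)$: $g$ preserves $D$, so $g|_D \in \Aut(D)$, and since $h|_D = \id$ we get $(g^{-1} h g)|_D = \id$.

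Next I will check that $\tau \circ \Psi^{-1}$ acts as the identity on $K$-theory and fixes $i_\ast \cO_C(-1)$ and $i_\ast \cO_C$ for every $(-2)$ curve $C \subset Y \setminus D$. For $K$-theory, the diffeomorphism $\iota: U \to M$ of Lemma~\ref{lem:iso_Ktheories} is built from the toric model data and is by construction compatible with $g$ and $\check{g}$ (both act on the base of the almost-toric fibration by the same $SL_2(\bZ)$ element), so $\check{g}_\ast$ on $H_2(M,\partial M)$ corresponds to $g_\ast$ on $K(U)$; conjugation therefore gives identical $K$-theoretic actions. For the spherical objects, fix $C \subset Y \setminus D$; by Proposition~\ref{prop:Corti-Filip-Petracci} there is a toric model contracting $C$, and post-composing with $g$ yields a toric model contracting $g(C)$. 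The associated almost-toric fibrations on $M$ and $M'$ are related precisely by the $SL_2(\bZ)$-action used to build $\check{g}$, so $\check{g}$ sends the matching cycle $s_C$ supplied by Proposition~\ref{prop:Lagrangian_spheres}(1) to a matching cycle Hamiltonian isotopic to $s_{g(C)}$; the corresponding statement for the translates $\cO_C(a)$ follows from Proposition~\ref{prop:Lagrangian_spheres}(2) together with Lemma~\ref{lem:nodal_slide_Lag_section}. This matches the action of $g_\ast$ on $i_\ast \cO_C(a-1) = i_\ast \cO_{g(C)}(a-1) \circ g_\ast$.

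By Proposition~\ref{prop:detecting_id}, $\tau \circ \Psi^{-1}$ must be of the form $\varphi_\ast$ for some $\varphi \in N$, up to an even shift if $Z = \emptyset$; the additional condition $\iota_D^\ast \circ (\tau \circ \Psi^{-1}) = \iota_D^\ast$ established above then forces $\tau \circ \Psi^{-1} = \id \in \Auteq D(Y)$, as required (and in particular $\tau = \Psi$ in $\Auteq D(U)$). The main obstacle will be the compatibility step: ensuring that $\check{g}(s_C)$ is genuinely Hamiltonian isotopic, rather than merely smoothly isotopic, to $s_{g(C)}$. This is built into the construction of $\check{g}$ as a symplectomorphism intertwining two almost-toric fibrations differing by an $SL_2(\bZ)$-action, but matching the two toric models contracting $C$ and $g(C)$ through this action, and tracking the resulting matching cycles under the induced nodal slides and cut transfers, requires careful bookkeeping.
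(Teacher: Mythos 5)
Your proposal is correct and follows exactly the route the paper intends: the paper gives no written proof beyond the remark that the lemma ``follows from Proposition \ref{prop:detecting_id} together with Propositions \ref{prop:Corti-Filip-Petracci} and \ref{prop:Lagrangian_spheres}'', and your argument is precisely a fleshed-out version of that — form $\tau\circ\Psi^{-1}$, check its $K$-theoretic triviality and its action on the spheres $s_{g(C)}$ supplied by the toric model contracting $g(C)$, and invoke the detection criterion. The bookkeeping caveat you flag at the end (Hamiltonian versus smooth isotopy of $\check{g}(s_C)$ and $s_{g(C)}$) is the honest residual content, and it is handled by the same change-of-toric-model compatibility the paper uses elsewhere.
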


\begin{remark}\label{rmk:SL_2(Z)_alternative}
 Alternatively, to get a well-defined action on directed categories, we could set up a mirror  $\tilde{g}$ to $g$  in terms of the Lefschetz fibration by combining the almost-toric / Lefschetz identification with \cite[Proposition 3.19]{HK}. The map $\tilde{g}$ would be a fibred symplectomorphism constructed by combining a rotation of the fibre near infinity with an automorphism of the base of $w$ (similarly to the proof of Theorem \ref{thm:nodal_slide_recombination}). While this is less geometrically appealing,  such a map $\tilde{w}$ will by construction automatically act on $D^b \cF^{\to} (w)$, and be mirror to $g \in \Auteq D(Y)$ under our HMS correspondence. While conditions nears infinity are different for $\tilde{g}$ and $\check{g}$ (heuristically, they are suited for different versions of the directed Fukaya category), they will agree on arbitrarily large compact sets. (In particular, they will be interchangeable for the purposes of conjugating a compactly supported map.)
\end{remark}

\subsubsection{Proof of independence of choices}

\begin{proposition}\label{prop:indep_of_choices}
The (equivalence class of) symplectomorphism $\check{\varphi} \in \pi_0 \Symp^\gr_c (M)  $ constructed in Theorem \ref{thm:nodal_slide_recombination} is independent of choices.
\end{proposition}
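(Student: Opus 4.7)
My plan is to proceed as follows. Two different presentations of $\varphi$ as a composition of elementary transformations (possibly with intermediate toric blow ups) yield equal birational transformations $\bar\varphi \in \Bir((\bC^*)^2, \Omega)$, and hence differ by a word in the relations of the groupoid of toric models of $(Y,D)$ whose morphisms are generated by elementary transformations, toric blow ups and $SL_2(\bZ)$ reparametrisations. By Blanc's description of the relations in the group of volume-preserving birational transformations of $\bP^2$, combined with Proposition \ref{prop:change_of_toric_model}, these elementary relations are generated by three families: commutativity relations for elementary transformations supported on disjoint pairs of opposite rays; $SL_2(\bZ)$-conjugation relations of the shape $g \cdot \cE \cdot g^{-1} = \cE^g$; and the $A_2$ cluster pentagon relation stating that a specific composition of five elementary transformations lies in $SL_2(\bZ)$. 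Each such relation yields a loop in the space of toric models, which in turn yields a loop of almost-toric fibrations on $M$ that closes up; the associated monodromy is exactly the candidate symplectomorphism built in Theorem \ref{thm:nodal_slide_recombination}, and the task is to show it is compactly supported Hamiltonian isotopic to the identity for each of the three families of generating relations.

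The commutativity relations are essentially immediate: elementary transformations affecting disjoint pairs of opposite rays translate, via Section \ref{sec:change-toric-model-A-side}, to nodal slides and cut transfers supported in disjoint compact regions of the base $\bR^2$, so the associated symplectomorphisms commute on the nose. The $SL_2(\bZ)$-conjugation relations reduce to the compatibility laid out in Section \ref{sec:SL_2(Z)_transformations}: the symplectomorphism assigned to $g \cdot \cE \cdot g^{-1}$ by the construction of Theorem \ref{thm:nodal_slide_recombination} is, essentially by definition, the conjugate $\check{g}^{-1} \circ \check{\cE} \circ \check{g}$ of the one assigned to $\cE$, and $\check{g}$ itself only depends on $g$ up to a compactly supported Hamiltonian isotopy (arbitrarily far out, using the discussion of almost-toric bases).

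The genuinely new content — and, I expect, the main obstacle — is the symplectic pentagon relation. Here I would fix a local model inside $M$: a compact subset $K \subset M$ symplectomorphic to a neighbourhood in $T^*T^2$ of two concurrent invariant lines supporting the five nodal fibres involved, sitting inside an almost-toric piece mirror to an $A_2$-type local toric geometry. Over this $K$, the five steps of the pentagon consist of iteratively sliding adjacent nodes together, performing a cut transfer (equivalently, a Lagrangian mutation on the attaching Legendrians of the Weinstein handles, cf.~Section \ref{sec:change-toric-model-A-side}), and sliding the nodes apart. Using Lemma \ref{lem:nodal_slide_Lag_section} to fix the reference section throughout, the only datum is then the induced isotopy class of the family of almost-toric fibrations parametrised by the pentagon loop, and I would check by explicit computation in this local model that the five Lagrangian mutations return the attaching Legendrians to themselves and exhibit the five-step loop as a fibred rotation of the base relative to the critical points which is compactly supported Hamiltonian isotopic to the identity.

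Finally, independence from the original choice of toric model $(\widetilde{Y},\widetilde{D}) \to (\bar Y, \bar D)$ is handled by applying exactly the same analysis to $\varphi = \id$: any two toric models for $(Y,D)$ are related by the moves of Proposition \ref{prop:change_of_toric_model}, the resulting loop of almost-toric fibrations is decomposed via the three families of generating relations above, and the argument of the previous paragraphs shows that the induced symplectomorphism is isotopic to the identity. This is consistent with, and parallel to, the independence of the Lefschetz fibration $w \colon M \to \bC$ up to Hurwitz-equivalent distinguished bases established in \cite{HK}.
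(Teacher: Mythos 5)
Your overall strategy matches the paper's: reduce independence of choices to checking that the generating relations among elementary transformations (as catalogued by Blanc) act trivially on the symplectic side, observe that the $SL_2(\bZ)$-conjugation relations and the ``mutate then mutate back'' relations are automatic, handle the choice of toric model by running the argument for $\varphi = \id$, and isolate the $A_2$ pentagon relation as the one piece of genuine content. Up to that point your proposal is sound and is essentially the paper's proof of Proposition \ref{prop:indep_of_choices}.

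The gap is in your treatment of the pentagon relation itself, which is exactly where the paper devotes a separate proposition (Proposition \ref{prop:symplectic_P5=I_relation}). You propose to ``check by explicit computation in this local model that the five Lagrangian mutations return the attaching Legendrians to themselves and exhibit the five-step loop as a fibred rotation of the base relative to the critical points which is compactly supported Hamiltonian isotopic to the identity.'' Two problems. First, the fact that the five steps return the almost-toric fibration, the nodal data, and the attaching Legendrians to themselves is already built into the construction of Theorem \ref{thm:nodal_slide_recombination} and tells you only that the composite is a fibration-preserving symplectomorphism; it does not trivialise the monodromy of the loop of fibrations, which is the actual content. Second, a fibred map covering a rotation of the base relative to the critical points is in general \emph{not} isotopic to the identity --- that is precisely what a product of half-twists (hence of Dehn twists) looks like. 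The paper explicitly remarks that the triviality is ``hard to see directly from the perspective of the almost-toric fibration'' and instead passes to the Lefschetz fibration $w \colon M \to \bC$, computes the induced base automorphism $f_{\check{\varphi}}$ as five half-twists between the critical points of $W_5$ and $\tau_{V_0}^{-1} W_1$, observes that these two vanishing cycles form a local $A_2$ configuration, and then unravels the half-twists by merging the two critical points into a single $A_2$ point (equivalently, by quoting Gromov's theorem for the standard $A_2$ fibration on $\bC^2$). Some such input --- the $A_2$ degeneration, Gromov, or the contact/weave argument sketched in Remark \ref{rmk:contact_pentagon} --- is indispensable, and your local almost-toric computation as described does not supply it.
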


In order to prove this, it is enough to show that two different factorisations of $\varphi$ into elementary transformations induce mirror symplectomorphisms which agree up to a compactly supported Hamiltonian isotopy. (This also takes care of the choice of toric model.)
This means that we need to check that relations in the group of birational automorphisms of $(\bC^\ast)^2$ induce isotopic symplectomorphisms. We start with the most interesting one.

\subsubsection*{The `$A_2$ cluster relation'}
There is a well-known order five relation in the group of birational transformations of $(\bC^\ast)^2$, which experts will recognise as corresponding to the fact that there are exactly five cluster charts for the $A_2$ quiver. To realise this relation (minimally) in terms of automorphisms of a log CY pair, take the model $(\widetilde{Y}, \widetilde{D}) \to (\bar{Y}, \bar{D})$ where $ (\bar{Y}, \bar{D})$ is a corner blow-up of $\bP^1 \times \bP^1$, and $(\widetilde{Y}, \widetilde{D})$ is given by one interior blow up on each of the self-intersection zero components. (This is a compactification of  the $A_2$ cluster variety.) The mirror almost-toric fibration is given in the left-hand side of Figure \ref{fig:relation_initialATF}; label the boundary components as in the figure.
 (Both the $A$ and $B$ side spaces, and the sequences of mutations, are carefully described in  \cite{Cheung-Vianna}.)

\begin{figure}[htb]
\begin{center}
\includegraphics[scale=0.5]{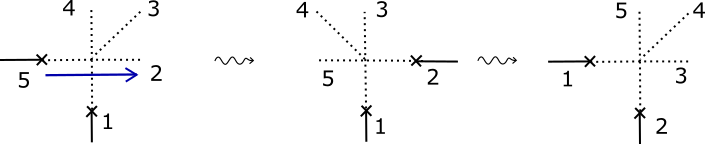}
\caption{Almost toric fibration for the `$A_2$ quiver relation': initial configuration, and mirrors to $E$ and then $g$. The cuts are in black, and dotted lines correspond to fan vectors (including ones with no interior blow-ups).
}
\label{fig:relation_initialATF}
\end{center}
\end{figure}

Let $E$ be the elementary transformation at $\bar{D}_5$, and let $g$ be the $SL_2(\bZ)$ birational transformation which takes $\bar{D}_i$ to $\bar{D}_{i-1}$. Let $P = g  E$; this defines an automorphism of $\widetilde{Y}$ (rather than a mere birational transformation), permuting the components of $D$; we have $P^5 = \Id$. 
Equivalently, setting $E_a = g^{-a} E g^{a}$, we have $E_4 \ldots E_0 = \Id$ (note $g^{-4} = g$). On the symplectic side, we have the following.

\begin{proposition}\label{prop:symplectic_P5=I_relation}
Let $(\widetilde{Y}, \widetilde{D}) \to (\bar{Y}, \bar{D})$ be as above, and let $M$ be its mirror. Let $\check{\varphi}$ be the compactly supported symplectomorphism of $M$ associated to the factorisation of $\Id \in \Aut (\widetilde{Y})$ into $E_4 \ldots E_0$; then $\check{\varphi}$ is isotopic to the identity through a compactly supported Hamiltonian. 
\end{proposition}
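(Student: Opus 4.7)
The plan is to reduce the claim to a triviality statement in the mapping class group of the base of the Lefschetz fibration, by combining the Hamiltonian rigidity built into Theorem~\ref{thm:nodal_slide_recombination} with the categorical triviality forced by the $B$-side identity $E_4\circ\cdots\circ E_0=\Id$.

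First, I would invoke Theorem~\ref{thm:nodal_slide_recombination}(1) to realise $\check\varphi$ as a compactly supported symplectomorphism fibered over $w:M\to\bC$, lifting a compactly supported orientation-preserving diffeomorphism $f_{\check\varphi}$ of $\bC$ relative to the critical values of $w$. The construction in the proof of that theorem shows that the compactly supported Hamiltonian isotopy class of $\check\varphi$ is determined by the isotopy class of $f_{\check\varphi}$ in the mapping class group of $\bC$ rel.~critical values. This reduces the proposition to showing that the total base diffeomorphism built from the five Hurwitz sequences of $E_0,E_1,\ldots,E_4$ (each preceded by the appropriate power of the $SL_2(\bZ)$ twist $\check g$ from Section~\ref{sec:SL_2(Z)_transformations}) is isotopic to the identity rel.~marked points.

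Next, I would use the categorical input. Because $E_4\circ\cdots\circ E_0=\Id \in \Aut(\widetilde Y,\widetilde D;\mathrm{pt})$, Theorem~\ref{thm:nodal_slide_recombination}(3) asserts that $\check\varphi$ acts as the identity autoequivalence on $D^b\cF^\to(w)$. Concretely, this means that $f_{\check\varphi}$ sends each of the distinguished vanishing paths $\{\gamma_{ij}\},\varrho_0,\ldots,\varrho_{k-1}$ to a path isotopic to itself rel.~endpoints. These paths, together with the chosen base point near $-\infty$, form a tree in $\bC$ whose complement is a disc. Applying Alexander's lemma cell by cell to this decomposition, any compactly supported diffeomorphism of $\bC$ fixing each of these paths up to isotopy rel.~endpoints is itself isotopic to the identity in the pure mapping class group.

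Finally, using the lifting construction of Theorem~\ref{thm:nodal_slide_recombination}(1) in reverse, an isotopy of $f_{\check\varphi}$ to $\Id$ produces a compactly supported Hamiltonian isotopy of $\check\varphi$ to a symplectomorphism that preserves every fibre of $w$ setwise. Such a residual map acts fibrewise by compactly supported symplectomorphisms of the smooth fibre $\Sigma$ which are trivial on $\cF(\Sigma)$ and hence Hamiltonian isotopic to the identity; a standard Moser argument in families then concludes. The main obstacle is the rigidity claim of the third step: one must verify that fixing a distinguished basis of vanishing paths up to isotopy rel.~endpoints is strong enough to trivialise $f_{\check\varphi}$ in the pure mapping class group, which is delicate because the $SL_2(\bZ)$ twists $\check g^{-a}$ appearing in $E_a=\check g^{-a}E\check g^a$ rotate the entire configuration between successive Hurwitz sequences. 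If the direct Alexander-lemma argument turns out to be insufficient at this level of precision, the fallback is to appeal to Blanc's description of relations in $\Bir((\bC^*)^2,\Omega)$ \cite{Blanc}, in which the $A_2$ pentagon appears as a fundamental relation and directly implies contractibility of the corresponding loop of compactly supported base diffeomorphisms.
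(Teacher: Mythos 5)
Your reduction in the third step is where the argument breaks: you claim that the base diffeomorphism $f_{\check\varphi}$ is isotopic to the identity rel the critical values, and propose to deduce this from the fact that $\check\varphi$ acts as the identity on $D^b\cF^{\to}(w)$ via an Alexander-lemma argument. This is false: the paper computes $f_{\check\varphi}$ explicitly (by iterating the single-mutation isotopy of Figure \ref{fig:nodal_isotopy_Lefschetz} in a cyclically symmetric presentation of $w$) and finds that it equals \emph{five half-twists} between the critical points for $W_5$ and $\tau_{V_0}^{-1}W_1$ --- a nontrivial power of a braid generator in the mapping class group of the disc rel marked points. The hidden error in your deduction is that categorical triviality does not control the base mapping class: $\check\varphi$ fixing each thimble up to quasi-isomorphism (indeed up to Hamiltonian isotopy in $M$) does not force $f_{\check\varphi}$ to fix each vanishing path up to isotopy rel endpoints, precisely because a half-twist between two critical points can act trivially upstairs while being nontrivial downstairs. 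Your fallback to Blanc's relations is circular: Blanc's $P^5=\Id$ is the $B$-side/birational statement, and the entire content of the proposition is to verify its symplectic incarnation; it cannot be used as input.

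The actual mechanism of triviality lives in the total space, not the base. Having identified $f_{\check\varphi}$ with five half-twists between two critical points, the paper observes that the corresponding two vanishing cycles form a local $A_2$ configuration, so the sub-Lefschetz fibration supporting $\check\varphi$ is the standard $A_2$ fibration with total space $\bC^2$; one then either merges the two critical points into a single $A_2$ point (along which the half-twists unravel) or simply quotes Gromov's theorem that $\pi_0\Symp_c(\bC^2)$ is trivial. If you want to repair your argument, you would need to replace the claim ``$f_{\check\varphi}\simeq\Id$ rel marked points'' with an explicit identification of $f_{\check\varphi}$ as a braid supported near an $A_2$ pair, and then import the nontrivial local statement about $\bC^2$; the categorical input alone cannot do this, since it does not distinguish $\check\varphi$ from a genuinely exotic symplectomorphism acting trivially on the Fukaya category.
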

Symplectic incarnations of the five $A_2$ cluster charts have been extensively studied, going back to \cite{EHK} -- see e.g.~\cite{Pan, STWZ, Treumann-Zaslow, GSW, STW, Cheung-Vianna}. Nevertheless, Proposition \ref{prop:symplectic_P5=I_relation} doesn't readily follow from the existing literature, as the focus has instead largely been on proving that symplectic objects `mirror' to the five charts, such as Lagrangian fillings of the right-hand trefoil, are distinct. (Note that by Theorem \ref{thm:nodal_slide_recombination} we already know that $\check{\varphi}$ acts as the identity on the Fukaya category.) We instead give a self-contained proof of Proposition \ref{prop:symplectic_P5=I_relation}, and comment further on the relation with the contact geometry literature in Remark \ref{rmk:contact_pentagon}. 

\begin{proof} of  Proposition \ref{prop:symplectic_P5=I_relation}.
The isotopy between $\check{\varphi}$ and the identity is hard to see directly from the perspective of the almost-toric fibration. We work instead with the Lefschetz fibration $w: M \to \bC$. 

As decribed in Theorem \ref{thm:nodal_slide_recombination}, $\check{\varphi}$ is a fibred symplectomorphism with respect to $w$; as before, let $f_{\check{\varphi}}$ be the automorphism of the base. We calculate this by hand. The key is to use, first, a more symmetric description of $w$, given in Figure \ref{fig:relation_initialLefschetz}. 

\begin{figure}[htb]
\begin{center}
\includegraphics[scale=0.38]{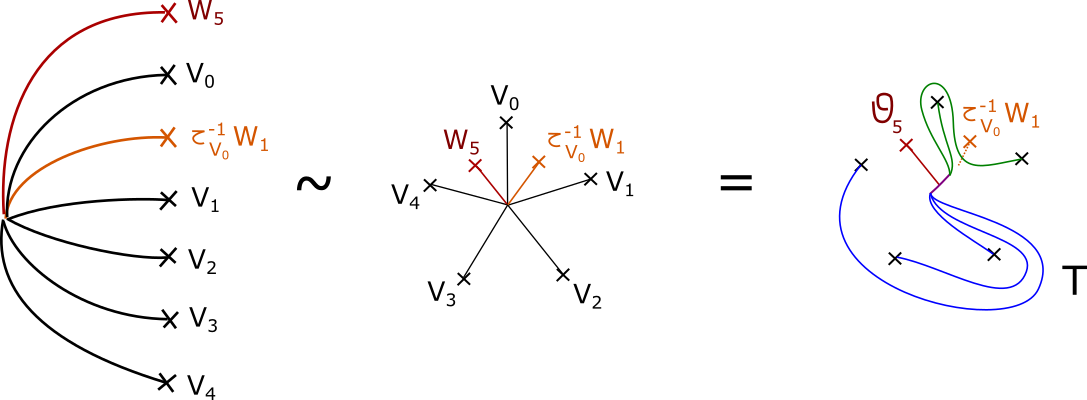}
\caption{Lefschetz fibration $w: M \to \bC$: standard (left), cyclically symmetric viewpoint (center), and in relation to the first mutation (same colour-coding as Figure \ref{fig:torus_for_mutation_1}).}
\label{fig:relation_initialLefschetz}
\end{center}
\end{figure}

We can now iteratively apply the isotopy corresponding to a single elementary transformation (in Figure \ref{fig:nodal_isotopy_Lefschetz}), combined with \cite[Proposition 3.19]{HK} for cycling the labels on components of $\bar{D}$, to get a description of $f_{\check{\varphi}}$; this is carried out in Figure \ref{fig:relation_baseisotopy}.

\begin{figure}[htb]
\begin{center}
\includegraphics[scale=0.5]{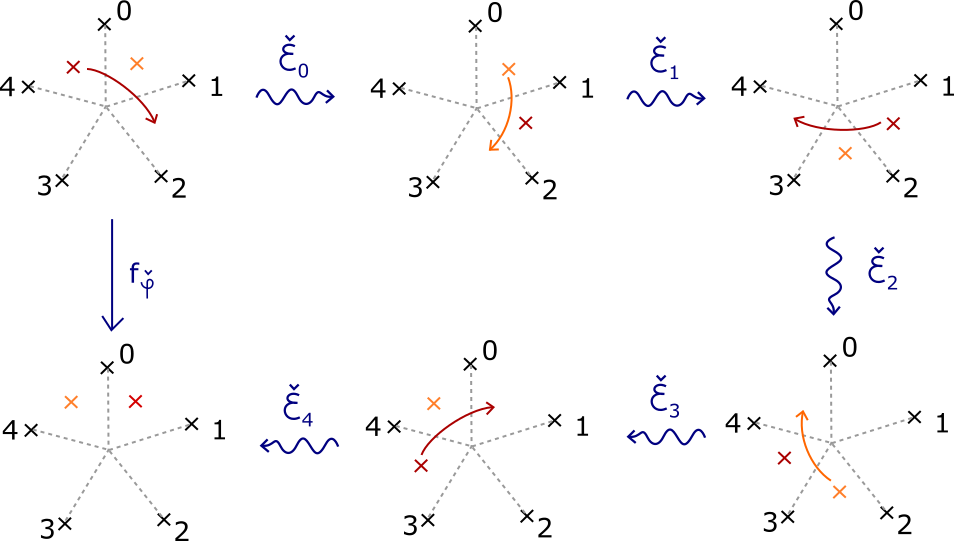}
\caption{Calculating $f_{\check{\varphi}}$ by applying the isotopies corresponding to $\cE_0, \ldots, \cE_4$. We're using the same cyclically symmetric configuration as in Figure \ref{fig:relation_initialLefschetz}: $T$ is given by gluing the blue and green thimbles along the purple arc.}
\label{fig:relation_baseisotopy}
\end{center}
\end{figure}

\begin{figure}[htb]
\begin{center}
\includegraphics[scale=0.55]{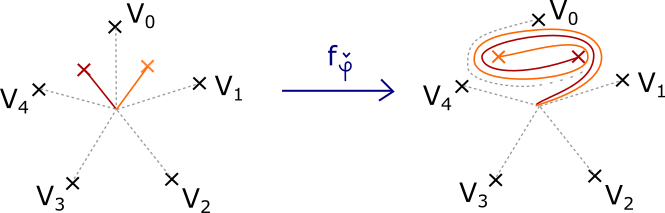}
\caption{Upshot: $f_{\check{\varphi}}$ is given by five half-twists between two vanishing cycles}
\label{fig:relation_baseoverall}
\end{center}
\end{figure}

The upshot is given in Figure \ref{fig:relation_baseoverall}: $f_{\check{\varphi}}$ is given by five half-twists between the critical points for $W_5$ and $\tau^{-1}_{V_0} W_1$. On the other hand, one can immediately check that these two vanishing cycles are in a local 
$A_2$ configuration; in particular, we can deform $w$ so that these two critical points merge (along their vanishing paths) to give a single $A_2$ critical point, at which point the half-twists readily unravel to give an isotopy to the identity. (Alternatively, as the sub-Lefshetz fibration on which $\check{\varphi}$ has support is just a copy of the `standard' $A_2$ fibration, with total space $\bC^2$, one could  quote \cite{Gromov}.)
\end{proof}

\begin{remark}\label{rmk:contact_pentagon} 
Intuitively, the key to Proposition \ref{prop:symplectic_P5=I_relation} is to show that $\check{\varphi}(T)$ is Hamiltonian isotopic to $T$. This can be checked using tools from contact geometry, which we now sketch. In fact, a slightly stronger statement is true: we'll see that the statement holds relative to a disc on $T$ (fixed pointwise under the five mutations and then the Hamiltonian isotopy), or, equivalently, we can work with a once-punctured torus. First, recall that the max-tb right-handed Legendrian trefoil knot $K \subset S^3 = \partial (B^4, \omega_0)$ has at least five exact Lagrangian fillings, all once-punctured tori (\cite[Example 8.4]{EHK}, see also \cite{Pan, Treumann-Zaslow}). On the other hand, the Weinstein domain given by attaching a Weinstein 2-handle along $K$ is precisely $M$ -- see e.g.~\cite[Section 4.1]{Casals-Murphy}, or the later conceptual argument in \cite[Corollary 1.2]{Casals}. One can then proceed, for instance, using the framework of weaves introduced in \cite{Casals-Zaslow}: first, the five Lagrangian fillings can be described in terms of 2-weaves, in this case trivalent graphs, see \cite[Section 7.1.3]{Casals-Zaslow}; the fillings are given by the 2-weaves dual to the five triangulations of the pentagon. Second, Lagrangian mutations can be understood in that language: they are given by Whitehead moves on the trivalent graph, dual to flips in the triangulation, see \cite[Theorem 4.21]{Casals-Zaslow}. Applying this, we get  that starting with any of our five Lagrangian fillings and performing the sequence of five mutations gives a filling which is Hamiltonian isotopic, relative to the boundary $K$, to the filling we started with. 
\end{remark}

We can now conclude:

\begin{proof} of Proposition \ref{prop:indep_of_choices}. 
Relations for the group of birational transformations of $(\bC^\ast)^2$ were described by Blanc in \cite[Theorem 2]{Blanc}. In our case, they consist of, first, relations in $SL_2(\bZ)$. These are readily satisfied on the symplectic side: the conjugate by $g \in SL_2(\bZ)$ of an elementary transformation with respect to ray $v_j$  is the elementary transformation with respect to ray $g \cdot v_j$, and the mirror statement is automatic. Second, Blanc's list gives us two relations involving elementary transformations which need checking on the symplectic side. The first one (`$PCP = I$' in his notation) amounts to checking that mutating and then mutating back gives the identity, which is automatic geometrically; and the second relation (`$P^5 = 1$' in his notation) was covered by Proposition \ref{prop:symplectic_P5=I_relation}. 
\end{proof}

\subsection{Examples} \label{sec:nodal_slide_examples}

We calculate the first few explicit examples of nodal slide recombinations.

\begin{example}\label{ex:k=6case} \emph{$B_2$ cluster variety relation.} Consider the mirror $M$ to the pair $(Y,D)$ given in Figure \ref{fig:nodal_slide_example_1}. (It has $k=6$, with three interior blow-ups; experts will recognise a standard compactification of the $B_2$ cluster variety, see \cite{Cheung-Vianna}.) $M$ has a non-compactly supported symplectomorphism $\rho$ described in the figure; this consists of a double nodal slide and cut transfer, followed by a single one, followed by an $SL_2(\bZ)$ map. By construction, $\rho^3$ has compact support; as $\Adm / W = \{ 1 \}$ in this case, $\rho^3$ is Hamiltonian isotopic to the identity by Theorem \ref{thm:nodal_slide_recombination}. This gives a symplectic mirror to what is sometimes known as the `$B_2$ cluster variety relation'. 
 \begin{figure}[htb]
\begin{center}
\includegraphics[scale=0.6]{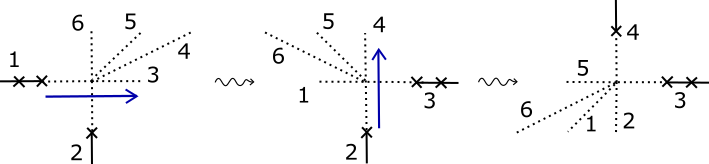}
\caption{Nodal slides for Example \ref{ex:k=6case}; the $SL_2$ action is given by taking the $i$th ray on the right-hand side to the $(i-2)$th, with indices mod 6.}
\label{fig:nodal_slide_example_1}
\end{center}
\end{figure}

\end{example}

Assume that $(Y,D)$ is negative semi-definite, so that $Y$ is a rational elliptic surface, and $M = X \backslash E$ the complement of a smooth anticanonical divisor in a del Pezzo surface. From Section \ref{sec:Mordell-Weil}, we know that $\Auteq(Y,D; \text{pt})$ is infinite in two cases, and, in both, equal to $k \cdot MW(Y, \bP^1)$, where $MW(Y, \bP^1) \simeq \bZ$ is the Mordell--Weil group of $Y$. For both of these cases, we can factorise a generator for $MW(Y)$ and write down the mirror sequence of nodal slides, together with the $SL_2(\bZ)$ action corresponding to the permutation of the components of $D$. 

\begin{example}\label{ex:k=7case}\emph{Mirror to Mordell--Weil for $\Bl(\bP^1 \times \bP^1) \backslash E$.}
Assume that $k=7$; $X$ is the blow-up of $\bP^1 \times \bP^1$ in a point. A generator $\varphi$ for $MW(Y, \bP^1)$ gives the symplectomorphism $\check{\varphi}$ of $M$ described by the sequence of nodal slides in Figure \ref{fig:nodal_slide_example_3}, postcomposed by the $SL_2(\bZ)$ action encoded by the labeling of the invariant rays. The map $\check{\varphi}^7$ has compact support, and, by Theorem \ref{thm:nodal_slide_recombination}, is mirror to $\varphi^7$. 
\begin{figure}[htb]
\begin{center}
\includegraphics[scale=0.55]{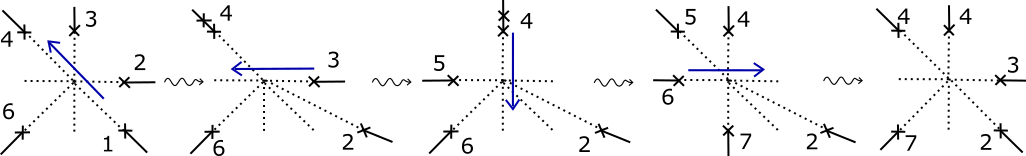}
\caption{Nodal slides for Example \ref{ex:k=7case}; the $SL_2$ action is determined by taking to $i$th ray in the right-hand side diagram to the $(i+1)$th.}
\label{fig:nodal_slide_example_2}
\end{center}
\end{figure}
\end{example}

\begin{example}\label{ex:k=8case}\emph{Mirror to Mordell--Weil for $\bF_1 \backslash E$.}  
Assume that $k=8$ and $X = \bF_1$. A generator $\varphi$ for $MW(Y, \bP^1)$ gives the symplectomorphism $\check{\varphi}$ of $M$ described by the sequence of nodal slides in Figure \ref{fig:nodal_slide_example_3}, postcomposed by the $SL_2(\bZ)$ action encoded by the labeling of the invariant rays. The map $\check{\varphi}^8$ has compact support, and, by Theorem \ref{thm:nodal_slide_recombination}, is mirror to $\varphi^8$. 
\begin{figure}[htb]
\begin{center}
\includegraphics[scale=0.55]{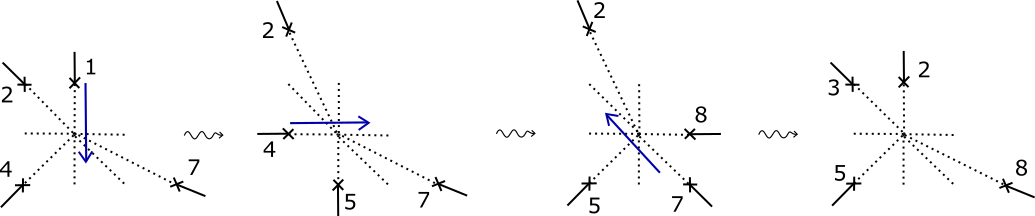}
\caption{Nodal slides for Example \ref{ex:k=8case}; the $SL_2$ action is determined by taking to $i$th ray in the right-hand side diagram to the $i+1$th.}
\label{fig:nodal_slide_example_3}
\end{center}
\end{figure}
\end{example}

\subsection{Relations between different families of symplectomorphisms}\label{sec:relations}

Homological mirror symmetry immediately hands us symplectic versions of Lemma \ref{lem:autoequivalence_relations} at the level of Fukaya categories.  The first two relations automatically have geometric counterparts: given a Lagrangian sphere $S$ and a symplectomorphism $f$, $\tau_{f(S)} = f \circ \tau_S \circ f^{-1} \in \pi_0 \Symp^\gr_c M$.  We check the other two.

\begin{lemma}\label{lem:translation_slide_relation}
Suppose that $\sigma_L \in \pi_0 \Symp^\gr_c M$ is the Lagrangian translation associated to $[L -L_0] \in H_2(M) /  (\bZ \cdot \gamma^\vee) $ and $\check{\varphi}\in \pi_0 \Symp^\gr_c M$ a nodal slide recombination. Then 
$$
\sigma_{\varphi_\ast L} = \varphi  \circ \sigma_{ L} \circ \varphi^{-1} \quad \in \pi_0 \Symp_c M.
$$
Similarly with non-compactly supported Lagrangian translations.
\end{lemma}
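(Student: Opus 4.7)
The strategy is to realise $\check{\varphi}$ as the end-point of the one-parameter family of symplectomorphisms $\check{\varphi}_t : M = M_0 \to M_t$ built in the proof of Theorem \ref{thm:nodal_slide_recombination} from the chosen factorisation of $\varphi$ into elementary transformations, and then to propagate Lemma \ref{lem:nodal_slide_Lag_section} along this family. Concretely, I split $[0,r]$ into the subintervals corresponding to the individual nodal slides $\cE_1, \ldots, \cE_r$; cut transfers and relabellings are tautological at the symplectic level and may be ignored. On each subinterval $[j-1,j]$ I have an almost-toric fibration $\pi_t : M_t \to \bR^2$ with reference Lagrangian section $L_0^t$, and by construction the isotopy preserves the chosen reference sections.

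Given the initial Lagrangian section $L = L^0$ of $\pi_0$, let $L^t := (\check{\varphi}_t)_\ast L$ be the Lagrangian section of $\pi_t$ obtained by pushing forward along the isotopy. For each single nodal slide $\cE_j$, Lemma \ref{lem:nodal_slide_Lag_section} (applied to $L^{j-1}$ on $M_{j-1}$ with the isotopy to $M_j$) gives
\[
\sigma^{j}_{L^{j}} \;=\; \check{\varphi}_{j|j-1} \circ \sigma^{j-1}_{L^{j-1}} \circ \check{\varphi}_{j|j-1}^{-1},
\]
where $\check{\varphi}_{j|j-1}$ is the partial isotopy from $M_{j-1}$ to $M_j$. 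Composing these identities over $j=1,\ldots,r$ and using $M_r = M_0 = M$, $\check{\varphi}_r = \check{\varphi}$, yields the relation
\[
\sigma_{\check{\varphi}_\ast L} \;=\; \check{\varphi} \circ \sigma_L \circ \check{\varphi}^{-1} \quad \in \pi_0 \Symp_c M.
\]

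The remaining task is to identify $\check{\varphi}_\ast L$ with $\varphi_\ast L$ as a class of Lagrangian section up to fibre-preserving Hamiltonian isotopy, so that the left-hand side is indeed $\sigma_{\varphi_\ast L}$. By Corollary \ref{cor:mirror_autos_K-theory}, such sections (agreeing with $L_0$ near $\partial M$) are classified by $H_2(M)/H_2(\partial M)$ via $L \mapsto [L]-[L_0]$, which under the prefered diffeomorphism $\iota : U \to M$ matches the classification of line bundles in $\bar Q$ by $c_1$. Theorem \ref{thm:nodal_slide_recombination}(3) says that $\check{\varphi}$ is mirror to $\varphi_\ast$ under homological mirror symmetry; at the $K$-theoretic level this means that the diffeomorphism $\check{\varphi}$ acts on $H_2(M)/H_2(\partial M)$ the same way $\varphi_\ast$ acts on $H_2(U)/H_2(\partial U)$, so $[\check{\varphi}_\ast L]-[L_0] = c_1(\varphi_\ast \cL)$ where $\cL$ is the line bundle mirror to $L$. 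Hence $\sigma_{\check{\varphi}_\ast L} = \sigma_{\varphi_\ast L}$ in $\pi_0 \Symp_c M$, completing the proof. The non-compactly supported statement is identical: Lemma \ref{lem:nodal_slide_Lag_section} extends verbatim to Lagrangian sections not matching $L_0$ near infinity since the nodal slides are compactly supported, and $\sigma_{\varphi_\ast L}$ is then only an element of $\pi_0 \Symp M$ rather than $\pi_0 \Symp_c M$.

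The main subtlety, and the only non-routine input, is the identification of $\check{\varphi}_\ast L$ with $\varphi_\ast L$; this is where one really uses the content of Theorem \ref{thm:nodal_slide_recombination} beyond its geometric construction, rather than just the iterative structure of the isotopy. The rest of the argument is bookkeeping.
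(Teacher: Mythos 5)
Your proof is correct and follows the same route as the paper, whose entire argument is "iteratively apply Lemma \ref{lem:nodal_slide_Lag_section}" along the factorisation into elementary transformations. Your additional final step identifying $\check{\varphi}_\ast L$ with the section mirror to $\varphi_\ast \cL$ via Corollary \ref{cor:mirror_autos_K-theory} is a harmless elaboration: the paper reads $\varphi_\ast L$ in the statement simply as the pushforward of the section under the symplectomorphism $\check{\varphi}$, so the lemma is purely geometric and that identification is not actually needed.
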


\begin{proof}
This follows from iteratively applying Lemma \ref{lem:nodal_slide_Lag_section}. Similarly with non-compactly supported Lagrangian translations.
\end{proof}

Suppose $C \subset Y \backslash D$ is a $(-2)$ curve and $a \in \bZ$. Recall that as autoequivalences of derived categories,
$$T_{\cO_C (a-1)} \circ T_{\cO_C (a)} = \underline{\phantom{...}}\otimes \cO_Y (C).$$
We will prove a symplectic analogue of this relation in Lemma \ref{lem:two_twists_Lag_translation}. Let's start with the set-up.

Choose an almost-toric fibration $\pi: M \to \bR^2$ as in Proposition \ref{prop:Lagrangian_spheres}, and let $s_{a} = \sigma^{a+1}_E s_C$ be the sphere mirror to $i_\ast \cO_C(a)$ constructed therein; these all project to a segment $\gamma$ between two critical points of $\pi$. This fibration comes with a prefered reference Lagrangian section $L_0$; using this, let $\sigma_{L_C}$ be the Lagrangian translation mirror to tensoring with $\cO_Y(C)$ from Theorem \ref{thm:mirror_autos}. 

 Let $B_\loc \subset \bR^2$ be a small open neighbourhood of $\gamma$, and let $M_\loc = \pi^{-1}(B_\loc)$.  (This should be thought of as mirror to the formal neighbourhood of a $(-2)$ curve, or to the resolution of an $A_1$ singularity, cf.~\cite{Chan-Ueda}.) $M_\loc$ is the total space of a Lefschetz fibration well-known to experts, which we briefly recall.
First, consider
$$M'_\loc =  \{ xy + (z-1)(z-2) = 0 \} \subset \bC^2 \times \bC^\ast
$$
equipped with the Kaehler form with potential $|x|^2 + |y|^2 + (\log |z|)^2$. Let $f: M'_\loc \to \bC^\ast$ be projection to $z$, a conic Lefschetz fibration with two critical fibres. There is a symplectic $S^1$ action given by $(x,y,z) \mapsto (e^{i\theta}x, e^{-i\theta}y,z)$. Following  \cite[Section 5.1]{Auroux_Gokova}, let $\delta_{z_0}(x_0,y_0)$ be the signed symplectic area between the equator $\{ |x| = |y| \} \subset f^{-1}(z_0)$ and the $S^1$ orbit of $(x_0,y_0,z_0)$. Then the map $\pi': M'_\loc \to \bR^2$ given by $\pi'(x,y,z)= (|z|, \delta_z(x,y))$ defines a singular Lagrangian torus fibration: symplectic parallel transport with respect to $f$ is $S^1$-equivariant and preserves $\delta$. There are two nodal critical fibres, the preimages of $(1,0)$ and $(2,0)$, and one then readily gets that after suitably truncating, $M'_\loc$ agrees with $M_\loc$. Moreover, the spheres $s_a$ can be represented as matching cycles with respect to $f$, as in Figure \ref{fig:formal_nhood_spheres}.

\begin{figure}[htb]
\begin{center}
\includegraphics[scale=0.35]{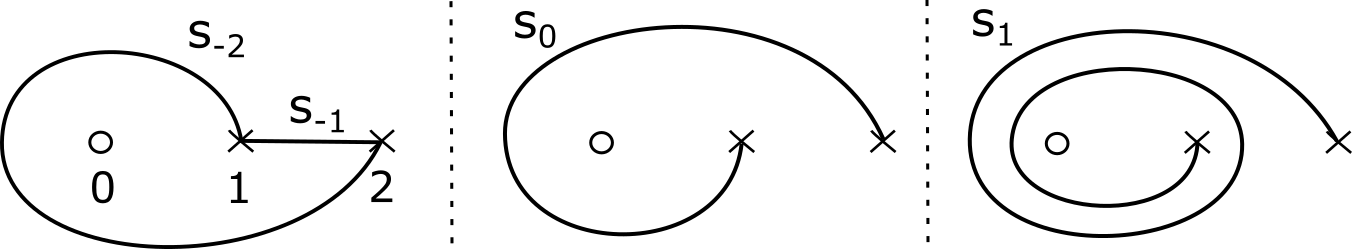}
\caption{Lagrangian spheres $s_a$: matching paths in the base of the Lefschetz fibration $f: M'_\loc \to \bR^2$. The matching path for $s_a$ can be obtained from the one for $s_{a-1}$ by applying a full negative twist in the segment from 0 to 1.}
\label{fig:formal_nhood_spheres}
\end{center}
\end{figure}

\begin{lemma}\label{lem:two_twists_Lag_translation}
Suppose that we are given $\sigma_{L_C} \in \pi_0 \Symp_c (M)$ and Lagrangian spheres $s_a$, $a \in \bZ$ as above. Then
$$
\tau_{s_{a-1}} \circ \tau_{s_a} = \sigma_{L_C} \in  \pi_0 \Symp_c (M).
$$
\end{lemma}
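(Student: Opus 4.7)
Since the identity is to hold in $\pi_0 \Symp_c(M)$, we need more than the $K$-theoretic equality that follows from Proposition \ref{prop:Lagrangian_spheres} and Theorem \ref{thm:mirror_autos} (the map $\pi_0 \Symp_c(M) \to \Auteq D^b \cW(M)$ need not be injective). The plan is a direct geometric argument in the local Lefschetz model $f: M'_\loc \to \bC^*$. First, because $s_a = \sigma_E(s_{a-1})$ and $\tau_{\sigma_E(L)} = \sigma_E \tau_L \sigma_E^{-1}$, one gets
$$\tau_{s_{a-1}} \circ \tau_{s_a} \;=\; \sigma_E^{a} \,\bigl(\tau_{s_{-1}} \circ \tau_{s_0}\bigr)\, \sigma_E^{-a};$$
by Proposition \ref{prop:Lag_translations_group}, Lagrangian translations commute in $\pi_0 \Symp_c(M)$, so $\sigma_E^{a} \sigma_{L_C} \sigma_E^{-a} = \sigma_{L_C}$ and it is enough to treat $a=0$.

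Next, localize. By Proposition \ref{prop:sections_M_cpt}, we may choose a representative section $L_C$ agreeing with $L_0$ outside a small neighbourhood of the matching segment $\gamma$, so $\sigma_{L_C}$ is compactly supported in $M_\loc = \pi^{-1}(B_\loc)$; the Dehn twists $\tau_{s_{-1}}, \tau_{s_0}$ are also supported there. Hence the identity is to be established in $\pi_0 \Symp_c(M_\loc)$. Working in the conic model $M'_\loc$, the spheres $s_{-1}, s_0$ are matching cycles above arcs $\gamma_{-1}, \gamma_0 \subset \bC^*$ from $z=1$ to $z=2$ that differ by a full negative twist around the puncture $z=0$ (Figure \ref{fig:formal_nhood_spheres}). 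Seidel's model realises each $\tau_{s_{\gamma_i}}$ as the lift of a half-twist of the base exchanging the two critical values through $\gamma_i$, combined with symplectic parallel transport in the $S^1$-equivariant fibres (see \cite[Section 18]{Seidel_book}). In the mapping class group of $\bC^* \setminus \{1,2\}$, the composition of such half-twists along $\gamma_0$ and then $\gamma_{-1}$ is the Dehn twist around a loop enclosing the puncture $z=0$ only. Accordingly, $\tau_{s_{-1}} \circ \tau_{s_0}$ is Hamiltonian isotopic to the monodromy $\mu_0$ of $f$ around $z = 0$.

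It then remains to identify $\mu_0$ with $\sigma_{L_C}$ in $\pi_0 \Symp_c(M_\loc)$. The monodromy $\mu_0$ is equivariant for the Hamiltonian circle action $(x,y,z) \mapsto (e^{i\theta}x, e^{-i\theta}y, z)$, hence preserves the invariant $\delta_z$, and so is a fibered symplectomorphism of the SYZ fibration $\pi' = (|z|, \delta_z)$. It acts on each regular torus fibre by a translation whose period is determined by its class in $H_2(M_\loc)/H_2(\partial M_\loc)$; by Corollary \ref{cor:mirror_autos_K-theory} applied to the local model this class is $c_1(\cO(C))$. On the other hand, $\sigma_{L_C}$ is by construction the fibered symplectomorphism (Proposition \ref{prop:Lag_translations}) with the same homology class, so Proposition \ref{prop:sections_M_cpt} (comparing the two induced actions on Lagrangian sections) yields $\mu_0 = \sigma_{L_C}$. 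The principal obstacle is the second step: promoting the mapping class group identity in the base to a genuine identity of compactly supported symplectomorphisms, since the half-twist model for a matching-cycle Dehn twist is defined only up to Hamiltonian isotopy and could a priori introduce a stray fibrewise contribution. This is controlled by the $S^1$-equivariance of the conic fibration, which forces any discrepancy to be itself fibered over $B_\loc$ and to act trivially on $H_2(M_\loc, \partial M_\loc)$, and hence to be Hamiltonian isotopic to the identity.
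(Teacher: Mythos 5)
Your overall strategy --- reduce to $a=0$, pass to the local conic model $f: M'_\loc \to \bC^\ast$, show that the composite of the two matching-cycle twists is fibred with respect to the torus fibration $\pi'=(|z|,\delta_z)$, and then pin it down by Arnol'd--Liouville plus its action on homology --- is the same as the paper's, and the first and last steps are fine. But the middle identification is wrong: the composite of the half-twists along $\gamma_0$ and $\gamma_{-1}$ is \emph{not} the Dehn twist around a loop enclosing only the puncture $z=0$. To see this, fill in the puncture at $0$ as a third marked point, so that the mapping class group of the annulus with marked points $\{1,2\}$ surjects onto the braid group $B_3=\langle \sigma_1,\sigma_2\rangle$ with kernel generated by exactly the twist you propose. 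Since $\gamma_{-1}$ and $\gamma_0$ differ by a full twist about a loop through $0$ and $1$, the composite maps to $\sigma_1^{\mp 2}\sigma_2\sigma_1^{\pm 2}\sigma_2\in B_3$, whose image in the abelianisation $\bZ$ is $2\neq 0$, so the composite does not lie in that kernel. (With the conventions of Figure \ref{fig:formal_nhood_spheres} one gets $\sigma_1^{-2}\sigma_2\sigma_1^{2}\sigma_2=\sigma_1^{-4}(\sigma_1\sigma_2)^3$, i.e.\ a product of twists about concentric circles of radius between $1$ and $2$ and of large radius --- foliation-preserving, as the paper asserts, but not boundary-parallel at $0$.) The error is not cosmetic: the lift of a twist about $\{|z|=\epsilon\}$ is supported in $\pi'^{-1}(\{|z|<\epsilon'\})$, a region disjoint from the critical fibres and from the spheres $s_a$ (and essentially disjoint from $M_\loc$ altogether), so it acts trivially on $H_2(M_\loc,\partial M_\loc)$ and cannot equal $\sigma_{L_C}$, whose defining class $[L_C]-[L_0]$ is Poincar\'e dual to $[C]\neq 0$. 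The same discrepancy undermines your subsequent appeal to Corollary \ref{cor:mirror_autos_K-theory}.

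What the paper actually proves, and all that is needed, is the weaker statement that $\tw_{-1}\circ\tw_0$ carries the foliation by concentric circles $\eta_c=\{|z|=c\}$ to an isotopic foliation rel the marked points; this already forces the composite, after a Hamiltonian isotopy, to respect the almost-toric fibration, whence Arnol'd--Liouville (extended over the nodal fibres as in Proposition \ref{prop:Lag_translations}) makes it a Lagrangian translation, which is then identified with $\sigma_{L_C}$ by its action on homology. Your reduction to $a=0$, the localisation, and your closing remark about controlling stray fibrewise contributions are all sound; if you replace the explicit identification of the base mapping class by the foliation-preservation statement, the rest of your argument goes through.
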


\begin{proof}
It's enough to prove this for $a=0$. 
Set $\rho= \tau_{s_{-1}} \circ \tau_{s_0}$.
The local model for the Dehn twist in the matching cycle $s_a$ is well known: it is induced by the half-twist $\tw_a$  in the associated matching path (as an isotopy class of automorphism of $\bC^\ast$ relative to the marked points),  see e.g.~\cite{Seidel_more, Seidel_LES, AMP}. In the case at hand, recall that we have a two-parameter family of Lagrangian tori, with each one fibred over one of the concentric circles $\eta_c = \{ |z| = c \}$, for some ${c \in \bR_+}$. The family $\{ \eta_c \}$ foliates the base of the Lefschetz fibration $f$. On the other hand, the image of the foliation under $\tw_{-1} \circ \tw_0$ is isotopic relative to marked points to the original one. See Figure \ref{fig:formal_nhood_map}. (This is familiar to experts, see e.g.~\cite[Remark 11.11]{Seidel_lectures} for the observation for a single $\eta_c$.) 
\begin{figure}[htb]
\begin{center}
\includegraphics[scale=0.3]{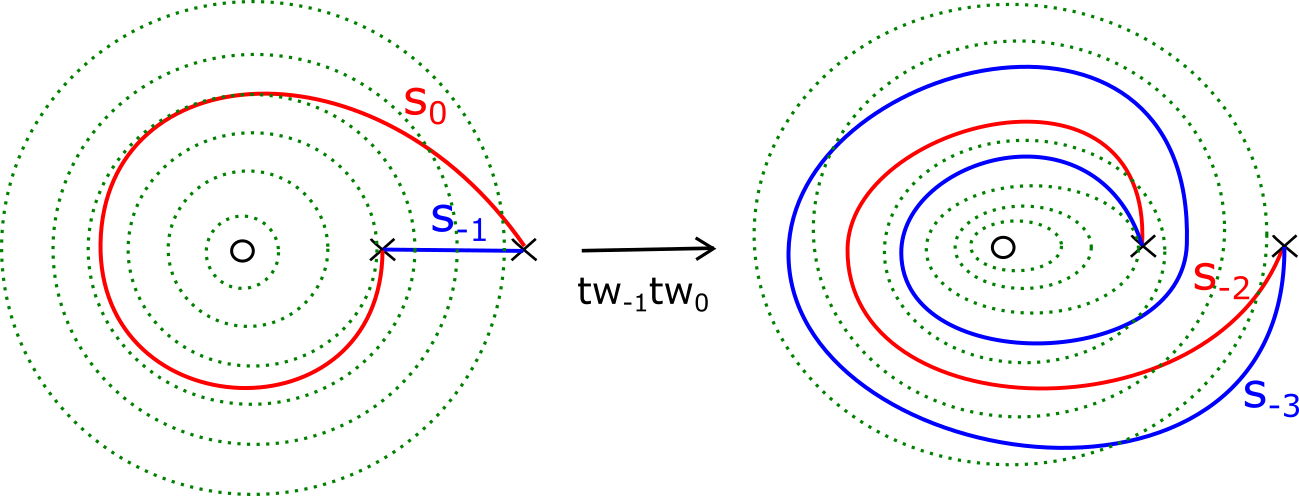}
\caption{Action of $\tw_{-1} \circ \tw_0$ on the base of the Lefschetz fibration $f: M'_\loc \to \bR^2$, with the images of $s_0$, $s_{-1}$, and a sample of circles $\gamma_c$.}
\label{fig:formal_nhood_map}
\end{center}
\end{figure}
This implies that after post-composing with a Hamiltonian isotopy, we can arrange for $\tau_{s_{-1}} \circ \tau_{s_0}$ to respect the almost-toric fibration $M_\loc \to B_\loc$. The Arnol'd--Liouville theorem then strongly constrains the map: it must be the Lagrangian translation determined by the image of a reference section $L_0$ (the argument extends over the nodal fibres similarly to the proof of Proposition \ref{prop:Lag_translations}). Reading off the action on homology is then enough to identify this translation with $\sigma_{L_C}$. 
\end{proof}

\section{Applications}\label{sec:applications}

\subsection{New symplectomorphisms}\label{sec:new_symplectos}
Fix a log CY surface $(Y_e, D)$ with mirror $M$. Let $Q$, $\Adm$ and $W$ be as before, and let $Q_\symp$, $\Adm/W_{\symp}$ and  $\Br_\symp$ be the subgroups of $\pi_0 \Symp_c^\gr (M)$ generated by respectively, compactly supported Lagrangian translations, nodal slide recombinations, and  Dehn twists in arbitrary Lagrangian spheres. (For the latter we do not restrict ourselves to the `known' spheres constructed in Proposition \ref{prop:Lagrangian_spheres}.) 

\begin{theorem} \label{thm:new_symplectos} 
Given the above setting, we have the following:

\begin{enumerate}

\item The map 
$Q \rtimes \Adm/W \to \pi_0 \Symp_c^\gr(M) $  with image $Q_\symp \rtimes \Adm/W_{\symp}$ is faithful. 

\item 
Let $\langle \Phi \rangle \leq  Q$ be the sublattice generated by classes of roots. Then $$\Br_\symp \cap (Q_\symp \rtimes \Adm/W_{\symp}) = \langle \Phi \rangle_\symp \rtimes \{ 1\} $$ 

\end{enumerate}

In particular, whenever  $\Adm \neq W$ or $Q \neq \langle \Phi \rangle$, we get elements of $\pi_0 \Symp_c^\gr (M)$ which are not products of Dehn twists. 

\end{theorem}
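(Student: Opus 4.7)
The plan is to reduce both parts of the theorem to statements about autoequivalences via homological mirror symmetry. By Theorems \ref{thm:mirror_autos} and \ref{thm:nodal_slide_recombination}, together with Lemma \ref{lem:AutYDpt} identifying $\Adm/W$ with $\Aut(Y,D;\mathrm{pt})$, the composite
$$
\bar{Q} \rtimes \Adm/W \longrightarrow \pi_0 \Symp_c^\gr(M) \longrightarrow \Auteq D^b \cW(M) \simeq \Auteq D(U)
$$
sends $(\cL, \varphi)$ to the autoequivalence $(\otimes \cL) \circ \varphi_\ast$ of $D(U)$. Thus faithfulness in (1) and the characterisation in (2) can be tested at the level of $\Auteq D(U)$ and its action on $K(U) = \bZ \oplus \Pic(U)$.

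For part (1), suppose $(\cL, \varphi)$ maps to the identity in $\pi_0 \Symp_c^\gr(M)$, so $(\otimes \cL) \circ \varphi_\ast = \id_{D(U)}$. Applying this to $[\cO_U]$ and using $\varphi_\ast \cO_U = \cO_U$ gives $[\cL] = [\cO_U]$; injectivity of $c_1 \colon \bar{Q} \hookrightarrow \Pic(U)$ (Lemma \ref{lem:Qbar}) forces $\cL = 0$. Then $\varphi_\ast = \id_{D(U)}$, and Proposition \ref{prop:detecting_id} (which applies since $Y = Y_e$) combined with Lemma \ref{lem:AutYDpt} forces $\varphi = 1$.

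For part (2), one inclusion is immediate: Lemma \ref{lem:two_twists_Lag_translation} expresses the Lagrangian translation by any $\cO_Y(C)$ with $C \subset U$ a $(-2)$ curve as a product of two Dehn twists, so $\langle \Phi \rangle_\symp \rtimes \{1\} \subset \Br_\symp \cap (\bar{Q}_\symp \rtimes \Adm/W_\symp)$. For the reverse inclusion, suppose $\sigma_L \circ \check{\varphi} = \prod_i \tau_{S_i}$ in $\pi_0 \Symp_c^\gr(M)$ for Lagrangian spheres $S_i \subset M$. Under HMS each $\tau_{S_i}$ acts as a spherical twist $T_{s_i}$ on $D(U)$; compactness of $S_i$ lets us lift $s_i$ to a spherical object of $D(Y)$ supported disjointly from $D$, and Lemma \ref{lem:roots_and_sphericals} gives $c_1(s_i) \in \Phi$. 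I then use two invariants. The induced action of $T_{s_i}$ on $H^2(Y, \bZ) = \Pic(Y)$ is the reflection in the root $c_1(s_i)$, so $\prod_i T_{s_i}$ acts via an element of $W$; since $\otimes \cL$ acts trivially and $\varphi_\ast$ via an element of $\Adm$ on this lattice, comparison in $\Adm/W$ gives $\varphi = 1$. With $\varphi = 1$, the equality $\otimes \cL = \prod_i T_{s_i}$ evaluated on $[\cO_U] \in K(U)$, together with the formula $T_s(v) = v - \chi(s,v)[s]$ and induction on the number of factors, yields $c_1(\cL) \in \langle \Phi \rangle \subset \Pic(U)$, whence $\cL \in \langle \Phi \rangle \subset \bar{Q}$.

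The main obstacle in implementing the plan is rigorously justifying the two $K$-theoretic inputs in part (2): first, that a compact Lagrangian sphere of $M$ corresponds under $D^b \cW(M) \simeq D(U)$ to a compact object which extends to a spherical object of $D(Y)$ supported in $Y \setminus D$, so that Lemma \ref{lem:roots_and_sphericals} is available; and second, that a spherical twist $T_s$ acts on $H^2(Y, \bZ)$ exactly as the reflection in $c_1(s)$. Both facts should be standard consequences of the structure of the Mukai pairing on $K(Y)$, but need to be written out carefully to cover the negative definite, semi-definite, and indefinite cases uniformly.
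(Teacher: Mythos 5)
Your proposal is correct and follows essentially the same route as the paper: both parts are tested via HMS at the level of $K$-theory, the inclusion $\langle \Phi \rangle_\symp \rtimes \{1\} \subseteq \Br_\symp$ comes from Lemma \ref{lem:two_twists_Lag_translation}, and the reverse inclusion uses that $K$-theory classes of Lagrangian spheres are roots (the paper cites Remark \ref{rmk:classes_sphericals}, which is exactly where the two technical points you flag at the end — extending a compact spherical object of $D(U)$ to one of $D(Y)$ supported off $D$, and the reflection formula for the twist — are already handled via Lemma \ref{lem:roots_and_sphericals}). The only cosmetic difference is that where you compare classes in $\Adm/W$ directly on $H^2(Y;\bZ)$, the paper instead invokes $\Br_Z \cap \Aut(Y) = \{\Id\}$ together with the relations of Lemma \ref{lem:autoequivalence_relations}; these amount to the same check.
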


\begin{proof}
(1). We have a well-defined geometric action by Lemma \ref{lem:translation_slide_relation}; faithfulness holds at the level of the action on the homology group $H_2(M; \bZ)$. 

(2). After the HMS identification, the $K$ theory classes of Lagrangian spheres are the same as the ones in Remark \ref{rmk:classes_sphericals}. 
Also, recall that on the mirror side, $\Br_Z \cap \Aut (Y) = \{ \Id \} \in \Auteq (Y)$. The claim then follows from mirrors of the relations between autoequivalences in Lemma  \ref{lem:autoequivalence_relations}, notably Lemma \ref{lem:two_twists_Lag_translation}. 
\end{proof}

\begin{remark} The classification in 
Remark \ref{rmk:classes_sphericals} allows us to exclude arbitrary spherical twists, irrespective of whether they are given geometrically by Dehn twists in embedded Lagrangian spheres. Twists in other Lagrangians with periodic geodesic flow are ruled out for cohomological and dimension reasons. Less is known in general about the categorical action of fibred twists (see however \cite{Perutz, WW, Mak-Wu}). The only known fibred twists for our family of manifolds $M$ are  boundary twists in the semi-definite (simple elliptic) cases; these are given by a finite-time action of the Reeb flow on the boundary. 
(For hypersurface simple elliptic singularities, they are products of Dehn twists \cite[Section 4.c]{Seidel_graded}.)
 In general, they act trivially on relative homology, and also on $\cW(M)$. We expect them to correspond to elements of $Q$ which vanish in $\bar{Q}$. 
\end{remark}

\begin{example} Nodal slide recombinations: $\Adm / W$ is non-trivial in the following cases.

\begin{itemize}

\item Small values of $k$: for $k \leq 6$, $\Adm / W$ is always trivial, and there are no interesting nodal slide recombinations. On the other hand,  for $k=7$, $\Adm / W$ is always infinite; this is also true when $k=8$ and $\pi_1(U) = 0$.

\item In general, note that if $\Adm_Y/W_Y$ is non trivial for a log CY pair $(Y,D)$, and a log CY pair $(Y',D')$ is obtained from $(Y,D)$ through any sequence of toric and interior blow-ups, then $\Adm_{Y'} / W_{Y'}$ is also non trivial -- indeed, pullback gives an injection $\Adm_Y/W_Y \to \Adm_{Y'} / W_{Y'}$. In particular, the examples for small values of $k$ can be used to generate many more. 

\item When $Y=Y_e$, $\Aut(Y_e)$ acts with rational polyhedral domain on the nef cone \cite{Li_thesis}. This implies that $\Adm/W$ is infinite if and only if the nef cone is not rational polyhedral.

\end{itemize}

\end{example}

\begin{example} $\bar{Q}$ is not typically generated by classes of simple roots. Some specific examples include:

\begin{itemize}

\item Semi-definite cases with $D$ a cycle of $(-2)$ curves and $k \geq 7$  \label{ex:k=8ctd}, except when $k=8$ and $\pi_1 (M) = \bZ/2$. 

\item Whenever the rank of $Q$ is one and $D$ is negative (semi-)definite.

\item Examples 4.3 and 4.4. in \cite{Friedman_amplecone}. Both have the property that $Q$ has rank 2 and the root system is empty.

\end{itemize}

\end{example}

\begin{remark}\label{rmk:semidef_compactify} 
In the semi-definite case, a compactly supported symplectomorphism of $M$ induces a symplectomorphism of its compactification $X$, a del Pezzo surface with Kaehler anti-canonical form. However, in our cases the induced symplectomophisms of $X$ act trivially on homology, and thus are isotopic to the identity \cite{Gromov, LP, Evans}. 
\end{remark}

\subsection{Lagrangian spheres and symplectic Torelli group}\label{sec:applications_to_sphericals}

For small values of $k$, a lot is explicitly known about $(-2)$ curves in $Y \backslash D$; as soon as a toric model for $(Y,D)$ involves two interior blow ups on the same  component of $\bar{D}$, there is at least one $(-2)$ curve.
 Moreover, suppose that $D$ is negative definite and does not contain a $(-1)$ curve. Then for $k \leq 5$, there are finitely many $(-2)$ curves on $Y_e$, which give a basis of $Q$ \cite{Looijenga}. For $k=6$,  the $(-2)$ curves span $Q$, with exactly one relation between them; in particular, there's also finitely many of them \cite{Li_thesis, Simonetti_thesis}.  
For $k=7$, there are always infinitely many $(-2)$ curves; for $k=8$ there are infinitely many whenever the fundamental group of the complement $U_e$ is trivial. (For $k \geq 9$ little has been done systematically, though individual cases can be done by hand.)

\subsubsection{Infinitely many $(-2)$ curves} Assume that $Y \backslash D$ contains infinitely many $(-2)$ curves (necessarily countable). This implies we are in the negative definite case.

\begin{corollary}\label{cor:infinite_spheres}
Let $(Y,D) = (Y_e, D)$ be a log Calabi-Yau surface with distinguished complex structure, and let $w: M \to \bC$ be its mirror, i.e.~a Milnor fibre of the dual cusp of $D$. Assume that $Y$ contains infinitely many $(-2)$ curves. Then:

\begin{enumerate}
\item There is a countable infinite collection of Lagrangian spheres in $M$, none of which is contained in the subcategory of $D^b\cW(M)$ split-generated by the others. 

\item The Dehn twist in any one of them is not contained in the subgroup of $\pi_0 \Symp_c (M)$ generated by any of the others; similarly with squares of Dehn twists.

\end{enumerate}

\end{corollary}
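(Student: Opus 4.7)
The plan is to work on the $B$-side of mirror symmetry. Enumerate the countably many $(-2)$ curves in $U=Y\setminus D$ as $\{C_j\}_{j\in\bN}$, and let $s_j:=s_{C_j}$ be the Lagrangian sphere furnished by Proposition~\ref{prop:Lagrangian_spheres}, mirror to $\mathcal{S}_j:=i_\ast\cO_{C_j}(-1)$ under the equivalence $D^b\cW(M)\simeq D(U)$ of Theorem~\ref{thm:hms}. With respect to the canonical gradings on the spheres, each Dehn twist $\tau_{s_j}$ corresponds to the Seidel--Thomas spherical twist $T_{\mathcal{S}_j}$, up to overall shift $[n]$ (the only ambiguity introduced by passing from $\pi_0\Symp_c(M)$ to the action on $D^b\cW(M)$, cf.\ the exact sequence \eqref{eq:graded_symplecto_lift}).

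For part (1), the argument is a support computation on the $B$-side. Any object in the split-closed triangulated subcategory of $D(U)$ generated by $\{\mathcal{S}_j\}_{j\neq j_0}$ has set-theoretic support contained in $Z_{j_0}:=\bigcup_{j\neq j_0}C_j$, since cones and direct summands can only preserve or shrink support. However, $\Supp(\mathcal{S}_{j_0})=C_{j_0}$, and because the $C_j$ are distinct irreducible curves each intersection $C_{j_0}\cap C_j$ is finite; thus $C_{j_0}\cap Z_{j_0}$ is a countable union of finite sets and cannot exhaust the uncountable set $C_{j_0}$.

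For part (2), I would promote this idea to a functor-level argument. Choose $p_0\in C_{j_0}$ lying on no other $(-2)$ curve (possible by the same cardinality argument). For each $j\neq j_0$ the supports of $\mathcal{S}_j$ and $\cO_{p_0}$ are disjoint, so $R\Hom_U(\mathcal{S}_j,\cO_{p_0})=0$ and hence $T_{\mathcal{S}_j}(\cO_{p_0})\simeq\cO_{p_0}$. Consequently every element of the subgroup of $\Auteq D(U)$ generated by $\{T_{\mathcal{S}_j}\}_{j\neq j_0}$, or by their squares, fixes the isomorphism class of $\cO_{p_0}$, and likewise after composing with any grading shift. If $\tau_{s_{j_0}}$ (respectively $\tau_{s_{j_0}}^2$) lay in the corresponding subgroup of $\pi_0\Symp_c(M)$, one would conclude that $T_{\mathcal{S}_{j_0}}(\cO_{p_0})$ (respectively $T_{\mathcal{S}_{j_0}}^2(\cO_{p_0})$) is a shift of $\cO_{p_0}$.

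The hard part is therefore to verify that $T_{\mathcal{S}_{j_0}}^m(\cO_{p_0})$ is not any shift of $\cO_{p_0}$ for $m=1,2$; this is essentially a bare-hands categorical computation. Locally near $p_0$ the sheaf $\mathcal{S}_{j_0}$ is cyclic, cut out by the local equation of $C_{j_0}$, so a Koszul resolution yields $R\Hom_U(\mathcal{S}_{j_0},\cO_{p_0})\simeq k\oplus k[-1]$, and a local-to-global spectral sequence using the normal bundle $N_{C_{j_0}/U}=\cO_{C_{j_0}}(-2)$ gives $R\Hom_U(\mathcal{S}_{j_0},i_\ast\cO_{C_{j_0}}(-2))\simeq k^{2}[-1]$. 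Unwinding the defining cone for the spherical twist then shows that $T_{\mathcal{S}_{j_0}}(\cO_{p_0})$ has coherent cohomology sheaves $i_\ast\cO_{C_{j_0}}(-2)$ in degree $-1$ and $\mathcal{S}_{j_0}$ in degree $0$, while iterating gives $T_{\mathcal{S}_{j_0}}^2(\cO_{p_0})$ with nonzero cohomology of generic rank $\geq 1$ on $C_{j_0}$. Since any shift of the skyscraper $\cO_{p_0}$ has zero-dimensional support, this yields the required contradiction.
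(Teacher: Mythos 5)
Your proposal is correct and follows essentially the same route as the paper: both arguments hinge on choosing $p_i \in C_i \setminus \bigcup_{j\neq i} C_j$ and testing against the skyscraper $\cO_{p_i}$ — the paper phrases part (1) via morphisms with $i_\ast\cO_{p_i}$ rather than via supports, and for part (2) simply asserts the fact that $T_{i_\ast\cO_{C_i}(-1)}$ and its square do not fix $i_\ast\cO_{p_i}$, which you verify explicitly (your stated cohomology sheaves of $T_{\mathcal{S}_{j_0}}(\cO_{p_0})$, namely $i_\ast\cO_{C_{j_0}}(-2)$ in degree $-1$ and $i_\ast\cO_{C_{j_0}}(-1)$ in degree $0$, are correct). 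One small slip that does not affect the conclusion: with $N_{C/U}=\cO_C(-2)$ one gets $R\Hom_U(i_\ast\cO_C(-1), i_\ast\cO_C(-2))\simeq k^2[-2]$, concentrated in degree $2$, not $k^2[-1]$.
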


\begin{proof} 
Let $C_i$, $i \in \bN$, be an infinite collection of $(-2)$ curves in $Y$; wlog they all lie in $U$. Let $S_i = s_{C_i}$ be the Lagrangian sphere mirror to $i_\ast \cO_{C_i}(-1)$, from Proposition \ref{prop:Lagrangian_spheres}. Let $p_i \in C_i \backslash \cup_{j \neq i} C_j$. By considering morphisms with $i_\ast \cO_{p_i}$, we see that $i_\ast \cO_{C_i}(-1)$ cannot be contained by the subcategory split-generated by $\{ i_\ast \cO_{C_j}(-1) \}_{j \neq i}$. The homological mirror symmetry theorem then implies the first point. For the second one, one can proceed similarly, noting that $i_\ast \cO_{p_i}$ is fixed by the spherical twist in $ i_\ast \cO_{C_j}(-1)$ for $j \neq i$, but not under the twist  in $ i_\ast \cO_{C_i}(-1)$, nor its square. 
\end{proof}

\begin{remark}
Using  \cite{Sheridan-Smith}, the same statements and proofs hold for K3 surfaces whose mirror K3s containing infinitely many embedded $(-2)$ curves. 
\end{remark}

\emph{Discussion.} All of the spheres in Corollary \ref{cor:infinite_spheres}  can be thought of as vanishing cycles for the cusp singularity dual to the one with cycle $D$, with Dehn twists in them induced by monodromy maps in the moduli spaces from Section \ref{sec:monodromies}. 
In particular, 
Corollary \ref{cor:infinite_spheres} should be contrasted with the case of (arbitrary) hypersurface singularities, for which Picard--Lefschetz theory tells us that any distinguished collection of vanishing cycles generates all others as objects of the Fukaya category, and that Dehn twists in them generate the monodromy group. 
(Note that the full compact Fukaya category of $M$ can never be split generated by spherical objects: this is immediate by using HMS and by considering supports of mirror objects.) 
On the other hand, by \cite{Li_thesis},  $\Aut(Y)$ acts on the collection of $(-2)$ curves with finitely many orbits (note that there's no a priori uniform bound); thus there are finitely many orbits of known Lagrangian spheres in  $D^b \cW(M)$  under the action of  compactly supported symplectomorphisms. 
There could be more than one orbit: for instance, 
in the $k=6$ semi-definite case, $Y_e$ is the total space of  elliptic fibration where the boundary $D$ is a cycle of 6 $(-2)$ curves, and the interior $(-2)$ curves form an $I_2$ and an $I_3$ fibre, giving two orbits of Lagrangians spheres in $D^b\cW(M)$, where $M$ is mirror to $Y_e \backslash D$. (Note that $\Adm = W$ here.)

\subsection{Semi-definite case: monodromy and mirror to the $SL_2(\bZ)$ action} \label{sec:monodromy_semidef_symplectic_side}
Assume that $D$ is negative semi-definite, i.e.~that $M$ is the Milnor fibre of a simple elliptic singularity.  We re-visit the monodromy perspective from the introduction in this case. We have that $M = X_0 \backslash E_0$, where $X_0$ is a del Pezzo surface, with $E_0$ smooth anticanonical. The natural moduli space of complex structures for $M$ consists of pairs $(X, E)$ where $X$ is a del Pezzo surface deformation equivalent to $X_0$, and $E \subset X$ a smooth anticanonical divisor. 

First, we want to carefully check that any loop in this moduli space induces a symplectomorphism of $M$, well-behaved near $\partial M$ in the cases where it's not compactly supported. Assume we have a smooth loop of pairs  $\{ (X_t, E_t) \}_{t \in [0,1]}$ where the $X_t$ are del Pezzo surfaces, and $E_t \subset X_t$ are smooth anticanonical elliptic curves. $M$ is equipped with $\omega_0$,  the restriction of a Kaehler form Poincar\'e dual to $E_0$; by making coherent choices of global sections of a smoothly varying very ample line bundle, we get a smooth loop  of triples $\{ (X_t, E_t, \omega_t ) \}_{t \in [0,1]}$, where $\omega_t \in \Omega^2(X_t)$ is a Kaehler form such that $[\omega_t] = P.D[E_t] \in H^2(X_t; \bZ)$. Let $f_t: X_0 \to X_t$ parametrise the family. Set $M_t := X_t \backslash E_t$, and $M = M_0 = M_1$. By assumption,  $\omega_t|_{M_t}$ is exact, and using the tubular neighbourhood theorem for symplectic submanifolds, we can choose a primitive $\theta_t$ for $\omega_t|_{M_t}$ such that on a collar neighbourhood of $\partial M_t$, the corresponding Liouville vector field radially scales each fibre of the symplectic normal bundle of $E_t$ (inwards with respect to the deleted zero-section $E_t$). Moreover, after isotopy, we can assume that $f_t^{-1}(E_t) = E_0$, and that $f_t^\ast \theta_t = \theta_0$ in a neighbourhood of $E_t$; note that $f_1$ can be arranged to restrict to the identity on this neighbourhood precisely when it induces the identity on $H_1(E_0; \bZ)$. A Moser argument gives a symplectomorphism $f: M \to M$, smoothly isotopic to $f_1$ rel boundary, which is the compactly supported whenever $(f_1)_\ast$ is the identity on $H_1(E_0; \bZ)$. 
Choosing a grading, the map $f$ acts on the wrapped Fukaya category; using the framework of stops from \cite{Sylvan,GPS_sectorial}, we expect to arrange an action on the directed Fukaya category.

Consider $H_2(M; \bZ)$. The ``tube over a cycle" map gives a primitive embedding $H_1(E;\bZ) \subset H_2(M; \bZ)$. This sublattice can be characterised as the radical of the degenerate intersection form on $H_2(M; \bZ)$. Now suppose that  $g$ is an arbitrary symplectomorphism of $M$. It induces an automorphism of  $H_2(M; \bZ)$ respecting the intersection form, and so restricts to an automorphism of  $H_1(E;\bZ)$. 
If $g$ is the result of a monodromy construction as above, we recover the original action on $H_1(E; \bZ)$. 
Dually, one could start with the action of $g$ on $H_2(M, \partial M; \bZ)$, and get the (dual) action on $H^1(E; \bZ)$ via cap products. Alternatively, if $g$ induces an autoequivalence of the directed Fukaya category of $M$, this could be obtained by looking at the induced element of $ \Aut K(D^b \cF^\to (w)) $ and then considering the action on the radical to get an element in $\Aut H^1(E; \bZ)$. 
Call the resulting map $\widetilde{\Theta}: \Auteq D^b \cF^\to (w) \to \Aut H^1(E; \bZ)$. 
The map $\widetilde{\Theta}$ is surjective: by considering paths of smooth elliptic curves in $\bP^2$ (or $\bP^1 \times \bP^1$) and blowing up points on those curves, we can arrange to get arbitrary elements of $SL_2(\bZ)$ (cf.~\cite[Section 5.4]{Keating_tori}). 

Finally, recall that on the mirror side, we have the map $\Theta: \Auteq D(Y) \to \End K(F)$ of equation \ref{eq:auteq_elliptic}, where $F$ denotes a smooth fibre of the elliptic fibration $\varpi: Y \backslash D \to \bC$. $K(F)$ is generated by $[\cO_{pt}]$ and $[\cO_F]$. 
Now the radical of the intersection pairing on  $H_2( Y \backslash D; \bZ)$ is the primitive sublattice $\langle \gamma, [F] \rangle$, where $\gamma$ is the `linking torus' for a node of $D$. Homological mirror symmetry induces an identification of this lattice with the sublattice $H_1(E;\bZ) \subset H_2(M; \bZ)$; and by construction $\widetilde{\Theta}$ is mirror to $\Theta$. (On the $B$ side, $K(F)$ should be thought of as dual to $\langle \gamma, [F] \rangle = H_1(E; \bZ) \subset H_2(M; \bZ)$, with  $\gamma$ corresponding to $[\cO_{pt}]$ and $F$ to $[\cO_F]$). In particular, we expect that there is a non-compactly supported symplectomorphism mirror to the fibrewise Fourier-Mukai transform described in Section \ref{sec:semi_def_D}.

\begin{remark}\label{rmk:Hicks} This should be compared with \cite[Section 5]{Hicks}: in the case where $X = \bC \bP^2$, 
Hicks constructs a symplectomorphism $g$ of $M = \bC \bP^2 \backslash E$ (where $E$ is smooth anticanonical), by using an automorphism of the Hesse pencil on $\bC \bP^2$, which interchanges an SYZ fibre and a  tropical Lagrangian torus,  $L_{T^2}$ in his notation.  The mirror log CY surface $Y$, denoted $\check{X}_{9111}$ in \cite{Hicks}, is the total space of a rational elliptic fibration with an $I_9$ fibre above infinity and three other singular fibres, all nodal. \cite[Section 5]{Hicks} provides evidence that $g$ is mirror to a fibrewise Fourier--Mukai transform on $Y$.  
\end{remark}

\subsection{Semi-definite case: connection with the special Lagrangian viewpoint} \label{sec:semi-def_hyperkaehler}

We briefly discuss connections between our results and work of Collins--Jacob--Lin in the semi-definite case \cite{CJL, CJL2}. 

Suppose $X$ is a del Pezzo surface, $E$ a smooth anticanonical divisor, and $M = X \backslash E$. There exists a meromorphic volume form $\Omega$ on $X$ with a simple pole along $E$; this is unique up to scaling by a constant. Moreover, $M$ has a complete Ricci flat metric \cite{Tian-Yau}, say $g_{TY}$. Let $\omega_{TY}$ be the associated Kaehler form (automatically exact). 

\begin{theorem}\cite{CJL}\label{thm:CJL_fibration}
Fix any primitive homology class $\alpha \in H_1(E, \bZ)$. Then 
$M$ admits a special Lagrangian fibration $\pi_\alpha: M \to \bR^2$ with respect to $\omega_{TY}$, with special Lagrangian torus fibre the image of $\alpha$ under the tube map $H_1 (E, \bZ) \to H_2(M, \bZ)$. 
\end{theorem}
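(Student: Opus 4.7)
The strategy I would pursue is via hyperk\"ahler rotation. Since $(M, g_{TY})$ is Ricci-flat K\"ahler of complex dimension two, the triple $(\omega_{TY}, \mathrm{Re}\,\Omega, \mathrm{Im}\,\Omega)$ (up to normalising $\Omega$ so that $\tfrac{1}{2}\Omega \wedge \bar\Omega = \omega_{TY}^2/2$) is a hyperk\"ahler triple, giving complex structures $J, I, K$ with $IJ=K$. Under this rotation, a submanifold $L \subset M$ is special Lagrangian with phase zero with respect to $(\omega_{TY}, \Omega)$ if and only if it is holomorphic with respect to $I$; the new K\"ahler form is $\omega_I = \mathrm{Re}\,\Omega$ and the new holomorphic volume form is $\Omega_I = \omega_{TY} - i\,\mathrm{Im}\,\Omega$. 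Thus the problem of producing a special Lagrangian torus fibration of $(M, \omega_{TY}, \Omega)$ is equivalent to producing a holomorphic elliptic fibration of the rotated complex surface $M^\vee := (M, I)$, with elliptic fibre class corresponding to $\alpha$ under the tube map.

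The plan would be to identify $M^\vee$ explicitly with a region in a compact complex surface carrying a minimal rational elliptic fibration. First, I would analyse the asymptotic geometry: the Tian--Yau metric is asymptotic to a semi-flat Calabi--Yau metric on a neighbourhood of infinity fibred over a half-cylinder $[R,\infty) \times S^1$ with fibre $E$, obtained from the Greene--Shapere--Vafa--Yau ansatz. In this ALH model, the tubes over flat geodesics representing $\alpha$ are manifestly special Lagrangian, and hyperk\"ahler rotation sends them to elliptic curves of a model Kodaira-type $I_d$ fibration (with $d$ determined by $E^2 = K_X^2$). This pins down the asymptotic complex geometry of $M^\vee$: it is asymptotic to the complement of an $I_d$ fibre in a rational elliptic surface $Y^\vee$, with $\alpha$ realising the fibre class of the elliptic fibration on $Y^\vee$.

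Next I would promote this asymptotic picture to a global biholomorphism. The approach is to construct a compactification $\overline{M^\vee}$ directly: the complex structure $I$ on $M$, combined with the asymptotic model, specifies a Kodaira $I_d$ fibre to glue in at infinity, yielding a smooth compact complex surface $Y^\vee$. Because the asymptotic fibration pulls back the cycle $\alpha$ to a torus fibre, and the homology class of a torus fibre in a rational elliptic surface is determined by being nef, isotropic and primitive, a deformation-theoretic/continuity argument (varying the would-be elliptic fibration within its linear system, starting from the asymptotic end and extending inward) produces a global pencil on $Y^\vee$ whose restriction to $M^\vee$ gives the desired fibration. Rotating back by $I \to J$ then identifies the holomorphic map $\pi_\alpha^\vee \colon M^\vee \to \mathbb{C}$ with the sought-after special Lagrangian fibration $\pi_\alpha \colon M \to \mathbb{R}^2$ (viewing $\mathbb{C} \cong \mathbb{R}^2$).

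The main obstacle, and what I expect to require the most care, is the global extension step: the hyperk\"ahler rotation provides the local/asymptotic model, but showing that $M^\vee$ genuinely compactifies to a smooth rational elliptic surface carrying a pencil with generic fibre of class $\alpha$ requires delicate analysis of the decay of $g_{TY}$ to the semi-flat model (exponential decay rates in the ALH setting), an openness/closedness argument to propagate the elliptic fibration from infinity, and verification that no obstruction to the pencil appears in the interior. Once these analytic points are in hand, the assertion that the generic fibre lies in the tube class of $\alpha$ is determined tautologically by the asymptotic construction.
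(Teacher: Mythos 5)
This statement is imported verbatim from Collins--Jacob--Lin: the paper cites \cite{CJL} and gives no proof of its own, so there is no internal argument to compare against. Your sketch is, in outline, the strategy of the cited work: hyperk\"ahler rotation converts the special Lagrangian fibration problem into producing a holomorphic elliptic fibration on the rotated surface, which is done by compactifying to a rational elliptic surface with an $I_k$ fibre at infinity and taking the pencil $|F|$ of the fibre class. Two points deserve correction or emphasis. First, your asymptotic model is not quite right: the Tian--Yau metric on $X \setminus E$ for $X$ a del Pezzo surface of degree $k = E^2 > 0$ is modelled at infinity on the Calabi ansatz over the punctured disc bundle in $N_E$, whose cross-section is the degree-$k$ circle bundle over $E$ (a nilmanifold), not a product $S^1 \times E$ over a half-cylinder; the end is of ALH${}^*$ rather than ALH type, with polynomial volume growth. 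It is precisely this nilmanifold cross-section that matches, after rotation, the boundary of a tubular neighbourhood of an $I_k$ fibre. Second, for a general primitive $\alpha$ the tube over a geodesic in class $\alpha$ is special Lagrangian of some phase $\theta_\alpha$ depending on $\alpha$, so before rotating one must multiply $\Omega$ by $e^{-i\theta_\alpha}$; your normalisation only fixes the modulus of $\Omega$, not its phase.

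Beyond these, the step you yourself flag as the main obstacle --- the decay estimates to the model metric, the compactification of the rotated complex surface to a smooth rational elliptic surface, and the verification that the fibre class gives a base-point-free pencil restricting to a fibration of $M^\vee$ --- is where essentially all of the content of \cite{CJL} lives, and your proposal defers it rather than supplying it. As written, this is a correct road map for the proof rather than a proof; if you intend to rely on the result, the honest course (and the one the paper takes) is simply to cite \cite{CJL}.
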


Note that once $\alpha$ has been specified, the (almost-)toric fibration underlying the special one should be unique up to nodal slides.  

Let $(\alpha_\gamma, \alpha_F)$ be the basis of $H_1(E; \bZ)$ corresponding to $(\gamma, [F]) \subset H_2(Y \backslash U; \bZ)$ under the mirror identification. (As before, $\gamma $ is the class of a linking torus for a node of $D$, and $[F]$ the class of a fibre of the elliptic fibration $\varpi$ on the mirror $Y_e$.) 
Intuitively, exactly one of the fibrations above should be thought of as the SYZ fibration: the one $\pi_\gamma$ with fibre in class $\Tube(\alpha_\gamma) \in H_2(M; \bZ)$. Recall from Section \ref{sec:monodromy_semidef_symplectic_side} that monodromy considerations give non-compactly supported symplectomorphisms inducing any $SL_2(\bZ)$ action on $H_1(E; \bZ)$. We expect that for arbitrary $\alpha$,  $\pi_\alpha$ can be obtained  as a Lagrangian fibration by precomposing $\pi_\gamma$ with a suitable non-compactly supported symplectomorphism. In the case of $\pi_{\alpha_F}$, this should be the mirror to the fibrewise Fourier-Mukai transform.

Collins--Jacob--Lin also study properties of the hyperkaehler structure associated to these Ricci flat metrics. Assume that the holomorphic volume form on $M$ is normalised so that the special torus fibres have argument zero. Hyperkaehler rotation on $M$ (from the $I$-structures to the $J$-structures with the standard conventions) yields a complex manifold $U_{\HK}$, and $\pi$ induces a holomophic fibration $\varpi_{\HK}: U_{\HK} \to \bC$; by \cite[Theorem 6.4]{CJL}, $U_{\HK}$ admits a compactification to a relatively minimal elliptic fibration to $\bP^1$ with an $I_{k}$ fibre at infinity, where $k$ is the degree of the del Pezzo surface $X$. In particular, this implies that $Y$ is in the deformation component of the homological mirror $Y_e$ to $M$. 

While hyperkaehler rotation should generally \emph{not} be thought of as mirror symmetry, an analysis in this case shows that the two can be set up to agree for a distinguished moduli space point: following \cite[Section 5]{CJL2} (and \cite{McMullen}), we can find 
 a smooth del Pezzo surface $X'$ deformation equivalent to $X$; a smooth anticanonical elliptic curve $E \subset X'$; and a holomorphic volume form $\Omega^{M'}$ on $M'$ such that:
\begin{enumerate}
\item Hyperkaehler rotation on $M$ (from the $I$ structures to the $J$ ones) gives the distinguished complex structure $U_e = Y_e \backslash D$. 

\item Under this hyperkaehler rotation, the special Lagrangian fibration on $M = X \backslash E$ with fibre class $\Tube( \alpha_F )$, say $\pi_F$, becomes an elliptic fibration $\varpi^0: U_e \to \bC$ which compactifies to $\varpi: Y_e \to \bP^1$.

\item The special Lagrangian fibration on $M$ with fibre class $\Tube( \alpha_\gamma )$ has fibres with argument $\pi / 2$. After hyperkaehler rotation, this gives a special Lagrangian fibration on $U_e$. 
\end{enumerate}
Cf.~\cite[Theorem 5.4]{CJL2}. This set-up can be used to obtain symplectomorphisms of $M$:

\begin{lemma}\label{lem:MW_symplectic} \cite[Proposition 5.10]{CJL2}
Suppose we are in the setting above. Let $\psi \in MW(Y_e, \bP^1)$ be an element of the Mordell-Weil group of $Y$. Then as a diffeomorphism of $M$, with the symplectic form given by hyperkaehler rotation, $\psi$ is in fact a symplectomorphism of $M$, say $\rho_\psi$. 
\end{lemma}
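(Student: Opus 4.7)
The plan is to exploit the hyperk\"ahler triple on $M$ directly and reduce the claim to the statement that $\psi$ preserves the canonical meromorphic volume form on $Y_e$. Let $(\omega_I,\omega_J,\omega_K)$ with complex structures $(I,J,K)$ satisfying $IJ=K$ denote the hyperk\"ahler triple on $M$ determined by the Tian--Yau metric, normalised so that $\omega_I=\omega_{TY}$ and $(M,I)=X\setminus E$. Then $\Omega_J:=\omega_K+i\omega_I$ is the $J$-holomorphic volume form. By the set-up recalled in items (1)--(3) of Section \ref{sec:semi-def_hyperkaehler} (following \cite{CJL2}), the biholomorphism $(M,J)\cong U_e$ intertwines $\Omega_J$ with the restriction to $U_e$ of the canonical meromorphic $2$-form $\Omega$ on $Y_e$ with simple pole along $D$; the argument-zero normalisation in item~(3) is precisely what pins down the constant so this identification is literal (not merely up to a scalar).

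With this dictionary in place, the argument is quick. First I would observe that $\psi\in MW(Y_e,\bP^1)$ is an automorphism of the elliptic surface $Y_e$ preserving the divisor $D$, so it restricts to a biholomorphism of $U_e$, i.e.\ to a $J$-holomorphic diffeomorphism of $M$ under the identification. Next, I would argue $\psi^\ast\Omega=\Omega$. Since $(Y_e,D)$ is log Calabi--Yau, the space of meromorphic $2$-forms on $Y_e$ with polar divisor bounded by $D$ is one-dimensional, so $\psi^\ast\Omega=c\cdot\Omega$ for some $c\in\bC^\ast$. To see $c=1$, it suffices to test on a single smooth fibre $F$ of $\varpi$: over the smooth locus of $\varpi$, $\Omega$ factorises (up to a nonvanishing base form pulled back by $\varpi$) as a wedge involving the translation-invariant holomorphic $1$-form along the fibres, and $\psi$ acts on each smooth fibre by translation by a point on the elliptic curve, fixing this $1$-form.

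Combining these two steps gives $\psi^\ast\Omega_J=\Omega_J$. Taking imaginary parts immediately yields $\psi^\ast\omega_I=\omega_I$, so $\psi$, viewed under the diffeomorphism $U_e\cong M$, is a symplectomorphism of $(M,\omega_{TY})$, which is the desired $\rho_\psi$. Note that $\rho_\psi$ is typically \emph{not} compactly supported, since $\psi$ acts nontrivially near $D$; this is consistent with the discussion of non-compactly supported maps arising from the Mordell--Weil group. The only nontrivial step is the matching of $\Omega_J$ with $\Omega$ (with the correct scalar) under the hyperk\"ahler rotation; once this identification from \cite{CJL2} is granted, the proof becomes a formal manipulation of the hyperk\"ahler triple together with the elementary observation that Mordell--Weil translations preserve the fibrewise holomorphic $1$-form on an elliptic fibration.
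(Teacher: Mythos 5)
Your proof is correct and follows essentially the same route as the paper: $\psi$ is $J$-holomorphic so it scales the holomorphic volume form $\Omega_J$, the scaling constant is $1$ because $\psi$ acts by translation on the smooth fibres and hence fixes the fibrewise invariant $1$-form, and splitting $\psi^\ast\Omega_J=\Omega_J$ into real and imaginary parts gives preservation of the Kaehler form. The only cosmetic difference is that you phrase the constant-is-one step on the $B$-side form $\Omega$ on $Y_e$ before transporting through the hyperkaehler rotation, whereas the paper works directly with $\Omega_J$ on $U_e$; the content is identical.
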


\begin{proof} This is essentially immediate from hyperkaehler identities. 
The map $\psi$ is holomorphic on $U = U_e$ (whose complex structure is $J$), so it must at worst scale $\Omega_J = \Omega^U$. Moreover, the scaling factor must in fact be one: $\psi$ acts by translation on each fibre, and uniformising the fibration near a smooth fibre one sees that the scaling factor must be one.  (See \cite[Lemma 15.4.4]{Huybrechts_K3} for details on an analogous argument, or the proof of  \cite[Proposition 5.10]{CJL2} for a more analytic viewpoint.) 
Now split the equation $\psi^\ast \Omega_J =  \Omega_J$ into real and imaginary parts to see that $\psi$ preserves the symplectic form on $M$. 
\end{proof}

Suppose that $\psi$ is a $k$th power in the Mordell-Weil group, where $k$ still denotes the degree of the del Pezzo surface $Y$. The map $\rho_\psi$ above doesn't have compact support: this is merely true asymptotically, in a sense which could be made precise. However, we observe that the ideas of Section \ref{sec:translations_and_tensors} can be used in this setting to improve this. We first note the following:

\begin{lemma}\label{lem:hk_MW_is_Lag_translation}
Let $s_0$ be the choice of reference holomorphic section for the group law in $MW(Y_e, \bP^1)$, and $s_\psi$ the one for $\psi$. Then their images under hyperkaehler rotation are Lagrangian sections of $\pi_F$, say $L_e$ and $L_\psi$, respectively. Moreover, away from critical fibres (and on fibres with a single critical point), $\rho_\psi$ is precisely equal to the Lagrangian translation of $\pi_\w: M \to \bR^2$ which takes $L_e$ to $L_\psi$. 
\end{lemma}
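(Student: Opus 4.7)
My plan is to separate the two assertions and reduce the second to a local computation near a smooth fiber, using a holomorphic Darboux chart and the hyperkaehler identity $\Omega_J = \omega_K + i\omega_I$.

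For the first assertion, observe that a holomorphic section $s$ of $\varpi$ contained in $U_e$ is a $J$-complex curve of complex dimension $1$ in the complex symplectic surface $(U_e, J, \Omega_J)$. Since $\Omega_J$ is of type $(2,0)$, it restricts to zero on any such curve; taking real and imaginary parts gives $\omega_K|_s = 0$ and $\omega_I|_s = 0$. In particular, $s$ is Lagrangian for $\omega_I$, which is the symplectic form on $M$ obtained by hyperkaehler rotation. Since $s_0$ and $s_\psi$ project bijectively to the base under $\varpi$, they descend to Lagrangian sections $L_e$ and $L_\psi$ of $\pi_F$ on $M$ away from the fiber over $\infty$.

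For the second assertion, fix a smooth fiber $F_q$ of $\varpi^0$. Since $F_q$ is complex Lagrangian, a relative holomorphic Darboux theorem applies to the Lagrangian fibration $\varpi^0$ near $F_q$: there exist local holomorphic coordinates $(z,w)$, with $z$ a function of a base coordinate and $w$ a coordinate on the universal cover of the fiber, in which $\Omega_J = dz \wedge dw$, $s_0 = \{w=0\}$ and $s_\psi = \{w = f(z)\}$ for some holomorphic function $f = f_1 + i f_2$. The Mordell-Weil translation is then $\psi(z,w) = (z, w + f(z))$. Writing $z = x+iy$, $w = u+iv$, one finds
\[
\omega_I = \Imag(dz \wedge dw) = dx \wedge dv + dy \wedge du.
\]
A direct Hamiltonian computation shows that for $p = p_x\, dx + p_y\, dy \in T^*_q B$, the Hamiltonian vector field of $h_p \circ \varpi^0$ is $-p_y \partial_u - p_x \partial_v$, whose time-$1$ flow sends $w \mapsto w - p_y - i p_x$. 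Therefore $\mathfrak{p}_{L_e}(z, p) = (z, -p_y - i p_x)$, and the closed $1$-form on the base representing $L_\psi - L_e$ is $\alpha(z) = -f_2(z)\, dx - f_1(z)\, dy$, whose closedness is precisely the Cauchy--Riemann condition for $f$. Plugging $\alpha$ into the formula for $\sigma_{L_\psi}$ in Proposition \ref{prop:Lag_translations} (with $L_e$ as the reference) yields $(z,w) \mapsto (z, w + f(z)) = \psi(z,w)$, as required. On a fiber with a single critical point the equality extends by continuity, since both $\rho_\psi$ and $\sigma_{L_\psi}$ are smooth and agree on the dense complement of the critical point.

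The hard part will be setting up conventions cleanly: one must verify that the sign/orientation in the identity $\Omega_J = \omega_K + i\omega_I$ matches the normalization implicit in Theorem \ref{thm:CJL_fibration} (so that $\pi_F$, with fibers of argument zero for the holomorphic volume form on $M$, does in fact yield the $J$-holomorphic fibration $\varpi^0$ after rotation), and that the relative Darboux chart $\Omega_J = dz \wedge dw$ is actually compatible with the elliptic fibration rather than merely with an arbitrary Lagrangian foliation. Once this is in place, the local computation is essentially forced, and the crucial point is the matching between the Arnol'd--Liouville affine structure on $F_z$ (determined by $\omega_I$ and $\varpi^0$) and the complex translation structure on $F_z$ as an elliptic curve (determined by $\Omega_J$ and the group law) --- a compatibility which is transparent in the coordinates $(z,w)$ above, since the map $p \mapsto -p_y - i p_x$ is a real-linear isomorphism from $T^*_q B$ to the tangent space of $F_z$ at $s_0(q)$.
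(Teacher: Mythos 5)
Your proof is correct, but the second half takes a genuinely different route from the paper. For the first assertion (holomorphic sections become Lagrangian after rotation) you and the paper argue identically: a $J$-holomorphic curve is isotropic for the $(2,0)$-form $\Omega_J$, so its real and imaginary parts $\omega_K,\omega_I$ both vanish on it. For the second assertion the paper is purely structural: by Lemma \ref{lem:MW_symplectic}, $\rho_\psi$ is a fibre-preserving symplectomorphism of the Lagrangian torus fibration, so by Symington's Lemma 2.5 it is, in every Arnol'd--Liouville chart, of the form $(q,p)\mapsto(q,p+dG(q))$, i.e.\ a Lagrangian translation; since it carries $L_e$ to $L_\psi$ it must be \emph{the} translation by $L_\psi - L_e$. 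This makes the claim essentially immediate and sidesteps any coordinate computation. You instead compute explicitly in a relative holomorphic Darboux chart $\Omega_J = dz\wedge dw$, identify the Arnol'd--Liouville affine structure with the complex translation structure via $p\mapsto -p_y - ip_x$, and verify that the Mordell--Weil translation $w\mapsto w+f(z)$ coincides with $\sigma_{L_\psi}$; the closedness of your $1$-form $\alpha$ via Cauchy--Riemann is a nice sanity check. Your computation is right (including the sign bookkeeping), and the compatibility you flag --- that the Darboux fibre coordinate is simultaneously a uniformizing coordinate for the group law --- does hold, because $\Omega_J/dz$ restricts on each compact fibre to a holomorphic, hence translation-invariant, $1$-form. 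What your approach buys is an explicit verification of the match between the two affine structures on the fibres, which the paper leaves implicit inside the citation of Symington; what the paper's approach buys is brevity and independence from any choice of chart. Both are complete proofs.
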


\begin{proof}
Hyperkaehler identities imply that holomorphic sections of $\w$ become Lagrangian sections of $\pi_F$. By Lemma \ref{lem:MW_symplectic}, the map $\rho_\psi$ is a symplectomorphism of $M$, preserving the fibres of $\pi_\w$. By \cite[Lemma 2.5]{Symington}, this implies that on any Arnol'd--Liouville chart $V \times T^2$, $V \subset \bR^2$, it is of the form $(q, p) \mapsto (q , p+ F(q))$, where $F$ is the derivative of a smooth function $V \to \bR$. The claim is then immediate.
\end{proof}

\begin{corollary}\label{cor:MW_symp_compact}
Let $\psi \in k \cdot MW(Y_e, \bP^1)$ be an element of the Mordell-Weil group of $Y_e$ fixing $D$ pointwise, where $k$ is the degree of the del Pezzo surface $Y_e$. Then $\rho_\psi$ is Hamiltonian isotopic, via a fibre-preserving isotopy of $\pi_F: M \to \bR^2$, to a compactly supported symplectomorphism, say $\rho^c_\psi$. 
\end{corollary}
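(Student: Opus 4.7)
The plan is to realise $\rho_\psi$ as a Lagrangian translation of $\pi_F$ and produce a fibre-preserving Hamiltonian isotopy that pushes the corresponding Lagrangian section to one which agrees with $L_e$ near $\partial M$; the resulting symplectomorphism is then compactly supported by Proposition \ref{prop:Lag_translations}(4).

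First, by Lemma \ref{lem:hk_MW_is_Lag_translation} together with the standing assumption (cf.~the start of Section \ref{sec:Lag_sections}) that each critical fibre contains a single node, $\rho_\psi$ coincides with the Lagrangian translation $\sigma_{L_\psi}$ away from the isolated critical points of $\pi_F$. Both maps are continuous and, by the unique extension provided by Proposition \ref{prop:Lag_translations}(2), extend across the singular points; hence $\rho_\psi = \sigma_{L_\psi}$ on all of $M$.

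Next, and this is the key geometric input, I would show that $L_\psi$ and $L_e$ are smoothly isotopic as sections of $\pi_F$ in a neighbourhood of $\partial B$. On the mirror, $s_0$ and $s_\psi = \psi \circ s_0$ are holomorphic sections of $\varpi: Y_e \to \bP^1$, and since $\psi$ fixes $D$ pointwise both meet the $I_k$ fibre $\varpi^{-1}(\infty) = D$ at the same smooth point $p_0$. In a local chart adapted to $\varpi$ near $p_0$, the two sections are graphs of smooth functions on a punctured neighbourhood of $\infty \in \bP^1$ with matching values at $\infty$, so linear interpolation in that chart produces a smooth isotopy $\{s_t\}_{t \in [0,1]}$ of (not necessarily holomorphic) sections of $\varpi$ defined over a punctured disc around $\infty$. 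Hyperkaehler rotation, together with the correspondence between sections of $\varpi$ and sections of $\pi_F$ used in the proof of Lemma \ref{lem:hk_MW_is_Lag_translation}, transports this to a smooth isotopy from $L_\psi$ to $L_e$ as sections of $\pi_F$ over the complement of a large disc in $B = \bR^2$, which is a neighbourhood of $\partial B$.

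With this in place, Corollary \ref{cor:isotoping_Lag_sections} yields a fibre-preserving Hamiltonian isotopy $\{H_t\}$ of $M$, supported near $\partial M$, such that $L'_\psi := H_1(L_\psi)$ agrees with $L_e$ on a tubular neighbourhood of $\partial M$. The one-parameter family of Lagrangian translations $\{\sigma_{H_t(L_\psi)}\}$ is then itself a fibre-preserving Hamiltonian isotopy (locally, differentiating $H_t$ in $t$ produces Hamiltonians that factor through $\pi_F$) from $\sigma_{L_\psi} = \rho_\psi$ to $\rho^c_\psi := \sigma_{L'_\psi}$, and the latter is compactly supported by Proposition \ref{prop:Lag_translations}(4). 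The main obstacle I foresee is the third paragraph: controlling the asymptotics of the hyperkaehler rotation near the boundary, where the Tian--Yau metric degenerates toward $E$, so that the neighbourhood of $\partial M$ is correctly matched with a neighbourhood of the $I_k$ fibre of $\varpi$. The explicit local description of the special Lagrangian fibration near $\partial M$ coming from \cite{CJL, CJL2} is what should make this matching precise.
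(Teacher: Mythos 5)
Your proposal is correct and follows essentially the same route as the paper: both arguments observe that since $\psi$ fixes $D$ pointwise the sections $s_0$ and $s_\psi$ meet $D$ at the same interior point, deduce that $L_e$ and $L_\psi$ are smoothly isotopic as sections near $\partial B$, invoke Corollary \ref{cor:isotoping_Lag_sections} (whose exactness-of-$\omega$ obstruction argument the paper briefly recalls), and then run the resulting family of Lagrangian translations to reach a compactly supported representative. The extra care you take in extending $\rho_\psi = \sigma_{L_\psi}$ over the nodal points and in spelling out the interpolation of sections near infinity is consistent with, and slightly more detailed than, what the paper writes.
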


\begin{proof}
As $\psi \in k \cdot MW(Y_e, \bP^1)$, the two sections $s_0$ and $s_\psi$ defining $\psi$ intersect at infinity, at an interior point of a component of $D = \w^{-1} (\infty)$. Thus the sections $s_0$ and $s_\psi$ are smoothly isotopic near infinity; hence $L_e$ and $L_\psi$ are too. It then follows from Corollary \ref{cor:isotoping_Lag_sections} after a fibre-preserving Hamiltonian isotopy $L_e$ and $L_\psi$ can be taken to agree outside a compact set. (Recall  the obstruction is the symplectic area of an annulus; one observes that it can be capped off with two Lagrangian discs, given by $L_e$ and $L_\psi$, to get a closed chain; and as $\omega \in \Omega^2(M)$ is exact, the obstruction must thus vanish.)
Lagrangian translation between $L_e$ and the images of $L_\psi$ under isotopy then gives the desired isotopy of $\rho_\psi$ to a compactly supported symplectomorphism.
\end{proof}

As before, this gives non-trivial symplectomorphisms for two cases: $k=7$, or $k=8$ with $X = \bF_1$. By construction, the $K$-theory actions of $\psi$ and $\rho^c_\psi$ are mirror. For the $k=8$ case, as there are no interior $(-2)$ curves in $Y_e$, Proposition \ref{prop:detecting_id} immediately implies that the maps are HMS mirrors. (For the $k=7$ case, there is an interior $I_2$ fibre, and no a priori reason for the two maps not to differ by a Torelli autoequivalence generated by twists in the associated spherical objects.)

\bibliography{bib}{}
\bibliographystyle{alpha}

\end{document}